\newtheorem{thm}[subsubsection]{Theorem}
\newtheorem{conj}[subsubsection]{Conjecture}
\newtheorem{cor}[subsubsection]{Corollary}
\newtheorem{lem}[subsubsection]{Lemma}
\newtheorem{prop}[subsubsection]{Proposition}
\theoremstyle{definition}
\newtheorem{defn}[subsubsection]{Definition}
\newtheorem{ex}[subsubsection]{Example}
\theoremstyle{remark}
\newtheorem{rem}[subsubsection]{Remark}
\numberwithin{equation}{subsection}
\newtheorem{hypothesis}[subsubsection]{\textbf{Hypothesis}}
\newcommand{\A}{\mathbb{A}}
\newcommand{\cE}{{\mathcal{E}}}
\newcommand{\fkc}{{\mathfrak c}}
\newcommand{\fke}{{\mathfrak e}}
\newcommand{\fkp}{{\mathfrak p}}
\newcommand{\fkw}{{\mathfrak w}}
\newcommand{\fkD}{{\mathfrak D}}
\newcommand{\fkE}{{\mathfrak E}}
\newcommand{\fkX}{{\mathfrak X}}
\newcommand{\fkK}{{\mathfrak K}}
\newcommand{\KP}{{\mathcal{KP}}}
\newcommand{\mE}{{\mathscr E}}
\newcommand{\mS}{{\mathscr S}}
\newcommand{\cL}{{\mathcal L}}
\newcommand{\Z}{\mathbb{Z}}
\newcommand{\D}{\mathbb{D}}
\newcommand{\Q}{\mathbb{Q}}
\newcommand{\F}{\mathbb{F}}
\newcommand{\G}{\mathbb{G}}
\newcommand{\bb}{\mathbf{b}}
\newcommand{\R}{\mathbb{R}}
\newcommand{\C}{\mathbb{C}}
\newcommand{\scusp}{\mathrm{sc}}
\newcommand{\Ig}{\mathrm{Ig}}
\newcommand{\IG}{\mathfrak{Ig}}
\newcommand{\ST}{{\mathrm{ST}}}
\newcommand{\TN}{{\mathrm{TN}}}
\newcommand{\el}{{\mathrm{ell}}}
\newcommand{\reg}{\mathrm{reg}}
\newcommand{\im}{\mathrm{im}}
\newcommand{\semis}{\mathrm{ss}}
\newcommand{\sr}{\mathrm{sr}}
\newcommand{\Sh}{\mathrm{Sh}}
\newcommand{\inv}{{\rm inv}}
\newcommand{\tor}{{\rm Tor}}
\newcommand{\Fr}{{\rm Fr}}
\newcommand{\tr}{{\rm tr}\,}
\newcommand{\Hom}{{\rm Hom}}
\newcommand{\Aut}{{\rm Aut}}
\newcommand{\Out}{{\rm Out}}
\newcommand{\alg}{ {\rm alg}}
\newcommand{\bas}{ {\rm bas}}
\newcommand{\Lie}{{\rm Lie}\,}
\newcommand{\ur}{{\mathrm{ur}}}
\newcommand{\cG}{{\mathcal{G}}}
\newcommand{\cH}{{\mathcal{H}}}
\newcommand{\cO}{\mathcal{O}}
\newcommand{\sE}{\mathsf{E}}
\newcommand{\ad}{{\rm ad}}
\newcommand{\Int}{{\rm Int}}
\newcommand{\Groth}{{\rm Groth}}
\newcommand{\Fpbar}{\overline{\F}_p}
\newcommand{\Qlbar}{\overline{\Q}_\ell}
\newcommand{\Qpbar}{\overline{\Q}_p}
\newcommand{\Qbar}{\overline{\Q}}
\newcommand{\vol}{{\rm vol}}
\newcommand{\der}{\mathrm{der}}
\newcommand{\ab}{\mathrm{ab}}
\newcommand{\Res}{\mathrm{Res}}
\newcommand{\Iso}{{\rm isoc}}
\newcommand{\op}{{\rm op}}
\newcommand{\Rig}{\rm rig}
\newcommand{\Loc}{{\rm loc}}
\newcommand{\LL}{ {}^L }
\newcommand{\ov}{\overline}
\newcommand{\eff}{\mathrm{eff}}
\newcommand{\emb}{\mathrm{emb}}
\newcommand{\ES}{\mE\Sigma}
\newcommand{\EK}{\mE\fkK}
\def\tu{\textup}
\def\hat{\widehat}
\def\hra{\hookrightarrow}
\def\ra{\rightarrow}
\def\isom{\stackrel{\sim}{\ra}}
\def\ol{\overline}
\def\tilde{\widetilde}
\begin{document}

\title{The stable trace formula for Igusa varieties, II}

\author{Alexander Bertoloni Meli}\email{abertolo@bu.edu}
\address{Department of Mathematics, Boston University, Boston, MA 02215, USA}

\author{Sug Woo Shin}\email{sug.woo.shin@berkeley.edu}
\address{Department of Mathematics, UC Berkeley, Berkeley, CA 94720, USA / Korea Institute for Advanced Study, Seoul 02455, Republic of Korea}

\date{\today}

\subjclass[2020]{11F72, 11G18, 14G35}

\begin{abstract}
Assuming the trace formula for Igusa varieties in characteristic $p$, which is known by Mack-Crane in the case of Hodge type with good reduction at $p$, we stabilize the formula via Kaletha's theory of rigid inner twists when the reductive group in the underlying Shimura datum is quasi-split at $p$. This generalizes our earlier work under more restrictive hypotheses. 

\smallskip
\noindent Keywords. Igusa varieties, Shimura varieties, stable trace formula
\end{abstract}

\maketitle

\tableofcontents

\section{Introduction}\label{s:intro}

Igusa \cite{Igusa} introduced Igusa curves to understand the mod $p$ geometry of modular curves when the level is divisible by powers of $p$. Over time, generalizations have been given in the setting of fairly general Shimura varieties (for example, \cite{HT01,HidaPAF,Man05,CS17,HamacherKim}). Igusa varieties help us understand the geometry of the Hodge--Tate period morphisms and Mantovan's product structure for Shimura varieties. Moreover Igusa varieties have found wide-ranging applications to the Langlands correspondence, the Kottwitz conjecture, vanishing of cohomology of Shimura varieties with torsion coefficients, $p$-adic automorphic forms, and the discrete Hecke orbit conjecture. (See the introduction of \cite{KretShin} for more details and references.) Most of these rely on a description of the $\ell$-adic cohomology of Igusa varieties. The computation is divided into three steps.
\begin{enumerate}
    \item[Step 1.] Adapt the Langlands--Kottwitz method to prove a trace formula computing the cohomology of Igusa varieties by means of counting fixed points. 
    \item[Step 2.] Stabilize the trace formula from Step 1. 
    \item[Step 3.] Compare the stabilized formula in Step 2 with the stabilized Arthur--Selberg trace formula to describe the cohomology in terms of automorphic representations. An endoscopic classification is required in general.
\end{enumerate}

For Igusa varieties associated with some simple Shimura varieties, all this has been carried out in \cite[Ch.V]{HT01}, where the simple situation made the second step unnecessary. 
More generally, Step~1 was done in the context of many PEL-type Shimura varieties in \cite{Shin09} and for Hodge-type Shimura varieties in \cite{MackCraneShin}, both in the case of good reduction at $p$ (i.e., with hyperspecial level at $p$). Step~3 was worked out in \cite{ShinGalois,ShinRZ,KretShin} in the minimal generality for intended applications and in \cite{BMaveraging} in many PEL-type cases assuming an endoscopic classification of automorphic representations and a technical assumption (see \cite[Assumption 4.8]{BMaveraging}). 

The goal of this paper is to complete the technically demanding Step 2 in considerable generality. 
The preceding works \cite{Shin10,BMaveraging} solved this problem for PEL-type Shimura varieties under technical hypotheses (see Assumption 5.1 in \cite{BMaveraging}). As a follow-up and significant improvement over \cite{Shin10,BMaveraging}, this paper deals with arbitrary Shimura varieties, only assuming that the reductive group in the Shimura datum is quasi-split at $p$ and that Step 1 is valid. (This assumption includes a reasonable formalism of Igusa varieties along with integral models for Shimura varieties. These are missing in general.) It is interesting to note that the analogue of Step 2 for Shimura varieties in \cite{KSZ} also works for arbitrary Shimura data but with hyperspecial level at $p$ (so the reductive group is assumed to be unramified at $p$). 

We hope that our paper will broaden the prospect for unconditional results. For instance, as a result of this paper (together with \cite{MackCraneShin}), Steps 1 and 2 are complete in the case of Hodge type with hyperspecial level at $p$, which led to an unconditional proof of the discrete Hecke orbit conjecture in \cite{KretShin}.

Our paper is written to be a reliable and relatively self-contained reference on the stable trace formula for Igusa varieties. For this reason, we have chosen to give a somewhat detailed exposition with basic definitions and facts spelled out, with an index of notation at the end.

\subsection{Igusa varieties}\label{ss:Igusa-intro}

To explain our results more precisely, let us recall the setup for Igusa varieties. When a field $k$ is in the subscript for a mathematical object, it will mean the base change of it to $k$.
Let $(G,X)$ be a Shimura datum. Denote by $E$ the reflex field of $(G,X)$, which is a finite extension of $\Q$. In particular $G$ is a connected reductive group over $\Q$ such that $G_{\R}$ contains an elliptic maximal torus, and $X$ determines a conjugacy class of cocharacters $\{\mu\}$ of $G_{\C}$. Fix a prime $p$ and a parahoric model $\cG$ of $G_{\Q_p}$ over $\Z_p$. Set $K_p:=\cG(\Z_p)$.
Write $\breve \Q_p$ for the completion of a maximal unramified extension of $\Q_p$. The arithmetic Frobenius automorphism on $\breve\Q_p$ is denoted $\sigma$. Let $\bb\in G(\breve\Q_p)$ be such that its image in the Kottwitz set $B(G_{\Q_p})$ lies in the finite subset $B(G_{\Q_p},\mu^{-1})$. Write $J_{\bb}(\Q_p)$ for the group of $\sigma$-centralizers of $\bb$ in $G(\breve\Q_p)$. Fix isomorphisms $\C\cong \Qlbar$ and $\C\cong \Qpbar$ and suppress them from the notation, but note that $E\hookrightarrow\C\cong \Qpbar$ induces a place $\fkp$ of $E$ over $p$. Throughout the paper, we assume that
\begin{itemize}
    \item $G_{\Q_p}$ is quasi-split.
\end{itemize}
This condition is used mainly for group-theoretic reasons in this paper. (It should also be helpful to assume for Step 1.) Removing the condition might be possible but would cause nontrivial changes that are elusive to us at this time.

Given $(G,X,p,\cG)$ and $\bb$ as above and an irreducible algebraic representation $\xi$ of $G_{\C}$, our basic hypotheses are that there exist
\begin{itemize}
    \item a natural integral model $\mS_{K_p}$ over $\cO_{E_{\fkp}}$ for the Shimura variety with level $K_p$ at $p$ (Hypothesis \ref{hypo:integral-model-Shimura}; also see Remark \ref{rem:natural}), and
    \item an Igusa variety $\IG_{\bb}\ra \mS_{K_p,\Fpbar}$ equipped with an $\ell$-adic local system $\cL_{\xi}$, which satisfies certain desiderata (Hypothesis \ref{hypo:Igusa-existence}). In particular $G(\A^{\infty,p})\times J_{\bb}(\Q_p)$ acts on $\IG_{\bb}$, and the map $\IG_{\bb}\ra \mS_{K_p,\Fpbar}$ is  equivariant for the $G(\A^{\infty,p})$-action (prime-to-$p$ Hecke action).
\end{itemize}
These hypotheses are known for Shimura data of Hodge type and a reductive model $\cG$, and also in many cases for Shimura data of abelian type and parahoric models $\cG$. See \S\ref{sss:known} below.

We define $[H_c]_{\bb,\xi}$ to be the virtual $G(\A^{\infty,p})\times J_{\bb}(\Q_p)$-module equal to the alternating sum of compactly supported $\ell$-adic cohomology of $\IG_{\bb}$ with coefficient sheaf $\cL_{\xi}$, cf.~\S\ref{sss:H(IG)} below.
A representation-theoretic description of $[H_c]_{\bb,\xi}$ essentially amounts to computing the trace
$$\tr (\phi^{\infty,p}\phi_p |[H_c]_{\bb,\xi})\in \C, \qquad \phi^{\infty,p}\phi_p\in C^\infty_c(G(\A^{\infty,p})\times J_{\bb}(\Q_p)).$$

\subsection{The main theorem}\label{ss:main-thm}

Our main theorem corresponds to Step 2 in the introductory paragraph above. As a prerequisite, we formulate the expected outcome of Step 1.
For simplicity of exposition, we ignore central character data, $z$-extensions, and choices of Haar measures even though they are necessary for stating and proving precise results, which are found in the main text. (It is one of our improvements over \cite{Shin10,BMaveraging} to deal with such technicalities in full.) 

Define $\Sigma_{\R\tu{-ell}}(G)$ to be a set of representatives for stable conjugacy classes in $G(\Q)$ which are elliptic in $G(\R)$. Write $\KP_{\bb}$ for the set of $\bb$-admissible Kottwitz parameters for $(G,X,p,\cG)$, (Definition \ref{def:Kottwitz-parameter}). To get a feel for $\KP_{\bb}$ in the case of Hodge type, a Kottwitz parameter is a group-theoretic datum to parametrize an adelic isogeny class of abelian varieties with extra structure in the mod $p$ Shimura variety $\mS_{K_p}(\Fpbar)$, and the $\bb$-admissibility prescribes the isogeny class of the $p$-divisible group in terms of $\bb$. 

Each $\gamma_0\in \Sigma_{\R\tu{-ell}}(G)$ 
determines a subset $\KP_{\bb}(\gamma_0)\subset \KP_{\bb}$, which can be thought of as the fiber over $\gamma_0$. Each $\fkc\in \KP_{\bb}(\gamma_0)$ determines (see~\S\ref{sss:summary-Kottwitz-parameters} below)
\begin{itemize}
    \item $\gamma_{\fkc}\in G(\A^{\infty,p})$ and $\delta_{\fkc}\in J_{\bb}(\Q_p)$ up to conjugacy, which are stably conjugate to $\gamma_0$, and
    \item a cohomological invariant $\alpha(\fkc)$ in a finite abelian group.
\end{itemize}
In particular, orbital integrals $O^G_{\gamma_\fkc}$ and $O^{J_{\bb}}_{\delta_{\fkc}}$ are well defined 
on the Hecke algebras $\cH(G(\A^{\infty,p}))$ and $\cH(J_{\bb}(\Q_p))$, respectively. 
Write $\KP^{\Fr}_{\bb}(\gamma_0)$ for the subset of $\fkc\in \KP_{\bb}(\gamma_0)$ with trivial $\alpha(\fkc)$.

The element $\bb$ determines a fractional Newton cocharacter $\nu_{\bb}\in X_*(J_{\bb})\otimes_{\Z}\Q$, which is central in $J_{\bb}$. Let $r\in \Z_{>0}$ be such that $r\nu_{\bb}\in X_*(J_{\bb})$.
For $\phi_p\in \cH(J_{\bb}(\Q_p))$ and $j\in r\Z$, let $\phi_p^{(j)}$ denote the translate of $\phi_p$ by $(j\nu_{\bb})(p)$. We are finally ready to state the anticipated output of Step 1.

\begin{conj}\label{conj:Igusa-simplified}
Given $\phi^{\infty,p}\in \cH(G(\A^{\infty,p}))$ and $\phi_p\in \cH(J_{\bb}(\Q_p))$, there exists $j_0\in \Z_{>0}$ such that the following holds:
  for every integer $j\ge j_0$ divisible by $r$, we have
  \begin{equation}\label{eq:MackCrane-simplified}
  \tr\big(\phi^{\infty,p}\phi^{(j)}_p \left|[H_c]_{\bb,\xi} \right.\big)
 = \!
   \sum_{\gamma_0\in \Sigma_{\R\tu{-ell}}(G)}
   \!\!  \tr \xi (\gamma_0)  \! \sum_{\fkc\in \KP^{\Fr}_{\bb}(\gamma_0)} c (\mathfrak c) O^G_{\gamma_{\mathfrak c}}(\phi^{\infty,p})O^{J_{\bb}}_{\delta_{\mathfrak c}}(\phi^{(j)}_p),
  \end{equation}
  where $c (\mathfrak c)\in \Q$ is an explicit constant. 
\end{conj}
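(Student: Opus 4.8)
\emph{Approach to a proof.} The expected proof follows the Langlands--Kottwitz method adapted to Igusa varieties, as in \cite{HT01,Shin09,MackCrane}. By definition the left-hand side of \eqref{eq:MackCrane-simplified} is an alternating sum of traces of the Hecke--Frobenius operator attached to $\phi^{\infty,p}\phi_p^{(j)}$ acting on $H^\ast_c(\IG_{\bb},\cL_{\xi})$. Since $\IG_{\bb}$ is not proper, the first step is to rewrite this as a sum of local terms over the fixed points of the associated self-correspondence $u_j$ of $\IG_{\bb}$ --- the prime-to-$p$ Hecke correspondence composed with translation by $(j\nu_{\bb})(p)\in J_{\bb}(\Q_p)$ and a power of the relative Frobenius. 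This is exactly where the translation by the central fractional cocharacter $\nu_{\bb}$ and the hypothesis $j\ge j_0$ are used: a sufficiently positive power of $(\nu_{\bb})(p)$ has a contracting effect --- via Mantovan's almost-product structure relating $\IG_{\bb}$, the central leaf, and the Rapoport--Zink space inside the Newton stratum --- which forces $u_j$ to have no contribution from infinity, so that a Fujiwara--Varshavsky-type Lefschetz trace formula applies. I expect establishing such a clean fixed-point formula on the non-proper $\IG_{\bb}$ to be the first serious obstacle, and it is the reason the identity is asserted only for $j$ large.

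Next I would partition the fixed-point set of $u_j$ into isogeny classes. In the Hodge-type picture a fixed point corresponds to an abelian variety over $\Fpbar$ carrying the relevant prime-to-$p$ level and crystalline structure, together with a quasi-isogeny intertwining a power of Frobenius with the Hecke action; two such are declared equivalent when isogenous compatibly with all structures. Using Honda--Tate theory in the PEL case --- and the Langlands--Rapoport description in the style of Kisin in the Hodge-type case --- each isogeny class is labelled by a $\bb$-admissible Kottwitz parameter $\fkc\in\KP_{\bb}$, and the map $\fkc\mapsto\gamma_0$ records the associated stable conjugacy class in $G(\Q)$, which is elliptic in $G(\R)$ because the underlying object is of CM type up to isogeny at the archimedean place (forced by the Shimura datum axioms). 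The self-quasi-isogeny group of a point in the class is an inner form $I_{\fkc}$ of the centralizer of $\gamma_0$, whose localization at $p$ is controlled by $J_{\bb}$ through the slope filtration; moreover only finitely many $\gamma_0$ contribute for fixed $\phi^{\infty,p},\phi_p$, by the usual support-plus-ellipticity finiteness argument, so the right-hand side is a finite sum.

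Then I would compute the contribution of a single isogeny class $\fkc$. The groupoid count produces a volume factor for $I_{\fkc}$; summing the local terms over the prime-to-$p$ level structures parametrized by the class collapses to the orbital integral $O^G_{\gamma_{\fkc}}(\phi^{\infty,p})$; and --- crucially --- because $\IG_{\bb}$ rigidifies the $p$-divisible group along a \emph{fixed} isocrystal in the class of $\bb$, the $p$-adic contribution is an honest orbital integral $O^{J_{\bb}}_{\delta_{\fkc}}(\phi_p^{(j)})$ on $J_{\bb}(\Q_p)$ rather than a twisted orbital integral (this linearization of the $p$-part is the conceptual gain of Igusa over Shimura varieties). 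The coefficient $\tr\xi(\gamma_0)$ is the trace of $\gamma_0$ on the stalk of $\cL_{\xi}$. Finally, an isogeny class with given local data is non-empty precisely when a cohomological obstruction in a finite abelian group vanishes --- this obstruction is the invariant $\alpha(\fkc)$, so the classes that actually occur are exactly those in $\KP^{\Fr}_{\bb}(\gamma_0)$ --- and assembling the volume together with the combinatorial normalization factors gives the rational number $c(\fkc)$.

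Beyond the non-properness issue flagged above, I expect the principal remaining difficulty to be the parametrization of isogeny classes in the Hodge-type (non-PEL) case, where Honda--Tate theory is unavailable and one must invoke Kisin's proof of the Langlands--Rapoport conjecture and its refinements at parahoric level, together with the careful tracking of Haar measures, central character data, and $z$-extensions that \eqref{eq:MackCrane-simplified} suppresses but which are needed to pin down $c(\fkc)$ precisely.
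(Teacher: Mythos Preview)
Your sketch is a faithful outline of the known approach, but note that the paper does \emph{not} itself prove this statement: Conjecture~\ref{conj:Igusa-simplified} is presented as the expected output of Step~1 and is taken as a standing hypothesis for the rest of the paper (the paper's contribution is Step~2, the stabilization). The paper records the state of the art in \S\ref{sss:known}: the conjecture is proved by Mack-Crane \cite{MackCrane} for Hodge-type data with hyperspecial level at $p$, and Remark~\ref{rem:LR-for-Igusa} explains that the expected route in general is exactly the one you describe---prove a Langlands--Rapoport-type description \eqref{eq:LR-Igusa} of $\IG_{\bb}(\ol k)$ and then run the Fujiwara--Varshavsky/Langlands--Kottwitz counting argument isogeny-class by isogeny-class.

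One small correction of emphasis: you attribute the need for large $j$ entirely to controlling the contribution from infinity in the fixed-point formula on the non-proper $\IG_{\bb}$. That is indeed the reason $j$ must be large in the proof of the conjecture (as you say, and as the paper notes parenthetically at the end of \S\ref{ss:sketch-of-proof}). But in \emph{this} paper the role of large $j$ is different and purely group-theoretic: it is needed so that the translated function $\phi_p^{(j)}$ lands in $\cH_{\tu{acc}}(J_{\bb}(\Q_p))$, which in turn guarantees the existence of the $\nu$-ascent (Definition~\ref{def:ascent}, Lemma~\ref{lem:ascentexistance}) used to construct $h_{1,p}$ in the stabilization at $p$. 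So your sketch is correct as an approach to the conjecture, but be aware that the paper's own use of ``$j$ sufficiently large'' is for a separate reason.
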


The latest work on this conjecture is \cite{MackCraneShin} based on Mack-Crane's thesis, where it is proved for Hodge-type data $(G,X)$ and reductive models $\cG$ (with no constraints on $p$). This was done by formulating and proving a suitable analogue of the Langlands--Rapoport conjecture for Igusa varieties, via \cite{KisinPoints,KSZ} on the Langlands--Rapoport conjecture for Shimura varieties.

The stabilization problem (Step 2) is to rewrite the right hand side of \eqref{eq:MackCrane-simplified} in terms of stable (elliptic) orbital integrals on endoscopic groups of $G$. To this end, let $\sE^\heartsuit_{\el}(G)$ denote a set of representatives for isomorphism classes of elliptic endoscopic data for $G$. Each $\fke\in\sE^\heartsuit_{\el}(G)$ consists of a  quasi-split group over $\Q$, to be denoted $H$, and other information. Write $\ST^{H}_{\el}$ for the elliptic part of the stable trace formula for $H$, which is essentially a sum of stable elliptic orbital integrals on $H$; see \S\ref{eq:def-of-ST} below. Our main theorem is as follows.

\begin{thm}\label{thm:intro}
Keep the hypotheses in the bullet list of \S\ref{ss:Igusa-intro} and
assume Conjecture \ref{conj:Igusa-simplified}.
For $\phi^{\infty,p}$, $\phi_p$, $\xi$, and sufficiently large $j$ as in the conjecture, there exists $h\in \cH(H(\A))$ (depending on~$j$) such that the following stabilized formula holds true:
  $$\tr\big(\phi^{\infty,p}\phi^{(j)}_p \big|[H_c]_{\bb,\xi}\big) = 
  \sum_{\fke\in \sE^\heartsuit_{\el}(G)}\iota(\fke) \ST^{H}_{\el} (h),$$
  where $\iota(\fke)\in \Q_{>0}$ is an explicit constant. 
\end{thm}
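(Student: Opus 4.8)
The plan is to start from the right-hand side of \eqref{eq:MackCrane-simplified} and massage it into the classical pseudo-stabilization pattern for Kottwitz-style point-counting formulas, i.e.\ the same shape one encounters when stabilizing the Langlands--Kottwitz formula for Shimura varieties. The key observation is that the sum over $\fkc\in \KP^{\Fr}_{\bb}(\gamma_0)$ with trivial cohomological invariant $\alpha(\fkc)$ is, by the usual Fourier-inversion trick on the relevant finite abelian group, equal to $\sum_{\kappa} \sum_{\fkc\in\KP_{\bb}(\gamma_0)}\langle \alpha(\fkc),\kappa\rangle$ times the appropriate normalization; the character $\kappa$ on this group is precisely what indexes endoscopic data. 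So the first step is to unwind the definitions of $\KP_{\bb}(\gamma_0)$, of $\gamma_{\fkc},\delta_{\fkc}$, and of the invariant $\alpha(\fkc)$ from \S\ref{sss:summary-Kottwitz-parameters}, and to rewrite $\tr\xi(\gamma_0)\cdot c(\fkc)\cdot O^G_{\gamma_\fkc}(\phi^{\infty,p})\,O^{J_\bb}_{\delta_\fkc}(\phi_p^{(j)})$ so that the dependence on $\fkc$ is packaged into (i) a product of local terms at the finite places away from $p$, at $p$, and at $\infty$, and (ii) a global cohomological pairing. This is where the hypothesis that $G_{\Q_p}$ is quasi-split and Kaletha's rigid inner twists enter: they give a coherent normalization of local transfer factors and of the local pairings $\langle\alpha(\fkc)_v,\kappa_v\rangle$ so that the product formula applies.

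Second, I would invoke the local constructions that the body of the paper must set up: at each place $v\nmid p\infty$, a transfer $\phi^{\infty,p}\mapsto \phi^{H}_v$ matching (twisted) orbital integrals on $G$ with stable orbital integrals on $H$; at $p$, a transfer of $\phi_p^{(j)}\in\cH(J_\bb(\Q_p))$ to $\cH(H(\Q_p))$ compatible with the $\bb$-twisted endoscopic picture (this is the analogue of the transfer used in the Kottwitz conjecture / \cite{KSZ}, and is the one place where $j$ must be taken large — one needs $j$ large enough that the relevant orbital integrals localize to the correct Kottwitz--Rapoport stratum and that the Newton-cocharacter translation kills the defect); and at $\infty$, the Euler--Poincaré/pseudocoefficient function $\phi^H_\infty$ built from $\xi$ whose stable orbital integrals on $H(\R)$ reproduce the factor $\tr\xi(\gamma_0)$ weighted by the correct sign and the average over the relevant $L$-packet, exactly as in the stabilization of the elliptic term of the Arthur--Selberg trace formula. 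Setting $h:=\phi^H_\infty\otimes\phi^H_p\otimes\bigotimes_{v\nmid p\infty}\phi^H_v\in\cH(H(\A))$ for each $\fke$, and then recognizing $\ST^H_{\el}(h)$ as the sum over stable classes $\gamma_H\in H(\Q)$ elliptic at $\infty$ of (its coefficient times) $SO^H_{\gamma_H}(h)$, one matches it term by term against the inner sum over $\fkc$ lying over a fixed $\gamma_0$, using the bijection between such $\fkc$ (with the $\kappa$-twist) and pairs $(\gamma_H,\text{local data})$ that transfer to $\gamma_0$. Summing over $\fke\in\sE^\heartsuit_{\el}(G)$ reassembles the Fourier inversion and produces the constants $\iota(\fke)$ from the $c(\fkc)$, the cardinalities of the finite abelian groups, and $|\pi_0|$-type factors, exactly as in \cite{Shin10,BMaveraging,KSZ}.

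The main obstacle, and the part that requires genuine work rather than bookkeeping, is the global cohomological comparison that makes the Fourier inversion legitimate: one must show that the map $\fkc\mapsto(\gamma_0;\text{family of local classes}; \text{cocycle})$ realizes $\KP_{\bb}(\gamma_0)$ as exactly the set of global objects predicted by $\ker^1$-type exact sequences in Galois cohomology (with $B(G)$ and Kaletha's $H^1_{\alg}$ replacing $H^1$ where appropriate), so that summing $\langle\alpha(\fkc),\kappa\rangle$ over $\fkc$ gives $|\text{group}|$ when $\kappa=1$ (recovering $\KP^{\Fr}$) and vanishes otherwise — and, on the endoscopic side, that each $\kappa$ corresponds to a unique $\fke$ up to isomorphism with the correct automorphism factor $|\Out(\fke)|$. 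This is precisely the place where the quasi-splitness of $G_{\Q_p}$ is used to avoid the pathologies of non-quasi-split local transfer, and where the $z$-extension, central-character, and Haar-measure subtleties that the paper insists on tracking must be handled with care; compatibility of the $p$-adic transfer with the Newton-translation and the large-$j$ hypothesis is the second delicate point. Everything else — the Arthur $(\beta)$ constants, the sign bookkeeping at $\infty$, and the final identification of $\iota(\fke)$ — is routine once these two inputs are in place, following the template of \cite{BMaveraging,KSZ} adapted to the Igusa setting.
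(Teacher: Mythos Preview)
Your overall architecture is correct and matches the paper's: Fourier inversion on $\fkK(I_0/\Q)$ to replace the constraint $\alpha(\fkc)=1$ by a sum over characters $\kappa$, place-by-place construction of $h$ (Langlands--Shelstad transfer away from $p,\infty$; averaged Lefschetz functions built from $\xi$ at $\infty$), and the final matching with $\ST^H_{\el}$ via the standard bijection between pairs $(\gamma_0,\kappa)$ and classes $(\fke,\gamma_H)$. The global cohomological bookkeeping you flag as ``the main obstacle'' is in fact routine and already handled in \cite{KSZ}.

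Where your proposal is genuinely imprecise is the transfer at $p$, which is the actual heart of the paper. It is \emph{not} ``the analogue of the transfer used in \cite{KSZ}'': the paper is explicit that the situation at $p$ is entirely different from the Shimura-variety case. The construction factors through Levi subgroups: $J_{\bb}$ is an inner form of a Levi $M_{\bb}\subset G_{\Q_p}$, and for each embedded endoscopic datum of $M_{\bb}$ lying over $\fke_p$ one first does ordinary Langlands--Shelstad transfer from $J_{\bb}$ to the endoscopic Levi $H_{\bb}\subset H$, and then takes a $\nu$-\emph{ascent} from $H_{\bb}$ to $H$ (Definition~\ref{def:ascent}). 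The large-$j$ hypothesis is used so that the transferred function is supported on $\nu$-acceptable elements, which is exactly the condition under which ascents exist---not to ``localize to a Kottwitz--Rapoport stratum.'' The substantive new work, which you do not name, is the comparison at $p$ between the invariant $\langle\beta_p(\gamma_0,[b]),\tilde\kappa\rangle$ coming from the Kottwitz parameter and the rigid-inner-twist transfer factor $\Delta[\fkw_{\bb},z^{\Rig}_{\bb}]$ for $J_{\bb}$ as a twist of $M_{\bb}$; this is done by passing through isocrystals/extended pure inner twists and a cocycle computation (Lemmas~\ref{rigcocycletwistlem}, \ref{invariantcalclem} and Corollary~\ref{transfactcomparison}).
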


For the theorem to be useful for a spectral interpretation of $[H_c]_{\bb,\xi}$ as in Step 3, a good control of $h$ is needed at each place of $\Q$. This is possible and built into the construction of $h$ place by place, to be discussed further in the next subsection.

The reader may have noticed that Theorem \ref{thm:intro} is very similar to the stabilized trace formula for Shimura varieties in the usual Langlands--Kottwitz method, cf.~\cite[Thm.~1]{KSZ}.
This is not a coincidence as the starting point \eqref{eq:MackCrane-simplified} resembles the counterpart for Shimura varieties, cf.~\cite[Thm.~2]{KSZ}. 
In fact, the construction of $h$ away from $p$ proceeds in the same way as in that setting, but the situation at $p$ is entirely different. The $p$-part of the picture is also the most complicated and interesting, not only in Step 2 but in all three steps. Besides various other improvements, our main novelty lies in the precise stabilization at $p$ through Kaletha's theory of rigid inner twists and their relationship with Kaletha--Kottwitz's extended pure inner twists. 
\subsection{Sketch of proof}\label{ss:sketch-of-proof}
Away from $p$ and $\infty$, we obtain $h$ from the Langlands--Shelstad transfer of $h$. At $\infty$, the function $h$ is built from the pseudo-coefficients of discrete series representations associated with $\xi$. The construction of $h$ at $p$ from $\phi_p^{(j)}$ is the most complicated and interesting, since this ``transfer'' does not fit in the usual setup for endoscopy. 

We briefly explain how $h$ is constructed at $p$. The group $J_{\bb}$ is an inner twist of a Levi subgroup $M_{\bb}$ of $G_{\Q_p}$ so there exists a map from endoscopic data of $J_{\bb}$ to those of $G_{\Q_p}$. Let $\fke_p$ be an endoscopic datum for $G_{\Q_p}$. For each endoscopic datum $\fke_{\bb}$ of $J_{\bb}$ in the fiber over $\fke_p$, if $H_{\bb}$ (resp. $H$) denote the endoscopic groups in the data $\fke_{\bb}$ (resp. $\fke_p$), then we obtain a function $\phi^{H_{\bb}}_p \in \cH(H_{\bb}(\Q_p))$ from $\phi_p^{(j)}$ by the Langlands--Shelstad transfer. Exploiting the fact that $H_{\bb}$ is also a Levi subgroup of $H$ that is the centralizer group of a Newton cocharacter given by $\bb$, the function $h_p \in \cH(H(\Q_p))$ is then constructed as a sum of \emph{ascents} (see Definition \ref{def:ascent}) of the $\phi^{H_{\bb}}_p$ to $H$. The existence of ascents satisfying the desired orbital integral and trace identities relies on a support condition on $\phi^{H_{\bb}}_p$, which is fulfilled if $j$ is sufficiently large.
(In this paper we want $j$ to be large only for this purely group-theoretic reason. Another reason, not needed here but relevant in the proof of Conjecture \ref{conj:Igusa-simplified}, is that a large $j$ corresponds to twisting by a high power of Frobenius when applying the Fujiwara--Varshavsky trace formula to Igusa varieties.)

To complete the proof, one verifies that $h$ as constructed satisfies Theorem \ref{thm:intro} via an extensive computation using the fundamental equations for endoscopic transfers and ascents (Definition \ref{def:Delta-transfer}, Definition \ref{def:ascent}). The argument proceeds as in the case for Shimura varieties (see \cite[Part 3]{KSZ}), taking the new phenomena at $p$ into account.

At a technical level, the main novelties in our project arise from the interplay between Kaletha's theory of rigid inner twists and the trace formula for Igusa varieties. We must relate on the one hand the invariant $\langle \beta_p[b], s_p \rangle$ arising in the $p$-part of the pre-stabilized trace formula of Igusa varieties and on the other hand, the invariants $\langle \inv[z^{\Rig}](\gamma, \gamma_0), \dot{s}_p \rangle$ arising in Kaletha's normalization of transfer factors for rigid inner twists. This comparison is accomplished in \S \ref{sss:endoscopy-at-p} by relating both invariants to those arising in the theory of $B(G)$ and extended pure inner twists.  The previous comparison in \cite{Shin10} was done with respect to unspecified normalizations of transfer factors (resulting in an undetermined constant for each $e_{\bb}$); the comparison was later made precise in \cite{BMaveraging} but under extra assumptions.

We are able to drop all technical assumptions on $G$ (other than quasi-split at $p$) by working with $z$-extensions, central character data, and Kottwitz parameters (as in \cite{KSZ}) as opposed to the usual Kottwitz triples. Our precise normalization of local transfer factors using rigid/extended pure inner twists implies that Theorem \ref{thm:intro} will be suitable for local representation-theoretic applications such as those of \cite{BMaveraging} in cases where Step 3 can be completed.

\subsection{A guide for the reader}

The structure of the paper should be clear from the table of contents but we make a few remarks to facilitate easier navigation. On a first reading, all details relating to $z$-extensions and central character data can be safely ignored. Section \ref{s:prelim} is preliminary and deals primarily with various notions of endoscopy, inner twists, and their interactions. The reader might therefore prefer to start at \S \ref{s:Igusa} referring back as necessary. The goal of \S \ref{s:Igusa} is to state the pre-stabilized trace formula for the cohomology of Igusa varieties (Conjecture \ref{conj:Igusa-general}) and formulate the required geometric and group-theoretic setup. 

The stabilization of this formula is carried out in \S \ref{s:stabilization} using the preliminaries in \S \ref{s:prelim}. The most technical part of the paper is \S \ref{ss:stab-at-p}, where the contribution to the trace formula at $p$ is stabilized (\S \ref{sss:stab-at-p}). The  subsection \S \ref{sss:comparison-trans-factors} is devoted to comparing transfer factors of rigid inner twists with certain invariants arising in the trace formula of Igusa varieties. The reader may wish to black box the main result of this subsection (Corollary \ref{transfactcomparison}) on a first read. The full stabilization is then completed in \S \ref{ss:final-steps} by combining the results at each place to deduce our main formula which is Theorem \ref{thm:stable-Igusa}.

\subsection{Notation and conventions}\label{ss:notation}

Let $F$ be a local or global field of characteristic 0, with an algebraic closure $\ol{F}$.
Given $E/F$ a Galois extension, we denote the Galois group by \gls{GammaEF}. We denote $\Gamma_{\ol{F}/F}$ by \gls{GammaF}. We let $W_F$ denote the absolute Weil group of $F$. We use the Deligne normalization of the local Artin map so that uniformizers correspond to geometric Frobenius elements.

When $G$ is a linear algebraic group over $F$, the connected component of the identity is denoted by $G^\circ$. For a field extension $E/F$, write $G_E$ for the base change of $G$ to $E$. When $F/F_0$ is a finite extension, write $\Res_{F/F_0} G$ for the Weil restriction of scalars.
When $F$ is a number field (e.g., $F=\Q$) and $v$ is a place of $F$, we abbreviate $G_{F_v}$ as $G_v$.

For $G$ a connected reductive group over $F$, let $Z_G$ and $Z(G)$ denote the center of $G$, and let $G_{\der}$ and $G_{\ad}$ denote the derived and adjoint groups of $G$. We let $G_{\scusp}$ denote the simply connected cover of $G_{\der}$. Write $G(F)_{\semis}$ (resp.~$G(F)_{\sr}$) for the subset of semi-simple (resp.~strongly regular semi-simple) elements of $G(F)$. The meaning of strong regularity is that the centralizer is a torus. For $\gamma\in G(F)_{\semis}$, we write $G_\gamma$ for the centralizer of $\gamma$ in $G$, and $I_\gamma=I^G_\gamma$ for $(G_\gamma)^\circ$. We use the notation \gls{Gamma(G(F))} (resp. \gls{SigmaG(F)})  for the set of $F$-conjugacy (resp. stable conjugacy) classes in $G(F)$. Recall that the stable conjugacy class of $\gamma \in G(F)$ is the set of $\gamma' \in G(F)$ conjugate to $\gamma$ by some $g \in G(\ov{F})$ and such that the cohomology class in $H^1(F, G_{\gamma})$ of $\sigma \mapsto g^{-1} \sigma(g)$ lies in the image of $H^1(F, I_{\gamma}) \to H^1(F, G_{\gamma})$. An inner twist of $G$ consists of a triple $(G', \psi, z)$ such that $\psi: G_{\ov{F}} \to G'_{\ov{F}}$ is an isomorphism and $z \in Z^1(\Gamma_F, G_{\ad})$ equals $\sigma \mapsto \psi^{-1} \circ \sigma(\psi)$. 

We denote the Langlands dual group of $G$ by $\hat{G}$. As in \cite[\S1.2]{KottwitzShelstad} the dual group $\hat{G}$ always comes equipped with a $\Gamma_F$-pinning $(\hat{T}, \hat{B}, \{X_{\alpha}\})$, an action $W_F \to \Aut(\hat{G})$ that factors through a finite quotient of $W_F$, and a $\Gamma_F$-equivariant bijection between the resulting based root datum of $\hat G$ and the dual of a based root datum for $G$ (coming from an inner twist to a quasi-split form equipped with an $F$-pinning, which induces a pinning for $G$). This information for $\hat G$ will be implicit whenever dual groups appear in this paper. The $W_F$-action is used to form the $L$-group $^L G:=\hat G \rtimes W_F$.
We let $\Out(\hat{G})$ denote the quotient of $\Aut(\hat{G})$ modulo inner automorphisms. The action of $W_F$ induces a map $\gls{rhoG}: W_F \to \Out(\hat{G})$. For a maximal torus $T$ in $G$, write $R(G,T)$ for the set of roots of $T_{\ol F}$ in $G_{\ol F}$.

Assume further that $F$ is a local field. 
Write $|\cdot|:F^{\times} \ra \C^{\times}$ for the absolute value such that every uniformizer maps to the inverse cardinality of the residue field of $F$. When $M$ is an $F$-rational Levi subgroup of $G$, and $\gamma\in M(F)_{\semis}$, define the Weyl discriminant
$D^G_M(\gamma):=\mathrm{det}\big(1- \mathrm{ad}(\gamma)|_{\mathrm{Lie}(G) /\mathrm{Lie}(M)}\big)$.

Let $S$ be a finite set of places of $\Q$. We write $\A^S$ for the ring of ad\`eles away from $S$, i.e., the restricted product $\prod'_{v\notin S}\Q_v$.
We write $\ol \A^S:=\A^S\otimes_{\Q} \Qbar$.

When $A$ is a set or a group, 
$\mathbf{1}_A$ means the characteristic function on $A$ or the trivial character on $A$ (if $A$ is a group); the context will make it clear.

In this paper we commonly abuse notation by confusing an isomorphism class of objects with a representative of the isomorphism class, and likewise for conjugacy classes etc. Implicit in this abuse is that the assertions or computations we make about them do not depend on the choice of the representative. For instance, when computing the orbital integral at $\gamma\in G(F)_{\semis}$ with $F$ a local field, the value is well defined whether $\gamma$ is a conjugacy class or an arbitrary representative thereof. However, when it comes to endoscopic data, we do choose a set of representatives for isomorphism classes in order to avoid confusion stemming from different notions of endoscopic data and isomorphisms between them.

\subsection*{Acknowledgments}

We are grateful to Alex Youcis for numerous comments and suggestions to improve the paper.
We thank Paul Hamacher and Wansu Kim for comments on \S3.1 of this paper.
We also thank the anonymous referee for sending us invaluable suggestions and corrections.
S.W.S.~was partially supported by NSF grant DMS-1802039/2101688, NSF RTG grant DMS-1646385, and a Miller Professorship. A.B.M.~was partially supported by NSF grant DMS-1840234.

\section{Preliminaries}\label{s:prelim}

Throughout this section, $F$ is a local or global field, and $G$ is a connected reductive group over~$F$. The characteristic of $F$ is always 0, and we will not repeat this condition every time.

\subsection{Rigid inner twists}

In this subsection, we recall some facts from the theory of local and global rigid inner twists as developed in 
\cite{KalethaLocalRigid,KalethaGlobalRigid, KalethaTaibi}.

\subsubsection{The local gerbe}
We first describe the local case. Let $F$ be a local field of characteristic~$0$, let $E/F$ be a finite Galois extension, and let $N$ be a positive integer. We then define  a finite $\Gamma_{E/F}$-module
\begin{equation*}
    \gls{MrigEN} := (\Z/ N \Z)[ \Gamma_{E/F} ]_0,
\end{equation*}
where the $0$-subscript denotes the elements whose coefficients sum to $0$. Let \gls{uEFN} denote the finite multiplicative group with character group $M^{\Rig}_{E,N}$ and let \gls{u} denote the pro-finite multiplicative group equal to the inverse limit over $E$ and $N$ of the $u_{E/F, N}$ where the transition maps are induced by the norm.
We have canonical isomorphisms
\begin{align*}
    H^1(F, u) & = 0,\\
    H^2(F, u) & = \begin{cases}
    \Z/2\Z, & F = \R, \\
    \widehat{\Z}, & F = \text{ non-archimedean. }
    \end{cases}
\end{align*}
Choose a cocycle representing the class of $-1$ in $H^2(F, u)$. This cocycle gives rise to a pro-Galois gerbe $\cE^{\Rig}_F$:
\begin{equation*}
    1 \to u(\ol{F}) \to \gls{cErig} \to \Gamma_F \to 1.
\end{equation*}

Now let $G$ be a connected reductive group over $F$ and let $Z \to G$ denote the data of a multiplicative group $Z$ and an embedding $Z \hookrightarrow Z(G)$. Let $Z^1(u \to \cE^{\Rig}_F, Z \to G)$ denote the set of continuous cocycles of $\cE^{\Rig}_F$ valued in $G(\ol{F})$ whose restriction to $u(\ol{F})$ factors through $Z(\ol{F})$ and corresponds to an algebraic homomorphism $u \to Z$. Define $H^1(u \to \cE^{\Rig}_F, Z \to G)$ to be the quotient of $Z^1(u \to \cE^{\Rig}_F, Z \to G)$ by the action of $G(\ol{F})$ as in the normal definition of non-abelian group cohomology. Since $H^1(F, u) = 0$, the only automorphisms of $\cE^{\Rig}_F$ are inner and hence the set $H^1(u \to \cE^{\Rig}_F, Z \to G)$ is independent, up to canonical isomorphism, of the choice of cocycle representing the class of $-1$ in $H^2(F, u)$.

In this paper, we will work primarily in the case where $Z=Z(G)$. This cohomology set admits the following Tate--Nakayama morphism. We define $Z_n$ to be the pre-image in $Z(G)$ of $(Z(G)/Z(G_{\der}))[n]$ and set $\gls{Gn} := G/Z_n$.  Note that $Z(G_1) = Z(G)/Z(G_{\der})$ is a torus and $Z(G_n) = Z(G_1)/Z(G_1)[n]$. Moreover, we have $G_n = G_{\ad} \times Z(G_n)$. 

We then set $\gls{hatbarG} := \varprojlim \widehat{G_n}$ and define \gls{ZhatG+} to be the pre-image of $Z(\hat{G})^{\Gamma_F}$ under the natural map $\hat{\bar{G}} \to \hat{G}$. We have the formula
\begin{equation*}
    \hat{\bar{G}} = \hat{G}_{\scusp} \times \hat{C_{\infty}},
\end{equation*}
where $\hat{C_{\infty}} = \varprojlim\limits_n \hat{Z(G_n)}$. We can identify $\hat{Z(G_n)}$ with $Z(\hat{G})^{\circ}$. The map $\hat{Z(G_m)} \to \hat{Z(G_n)}$ for $n \mid m$ becomes the $\frac{m}{n}$th power map under this identification.

\begin{prop}[{\cite[Equation (3.12)]{KalethaRigidvsIsoc}}]{\label{TateNakayama}}
We have a canonical morphism 
\begin{equation*}
    \gls{TN}: H^1(u \to \cE^{\Rig}_F, Z(\hat{G}) \to G) \to \pi_0(Z(\hat{\bar{G}})^+)^D.
\end{equation*}
\end{prop}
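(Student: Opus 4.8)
The result is taken from \cite{KalethaRigidvsIsoc} (building on \cite{KalethaLocalRigid}), and the morphism $\TN$ is assembled by a dévissage from the case of tori, exploiting that $\cE^{\Rig}_F$ was built from a cocycle representing $-1\in H^2(F,u)$ precisely so that the $H^1$ of a multiplicative group valued in $\cE^{\Rig}_F$ computes a Tate-twisted form of ordinary Galois cohomology, amenable to local duality. \emph{First, reduce to finite level.} A cocycle in $Z^1(u\to\cE^{\Rig}_F, Z(G)\to G)$ restricts on $u(\ol F)$ to an algebraic homomorphism $u\to Z(G)$; since $u$ is profinite and $Z(G)$ is of finite type, this homomorphism factors through some $u_{E/F,N}$ with image in $Z_n$ for suitable $n$ (note $Z(G)[m]\subseteq Z_m$, and each $Z_n$ is finite since $Z_1=Z(G_{\der})$ is). Hence
\[
H^1(u\to\cE^{\Rig}_F, Z(G)\to G)=\varinjlim_{(E,N)} H^1\bigl(u_{E/F,N}\to\cE^{\Rig}_F,\ Z_n\to G\bigr),\qquad n=n(E,N),
\]
while dually $\hat{\bar{G}}=\varprojlim_n\widehat{G_n}$ gives $\pi_0(Z(\hat{\bar{G}})^+)^D=\varinjlim_n\pi_0(Z(\widehat{G_n})^+)^D$. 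It therefore suffices to construct compatible morphisms at each finite level and pass to the colimit.

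\emph{The torus case.} For $G=T$ a torus and $Z\subseteq T$ finite, gerbe cohomology is abelian, so $H^1(u\to\cE^{\Rig}_F, Z\to T)$ can be computed from the long exact sequence attached to $1\to Z\to T\to T/Z\to 1$ once one knows $H^1(u\to\cE^{\Rig}_F, Z\to Z)$. The latter is governed by the defining extension class: a lift of a homomorphism $u\to Z$ to a cocycle on $\cE^{\Rig}_F$ exists exactly when the pushforward of $-1\in H^2(F,u)$ vanishes in $H^2(F,Z)$, so $H^1(u\to\cE^{\Rig}_F, Z\to Z)$ is an extension of $H^1(F,Z)$ by a subgroup of $\Hom_F(u,Z)$; feeding this into local Tate--Nakayama duality for $Z$ and for $T$ yields a canonical isomorphism $H^1(u\to\cE^{\Rig}_F, Z\to T)\cong\pi_0(Z(\hat{\bar{T}})^+)^D$. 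This is the technical core; the archimedean place needs separate care because $H^2(\R,u)=\Z/2\Z$ rather than $\widehat{\Z}$.

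\emph{General $G$, and the target.} Choose a $z$-extension $1\to C\to\tilde G\to G\to 1$ with $\tilde G_{\der}$ simply connected and $C$ an induced torus, and set $\tilde T=\tilde G/\tilde G_{\der}$. Since $\tilde G_{\der}$ is simply connected, $H^1(u\to\cE^{\Rig}_F,\,\cdot\,\to\tilde G_{\der})$ vanishes for $F$ non-archimedean and is under control for $F=\R$; so the long exact sequence of $1\to\tilde G_{\der}\to\tilde G\to\tilde T\to 1$, together with the torus case applied to $\tilde T$ and $C$, reduces the computation for $G$ to tori. One then matches the result with $\pi_0(Z(\hat{\bar{G}})^+)^D$ via the decompositions $G_n=G_{\ad}\times Z(G_n)$ and $\hat{\bar{G}}=\hat{G}_{\scusp}\times\hat{C_{\infty}}$, the identification $\widehat{Z(G_n)}\cong Z(\hat G)^{\circ}$, and the description of $Z(\hat{\bar{G}})^+$ as the preimage of $Z(\hat G)^{\Gamma_F}$, and finally checks independence of the chosen $z$-extension.

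The main obstacle is the torus case: showing that $\cE^{\Rig}_F$ carries \emph{exactly} the Tate twist needed to turn the gerbe $H^1$ of a finite multiplicative group into the Pontryagin dual of the component group of $Z(\hat{\bar{T}})^+$. Once that is in place, the general case is largely bookkeeping, the delicate points being the non-abelian long exact sequences, the behaviour at $F=\R$, and maintaining canonicity and functoriality in $G$ — the functoriality being what makes $\TN$ usable for normalizing transfer factors elsewhere in the paper.
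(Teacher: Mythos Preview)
The paper does not supply its own proof of this proposition; it is stated purely as a citation of \cite[Equation (3.12)]{KalethaRigidvsIsoc}, so there is nothing to compare against on the paper's side.

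Your sketch is a reasonable outline of a construction, but it diverges from Kaletha's actual argument in one respect worth flagging. For the passage from tori to general $G$, Kaletha does not go through a $z$-extension $\tilde G$ and the quotient torus $\tilde G/\tilde G_{\der}$; instead he uses that every class in $H^1(u\to\cE^{\Rig}_F, Z(G)\to G)$ lifts to $H^1(u\to\cE^{\Rig}_F, Z(G)\to S)$ for a fundamental maximal torus $S\subset G$ (see \cite[Cor.~3.7]{KalethaLocalRigid}, and compare the use of this surjectivity in the proof of Lemma~\ref{rigcocycletwistlem} in the present paper). The map $\TN$ is then the composite of the torus-level isomorphism for $S$ with the projection $\pi_0(\hat{\bar S}^+)^D\to\pi_0(Z(\hat{\bar G})^+)^D$, and one checks independence of the choice of $S$. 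Your $z$-extension route could likely be made to work, but the fundamental-torus approach is more direct and is what is actually invoked elsewhere in the paper.
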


\subsubsection{The global gerbe}
We now describe the case where $F$ is a global field of characteristic $0$. Let $V$ be the set of places of $F$ and pick a set of lifts $\dot{V}$ of $V$ to $\ol{F}$ such that $\bigcup_{v \in V} \Gamma_{F_v}$ is dense in $\Gamma_F$, where the chosen lift of $v$ determines the embedding $\Gamma_{F_v}\hookrightarrow \Gamma_F$. As in \cite[Lem.~3.3.2]{KalethaTaibi}, we can choose an increasing sequence $(S_i)_{i \geq 0}$ of finite subsets of $V$ and an exhaustive tower of finite Galois extensions $(E_i)_{i \geq 0}$ of $F$ such that the sequence $(E_i, S_i, \dot{S_i})_{i \geq 0}$ satisfies \cite[Conditions 3.3.1]{KalethaGlobalRigid}, where $\dot{S_i}$ is the intersection of the set $S_{E_i}$ of places of $E_i$ lying over $S_i$ and the image of $\dot{V}$ in $V_{E_i}$. We then define the finite $\Gamma_{E_i/F}$-module \gls{MrigES} consisting of the subgroup of  $(\Z/ [E_i:F]\Z)[\Gamma_{E_i/F} \times S_{E_i}]$ containing the elements $\sum_{w,v} a_{w,v}[(w,v)]$ such that $\sum_{w'} a_{w',v} = \sum_{v'} a_{w,v'}=0$ for all $(w,v) \in \Gamma_{E_i/F} \times S_{E_i}$ and such that $a_{w,v}=0$ unless $w^{-1}(v) \in \dot{S_i}$.

Let \gls{PrigES} be the finite multiplicative group over $F$ with character group equal to $M^{\Rig}_{E_i, \dot{S}_i}$ and let $\gls{PrigV} := \varprojlim P^{\Rig}_{E_i, \dot{S}_i}$. We have  $H^1(F, P^{\Rig}_{\dot{V}})=0$ from \cite[\S 3.4]{KalethaGlobalRigid}. Further, Kaletha (\cite[Def.~3.5.4]{KalethaGlobalRigid}) constructs a canonical class $\xi_{\dot{V}} \in H^2(F, P^{\Rig}_{\dot{V}})$ that, by choosing a cocycle lift of this class, gives a Galois gerbe
\begin{equation*}
    1 \to P^{\Rig}_{\dot{V}}(\ol{F}) \to \gls{cErigdotV} \to \Gamma_F \to 1.
\end{equation*}
The class $\xi_{\dot{V}}$ does not depend on the sequence $(E_i, S_i, \dot{S_i})$ as long as two sequences yield the same $\dot{V}$. However, it does depend on $\dot{V}$ as described in \cite[Rem.~3.8.5]{KalethaTaibi}. By virtue of the vanishing of $H^1(F, P^{\Rig}_{\dot{V}})$, the cohomology set $H^1(P^{\Rig}_{\dot{V}} \to \cE^{\Rig}_{\dot{V}}, Z \to G)$ is independent, up to canonical isomorphism, of the choice of cocycle lift of $\xi_{\dot{V}}$.

As in the local case, there is a Tate--Nakayama morphism
\begin{equation*}
   \TN:  H^1(P^{\Rig}_{\dot{V}} \to \cE^{\Rig}_{\dot{V}}, Z(G) \to G) \to \ol{Y}^{\Rig}_{\dot{V}},
\end{equation*}
where $\ol{Y}^{\Rig}_{\dot{V}}$ is a limit of the linear algebraic objects $\ol{Y}[V_{\ol{F}}, \dot{V}]_{0,+, \tor}(Z_n \to G)$ described in \cite[\S\S3.7--3.8]{KalethaGlobalRigid}.

\subsubsection{Localization}{\label{sss:localization}}

Let $F$ be a number field. For each place $v \in \dot{V}$, Kaletha \cite[\S3.6]{KalethaGlobalRigid} constructs a localization map:
\begin{equation*}
    \Loc_v : H^1(P^{\Rig}_{\dot{V}} \to \cE^{\Rig}_{\dot{V}}, Z(G) \to G) \to H^1(u_v \to \cE^{\Rig}_v, Z(G) \to G),
\end{equation*}
which is compatible with the Tate--Nakayama morphisms in that the following diagram commutes (\cite[Thm.~3.8.1]{KalethaGlobalRigid}):
\begin{equation*}
\begin{tikzcd}
H^1(P^{\Rig}_{\dot{V}} \to \cE^{\Rig}_{\dot{V}}, Z(G) \to G) \arrow[r, "\oplus_v \Loc_v"] \arrow[d, "\TN"] & \bigoplus\limits_{v \in \dot{V}} H^1(u_v \to \cE^{\Rig}_v, Z(G) \to G) \arrow[d, "\oplus_v \TN_v"]  \\
 \ol{Y}^{\Rig}_{\dot{V}} \arrow[r]  & \bigoplus\limits_{v \in \dot{V}} \pi_0(Z(\hat{\bar{G}})^+),
\end{tikzcd}    
\end{equation*}
where the direct sum in the upper right corner denotes the subset of the direct product of pointed sets consisting of families of elements that are trivial at almost every $v$.

\subsubsection{Local rigid inner twists}
Let $F$ be a local field of characteristic $0$. Given a cocycle $z \in H^1(u \to \cE^{\Rig}_F, Z \to G)$, we can post-compose with the map $G \to G_{\ad}$ to get a cocycle $z_{\ad}: \cE^{\Rig}_F \to G_{\ad}$. This cocycle is trivial on $u$ and hence factors to give a class of $Z^1(F, G_{\ad})$. We have an entirely analogous map in the case that $F$ is a number field.

\begin{lem}{\label{H1surjectionlem}}
\begin{itemize}
\item If $F$ is local, the above map induces a surjection
\begin{equation*}
    H^1(u \to \cE^{\Rig}_F, Z(G) \to G) \twoheadrightarrow H^1(F, G_{\ad}).
\end{equation*}
\item If $F$ is global, the above map induces a surjection
\begin{equation*}
    H^1(P^{\Rig}_{\dot{V}} \to \cE^{\Rig}_{\dot{V}}, Z(G) \to G) \twoheadrightarrow H^1(F, G_{\ad}). 
\end{equation*}
\end{itemize}
\end{lem}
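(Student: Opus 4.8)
The plan is to exhibit the surjection by pulling back an exact sequence of pointed sets from a known one, using the fact (recorded above) that $H^1(F,u)=0$ in the local case and $H^1(F,P^{\Rig}_{\dot V})=0$ in the global case. First I would treat the local case. The map $Z(G)\to Z(G)$ and the embedding $Z(G)\hookrightarrow G$ fit into the short exact sequence of $F$-groups $1\to Z(G)\to G\to G_{\ad}\to 1$, and more to the point there is a short exact sequence of (pro-)gerbe-type objects relating $Z^1(u\to\cE^{\Rig}_F, Z(G)\to G)$, $Z^1(u\to\cE^{\Rig}_F, Z(G)\to Z(G))$, and $Z^1(\Gamma_F, G_{\ad})$: a cocycle of $\cE^{\Rig}_F$ valued in $G(\ol F)$ whose restriction to $u$ lands in $Z(G)$ becomes, after composition with $G\to G_{\ad}$, a cocycle that is trivial on $u(\ol F)$ (since $Z(G)$ maps to $1$ in $G_{\ad}$) and hence descends to an honest Galois $1$-cocycle in $Z^1(\Gamma_F, G_{\ad})$. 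So the map in the statement is well-defined on the level of cohomology, as already noted in \S\ref{sss:localization} preceding the lemma; what remains is surjectivity.

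For surjectivity I would argue as follows. Fix a class $\bar z\in H^1(F,G_{\ad})$ represented by a cocycle $z_{\ad}\in Z^1(\Gamma_F,G_{\ad})$. Inflate $z_{\ad}$ along $\cE^{\Rig}_F\to\Gamma_F$ to a cocycle of $\cE^{\Rig}_F$ valued in $G_{\ad}(\ol F)$ that is trivial on $u(\ol F)$. The obstruction to lifting this to a cocycle of $\cE^{\Rig}_F$ valued in $G(\ol F)$ (whose restriction to $u$ factors algebraically through $Z(G)$) lives in a cohomology group built from $Z(G)$; the key point is that the part of the obstruction that could force the $u$-restriction to be nontrivial or non-algebraic is governed by $H^*(F,u)$-type data, and $H^1(F,u)=0$ kills the relevant piece. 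Concretely, I would run the standard twisting/lifting argument for the central extension $1\to Z(G)\to G\to G_{\ad}\to 1$ pulled back over the gerbe: a set-theoretic lift of $z_{\ad}$ to $G(\ol F)$ gives a $2$-cocycle obstruction with values in $Z(G)(\ol F)$; comparing with the gerbe-cocycle defining $\cE^{\Rig}_F$, one adjusts the lift (using freedom coming from $H^1(F,u)=0$, which makes all automorphisms of $\cE^{\Rig}_F$ inner, so the choice of gerbe representative does not obstruct anything) so that the restriction to $u(\ol F)$ factors through an algebraic homomorphism $u\to Z(G)$. This produces $z\in Z^1(u\to\cE^{\Rig}_F, Z(G)\to G)$ with $z_{\ad}$ as its image, giving surjectivity. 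The global case is formally identical, replacing $u$ by $P^{\Rig}_{\dot V}$, $\cE^{\Rig}_F$ by $\cE^{\Rig}_{\dot V}$, and invoking $H^1(F,P^{\Rig}_{\dot V})=0$ from \cite[\S3.4]{KalethaGlobalRigid} in place of $H^1(F,u)=0$; alternatively, one deduces the global surjectivity from the local one together with the localization maps $\Loc_v$ and the compatibility diagram of \S\ref{sss:localization}, using that a global adjoint class is determined by its localizations (and that $H^1(F,G_{\ad})\to\bigoplus_v H^1(F_v,G_{\ad})$ behaves well for adjoint groups), though the direct gerbe-theoretic argument is cleaner.

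I expect the main obstacle to be the bookkeeping in the lifting step: one must check that the $2$-cocycle obstruction to lifting $z_{\ad}$ from $G_{\ad}$ to $G$ across the gerbe can be trivialized \emph{while simultaneously} arranging that the restriction to $u(\ol F)$ (resp. $P^{\Rig}_{\dot V}(\ol F)$) is \emph{algebraic} and valued in $Z(G)$, not merely continuous. This is exactly where the structure of $\cE^{\Rig}_F$ as built from a cocycle representing $-1\in H^2(F,u)$ is used, together with the vanishing $H^1(F,u)=0$; I would phrase this cleanly by noting that the relevant obstruction and ambiguity groups are computed by the long exact sequence associated to $1\to Z(G)\to G\to G_{\ad}\to 1$ in $\cE^{\Rig}_F$-hypercohomology, whose relevant terms vanish or are controlled precisely by the cited vanishing results. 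In fact, since this surjectivity is essentially part of the formalism set up by Kaletha, the cleanest route may simply be to cite \cite[\S3]{KalethaLocalRigid} (local) and \cite[\S3]{KalethaGlobalRigid} (global), where the cohomology set $H^1(u\to\cE^{\Rig}_F, Z\to G)$ is shown to fit into functorial exact sequences with $H^1(F,G_{\ad})$ as a quotient, and to spell out only the short deduction of surjectivity from those sequences.
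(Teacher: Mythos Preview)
Your final suggestion—to cite Kaletha directly—is exactly what the paper does: the entire proof is ``\cite[Cor.~3.8]{KalethaLocalRigid} and \cite[Lem.~3.6.1]{KalethaGlobalRigid}.'' So on that score you land in the same place.

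However, the longer lifting argument you sketch before that has a conceptual gap worth flagging. You attribute the surjectivity to the vanishing $H^1(F,u)=0$ (resp.\ $H^1(F,P^{\Rig}_{\dot V})=0$), but that vanishing is what makes the cohomology set well-defined up to canonical isomorphism (it forces all automorphisms of the gerbe to be inner); it is not the engine of surjectivity. The obstruction to lifting $z_{\ad}\in Z^1(\Gamma_F,G_{\ad})$ to a cocycle in $G$ is a class in $H^2(\Gamma_F,Z(G))$, and for a general $z_{\ad}$ this class is genuinely nonzero—no amount of $H^1$-vanishing kills it. What actually happens is that one absorbs this obstruction by choosing an algebraic homomorphism $\phi:u\to Z(G)$ so that $\phi_*$ sends the defining class of the gerbe in $H^2(F,u)$ to the obstruction class in $H^2(F,Z(G))$; the existence of such $\phi$ is a property of $u$ (and of the canonical class), not a consequence of $H^1(F,u)=0$. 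Kaletha's argument in \cite[Cor.~3.8]{KalethaLocalRigid} in fact proceeds via elliptic maximal tori and the Tate--Nakayama description rather than via this raw obstruction calculus, so if you want to expand the citation into an argument, that is the route to follow.
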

\begin{proof}
\cite[Cor.~3.8]{KalethaLocalRigid} and \cite[Lem.~3.6.1]{KalethaGlobalRigid}.
\end{proof}
\begin{defn}\label{def:local-rigid-inner-twist}
For $F$ a local field of characteristic $0$ and $G$ a fixed connected reductive group, a local \emph{rigid inner twist} of $G$ is a triple $(G', \psi, z)$ consisting of 
\begin{itemize}
\item a connected reductive group $G'$ defined over $F$,
\item an isomorphism $\psi: G_{\ol{F}} \to G'_{\ol{F}}$, and
\item a cocycle $z \in Z^1(u \to \cE^{\Rig}_F, Z(G) \to G)$ such that for each $\sigma \in \Gamma_F$, we have
\begin{equation*}
    \psi^{-1} \circ \sigma(\psi) = \Int(z_{\ad}(\sigma)),
\end{equation*}
where $z_{\ad}$ is the projection of $z$ to $G_{\ad}$.
\end{itemize}

Suppose we have  rigid inner twists $(G_1, \psi_1, z_1)$ and $(G_2, \psi_2, z_2)$. Then an isomorphism from $(G_1, \psi_1, z_1)$ to $(G_2, \psi_2, z_2)$ is a pair $(f,g)$ consisting of
\begin{itemize}
    \item an element $g \in G(\ol{F})$ satisfying $z_2(w) = g z_1(w)w(g^{-1})$ for each $w \in \cE^{\Rig}_F$, and
    \item an isomorphism $f: G_1 \to G_2$ defined over $F$ such that the following diagram commutes
    \begin{equation*}
        \begin{tikzcd}
            G_{\ol F} \arrow[r, "\psi_1"] \arrow[d, "\Int(g)", swap] & G_{1, \ol F} \arrow[d, "f"] \\
            G_{\ol F} \arrow[r, "\psi_2"] & G_{2, \ol F}.
        \end{tikzcd}
    \end{equation*}
\end{itemize}
\end{defn}
\begin{rem}
A naive analogue of Definition \ref{def:local-rigid-inner-twist} gives a notion of a global rigid inner twist for a connected reductive group over a number field, which yields a local rigid inner twist at each place. However, this is not the correct notion of global rigid inner twist for our purposes. In particular, the family of local rigid inner twists one gets from the above process need not have the necessary coherence properties to give an adequate normalization of local transfer factors. Instead, we first pass to $G^*_{\scusp}$, see ~\S\ref{ss:global transfer factors construction} below.
\end{rem}

We need the following compatibilities between inner twists. 

\begin{lem}{\label{cocyclelem}}
Let $G_1, G_2, G_3$ be reductive groups defined over a field $F$. Suppose that we have a commutative diagram of isomorphisms $\psi_1, \psi_2, \psi_3$ defined over $\ov{F}$:
\begin{equation*}
    \begin{tikzcd}
    G_{1, \ol F} \arrow[r, "\psi_1"] \arrow[rd, "\psi_3"'] & \arrow[d, "\psi_2"] G_{2, \ol F} \\
        & G_{3, \ol F},
    \end{tikzcd}
\end{equation*}
such that $w \mapsto \psi^{-1}_i \circ w(\psi_i)$ induces an element of $Z^1(F, G_{1, \ad})$ for $i=1,3$ and an element of $Z^1(F, G_{2, \ad})$ for $i=2$.
Let $z_1, z_3 \in Z^1(F, G_{1, \ad})$ and $z_2 \in Z^1(F, G_{2, \ad})$ be the $1$-cocycles defined by $\Int(z_{i}(w)) = \psi^{-1}_i \circ w(\psi_i)$. Then we have 
\begin{equation*}
    \psi^{-1}_1(z_2)z_1 = z_3.
\end{equation*}
\end{lem}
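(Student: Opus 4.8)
The plan is to unwind all the definitions and verify the cocycle identity $\psi_1^{-1}(z_2)z_1 = z_3$ directly, where by $\psi_1^{-1}(z_2)$ we mean the cocycle $w \mapsto \psi_1^{-1}(z_2(w))$ — here $z_2$ takes values in $G_{2,\ad}(\ol F)$ and $\psi_1$ induces an isomorphism $G_{1,\ad,\ol F} \to G_{2,\ad,\ol F}$, so $\psi_1^{-1}(z_2(w)) \in G_{1,\ad}(\ol F)$, and the product on the left is taken in $G_{1,\ad}(\ol F)$. Since all three cocycles live in $Z^1(F, G_{i,\ad})$ and the $G_{i,\ad}$ are the adjoint groups, it suffices to check the identity after applying $\Int(-)$, because $\Int$ is injective on an adjoint group (its center is trivial); that is, it is enough to show $\Int(\psi_1^{-1}(z_2(w))) \circ \Int(z_1(w)) = \Int(z_3(w))$ as automorphisms of $G_{1,\ol F}$ for every $w$.

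First I would record that $\Int(z_1(w)) = \psi_1^{-1}\circ w(\psi_1)$, $\Int(z_3(w)) = \psi_3^{-1}\circ w(\psi_3)$, and $\Int(z_2(w)) = \psi_2^{-1}\circ w(\psi_2)$ as automorphisms (of $G_{1,\ol F}$, $G_{1,\ol F}$, and $G_{2,\ol F}$ respectively). Next, since $\Int$ is equivariant for transport of structure, $\Int(\psi_1^{-1}(z_2(w))) = \psi_1^{-1}\circ \Int(z_2(w)) \circ \psi_1 = \psi_1^{-1}\circ \psi_2^{-1}\circ w(\psi_2)\circ \psi_1$ as an automorphism of $G_{1,\ol F}$. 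Then I would compose:
\[
\Int(\psi_1^{-1}(z_2(w)))\circ \Int(z_1(w)) = \psi_1^{-1}\circ\psi_2^{-1}\circ w(\psi_2)\circ\psi_1\circ\psi_1^{-1}\circ w(\psi_1) = \psi_1^{-1}\circ\psi_2^{-1}\circ w(\psi_2)\circ w(\psi_1).
\]
Now apply the hypothesis that the diagram commutes, i.e.\ $\psi_3 = \psi_2\circ\psi_1$ (and hence $\psi_3^{-1} = \psi_1^{-1}\circ\psi_2^{-1}$ and $w(\psi_3) = w(\psi_2)\circ w(\psi_1)$, since applying $w$ is a functorial operation on morphisms). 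Substituting, the right-hand side becomes $\psi_3^{-1}\circ w(\psi_3) = \Int(z_3(w))$, which is what we wanted.

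There is essentially no hard part here — the only things to be slightly careful about are (i) the direction/order conventions for the cocycle relation and for composition of the $\psi_i$ (making sure $\psi_3 = \psi_2\circ\psi_1$ is the correct reading of the triangle, not $\psi_1\circ\psi_2$), and (ii) the fact that $w\mapsto w(\psi_i)$ is compatible with composition of morphisms, which is immediate from functoriality of the Galois action (or the action of $\cE^{\Rig}_F$) on $\ol F$-morphisms. The reduction to checking on $\Int$ uses only that $G_{1,\ad}$ has trivial center, so $G_{1,\ad}(\ol F)\hookrightarrow \Aut(G_{1,\ol F})$; this is the one place where the hypothesis that we are working with adjoint groups is used. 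I would present the argument in the compressed ``apply $\Int$, chase the diagram, conclude'' form above, since spelling out more would be routine.
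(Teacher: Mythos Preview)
Your proposal is correct and follows essentially the same approach as the paper: both arguments compute $\psi_3^{-1}\circ w(\psi_3)$ by substituting $\psi_3=\psi_2\circ\psi_1$, inserting $\psi_1\circ\psi_1^{-1}$ to extract $\Int(\psi_1^{-1}(z_2(w)))\circ\Int(z_1(w))$, and conclude by injectivity of $\Int$ on the adjoint group. The only difference is cosmetic—you run the chain of equalities from the product side toward $\Int(z_3(w))$, while the paper runs it in the opposite direction.
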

\begin{proof}
Indeed, for $w \in \Gamma_{F}$, we have
\begin{equation}{\label{eq: triplecocycles}}
    \psi^{-1}_3 \circ w(\psi_3) = \psi^{-1}_1 \circ \psi^{-1}_2 \circ w( \psi_2 \circ \psi_1 ) = \psi^{-1}_1 \circ \Int(z_2(w)) \circ w(\psi_1) = \Int(\psi^{-1}_1(z_2(w))z_1(w)).
\end{equation}
\end{proof}
\begin{lem}{\label{rigcocycletwistlem}}
Let $G_1$ and $G_2$ be reductive groups defined over a non-archimedean field $F$. Suppose that we have a $\ov{F}$-isomorphism
\begin{equation*}
    G_{1, \ol F} \xrightarrow{\psi} G_{2, \ol F},
\end{equation*}
and $1$-cocycles $z_i \in Z^1(u \to \cE^{\Rig}_{F}, Z(G_i) \to G_i)$ for $i=1, 2$ respectively such that $z_1|_u = z_2|_u$ and $w \mapsto \psi^{-1} \circ w(\psi) = \Int(z_1)$. Then
\begin{equation*}
    \TN(\psi^{-1}(z_2)z_1) =  \psi^{-1}(\TN(z_2))+ \TN(z_1).
\end{equation*}
\end{lem}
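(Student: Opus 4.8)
The strategy is to reduce the statement to the additivity of the Tate--Nakayama map on the relevant abelian-type cohomology, combined with the already-established compatibility of the cocycle twist with the projection to $G_{\ad}$. First I would observe that the hypothesis $z_1|_u = z_2|_u$ means that $\psi$ identifies the restrictions of $z_1$ and $z_2$ to $u(\ol F)$ (both being algebraic homomorphisms into $Z(G_i)$ which $\psi$ matches up, since $\psi$ restricts to an isomorphism on centers compatible with the $u$-data). Consequently the twisted cocycle $\psi^{-1}(z_2)z_1$, when restricted to $u$, equals $(z_1|_u)^2$ — that is, it lies in the correct cohomology set $Z^1(u \to \cE^{\Rig}_F, Z(G_1) \to G_1)$ so that $\TN$ of it is defined. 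This is the point where one checks that all three terms of the asserted equation live in $\pi_0(Z(\hat{\bar G})^+)^D$ for the \emph{same} group (namely $G_1$, after transporting $z_2$ via $\psi$), so that adding them makes sense.

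Next I would use Lemma \ref{cocyclelem} (with $G_3 = G_1$, $\psi_3 = \id$, i.e.\ in the degenerate form where the twisting is by an inner cocycle on $G_1$ itself): the projection of $\psi^{-1}(z_2)z_1$ to $G_{1,\ad}$ is $\psi^{-1}(z_{2,\ad})\,z_{1,\ad}$, and this matches the inner-twist cocycle attached to the composite isomorphism. The real content is then the claim that the Tate--Nakayama isomorphism is \emph{additive} with respect to multiplication of cocycles whose restrictions to $u$ agree: if $z, z'$ are two elements of $Z^1(u \to \cE^{\Rig}_F, Z(G)\to G)$ with $z|_u = z'|_u$, then (under suitable interpretation) $\TN(z \cdot z') = \TN(z) + \TN(z')$. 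This is essentially built into the construction of $\TN$ in \cite{KalethaLocalRigid,KalethaRigidvsIsoc}: the map factors through the abelianization $H^1_{\ab}$, i.e.\ through $H^1(u \to \cE^{\Rig}_F, Z(G) \to Z_n)$ for the torus quotients, where the group law is literally addition and $\TN$ is (a limit of) the classical Tate--Nakayama isomorphism for tori, which is a homomorphism. The compatibility with $\psi$ on the $G_1$-factor versus the $G_2$-factor is exactly the functoriality of $\TN$ in the group, applied to the isomorphism $\psi$; this is where the term $\psi^{-1}(\TN(z_2))$ comes from, $\psi$ inducing an isomorphism $\pi_0(Z(\hat{\bar G_2})^+)^D \to \pi_0(Z(\hat{\bar G_1})^+)^D$.

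I expect the main obstacle to be purely bookkeeping: making precise the sense in which ``$\psi^{-1}(z_2)z_1$'' is a well-defined class and ``$\psi^{-1}(\TN(z_2))$'' a well-defined element, since $z_2$ and $z_1$ a priori take values in different groups and the cocycle condition for a product requires the $u$-parts to be compatible (which is the hypothesis $z_1|_u = z_2|_u$, transported through $\psi$). Once the objects are correctly set up, the identity itself follows from: (a) the functoriality of $\TN$ under the isomorphism $\psi$ of rigid-inner-twist data over $\ol F$ that is defined over $\ol F$ but whose source/target cohomology is identified appropriately, and (b) the homomorphism property of $\TN$ on the abelianized cohomology, which holds because $\TN$ is assembled from Tate--Nakayama duality for the tori $Z_n$ and the norm maps, all of which are additive. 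I would therefore structure the write-up as: (1) unwind the definitions to see all three terms lie in $\pi_0(Z(\hat{\bar G_1})^+)^D$; (2) cite/recall that $\TN$ is a homomorphism on $H^1(u\to\cE^{\Rig}_F, Z(G)\to G)$ with respect to the natural group structure coming from the center; (3) invoke functoriality of $\TN$ in $G$ to handle the $\psi$-twist; (4) combine.
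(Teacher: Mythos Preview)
Your proposal has a genuine gap at both of its two key steps.

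First, step (2) asserts that $\TN$ is ``a homomorphism on $H^1(u\to\cE^{\Rig}_F, Z(G)\to G)$ with respect to the natural group structure coming from the center.'' But this cohomology set is only a pointed set, not a group: the cocycles take values in all of $G$, not in a torus, and there is no multiplication of classes. Your suggestion that $\TN$ ``factors through $H^1(u\to\cE^{\Rig}_F, Z(G)\to Z_n)$'' is also not right---$Z_n$ is a central \emph{subgroup}, not a quotient, so there is no map in that direction. The product $\psi^{-1}(z_2)z_1$ is a specific cocycle, not an instance of a group law on cohomology classes.

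Second, step (3) invokes ``functoriality of $\TN$ in $G$'' for the isomorphism $\psi$. But $\psi$ is only defined over $\ol F$, not over $F$, so functoriality does not apply: $\TN$ is functorial for $F$-rational morphisms. The expression $\psi^{-1}(\TN(z_2))$ has no a priori meaning until you produce an $F$-rational map identifying the relevant duals.

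The paper's proof addresses exactly these two issues by a single reduction. One chooses an elliptic maximal torus $S_1\subset G_1$ and, using the surjectivity of $H^1(u\to\cE^{\Rig}_F, Z(G_1)\to S_1)\to H^1(u\to\cE^{\Rig}_F, Z(G_1)\to G_1)$, replaces $z_1$ by a cohomologous $z'_1$ valued in $S_1$ and adjusts $\psi$ to $\psi'$ accordingly. The crucial computation is that $S_2:=\psi'(S_1)$ is then defined over $F$ and $\psi'|_{S_1}:S_1\to S_2$ is $F$-rational (precisely because the Galois twist $\psi'^{-1}\circ\sigma(\psi')=\Int(z'_1(w))$ acts trivially on $S_1$). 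One similarly arranges $z'_2$ to take values in $S_2$. Now both obstacles disappear: cohomology with torus coefficients is a group on which $\TN$ is genuinely additive, and $\psi'|_{S_1}$ is $F$-rational so functoriality applies. The identity is proved for $S_1$ and then pushed down to $\pi_0(Z(\hat{\bar G_1})^+)^D$.
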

\begin{proof}
Let $S_1 \subset G_1$ be a fundamental torus. Since we are working over a non-archimedean local field, this means that $S_1$ is an elliptic maximal torus. By \cite[Cor.~3.7]{KalethaLocalRigid}, we have a surjection 
$$H^1(u \to \cE^{\Rig}_{F}, Z(G_1) \to S_1) \twoheadrightarrow H^1(u \to \cE^{\Rig}_{F}, Z(G_1) \to G_1).$$
Hence, we may modify $z_1$ by a coboundary to get $z'_1$ such that $z'_1(w) \in S_1$ for each $w \in \cE^{\Rig}_{F}$. We may similarly modify $\psi$ to get $G_{1, \ol F} \xrightarrow{\psi'} G_{2, \ol F}$ such that for each $w \in \cE^{\Rig}_{F}$, we have ${\psi'}^{-1} \circ w(\psi') = \Int(z'_1(w))$.

Now we define $S_2 := \psi'(S_1)$ and note that $S_2$ is defined over $F$. Indeed, for each $\sigma \in \Gamma_{F}$, we can lift $\sigma$ to some $w \in \cE^{\Rig}_{F}$ and get for $s \in S_1(\ov{F})$,
\begin{equation*}
    \sigma(\psi'(s)) = \sigma(\psi')(\sigma(s)) = (\psi' \circ \Int(z'_1(w)) )(\sigma(s)) = \psi'(\sigma(s)).
\end{equation*}
Hence, $S_2$ is a fundamental torus of $G_2$ and we can choose $z'_2$ that is cohomologous to $z_2$ and factors through $Z^1(u \to \cE^{\Rig}_{F}, Z(G_2) \to S_2) \to Z^1(u \to \cE^{\Rig}_{F}, Z(G_2) \to G_2)$. By construction, we have ${\psi'}^{-1}(z'_2), z'_1 \in Z^1(u \to \cE^{\Rig}_{F}, Z(G_1) \to S_1)$. The Tate--Nakayama map respects the group structure on cocycles valued in $S_2$. Moreover, $\psi'|_{S_1}$ is defined over $F$ and therefore induces an isomorphism $H^1(u \to \cE^{\Rig}_{F}, Z(G_1) \to S_1) \to H^1(u \to \cE^{\Rig}_{F}, Z(G_2) \to S_2)$ compatible with the Tate--Nakayama maps. We therefore have
\begin{equation*}
    \TN_{S_1}( {\psi'}^{-1}(z'_2)z'_1) = {\psi'}^{-1}\TN_{S_2}(z'_2) + \TN_{S_1}(z'_1).
\end{equation*}
Since the equation holds in $\pi_0(\hat{\bar{S_1}}^+)^D$, it is still true upon projection to $\pi_0(Z(\hat{\bar{G_1}})^+)^D$. This proves the lemma.
\end{proof}

\subsection{Isocrystals vs local rigid inner twists}{\label{ss:isocrystals}}

In this subsection, we explain how local rigid inner twists are related to isocrystals at $p$, cf. \cite{KalethaRigidvsIsoc}.

\subsubsection{Review of the Kottwitz set $B(G)$}{\label{BGreview}}

We recall some facts about the Kottwitz set \gls{BG} for $G$ a connected reductive group over a finite extension $F$ of $\Q_p$. Let $\breve F$ denote the completion of a maximal unramified extension of $F$, equipped with the arithmetric Frobenius automorphism $\sigma$. The set $B(G)$ is defined as the quotient of $G(\breve{F})$ by the equivalence relation that $b \sim b'$ if there exists $g \in G(\breve{F})$ such that $b' = g b\sigma(g)^{-1}$.

The set $B(G)$ is determined by two important invariants. The first attaches to each $b \in G(\breve{F})$, a \emph{slope cocharacter} $\gls{nub} \in \Hom_{\breve{F}}( \D, G)$, where $\D$ is the Kottwitz pro-torus with $X^*(\D) = \Q$. This induces a \emph{slope morphism}
\begin{equation*}
    \gls{[nu]} : B(G) \to (\Hom_{\breve{F}}( \D, G)/ G(\breve{F}) )^{\langle \sigma \rangle}.
\end{equation*}

We also have the \emph{Kottwitz morphism}
\begin{equation*}
    \gls{kappa} : B(G) \to \pi_1(G)_{\Gamma_F},
\end{equation*}
where we note that $\pi_1(G)_{\Gamma_F}$ is canonically isomorphic to $X^*(Z(\hat{G})^{\Gamma_F})$.

These morphisms fit into a commutative diagram as follows, which is functorial in $G$:
\begin{equation*}
    \begin{tikzcd}
    B(G) \arrow[r, "{[\nu]}"] \arrow[d, swap, "\kappa"] & (\Hom_{\breve{F}}( \D, G)/ G(\breve{F}) )^{\langle \sigma \rangle} \arrow[d]  \\
    \pi_1(G)_{\Gamma_F} \arrow[r] & \pi_1(G)^{\Gamma_F}_{\Q},
    \end{tikzcd}
\end{equation*}
where the bottom arrow is induced by $x\mapsto \frac{1}{|\Gamma_F \cdot x|} \sum\limits_{y \in \Gamma_F \cdot x} y$ from $\pi_1(G)$ to $\pi_1(G)^{\Gamma_F}_{\Q}$.

Given $b$, we define a reductive group \gls{J} over $F$ by
\begin{equation*}
    J_b(R) = \{ g \in G(\breve{F} \otimes_F R) \mid g= b \sigma(g)b^{-1}\} = \{ g \in G(\breve{F} \otimes_F R) \mid \nu_b= g \nu_b g^{-1}\},
\end{equation*}
for any $F$-algebra $R$. When $R$ is an $\ov{\breve{F}}$-algebra, the canonical $\ov{\breve{F}}$-algebra map $ \ov{\breve{F}} \otimes R  \to R$ induces an inclusion $\iota_b: J_b(R) \to G(R)$. When $R= \ov{\breve{F}}$, this inclusion is equivariant for the standard action of $W_F$ on $J_b(\ov{\breve{F}})$ and the ``$b$-twisted'' action on $G(\ov{\breve{F}})$ in the sense of \cite[(3.3.3)]{KottwitzIsocrystal2}.

Now fix a quasi-split group $G^*$ that is isomorphic to $G$ over $\ov{F}$ and an inner twist $(G, \psi, z_{\ad})$ of $G^*$. Fix also a maximal split torus $A^*$ of $G^*$ and let $T^*$ denote the maximal torus of $G^*$ equal to the centralizer of $A^*$ in $G^*$. Fix $B^*$ a Borel subgroup of $G^*$ containing $T^*$. Let \gls{CQ} denote the closed dominant Weyl chamber in $X_*(A^*)_{\Q}$ relative to $B$.

Given $b \in G(\breve{F})$, we can take the morphism
\begin{equation*}
    \psi^{-1} \circ \nu_b: \gls{D} \to G^*,
\end{equation*}
defined over $\ov{\breve{F}}$.
As in \cite[\S4.2]{KottwitzIsocrystal2}, we can take the $G^*(\ov{\breve{F}})$ conjugacy class of $\psi^{-1} \circ \nu_b$ and, since $G^*$ is quasi-split, this gives an element of $\ov{C}_{\Q}$ which we denote by $\ov{\nu}_b$. One can check this element does not depend on $\psi$ or the choice of $b$ in its class $[b] \in B(G)$. We call the element $\ov{\nu}_b$ the \emph{Newton point} of $b$ (or $[b]$). We have thus constructed a \emph{Newton map}
\begin{equation*}
    \gls{ovnu}: B(G) \to \ov{C}_{\Q}.
\end{equation*}
The centralizer $M^*_{\ov{\nu}_b}$ of $\ov{\nu}_b$ in $G^*$ is a standard $F$-rational Levi subgroup of $G^*$ and an inner form of $J_b$. To simplify notation, we henceforth denote $M^*_{\ov{\nu}_b}$ by \gls{Mb}. 
(Beware that $M_b$ need not transfer to an $F$-rational Levi subgroup of $G$, although this will be the case when $G$ itself is quasi-split.)
Following \emph{loc.~cit.}~we can choose $\psi'$ equivalent to $\psi$ such that $\psi' \circ \ov{\nu}_b = \nu_b$ and the restriction gives an inner twist
\begin{equation}\label{eq:M*Jb}
    \psi'_{M_b}: M_{b, \ol F} \to J_{b, \ol F},
\end{equation}
using the inclusion $J_b(R) \hookrightarrow G(R)$ discussed above.

Following \cite[Def.~1.8]{RapoportZink}, for $n \in \Z_{\geq 1}$, we say an element $b \in G(\breve{F})$ is $n$-\emph{decent} if $n\nu_b$ is a cocharacter of $G_{\breve{F}}$ and 
\begin{equation*}
    b \sigma(b) \dots \sigma^{n-1}(b) = (n\nu_b)(p).
\end{equation*}
One can always choose a decent representative of a class $[b] \in B(G)$ (\cite[\S4.3]{KottwitzIsocrystal1}). If $b$ is $n$-decent then it is $m$-decent for $n|m$. When $G$ is quasi-split, one can require that $\nu_b$ is defined over $F$ (\cite[page 219]{KottwitzIsocrystal1}).

There is another description of $B(G)$ using Galois gerbes, which we now discuss. The local Kottwitz gerbe $\cE^{\Iso}_F$ is a pro-Galois gerbe
\begin{equation*}
    1 \to \D(\ol{F}) \to \gls{cEiso} \to \Gamma_F \to 1.
\end{equation*}
We can then consider the set $H^1_{\alg}(\cE^{\Iso}_F, G)$ of continuous cocycles of $\cE^{\Iso}_F$ whose restriction to $\D(\ol{F})$ is induced by an algebraic morphism $\nu: \D \to G$. We define the subset $H^1_{\bas}(\cE^{\Iso}_F, G) \subset H^1_{\alg}(\cE^{\Iso}_F, G)$ to consist of those classes where the image of $\nu$ is contained in $Z(G)$.
We have a canonical bijection
\begin{equation}{\label{eq: BGHalgbij}}
    B(G) = H^1_{\alg}(\cE^{\Iso}_F, G)  ~ ; \quad \bb \mapsto z^{\Iso}_{\bb}
\end{equation}
given in \cite[Appendix B]{KottwitzIsocrystal2} and described explicitly in \cite[\S2.3]{HansenKalethaWeinstein} for decent $b$. There is a construction of the Kottwitz morphism for $H^1_{\alg}(\cE^{\Iso}_F, G)$ (see \cite[\S 11]{KottwitzglobalB(G)}), again denoted by $\kappa$, and equation \eqref{eq: BGHalgbij} preserves the Kottwitz maps: $\kappa(\bb) = \kappa(z^{\Iso}_{\bb})$. Indeed, one can check this via the usual strategy of Kottwitz of reducing successively to the $G_{\der}=G_{\tu{sc}}$ case, torus case, and $\G_m$ case (cf. \cite[\S7.6]{KottwitzIsocrystal2}).

\begin{defn}
An \emph{extended pure inner twist} of $G$ over $F$ consists of a triple $(G', \psi, z)$, where $z \in Z^1_{\alg}(\cE^{\Iso}_F, G)$ and $\psi: G_{\ol{F}} \to G'_{\ol{F}}$ such that $\psi^{-1} \circ \sigma(\psi) = \Int(z_{\ad}(\sigma))$ for all $\sigma \in \Gamma_F$.
\end{defn}

\subsubsection{Rigid inner twists vs extended pure inner twists}\label{sss:rigid-vs-epit}

Kaletha (\cite[\S3.3]{KalethaRigidvsIsoc}) constructs a morphism of Galois gerbes
\begin{equation*}
    \begin{tikzcd}
    1 \arrow[r] & u \arrow[r]  \arrow[d, "\phi"] & \cE^{\Rig}_F \arrow[r] \arrow[d, "\tilde{\phi}"] & \Gamma_F \arrow[r] \arrow[d, equals] & 1 \\
    1 \arrow[r] & \D  \arrow[r] & \cE^{\Iso}_F \arrow[r] & \Gamma_F \arrow[r] & 1,
    \end{tikzcd}
\end{equation*}
which induces a map 
\begin{equation*}
    \tilde{\phi}^*: H^1_{\bas}(\cE^{\Iso}_F, G) \to H^1(u \to \cE^{\Rig}_F, Z(G) \to G),
\end{equation*}
which is well defined on the level of cocycles. In particular, given an extended pure inner twist $(G', \psi, z_{\Iso})$, we get a rigid inner twist $(G', \psi, \gls{zrig})$ via pullback by $\tilde{\phi}$.

\subsection{Endoscopic data}\label{ss:endoscopic-data}

We review the various forms of endoscopy we use in this paper, primarily summarizing from \cite{BMaveraging}. In this subsection, $F$ is a local or global field.

\subsubsection{Endoscopic data}
We introduce three versions of endoscopic data for $G$, necessitated by different notions of inner twist: inner twists classified by $H^1(F, G_{\ad}),$ pure inner twists classified by $H^1(F,G)$ or extended pure inner twists classified by $B(G)_{\bas}$, and rigid inner twists classified by $H^1( u \to \cE^{\Rig}_F, Z(G) \to G)$.
\begin{defn}
A \emph{standard endoscopic datum} for $G$ is a tuple $(H, \cH, s,\eta)$ which consists of
\begin{itemize}
    \item a quasi-split group $H$ over $F$,
    \item an extension $\cH$ of $W_F$ by $\hat{H}$ such that the map $W_F \to \Out(\hat{H})$ coincides with $\rho_H$,
    \item an element $s \in Z(\widehat{H})$,
    \item an $L$-homomorphism $\eta: \cH \to \LL G$,
\end{itemize}
satisfying the conditions:
\begin{enumerate}
    \item we have $\eta(\hat{H}) = Z_{\hat{G}}(\eta(s))^{\circ}$, and
    \item $\Int(s) \circ \eta = a \cdot \eta$, where $a: W_F \to Z(\hat{G})$ is a trivial (resp.~locally trivial) $1$-cocycle when $F$ is local (resp.~global).
\end{enumerate}
\end{defn}
\begin{defn}
An isomorphism of endoscopic data from $(H, \cH, s, \eta)$ to $(H', \cH', s', \eta')$ is an element $g \in \hat{G}$ such that
\begin{enumerate}
    \item we have $(\Int(g) \circ \eta)(\cH) = \eta'(\cH')$, and 
    \item $\Int(g)(\eta(s)) = \eta'(s')$ modulo $Z(\hat{G})$.
\end{enumerate}
\end{defn}
As in \cite[p.19]{KottwitzShelstad}, an element $g \in \hat{G}$ giving an automorphism of endoscopic data determines an automorphism $\alpha_g \in \Aut_F(H)$. The automorphism of endoscopic data is said to be inner if $\alpha_g\in H_{\ad}(F)$. We write $\Out_F(H) := \Aut_F(H)/H_{\ad}(F)$. 
We denote the subgroup of $\Out_F(H)$ arising from automorphisms of endoscopic data by $\Out_F(H, \cH, s, \eta)$.

\begin{defn}{\label{def: refined endoscopic datum}}
    A \emph{refined endoscopic datum} is a standard endoscopic datum $(H,\cH,s,\eta)$ satisfying the further requirement that $s \in Z(\hat{H})^{\Gamma_F}$. This implies that condition (ii) is automatically satisfied. For an isomorphism of refined endoscopic data, we require the equality $\Int(g)(\eta(s)) = \eta'(s')$ in $\eta'(Z(\hat{H})^{\Gamma})$ rather than $\eta'(Z(\hat{H}))/Z(\hat{G})$. 
\end{defn}
    We will now define a notion of rigid endoscopy after some preliminaries. Given a standard endoscopic datum $(H, \cH, s, \eta)$, we define a group $\gls{hatbarH} = \varprojlim \hat{H_n}$ where $H_n = H/Z_n$ and \gls{Zn} is the pre-image in $Z(G)$ of $(Z(G)/Z(G_{\der}))[n]$. Note that we identify $Z(G)$ with a central subgroup of $H$ via $\eta$. There is a natural map $\hat{\bar{H}} \to \hat{H}$ and we define \gls{ZhatbarH+} to be the pre-image of $Z(\hat{H})^{\Gamma_F}$ in $Z(\hat{\bar{H}})$. 
    
    We claim that $\eta$ induces a natural map $\bar{\eta}: \hat{\bar{H}} \to \hat{\bar{G}}$ such that the following diagram commutes:
    \begin{equation*}
        \begin{tikzcd}
                \hat{\bar{H}} \arrow[d] \arrow[r, "{\bar{\eta}}"] & \hat{\bar{G}} \arrow[d] \\
                \hat{H} \arrow[r, "\eta"] & \hat{G}.
        \end{tikzcd}
    \end{equation*}
    By universal property, it suffices to show we have a system of maps $\hat{H_n} \to \hat{G_n}$ for each $n$ that are compatible with $\eta$. Let $\hat{H_n}'$ be the pre-image of $\eta(\hat{H})$ under the map $\hat{G_n} \to \hat{G}$. Then $\hat{H_n}'$ and $\hat{H_n}$ are both central extensions of $\hat{H}$ and so it suffices to show they are isomorphic as central extensions. Choose maximal tori $\hat{T} \subset \hat{G}$ and $\hat{T_H} \subset \hat{H}$ such that $\eta(\hat{T_H})= \hat{T}$. Let $\hat{T_n}$ and $\hat{T_{H,n}}$ be the pre-images of $\hat{T}$ in $\hat{H_n}'$ and $\hat{T_H}$ in $\hat{H_n}$, respectively. 
    We have natural maps $X_*(\hat{T_n}) \to X_*(\hat{T})$ and $X_*(\hat{T_{H,n}}) \to X_*(\hat{T_H}) \to X_*(\hat{T})$, and it suffices to show the images of these maps coincide. This is the case since they are both equal to $\ker( X_*(\hat{T}) \to X_*(\hat{Z_n}))$, where the map $X_*(\hat{T}) \to X_*(\hat{Z_n})$ comes from $G$ via the isomorphism of the pinned root datum of $\hat{G}$ with the dual of the canonical based root datum of $G$.
    
   \begin{defn} 
    A \emph{rigid} endoscopic datum $(H, \cH, \dot s,\eta)$ over a local field $F$ is defined in the same way as a standard datum (including conditions (i) and (ii)) except the requirement that $\dot s \in Z(\hat{\bar{H}})^+$. An isomorphism of rigid endoscopic data from $(H_1, \cH_1, \dot{s}_1, \eta_1)$ to $(H_2, \cH_2, \dot{s}_2, \eta_2)$ is an element $g \in \hat{G}$ such that 
    \begin{enumerate}
        \item $(\Int(g) \circ \eta_1)(\cH_1) = \eta_2(\cH_2)$, and 
        \item the images of $\dot{s}_i$ in $\pi_0(Z(\hat{\bar{H_i}})^+)$ coincide under the isomorphism
        \begin{equation*}
            \pi_0(Z(\hat{\bar{H_1}})^+) \xrightarrow{} \pi_0(Z(\hat{\bar{H_2}})^+),
        \end{equation*}
        induced by ${\ov{\eta}_2}^{-1} \circ \Int(g) \circ \ov{\eta}_1$.
    \end{enumerate}
\end{defn}

  We denote the set of standard (resp.~refined, resp.~rigid) endoscopic data by \gls{sE(G)}, (resp.~$\sE^{\Iso}(G)$, resp.~$\sE^{\Rig}(G)$). The corresponding sets of isomorphism classes of such data are denoted by \gls{mE(G)}, $\mE^{\Iso}(G)$, and $\mE^{\Rig}(G)$.

An endoscopic datum (of any type) is said to be \emph{elliptic} if $(Z(\hat{H})^{\Gamma_F})^\circ \subset Z(\hat{G})$. This condition determines the subsets $\sE^?_{\el}(G)\subset \sE^?(G)$ and $\mE^?_{\el}(G)\subset \mE^?(G)$ for $?\in \{\emptyset,\Iso,\Rig\}$.

\subsubsection{Endoscopic data and Levi subgroups}{\label{endoscopyandLevis}}
We will need to study the relation between the endoscopy of $G$ and its Levi subgroups. Fix a minimal parabolic subgroup $P_0 \subset G$ and suppose now that $M \subset G$ is a standard Levi subgroup of $G$. Fix a Borel subgroup $B \subset P_{0,\ol F} \subset G_{\ol{F}}$. We have a Levi subgroup $\hat{M} \subset \hat{G}$ determined by the set of simple roots of $\hat{G}$ corresponding to the simple absolute coroots determining $M$. We have the following notion of endoscopy for $M$ relative to $G$.
\begin{defn}\label{def:emb-end-datum}
    An \emph{embedded endoscopic datum} for $G$ is a tuple $(H_M, \cH_M, H, \cH, s, \eta)$, where 
\begin{itemize}
    \item $(H, \cH, s, \eta)$ is a refined endoscopic datum of $G$ with a fixed $F$-pinning $(T_H, B_H, \{X_{H,\alpha}\})$ of $H$,
    \item $H_M$ is a standard Levi subgroup of $H$,
    \item $\cH_M$ is a Levi subgroup of $\cH$, namely $\cH_M$ surjects onto $W_F$ and its intersection with $\hat H$ is a Levi subgroup of $\hat H$,
\end{itemize}    
    such that $\hat{H_M} = \cH_M \cap \hat{H}$ 
    and $(H_M, \cH_M, s, \eta|_{\cH_M})$ is a refined endoscopic datum of $M$. 
    
    An isomorphism of embedded data from $(H_M, \cH_M, H, \cH, s, \eta)$ to $(H'_M, \cH'_M, H', \cH', s', \eta')$ is a $g \in \hat{M}$, which simultaneously produces isomorphisms of refined endoscopic data
    $$(H_M, \cH_M, s, \eta) \isom (H'_M, \cH'_M, s', \eta')\quad \mbox{and} \quad (H, \cH, s, \eta) \isom (H', \cH', s', \eta').$$
    We denote the set of embedded endoscopic data by \gls{sEemb(M,G)} and the set of isomorphism classes by \gls{mEemb(M,G)}. An automorphism of embedded endoscopic data is said to be \emph{inner} if the associated automorphism of $(H_M, \cH_M, s, \eta)$ is an inner automorphism of endoscopic data.
    \end{defn}

We have the natural restrictions $X: \sE^{\emb}(M,G) \to \sE^{\Iso}(M)$ and $Y^{\emb}: \sE^{\emb}(M,G) \to \sE^{\Iso}(G)$. These induce maps of isomorphism classes, and the map induced by $X$ is a bijection 
by \cite[Prop.~2.20]{BMaveraging}. We recall from \cite[Construction 2.15]{BMaveraging} that there is a natural map $Y: \mE^{\Iso}(M) \to \mE^{\Iso}(G)$ such that the following diagram commutes
\begin{equation}
\begin{tikzcd}
&\mE^{\Iso}(G)&\\
\mE^{\emb}(M,G) \arrow[ur, " Y^{\emb}"] \arrow[rr, "X"] && \mE^{\Iso}(M) \arrow[ul, swap, "Y"].
\end{tikzcd}
\end{equation}

\begin{defn}
Fix a refined endoscopic datum $(H, \cH, s, \eta)$ of $G$. We define the set \gls{mEemb(M,G;H)} to be the set of isomorphism classes of embedded endoscopic data whose image under 
\begin{equation}
Y^{\emb}: \mE^{\emb}(M,G) \to \mE^{\Iso}(G)
\end{equation}
is the isomorphism class of $(H,\cH, s,\eta)$. Note that by \cite[Prop. 2.25]{BMaveraging}, every class in $\mE^{\emb}(M,G;H)$ has a (not necessarily unique) representative of the form $(H_M, \cH_M, H, \cH, s, \Int(n) \circ \eta)$ for $n \in N_{\hat{G}}(\hat{T})$.
We define the set of \emph{inner classes of embedded endoscopic data} relative to $H$, denoted by \gls{mEi(M,G;H)}, to be the set of equivalence classes of embedded data of the form $(H_M, \cH_M, H, \cH, s, \Int(n) \circ \eta)$. Two such data are considered equivalent if they are isomorphic by an inner isomorphism $\alpha$ of the group $H$ inducing an isomorphism of embedded endoscopic data. Note that the $\sE^{\emb}(M,G)$-isomorphism class of an element of $\mE^i(M,G;H)$ lies in $\mE^{\emb}(M,G;H)$ and that $\alpha$ need not induce an inner isomorphism of embedded endoscopic data.
\end{defn}

\subsubsection{Endoscopy and semi-simple conjugacy classes}\label{sss:endoscopy-and-ss-conj}

In this section we recall a number of results that will be of use to us. We largely follow \cite{BMaveraging}, whose results are based on \cite{KottwitzEllipticSingular} and \cite{Shin10}. Our notation is chosen to be similar to that of \cite{KSZ}. 
We assume throughout that $F$ is either a local or global field and $G$ is such that  $G_{\der} = G_{\scusp}$. If $M \subset G$ is a Levi subgroup and $T \subset M$ is a maximal torus, we say $\gamma \in T(\ol F)$ is $(G,M)$-\emph{regular} if for each root $\alpha \in R(G,T) \setminus R(M,T)$, we have $\alpha(\gamma) \neq 1$. Suppose $(H, \cH, s,  \eta)$ is an endoscopic datum. Let $\gamma_H\in H(\ol F)_{\tu{ss}}$. Choose a maximal torus $T_H \subset H_{\ol F}$ containing $\gamma_H$. We have a transfer $T_H\cong T\subset G_{\ol F}$ for a maximal torus $T$ of $G$. Write $\gamma\in T(\ol F)$ for the image of $\gamma_H$. Then $\gamma_H$ is said to be \emph{$(G,H)$-regular} if for each $\alpha \in R(G,T) \setminus R(H, T_H)$, we have $\alpha(\gamma) \neq 1$; if $\gamma$ is a fortiori strongly regular in $G(\ol F)$ then $\gamma_H$ is said to be \emph{$G$-strongly regular}. The definition of $(G,M)$-regular, $(G,H)$-regular, and $G$-strongly regular elements is independent of all auxiliary choices. We use the subscripts $(G,M)\tu{-reg}$, $(G,H)\tu{-reg}$, and $G\tu{-sr}$ to denote the subsets of such elements. By definition, $\gls{H(F)G-sr}\subset \gls{H(F)(G,H)-reg} \subset \gls{H(F)ss}$.

\begin{defn}\label{def:EK(G)}
We define \gls{EKiso(G)} to be the set of equivalence classes of pairs $(\gamma, \lambda)$ consisting of $\gamma \in G(F)_{\semis}$ and $\lambda \in Z(\widehat{I_{\gamma}})^{\Gamma_F}$, with $I_{\gamma} := Z_{G}(\gamma)^{\circ}$, where two pairs  $(\gamma, \lambda), (\gamma', \lambda')$ are equivalent if $[\gamma] =[\gamma'] \in \Sigma(G(F))$ and $\lambda$ corresponds to $\lambda'$ under the canonical isomorphism $Z(\widehat{I_{\gamma}})^{\Gamma_F} \cong Z(\widehat{I_{\gamma'}})^{\Gamma_F}$ \cite[\S 3]{KottwitzEllipticSingular}. We define the subset $\EK^{\Iso}_{\el}(G)\subset \EK^{\Iso}(G)$ by the condition that $\gamma \in G(F)_{\el}$.

For $M$ a Levi subgroup of $G$, we let \gls{EKiso(M,G)} denote the set of equivalence classes of pairs $(\gamma, \lambda)$ consisting of $\gamma \in M(F)_{(G,M)\tu{-reg}}$ and $\lambda \in Z(\widehat{I^M_{\gamma}})^{\Gamma_F}$, with $I^M_{\gamma}:=Z_M(\gamma)^\circ$, where two pairs $(\gamma, \lambda), (\gamma', \lambda')$ are equivalent if $[\gamma] = [\gamma'] \in \Sigma(M(F))$ and if $\lambda$ is sent to $\lambda'$ under the canonical isomorphism $Z(\widehat{I^M_{\gamma}})^{\Gamma_F} \cong Z(\widehat{I^M_{\gamma'}})^{\Gamma_F}$. In particular, $\EK^{\Iso}(M,G) \subset \EK^{\Iso}(M)$.
\end{defn}
\begin{defn}
We define $\ES^{\Iso}(G)$ to be the set of equivalence classes of tuples $(H, \cH,  s, \eta, \gamma_H)$ such that $(H, \cH, s, \eta) \in \sE^{\Iso}(G)$, $\gamma_H \in H(F)_{(G,H)\tu{-reg}}$, and $\gamma_H$ transfers to $G(F)$. Two tuples $(H, \cH, s, \eta, \gamma_H)$ and $(H', \cH', s', \eta', \gamma_{H'})$ are equivalent if there is an isomorphism $g \in \hat{G}$ of the refined data $(H, \cH, s, \eta)$ and $(H',\cH', s', \eta')$ and if the isomorphism $\alpha: H \to H'$ induced by $g$ in the sense of \cite[p.19]{KottwitzShelstad}  satisfies that $[\alpha(\gamma_H)]=[\gamma'_H] \in \Sigma(H'(F))$. We remark that $\alpha$ is only well-defined up to an inner automorphism of $H$, but that this does not matter for our purposes since we are only interested in stable conjugacy. We define the subset $\ES^{\Iso}_{\el}(G)\subset \ES^{\Iso}(G)$ by requiring that $(H, \cH, s, \eta) \in \sE^{\Iso}_{\el}(G)$ and $\gamma_H \in H(F)_{\el}$. 

Write \gls{ESiso(M,G)} for the set of equivalence classes of tuples $(H_M, \cH_M, s_M, \eta_M, \gamma_{H_M})$ such that $(H_M, \cH_M, s_M, \eta_M) \in \sE^{\Iso}(M)$ and $\gamma_{H_M}$ is an element of $H_M(F)_{(M,H_M)\tu{-reg}}$ that transfers to an element of $M(F)_{(G,M)\tu{-reg}}$. Two tuples $(H_M, \cH_M, s_M, \eta_M, \gamma_{H_M})$ and $(H'_M, \cH'_M, s'_M, \eta'_M, \gamma_{H'_M})$ are considered equivalent if there exists an isomorphism $m \in \hat{M}$ of the refined endoscopic data $(H_M, \cH_M, s_M, \eta_M)$ and $(H'_M, \cH'_M, s'_M, \eta'_M)$ such that the associated isomorphism $\alpha: H_M \to H'_M$ satisfies that $[\alpha(\gamma_{H_M})] = [\gamma_{H'_M}] \in \Sigma(H'_M(F))$. Note that $\ES^{\Iso}(M,G) \subset \ES^{\Iso}(M)$. 

The analogue of \gls{ESiso(M,G)} defined for embedded endoscopic data is denoted \gls{ESemb(M,G)}. By proof of \cite[Prop.~2.20]{BMaveraging}, we have  a natural identification $\ES^{\emb}(M,G)=\ES^{\Iso}(M,G)$.
\end{defn}

Define a natural map 
\begin{equation}{\label{EKESbij}}
 \ES^{\Iso}(G) \to \EK^{\Iso}(G),   
\end{equation}
as follows. Given $(H, \cH, s, \eta, \gamma_H) \in \ES^{\Iso}(G)$, we let $\gamma \in G(F)$ be a transfer of $\gamma_H$. 
(Recall that the existence of the transfer is built into the definition of $\ES^{\Iso}(G)$.) Since $\gamma_H$ is $(G,H)$-regular, we have a canonical $\Gamma_F$-equivariant isomorphism $Z(\widehat{I_{\gamma_H}})\cong Z(\widehat{I_{\gamma}})$. In particular, since $s \in Z(\widehat{H})^{\Gamma_F}\subset Z(\widehat{I_{\gamma_H}})^{\Gamma_F}$, it gives an element $\lambda\in Z(\widehat{I_{\gamma}})^{\Gamma_F}$. Thereby we obtain $(\gamma,\lambda)\in \EK^{\Iso}(G)$. If $(H', s', \eta', \gamma_{H'})$ is equivalent to $(H, s, \eta, \gamma_H)$, then the construction assigns the same element $(\gamma,\lambda)\in\EK^{\Iso}(G)$. This map \eqref{EKESbij} is a bijection (\cite[Lem.~2.30]{BMaveraging}, cf. \cite[Lem.~9.7]{KottwitzEllipticSingular}). Analogously we have a bijection
$\ES^{\Iso}_{\el}(G) \xrightarrow{\sim} \EK^{\Iso}_{\el}(G)$.
    
Now, since the $\gamma$ for any $(\gamma, \lambda) \in \EK^{\Iso}(M,G)$ is assumed to be $(G,M)$-regular, we have an equality $I^M_{\gamma}=I^G_{\gamma}$. Hence we get an obvious map
\begin{equation}
    \EK^{\Iso}(M,G) \to \EK^{\Iso}(G).
\end{equation}

The various objects we have defined fit into a commutative diagram (\cite[Lem.~2.36]{BMaveraging}):

\begin{equation}{\label{EQSSdiagram}}
    \begin{tikzcd}
    \EK^{\Iso}(G) \arrow[rr, "\sim"] & & \ES^{\Iso}(G)  \arrow[r] & \mE^{\Iso}(G)\\
    \EK^{\Iso}(M,G) \arrow[u] \arrow[r, "\sim"] & \ES^{\Iso}(M,G) \arrow[r, equal] & \ES^{\emb}(M,G) \arrow[r] \arrow[u] & \mE^{\emb}(M, G) \arrow[u, swap, "Y^e"],\\
    \end{tikzcd}
\end{equation}
   where the middle vertical map takes $( H_M, \cH_M, H, \cH, s, \eta, \gamma_{H_M})$ to $(H, \cH, s, \eta, \gamma_{H_M})$, where $\gamma_{H_M}$ is realized as an element of $H(F)$ via the inclusion $H_M \subset H$.

\subsection{Local transfer factors}\label{ss:local transfer factors}

We review the transfer factors normalized by a rigid endoscopic datum. In this section we assume that $\cH$ can be taken to be $\LL H$. This is satisfied, for instance, when $G_{\der}$ is simply connected. In general, one can reduce to this case using $z$-extensions.

Fix $G^*$ a quasi-split reductive group over a local field $F$ of characteristic $0$ and $(G, \psi, z^{\Rig})$ a rigid inner twist of $G^*$ over $F$. Fix also a Whittaker datum $\fkw$ for $G^*$, namely a $G^*(F)$-conjugacy class of pairs $(B, \theta)$, where $B$ is a Borel subgroup of $G^*$ defined over $F$ with unipotent radical $U$, and $\theta: U(F) \to \C^{\times}$ is a non-degenerate character. 
Let $(H, \cH, \dot{s}, \eta)$ be a rigid endoscopic datum. As in  \cite{KottwitzShelstad}, there is a Whittaker normalized transfer factor 
\begin{equation*}
    \gls{Deltaw}: H(F)_{G\tu{-}\sr} \times G^*(F)_{\sr} \to \C.
\end{equation*}
Specifically, we are using the $\Delta^{\lambda}_D$ normalization of \cite[\S5.5]{KottwitzShelstadSigns}. 

\subsubsection{Local rigid transfer factors}{\label{sss:transfer factors construction}}
Following \cite[\S 5.3]{KalethaLocalRigid}, we construct a transfer factor
\begin{equation*}
    \gls{deltawrig}: H(F)_{G\tu{-}\sr} \times G(F)_{\sr} \to \C,
\end{equation*}
 by the formula
\begin{equation}{\label{eq: rigtransferinnerforms}}
    \Delta[\fkw, z^{\Rig}](\gamma_H, \gamma) = \Delta[\fkw](\gamma_H, \gamma^*)\langle \gls{inv[zrig]}(\gamma^*, \gamma), \dot{s} \rangle^{-1}.
\end{equation}
We explain the notation in the above equation. Choose $\gamma^* \in G^*(F)$ such that $\psi(g\gamma^*g^{-1}) = \gamma$ for some $g \in G^*(\ol{F})$. Then one checks that if $T = Z_{G^*}(\gamma^*)$, then $w \mapsto g^{-1}z^{\Rig}(w)w(g)$ gives a cocycle in $Z^1(u \to \cE^{\Rig}_F, Z(G^*) \to  T)$, whose cohomology class is independent of the choice and denoted $\inv[z^{\Rig}](\gamma^*, \gamma)$. We have a canonical $\Gamma_F$-equivariant embedding $Z(\hat{H}) \to \hat{T}$ which induces a map $Z(\hat{\bar{H}})^+ \to \hat{\bar{T}}^+$. 
Then the pairing $\langle \inv[z^{\Rig}](\gamma^*, \gamma), \dot{s} \rangle$ is the Tate--Nakayama pairing of Proposition \ref{TateNakayama}, where we identify $\dot{s}$ with its image in $\hat{\bar{T}}^+$.

\subsubsection{The $(G,H)$-regular case}{\label{ss:GHregsection}}

We described our normalization of transfer factors in \S\ref{sss:transfer factors construction}.
In order to use these factors in the stabilization of the trace formula for the cohomology of Igusa varieties, we need to extend the transfer factors to functions
\begin{equation}\label{eq:transfer-factor-(G,H)-reg}
    \Delta[\fkw, z^{\Rig}]: H(F)_{(G,H)\tu{-}\reg} \times G(F)_{\semis} \to \C.
\end{equation}
In \cite{BMadelic} these factors were defined for refined endoscopic data via an explicit construction of the invariant $\inv[z](\gamma^*, \gamma)$ in the non-strongly regular case. This construction could in theory be carried out in the rigid setting but to do so compatibly at each localization of a global field would involve the cohomology of a global gerb that is an ``adelic form'' of $\cE^{\Rig}_{\dot{V}}$. Unfortunately, this would require a substantial amount of preparation and so we have instead opted for a different approach.

Given a transfer factor defined on $H(F)_{G-\sr} \times G(F)_{\sr}$, Langlands and Shelstad \cite[\S2.4]{LanglandsShelstadDescent} showed that one can extend the transfer factor to the domain $H(F)_{(G,H)\tu{-}\reg} \times G(F)_{\semis}$ via a limiting operation. In particular, given $(\gamma_H, \gamma) \in H(F)_{(G,H)\tu{-}\reg} \times G(F)_{\semis}$ such that $\gamma_H$ transfers to $\gamma$, one chooses a sequence of pairs $(\gamma_{H,i}, \gamma_i) \in H(F)_{G-\sr}\times G(F)_{\sr}$ such that $\gamma_{H_i}$ transfers to $\gamma_i$ and $(\gamma_{H_i}, \gamma_i)$ converges to $(\gamma_H, \gamma)$. One then defines
\begin{equation*}
    \Delta[\fkw, z^{\Rig}](\gamma_H, \gamma)= \lim\limits_{i \to \infty} \Delta[\fkw, z^{\Rig}](\gamma_{H,i}, \gamma_i),
\end{equation*}
and shows that this limit does not depend on the choice of convergent sequence. The factor $\Delta[\fkw]$ can be extended analogously. We then define $\langle \inv[z^{\Rig}](\gamma^*, \gamma), \dot{s} \rangle := \Delta[\fkw](\gamma_H, \gamma^*) / \Delta[\fkw, z^{\Rig}](\gamma_H, \gamma) $. It follows from the continuity of $\Delta[\fkw]$ and $\Delta[\fkw, z^{\Rig}]$ and the definition of $\inv[z^{\Rig}]$ on the strongly regular locus that $\langle \inv[z^{\Rig}](\gamma^*, \gamma), \dot{s} \rangle$ does not depend on $\gamma_H$.
Of course we set $\Delta[\fkw, z^{\Rig}](\gamma_H, \gamma)=0$ if $\gamma_H$ does not transfer to $\gamma$.

In the case where $G_{\der}=G_{\scusp}$, we can describe $\inv[z^{\Rig}](\gamma^*, \gamma)$ for $\gamma \in G(F)_{\semis}$ more explicitly. Then the centralizer of $\gamma^*$ in $G^*$ is connected. So if $g \in G^*(\ov{F})$ is such that $\psi(g \gamma^*g^{-1}) = \gamma$, then $w \mapsto g^{-1}z^{\Rig}(w)w(g)$ gives a cocycle $[z^{\Rig}]_{\gamma^*, \gamma} \in Z^1(u \to \cE^{\Rig}_F, Z(G^*) \to I_{\gamma^*})$. By a continuity argument analogous to \cite[Lem.~3.8]{BMadelic} but for rigid inner twists, we have that
\begin{equation}{\label{eq: explicit inv}}
    \langle \inv[z^{\Rig}](\gamma^*, \gamma), \dot{s} \rangle = \langle [z^{\Rig}]_{\gamma^*, \gamma}, \dot{s} \rangle.
\end{equation}

We also record an equivariance property of (any normalization of) transfer factors.

\begin{lem}\label{lem:equivariance}
There exists a smooth character $\gls{lambdaH}:Z_G(F)\ra \C^\times$ such that for all $x\in Z_G(F)$, $\gamma_H\in H(F)_{(G,H)\tu{-}\reg}$, and $\gamma\in G(F)_{\semis}$,
$$\Delta[\fkw, z^{\Rig}]( x \gamma_H,x\gamma) = \lambda_H(x) \Delta[\fkw, z^{\Rig}](  \gamma_H,\gamma).$$
\end{lem}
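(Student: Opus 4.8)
The plan is to reduce the assertion to the quasi-split group $G^*$, where it is the standard equivariance of Whittaker-normalized transfer factors under translation by central elements, and then to check that the factor $\langle\inv[z^{\Rig}](\gamma^*,\gamma),\dot s\rangle$ in \eqref{eq: rigtransferinnerforms} contributes nothing to $\lambda_H$. Recall that $\eta$ realizes $Z_G$ as a central $F$-subgroup of $H$ (as in \S\ref{sss:endoscopy-and-ss-conj}), and that $\psi$ restricts to an $F$-isomorphism $Z(G)\isom Z(G^*)$ since $\Int(z_{\ad})$ acts trivially on centers; write $x^*\in Z(G^*)(F)$ for the image of $x\in Z_G(F)$. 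First I would observe that $\gamma_H$ transfers to $\gamma$ if and only if $x\gamma_H$ transfers to $x\gamma$, and that in the transferring case $x^*\gamma^*$ plays for $(x\gamma_H,x\gamma)$ exactly the role that $\gamma^*$ plays for $(\gamma_H,\gamma)$ in \S\ref{sss:transfer factors construction}, with the same $g\in G^*(\ol F)$ (using that $x^*$ is central). If $\gamma_H$ does not transfer to $\gamma$, both sides of the claimed identity vanish, so we reduce to the transferring case and apply \eqref{eq: rigtransferinnerforms}, extended to the $(G,H)$-regular locus as in \S\ref{ss:GHregsection}.

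Given this, the identity follows from two points. The easy one is that $\langle\inv[z^{\Rig}](x^*\gamma^*,x\gamma),\dot s\rangle=\langle\inv[z^{\Rig}](\gamma^*,\gamma),\dot s\rangle$: the cocycle $w\mapsto g^{-1}z^{\Rig}(w)w(g)$ representing the invariant does not involve $\gamma^*$ at all, its target torus is $Z_{G^*}(x^*\gamma^*)=Z_{G^*}(\gamma^*)=T$ because $x^*$ is central, and the embedding $Z(\hat H)\to\hat T$ along which one pushes $\dot s$ is unchanged because $Z_H(x\gamma_H)=Z_H(\gamma_H)=T_H$; on the merely $(G,H)$-regular locus one gets the same conclusion either from \eqref{eq: explicit inv} when $G_{\der}=G_{\scusp}$, or by continuity from the strongly regular locus. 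So the rigid/isocrystal part contributes nothing.

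The substantive point is the quasi-split statement: there should be a smooth character $\lambda_H\colon Z(G^*)(F)\to\C^\times$ with $\Delta[\fkw](x\gamma_H,x^*\gamma^*)=\lambda_H(x^*)\,\Delta[\fkw](\gamma_H,\gamma^*)$. By the limiting construction of \cite[\S2.4]{LanglandsShelstadDescent} recalled in \S\ref{ss:GHregsection}, it suffices to establish this for strongly regular $\gamma^*$: strong regularity is preserved under central translation and the translation maps are continuous, so the equality propagates to the $(G,H)$-regular locus. For strongly regular arguments I would expand $\Delta[\fkw]$ in the $\Delta^{\lambda}_D$-normalization of \cite[\S5.5]{KottwitzShelstadSigns} (cf.\ \cite{KottwitzShelstad}): the $\Delta_{\mathrm{I}}$, $\Delta_{\mathrm{II}}$ and $\Delta_{\mathrm{IV}}$ terms are assembled from $R(G^*,T)$ and from the Weyl discriminants $D^{G^*},D^{H}$, hence are unchanged when $\gamma_H,\gamma^*$ are multiplied by the central elements $x,x^*$ (roots vanish on $Z(G^*)$, and Weyl discriminants are insensitive to central translation); the remaining factor changes by a Tate--Nakayama-type pairing which, since the torus $T_H=Z_H(\gamma_H)$, its $a$- and $\chi$-data, and the Whittaker splitting are all unaffected, depends only on $x^*$. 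This would define $\lambda_H(x^*)$ independently of $(\gamma_H,\gamma^*)$; multiplicativity of the relative transfer factor then gives $\lambda_H(x^*y^*)=\lambda_H(x^*)\lambda_H(y^*)$, and continuity of $\Delta[\fkw]$ in its arguments gives smoothness. (This equivariance is well known; a closely analogous computation for refined endoscopic data appears in \cite{BMadelic}.) Transporting $\lambda_H$ along $Z_G(F)\isom Z(G^*)(F)$, $x\mapsto x^*$, and combining the two points gives $\Delta[\fkw,z^{\Rig}](x\gamma_H,x\gamma)=\lambda_H(x^*)\,\Delta[\fkw,z^{\Rig}](\gamma_H,\gamma)$, as desired.

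The hard part will be this last step: although the mechanism is classical, one must track carefully, inside the Langlands--Shelstad/Kottwitz--Shelstad normalization, which factor absorbs the $\chi$-data and the Whittaker contribution, in order to confirm that the net effect of central translation really does collapse to a single, $\gamma$-independent and multiplicative quantity. The invariance of the $\langle\inv[z^{\Rig}],\dot s\rangle$ factor and the various reductions are routine.
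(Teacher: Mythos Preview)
Your proposal is correct. The paper's proof is much shorter: it simply cites \cite[Lem.~3.5.A]{LanglandsShelstadDescent}, together with the limit extension to the $(G,H)$-regular locus already recalled in \S\ref{ss:GHregsection}. The point is that Langlands--Shelstad's equivariance lemma applies to \emph{any} absolute normalization of the transfer factor for $G$, and $\Delta[\fkw, z^{\Rig}]$ is one such normalization; since any two absolute transfer factors differ by a nonzero constant, the equivariance for one choice implies it for all, with the same $\lambda_H$. Your approach instead unpacks the definition via \eqref{eq: rigtransferinnerforms}, splitting $\Delta[\fkw, z^{\Rig}]$ into the quasi-split Whittaker factor $\Delta[\fkw]$ (to which the Langlands--Shelstad argument applies directly, and whose proof you sketch term by term) and the rigid correction $\langle\inv[z^{\Rig}](\gamma^*,\gamma),\dot s\rangle$, which you verify separately is unchanged under central translation. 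This is more labor than required but has the merit of making explicit that the rigid invariant contributes nothing to $\lambda_H$, so that $\lambda_H$ is the same character as in the classical quasi-split setting.
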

\begin{proof}
This is \cite[Lem.~3.5.A]{LanglandsShelstadDescent}. (The proof is given for strongly regular elements, but obviously extends to the $(G,H)$-regular case by the limit formula above.)
\end{proof}

\begin{rem}\label{rem:lambdaH}
By \cite[Lem.~7.4.6]{KSZ}, the restriction of $\lambda_H$ to $Z^{\circ}_G(F)$ corresponds via class field theory to the $L$-morphism
$$W_F \ra {}^L H \stackrel{\eta}{\ra} {}^L G \ra {}^L Z^{\circ}_G,$$
where the last map is dual to the inclusion $Z^{\circ}_G \ra G$.
\end{rem}

\subsubsection{Comparison of local transfer factors}
We describe the relationship between isocrystal and rigid transfer factors for local $F$. To do so, we fix $\gls{ziso} \in Z^1_{\bas}(\cE^{\Iso}_F, G)$  and denote by $z^{\Rig} \in Z^1(u \to \cE^{\Rig}_F, Z(G) \to G)$ the pullback (see ~\S\ref{sss:rigid-vs-epit}). In analogy with \S\ref{sss:transfer factors construction}, we can define for $\gamma^* \in G^*(F)_{\sr}$ and $\gamma \in G(F)_{\sr}$ an invariant $\gls{inv[ziso]}(\gamma^*, \gamma) \in Z^1_{\bas}(\cE^{\Iso}_F, T)$. Following \S\ref{ss:GHregsection}, this naturally extends to an invariant map defined on $\gamma^* \in G^*(F)_{\semis}$ and $\gamma \in G(F)_{\semis}$. 
\begin{lem}{\label{rig vs iso transfer factors}}
Choose $\dot{s} \in \hat{\bar{T}}^+$ and let $s \in \hat{T}^{\Gamma_F}$ be the projection of $\dot{s}$. Then we have
\begin{equation*}
\langle \inv[z^{\Iso}](\gamma^*, \gamma), s \rangle = \langle \inv[z^{\Rig}](\gamma^*, \gamma), \dot{s} \rangle,
\end{equation*}
which implies
\begin{equation*}
    \Delta[\fkw, z^{\Iso}](\gamma_H, \gamma) = \Delta[\fkw, z^{\Rig}](\gamma_H, \gamma).
\end{equation*}
\end{lem}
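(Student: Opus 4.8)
The plan is to reduce the identity to a statement about the Tate--Nakayama pairings, since the second displayed equation follows immediately from the first by plugging into the defining formula \eqref{eq: rigtransferinnerforms} (and its isocrystal analogue) and noting that the Whittaker-normalized factor $\Delta[\fkw](\gamma_H,\gamma^*)$ is the same on both sides. So the whole content is the first equation $\langle \inv[z^{\Iso}](\gamma^*,\gamma), s\rangle = \langle \inv[z^{\Rig}](\gamma^*,\gamma), \dot s\rangle$. By the limiting construction in \S\ref{ss:GHregsection}, both sides are continuous in $(\gamma_H,\gamma)$, so it suffices to prove the identity on the strongly regular locus; there $T = Z_{G^*}(\gamma^*)$ is a maximal torus and $\inv[z^{\Iso}](\gamma^*,\gamma)\in Z^1_{\bas}(\cE^{\Iso}_F, T)$, $\inv[z^{\Rig}](\gamma^*,\gamma)\in H^1(u\to\cE^{\Rig}_F, Z(G^*)\to T)$ are honest cohomology classes valued in a torus.

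The key point is that, by construction (see \S\ref{sss:transfer factors construction}), $\inv[z^{\Rig}](\gamma^*,\gamma)$ is the class of the cocycle $w\mapsto g^{-1}z^{\Rig}(w)w(g)$, while $\inv[z^{\Iso}](\gamma^*,\gamma)$ is the class of $w\mapsto g^{-1}z^{\Iso}(w)w(g)$, for the same $g\in G^*(\ol F)$ with $\psi(g\gamma^* g^{-1})=\gamma$. Since $z^{\Rig}$ is by hypothesis the pullback $\tilde\phi^*(z^{\Iso})$ along the morphism of gerbes $\tilde\phi: \cE^{\Rig}_F\to\cE^{\Iso}_F$ of \S\ref{sss:rigid-vs-epit}, and since pullback of cocycles commutes with the twisting operation $z\mapsto g^{-1}z\, w(g)$, we get $\inv[z^{\Rig}](\gamma^*,\gamma) = \tilde\phi^*\big(\inv[z^{\Iso}](\gamma^*,\gamma)\big)$ as classes in $H^1(u\to\cE^{\Rig}_F, Z(G^*)\to T)$ (using that the restriction to $Z(G^*)$ factors through the central torus, so the basic-isocrystal class does lie in the source of $\tilde\phi^*$). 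So the equality of pairings reduces to the compatibility of $\tilde\phi^*$ with the two Tate--Nakayama/duality pairings: one must check that for $c\in H^1_{\bas}(\cE^{\Iso}_F, T)$ and $\dot s\in \hat{\bar T}^+$ with image $s\in\hat T^{\Gamma_F}$, one has $\langle \tilde\phi^*(c), \dot s\rangle = \langle c, s\rangle$. This is exactly the content of Kaletha's comparison in \cite{KalethaRigidvsIsoc} relating the isocrystal duality pairing (i.e.\ Kottwitz's $B(G)_{\bas}\to X^*(Z(\hat G)^{\Gamma_F})$, evaluated against $s$) with the rigid Tate--Nakayama pairing of Proposition \ref{TateNakayama} (evaluated against $\dot s$); concretely, the map $\hat{\bar T}\to \hat T$ dual to $u\to\D$ under $\phi$ sends $\dot s\mapsto s$, and Kaletha shows the pairings are compatible under $\tilde\phi^*$ on one side and this projection on the other.

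Thus the proof is: (1) reduce to the first equation and to the strongly regular locus by continuity; (2) identify $\inv[z^{\Rig}](\gamma^*,\gamma)$ with $\tilde\phi^*\inv[z^{\Iso}](\gamma^*,\gamma)$ using that $z^{\Rig}=\tilde\phi^*z^{\Iso}$ and naturality of cocycle twisting; (3) invoke the compatibility of $\tilde\phi^*$ with Tate--Nakayama pairings from \cite{KalethaRigidvsIsoc}, noting that $\dot s\in\hat{\bar T}^+$ maps to $s\in\hat T^{\Gamma_F}$ under $\hat{\bar T}\to\hat T$; then conclude the transfer factor identity by substituting into \eqref{eq: rigtransferinnerforms}. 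The main obstacle is step (3): one has to be careful that the Tate--Nakayama pairing of Proposition \ref{TateNakayama} for $T$ (a torus) and Kottwitz's duality for $B(T)_{\bas}=B(T)$ are normalized so that they match under the gerbe morphism $\tilde\phi$; this is precisely the diagram \cite[\S3.3, esp.\ eq.\ (3.12)]{KalethaRigidvsIsoc}, and one should cite it rather than re-derive the sign/normalization bookkeeping. A secondary technical point is to confirm that the limiting extension to $(G,H)$-regular elements is performed identically in the isocrystal and rigid settings (it is, both via \cite[\S2.4]{LanglandsShelstadDescent}), so that continuity indeed transports the strongly regular identity to the full domain.
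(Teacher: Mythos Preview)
Your proposal is correct and follows essentially the same approach as the paper: the paper's proof simply cites \cite[p.15]{KalethaRigidvsIsoc} for the pairing identity in the strongly regular case, then notes that this immediately gives the transfer-factor identity on all of $H(F)_{(G,H)\tu{-}\reg} \times G(F)_{\semis}$ by the continuity/limiting extension. You have unpacked the content of that citation (the identification $\inv[z^{\Rig}]=\tilde\phi^*\inv[z^{\Iso}]$ and the compatibility of $\tilde\phi^*$ with the two Tate--Nakayama pairings), but the logical structure is identical.
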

\begin{proof}
This is proven in \cite[p.15]{KalethaRigidvsIsoc} for strongly regular elements. This immediately implies equality of the transfer factors on all of $H(F)_{(G,H)\tu{-}\reg} \times G(F)_{\semis}$.
\end{proof}
\subsection{Global transfer factors}{\label{ss:global transfer factors construction}}

We recall how a global rigid inner twist gives a decomposition of the global transfer factor as the product of local transfer factors.

\subsubsection{The strongly regular case}
\label{sss:global-strongly-regular}
Now suppose that $F$ is a number field and that $G^*$ is a quasi-split connected reductive group over $F$. Let $\psi: G^*_{\ol F} \to G_{\ol F}$ be an inner twist and $(H, \cH, s, \eta)$ a standard endoscopic datum for~$G$. As in the last section, we assume that $\cH={} ^L H$. Let $\fkw$ be a global Whittaker datum. There is the canonical adelic transfer factor of \cite[\S6]{LanglandsShelstad} and \cite[\S7.3]{KottwitzShelstad}
\begin{equation*}
    \Delta_{\A}: H(\A_F)_{G\tu{-}\sr} \times G(\A_F)_{\sr} \to \C,
\end{equation*}
characterized as follows.
Whenever $\gamma_H\in H(F)_{G\tu{-}\sr}$ transfers everywhere locally to an element $\gamma\in G(\A_F)$, we have $\Delta_{\A}(\gamma_H, \gamma) = \langle \tu{obs}(\gamma)^{-1},\kappa \rangle$ in the notation of \cite[\S7.3]{KottwitzShelstad} (Cor.~7.3.B in \emph{loc.~cit.}) except that our $\tu{obs}$ is the inverse of theirs and hence is consistent with \cite{LanglandsShelstad} and \cite{KalethaTaibi} (see \cite[Remark 4.2.2 ]{KalethaTaibi}). If no $\gamma_H\in H(F)_{G\tu{-}\sr}$ transfers to $G(F)$, then $(H,\cH,s,\eta)$ is irrelevant to stabilization. (The same assumption is made in \cite[\S6.3]{LanglandsShelstad}.) More precisely, in this case, the stable orbital integrals on $H(F)$ will be identically zero in the proof of Lemma \ref{lem:SO-adelic} below by the local transfer identity of orbital integrals. Thus there is no need to consider the global normalization or canonical adelic transfer factor. Henceforth we assume that some $\gamma_H\in H(F)_{G\tu{-}\sr}$ transfers $G(F)$.

For each place $v \in V$, we want to normalize the local transfer factor at $v$ such that the product over all places of the local transfer factors is $\Delta_{\A}$. We follow the process described in \cite[\S4.2]{KalethaGlobalRigid} with the only difference being that we work with the set $H^1(u \to \cE^{\Rig}_F, Z(G) \to G)$ instead of $H^1(u \to \cE^{\Rig}_F, Z(G_{\der}) \to G)$.

In particular, we let $s_{\ad} \in \hat{G}_{\ad}$ be the image of $\eta(s)$ in $\hat{G}_{\ad}$ and let $s_{\scusp} \in \hat{G}_{\scusp}$ be a lift of $s_{\ad}$ and $s_{\der}$ be the image of $s_{\scusp}$ in $\hat{G}_{\der}$.  Then for each $v \in \dot{V}$, we can view $s_{\der}$ as an element of $(\hat{G_v})_{\der}$ and there is a $y_v \in Z(\hat{G_v})$ such that $s_{\der} y_v \in \eta(Z(\hat{H_v})^{\Gamma_{F_v}})$.
Then we can write $y_v = y'_v \cdot y''_v$ for $y'_v \in Z((\hat{G_v})_{\der})$ and $y''_v \in Z(\hat{G_v})^{\circ}$. We lift $y'_v$ to $\gls{yv} \in Z((\hat{G_v})_{\scusp})$ and lift $y''_v$ to $\dot{y}''_v \in \hat{C_{\infty}}$ via the map $\hat{C_{\infty}} \to \hat{Z(G_{v,1})} = Z(\hat{G_v})^{\circ}$.  Then we define $\gls{s} \in Z(\hat{\bar{H_v}})$ to be the pre-image under $\bar{\eta}$ of  $(s_{\scusp}  \dot{y}'_v, \dot{y}''_v) \in  (\hat{G_v})_{\scusp} \times \hat{C_{\infty}} = \hat{\bar{G_v}}$ and observe that $\dot{s}_v \in Z(\hat{\bar{H_v}})^+$. This gives rise to a local rigid endoscopic datum $(H_v, \cH_v, \dot{s}_v, \eta_v)$, where $\cH_v$ is the pre-image of $W_{F_v}$ under the projection $\cH \to W_F$. Note that we have a natural map 
$$\sE^{\Rig}(G) \to \sE(G),\qquad (H_v, \cH_v, \dot{s}_v, \eta_v) \mapsto (H_v, \cH_v, s_v, \eta_v),$$
where $s_v$ is the projection of $\dot{s}_v$ to $Z(\hat{H})^{\Gamma_{F_v}}$. The data $(H_v, \cH_v, s_v, \eta_v)$ and $(H_v, \cH_v, s, \eta_v)$ are isomorphic in $\sE(G)$ since $s_v$ and $s$ differ by an element of $Z(\hat{G_v})$.

By Lemma \ref{H1surjectionlem} and after potentially replacing $\psi$ with an inner twist in the same equivalence class, we can lift the cocycle in $Z^1(F, G^*_{\ad})$ corresponding to $\psi$ to a cocycle $z^{\Rig} \in Z^1(P^{\Rig}_{\dot{V}} \to \cE^{\Rig}_{\dot{V}}, Z(G^*_{\scusp}) \to G^*_{\scusp})$. For each $v \in \dot{V}$, we then let $z^{\Rig}_{\scusp, v}$ be the localization of $z^{\Rig}$ as in \S \ref{sss:localization} and let $z^{\Rig}_v$ be the image in $Z^1(u_v \to \cE^{\Rig}_{F_v}, Z(G^*) \to G^*)$. Then $(G_{F_v}, \psi_v, z^{\Rig}_v)$ is a rigid inner twist of $G^*_{F_v}$.

Finally, we have local transfer factors $\Delta[\fkw_v, z^{\Rig}_v]: H(F_v)_{G\tu{-}\sr} \times G(F_v)_{\sr} \to \C$. Following the proof of \cite[Prop.~4.4.1]{KalethaGlobalRigid} we get that
\begin{equation}\label{eq:Delta-adelic}
    \Delta_{\A}(\gamma_H, \gamma) = \prod\limits_{v \in \dot{V}}  \langle z^{\Rig}_{\scusp, v}, \dot{y}'_v \rangle \Delta[\fkw_v, z^{\Rig}_v](\gamma_{H,v}, \gamma_v),
\end{equation}
for all $(\gamma_H, \gamma) \in H(F)_{\sr} \times G(F)_{\sr}$. 

\begin{rem}\label{rem:y'=1}
Suppose that $G_{\der}=G_{\scusp}$. (We always reduce to this case via $z$-extensions.) Then $Z(\hat G)$ is connected, so we can take $y_v=y''_v$, $y_v'=1$, and $\dot y_v'=1$ at every $v$ above. Then \eqref{eq:Delta-adelic} simplifies as $\langle z^{\Rig}_{\scusp, v}, \dot{y}'_v \rangle=1$ for all $v$. 
\end{rem}

\subsubsection{The $(G,H)$-regular case}\label{sss:global-GH-regular}

A semisimple element of $H(\A_F)$ is $(G,H)$-regular if it is at each place of $F$. As in the local case, the subscript ``$(G,H)$-reg'' means the subset of $(G,H)$-regular elements.
We define the adelic transfer factor 
$$\Delta[\fkw_v, z^{\Rig}_v]: H(\A_F)_{(G,H)\tu{-reg}} \times G(\A_F)_{\semis} \to \C$$
by extending  \eqref{eq:Delta-adelic} from the strongly regular case. The idea is to take \eqref{eq:Delta-adelic} as a definition when $\gamma_H$ is $(G,H)$-regular but not strongly $G$-regular:
$$  \Delta_{\A}(\gamma_H, \gamma) := \prod\limits_{v \in \dot{V}}  \langle z^{\Rig}_{\scusp, v}, \dot{y}'_v \rangle \Delta[\fkw_v, z^{\Rig}_v](\gamma_{H,v}, \gamma_v).$$
It follows from our definition that the factor $\Delta_{\A}$ defined on $H(\A_F)_{(G,H)\tu{-reg}} \times G(\A_F)_{\semis}$ equals the unique continuous extension of $\Delta_{\A}$ defined on $H(\A_F)_{G-\sr} \times G(\A_F)_{\sr}$. We have the product formula $\Delta_{\A}(\gamma_H,\gamma)=\langle \tu{obs}(\gamma)^{-1},\kappa \rangle$ on $H(F)_{(G,H)\tu{-reg}} \times G(\A_F)_{\semis}$ in the notation of \cite[(6.10.1)]{KottwitzEllipticSingular}, as follows from \cite[Lem.~4.1.(i)]{ArthurGlobalDescent} and the fact that the product formula holds in the strongly regular case. As $\tu{obs}(\gamma)$ vanishes on $\gamma\in G(F)$, we have
$$\Delta_{\A}(\gamma_H,\gamma)=1\quad\mbox{if}~\gamma_H\in H(\A_F)_{(G,H)\tu{-reg}}~\mbox{transfers to}~\gamma\in G(F).$$
The equivariance of the local transfer factors (Lemma \ref{lem:equivariance}) implies that there exists a continuous character $\lambda_H:Z_G(F)\backslash Z_G(\A_F)\ra \C^\times$ such that for $\gamma_H\in  H(\A_F)_{(G,H)\tu{-reg}}$ and $\gamma\in  G(\A_F)_{\semis}$,
$$\Delta_{\A}(x\gamma_H, x\gamma) = \lambda_H(x) \Delta_{\A}(\gamma_H, \gamma),
\qquad x\in Z_G(\A_F).
$$ 
More precisely, Lemma \ref{lem:equivariance} tells us that there exists a character $\lambda_H$ of $Z_G(\A_F)$ with the above property, and we should verify that $\lambda_H|_{Z_G(F)}$ is trivial. To check this, fix $\gamma_H\in H(F)_{G\tu{-sr}}$ and its transfer $\gamma\in G(F)$; such a pair exists by the running assumption from \S\ref{sss:global-strongly-regular}. Observe that \cite[\S\S3.4--3.5]{LanglandsShelstadDescent} can be adapted to the adelic setting by choosing global $a$-data and $\chi$-data and an admissible embedding $Z_H(\gamma_H)\ra Z_G(\gamma)$ over $F$. Then the desired triviality follows from the fact that a continuous cohomology class $ \mathbf{a}\in H^1(W_F,\hat T)$ in \emph{loc.~cit.}~gives rise to a character of $T(\A_F)$ that is trivial on $T(F)$, cf.~\cite[Thm.~2(b)]{LanglandsAbelian}. As the global analogue of Remark \ref{rem:lambdaH}, the restriction of $\lambda_H$ to $Z_G^{\circ}(F)\backslash Z^{\circ}_G(\A_F)$ corresponds to the composite $L$-morphism
$$W_F \ra {}^L H \stackrel{\eta}{\ra} {}^L G \ra {}^L Z^{\circ}_G.$$ 

\subsection{Local transfer}
We recall the endoscopic transfer of functions when $F$ is local. This readily allows us to transfer functions in the adelic setting, possibly away from finitely many places.

\begin{defn}
A \emph{(local) central character datum} is a pair \gls{(X,chi)}, where
\begin{itemize}
    \item $\fkX$ is a closed subgroup of $Z_G(F)$ equipped with a Haar measure,
    \item $\chi:\fkX\ra \C^\times$ is a smooth character.
\end{itemize}
\end{defn}

\subsubsection{}\label{sss:Hecke-alg-chi-inv}

Fix a Haar measure on $G(F)$ and a maximal compact subgroup $K$ of $G(F)$. Write $\cH(G,\chi^{-1})$ for the space of smooth bi-$K$-finite functions $f$ on $G(F)$ which are compactly supported modulo $\fkX$ and satisfy $f(xg)=\chi^{-1}(x)f(g)$ for $x\in \fkX$ and $g\in G(F)$. (If $F$ is nonarchimedean, the $K$-finiteness is automatic.)  In the case $\fkX=\{1\}$, we simply write $\cH(G)$; this is the usual Hecke algebra.

For each $f\in \cH(G,\chi^{-1})$ and $\gamma\in G(F)_{\semis}$, the orbital integral 
\begin{equation}\label{eq:def-orbital-integral}
    \gls{OGgamma(f)}=\int_{G_\gamma^{\circ}(F)\backslash G(F)} f(g^{-1}\gamma g) dg
\end{equation}
is defined verbatim as in the case $f\in\cH(G)$, where the quotient measure on $G_\gamma^{\circ}(F)\backslash G(F)$ is taken with respect to a Haar measure on $G_\gamma^{\circ}(F)$ (which is in practice chosen to be compatible with a Haar measure on another group). Similarly, the definition of stable orbital integrals \gls{SOGgamma(f)} is unchanged from the case $f\in \cH(G)$. When $G$ is clear from the context, we drop it from the notation.

For each $f\in \cH(G,\chi^{-1})$ and an irreducible admissible representation $\pi$ of $G(F)$ whose central character on $\fkX$ is $\chi$, we have the linear operator $v\mapsto \int_{G(F)/\fkX} f(g) (\pi(g) v) dg$ on the underlying space for $\pi$. 
Its trace has finite value and is to be denoted by
$$\tr_{\fkX} (f|\pi).$$

\subsubsection{The Langlands--Shelstad transfer}\label{sss:LS-transfer}

Let $\fke=(H,\cH,s,\eta)$ be an endoscopic datum for $G$ satisfying $\cH={}^L H$. Via the canonical embedding $Z_G\hra Z_H$, we identify $\fkX$ with a closed subgroup of $Z_H(F)$. Let $\lambda_H$ be as in Lemma \ref{lem:equivariance}.
Define a smooth character $\chi_H:=\chi \lambda_H^{-1}$ on $\fkX$. Then $(\fkX,\chi_H)$ is a central character datum for $H$.
Fix Haar measures on $G(F)$ and $H(F)$. 
Let $ \Delta: H(F)_{(G,H)\tu{-}\reg} \times G(F)_{\semis} \to \C$ be a transfer factor, which is necessarily a nonzero scalar multiple of the factor \eqref{eq:transfer-factor-(G,H)-reg}.
Write $G(F)_{\tu{ss}}/\sim$ for a set of representatives for semisimple conjugacy classes in $G(F)$.

\begin{defn}\label{def:Delta-transfer}
Let $f\in \cH(G,\chi^{-1})$ and $f^H\in \cH(H,\chi_H^{-1})$. We say that $f^H$ is a $\Delta$-\emph{transfer} of $f$ (with respect to $\fkw$, $z^{\Rig}$, and $\fke$) if the following holds:
$$ SO_{\gamma_H}(f^H) = \sum_{\gamma\in G(F)_{\tu{ss}}/\sim } \Delta(\gamma_{H}, \gamma) O_\gamma(f),\qquad \gamma_H\in H(F)_{(G,H)\tu{-}\reg}.$$
 where the Haar measures implicit in the (stable) orbital integrals are chosen as follows: if the transfer factor is nonzero, then  $H^{\circ}_{\gamma_H}$ and $G^{\circ}_{\gamma}$ are inner forms, and the Haar measures on $H^{\circ}_{\gamma_H}(F)$ and $G^{\circ}_{\gamma}(F)$ are chosen compatibly in the sense of \cite[p.631]{KottwitzTamagawa}. (Within the stable orbital integral, Haar measures are similarly normalized as in \cite[p.638]{KottwitzTamagawa}.)
\end{defn}

\begin{rem}
The definition is unchanged if the equality is restricted to $\gamma_H\in H(F)_{G\tu{-sr}}$, as shown by \cite[Lem.~2.4]{LanglandsShelstadDescent}. The existence of transfer is clearly independent of the normalization (i.e., rescaling) of $\Delta$.
Since the right hand side is $\chi_H^{-1}$-equivariant in view of Lemma \ref{lem:equivariance} and the $\chi^{-1}$-equivariance of $f$, it is natural to require $f^H$ to be $\chi_H^{-1}$-equivariant.
\end{rem}

\begin{prop}\label{prop:LS-transfer}
Every $f\in \cH(G,\chi^{-1})$ admits a $\Delta$-transfer in the above sense.
\end{prop}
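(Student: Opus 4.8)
The statement is the existence of Langlands--Shelstad transfer in the slightly enlarged setting where Hecke algebras are twisted by a central character $\chi^{-1}$ (and the target is twisted by $\chi_H^{-1} = \chi\lambda_H^{-1}$). The strategy is to reduce to the classical Langlands--Shelstad transfer theorem, which by now is known in full (Waldspurger/Ngô in the $p$-adic case, Shelstad in the archimedean case), applied on the level of the untwisted Hecke algebras $\cH(G)$ and $\cH(H)$. First I would fix a strongly $G$-regular semisimple element if one exists so that the datum is relevant; if no such element exists, then by the argument already recorded in \S\ref{sss:global-strongly-regular} the right-hand side of the transfer identity vanishes identically and $f^H = 0$ works, so we may assume relevance.

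\textbf{Reduction to the untwisted case.} The key step is an averaging/descent argument over $\fkX$. Choose an open compact subgroup (in the $p$-adic case) or use a smooth cutoff (in the archimedean case) to write $f \in \cH(G,\chi^{-1})$ as built from a function $f_0 \in C_c^\infty(G(F))$ via
\[
f(g) = \int_{\fkX} \chi^{-1}(x)\, f_0(xg)\, dx,
\]
which is possible since any $\chi^{-1}$-equivariant, compactly-supported-mod-$\fkX$ function arises this way (choose $f_0$ supported on a set mapping properly to $\fkX\backslash G(F)$). The orbital integrals of $f$ are then obtained from those of $f_0$ by the same averaging: $O_\gamma(f) = \int_{\fkX}\chi^{-1}(x) O_{x^{-1}\gamma}(f_0)\,dx$, using that conjugation commutes with central translation. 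Apply classical Langlands--Shelstad transfer (Proposition's untwisted analogue) to get $f_0^H \in C_c^\infty(H(F))$ with $SO_{\gamma_H}(f_0^H) = \sum_\gamma \Delta(\gamma_H,\gamma) O_\gamma(f_0)$. Now set
\[
f^H(h) := \int_{\fkX} \chi_H^{-1}(x)\, f_0^H(xh)\, dx \;\in\; \cH(H,\chi_H^{-1}).
\]
One then computes $SO_{\gamma_H}(f^H) = \int_{\fkX} \chi_H^{-1}(x) SO_{x^{-1}\gamma_H}(f_0^H)\,dx$, substitutes the transfer identity for $f_0^H$, and uses the equivariance $\Delta(x^{-1}\gamma_H, x^{-1}\gamma) = \lambda_H(x)^{-1}\Delta(\gamma_H,\gamma)$ from Lemma~\ref{lem:equivariance} together with $\chi_H = \chi\lambda_H^{-1}$ to see that all the $x$-dependence collapses to $\int_{\fkX}\chi^{-1}(x)O_{x^{-1}\gamma}(f_0)\,dx = O_\gamma(f)$, yielding exactly $SO_{\gamma_H}(f^H) = \sum_\gamma \Delta(\gamma_H,\gamma) O_\gamma(f)$ as required. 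The Haar measure normalizations on centralizers are the standard compatible ones, matching Definition~\ref{def:Delta-transfer}; they are unaffected by central translation since $x^{-1}\gamma$ and $\gamma$ have the same (connected) centralizer.

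\textbf{Main obstacle.} The genuinely nontrivial input is the classical Langlands--Shelstad transfer theorem itself, which I am invoking as a black box; everything else is bookkeeping with the central character twist and the equivariance lemma. Within that bookkeeping, the one point requiring a little care is that the integral over $\fkX$ defining $f^H$ actually produces a function in $\cH(H,\chi_H^{-1})$ --- i.e., that it is compactly supported modulo $\fkX$ and bi-$K$-finite (in the archimedean case). Compact support mod $\fkX$ follows because $f_0^H$ has compact support and $\fkX$ is closed in $Z_H(F)$; $K$-finiteness is inherited from $f_0^H$ since central translation commutes with the $K$-action. In the archimedean case one should also note that $\fke$ with $\cH = {}^LH$ forces $G_{\der}$ simply connected or else one first passes to a $z$-extension, but that reduction is already assumed in the setup of \S\ref{ss:local transfer factors}, so no additional work is needed here. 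Thus the proposition follows by combining classical transfer with the central-character averaging argument, exactly as above.
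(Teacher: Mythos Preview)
Your proposal is correct and follows essentially the same approach as the paper: reduce to the untwisted case $\fkX=\{1\}$ by an averaging argument over $\fkX$, then invoke the classical Langlands--Shelstad transfer (Shelstad in the archimedean case; Waldspurger's reduction to the fundamental lemma plus Ng\^o in the nonarchimedean case). The paper's proof is terser---it cites \cite[Prop.~7.4.11]{KSZ} for the averaging step rather than spelling it out---but your expanded sketch of the averaging computation is accurate and matches what that reference does.
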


\begin{proof}
The general case is obtained from the case when $\fkX=\{1\}$ by averaging (see the proof of \cite[Prop.~7.4.11]{KSZ} for details), so we may assume $\fkX=\{1\}$.
Then the archimedean case is proven by \cite{ShelstadRealEndoscopy}, in light of \cite[Thm.~2.6.A]{LanglandsShelstadDescent}. In the nonarchimedean case, this is reduced to the fundamental lemma (Proposition \ref{prop:FL} below) by \cite{WaldspurgerFLimpliesTC}. The proof of the fundamental lemma was completed by Ng\^o \cite{NgoFL}, based on earlier works \cite{CluckersLoeser,Hales,WaldspurgerChangement}.
\end{proof}

\subsubsection{The fundamental lemma}\label{sss:fundamental-lemma}

We retain the notation from \S\ref{sss:LS-transfer}. Suppose that $F$ is nonarchimedean and that $G$ and $H$ are unramified over $F$. We fix pinnings $(B,T,\{X_\alpha\})$ for $G$ and $(B_H,T_H,\{Y_{\alpha}\})$ for $H$ defined over $F$. This determines hyperspecial subgroups $K\subset G(F)$ and $K_H\subset H(F)$; see \cite[\S4.1]{Waldspurger-sitordue}. We normalize Haar measures on $G(F)$ and $H(F)$ such that $K$ and $K_H$ have volume 1.
Define \gls{cHur(G,chi)} to be the space of bi-$K$-invariant functions which have compact support modulo $\fkX$ and transform under $\fkX$ by $\chi^{-1}$. Likewise $\cH^{\ur}(H,\chi_H^{-1})$ is defined. We have a morphism of unramified Hecke algebras induced by $\eta:{}^L H\ra {}^L G$ with respect to $(\fkX,\chi)$ (cf.~\cite[\S7.4.10]{KSZ})
$$\eta^*: \cH^{\ur}(G,\chi^{-1})\ra \cH^{\ur}(H,\chi_H^{-1}).$$
Let $\gls{Delta0}$ denote the canonical transfer factor for quasi-split groups \cite[\S3.7]{LanglandsShelstad} (extended to the $(G,H)$-regular case by \cite{LanglandsShelstadDescent}). 

\begin{prop}\label{prop:FL}
For each $f\in \cH^{\ur}(G,\chi^{-1})$, the function $\eta^*(f)$ is a $\Delta_0$-transfer of $f$.
\end{prop}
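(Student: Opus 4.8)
The statement to prove is Proposition~\ref{prop:FL}: the fundamental lemma with central character data, asserting that $\eta^*(f)$ is a $\Delta_0$-transfer of $f$ for $f\in\cH^{\ur}(G,\chi^{-1})$.

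\medskip

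The plan is to reduce to the case $\fkX=\{1\}$ and invoke the classical fundamental lemma of Ng\^o (together with the work on twisted/weighted fundamental lemmas and the reduction from endoscopic transfer to the Lie algebra version). First I would observe, exactly as in the proof of Proposition~\ref{prop:LS-transfer}, that the central character version follows formally from the $\fkX = \{1\}$ version by an averaging (integration over $\fkX$) argument: given $f\in\cH^{\ur}(G,\chi^{-1})$, choose a compactly supported bi-$K$-invariant function $f_0$ on $G(F)$ whose average $\int_{\fkX}\chi(x)f_0(x\cdot)\,dx$ equals $f$ (possible since $K\cap\fkX$ maps onto the relevant quotient, using that $\chi$ is unramified on $\fkX$ — if $\chi$ is ramified on $\fkX$ then $\cH^{\ur}(G,\chi^{-1})=0$ and there is nothing to prove); one checks that $\eta^*$ commutes with this averaging operation and that orbital integrals and stable orbital integrals transform correspondingly under the $\fkX$-translation, so the transfer identity for $f_0$ in the classical setting implies the identity for $f$. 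The compatibility of $\eta^*$ with averaging is where one uses the precise definition of $\eta^*$ via $(\fkX,\chi)$ from \cite[\S7.4.10]{KSZ}; this is a routine unwinding.

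\medskip

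With $\fkX=\{1\}$, the statement is precisely the fundamental lemma for the full Hecke algebra relating unramified $G$ and an unramified endoscopic group $H$, with the canonical (Whittaker-normalized, since $G$ is quasi-split and unramified here the Whittaker normalization for a hyperspecial vertex agrees with $\Delta_0$) transfer factor $\Delta_0$, extended to $(G,H)$-regular elements. By Hales' descent argument one reduces the full Hecke algebra statement to the unit element $\mathbf{1}_K$ (the ``standard'' fundamental lemma), and Waldspurger's work reduces this to the Lie algebra fundamental lemma; the Lie algebra statement over $F$ nonarchimedean of residue characteristic $0$ — meaning here of \emph{arbitrary} residue characteristic, via the Cluckers--Loeser transfer principle and Waldspurger's reduction to large residue characteristic — is Ng\^o's theorem. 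The extension from the strongly $G$-regular locus to the $(G,H)$-regular semisimple locus is handled by the descent formalism of Langlands--Shelstad \cite{LanglandsShelstadDescent}, which is already built into the definition of $\Delta_0$ on $H(F)_{(G,H)\tu{-reg}}$ cited in \S\ref{sss:fundamental-lemma}; concretely, both sides of the transfer identity satisfy parabolic/semisimple descent to the centralizers, reducing the $(G,H)$-regular case to the strongly regular case in smaller groups.

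\medskip

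The main obstacle — or rather the main point requiring care rather than genuine difficulty — is the bookkeeping around the central character datum: verifying that the averaging operation is compatible with $\eta^*$ and with the normalizations of Haar measures (recall $K$ and $K_H$ are given volume $1$), and that the transfer factor $\Delta_0$ in the $(\fkX,\chi)$-setting is the one obtained from the $\fkX=\{1\}$ transfer factor by the equivariance of Lemma~\ref{lem:equivariance} with the character $\lambda_H$ (which in the unramified situation is itself unramified, so consistent with $\chi_H=\chi\lambda_H^{-1}$ being unramified on $\fkX$). None of this is deep, but it must be stated cleanly. The genuinely hard input — Ng\^o's proof of the Lie algebra fundamental lemma and Waldspurger's and Cluckers--Loeser's reductions — is cited as a black box, as in Proposition~\ref{prop:LS-transfer}. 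So the proof I would write is short: reduce to $\fkX=\{1\}$ by averaging, reduce the $(G,H)$-regular case to the strongly regular case by Langlands--Shelstad descent, and cite \cite{NgoFL,WaldspurgerFLimpliesTC,Hales,CluckersLoeser,WaldspurgerChangement} for the remaining statement, noting the agreement of $\Delta_0$ with the Whittaker normalization at a hyperspecial point (e.g.\ via \cite{Hales} or \cite[\S5]{KottwitzShelstadSigns}).
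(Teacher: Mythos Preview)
Your proposal is correct and follows essentially the same approach as the paper: reduce to $\fkX=\{1\}$ by averaging (the paper cites \cite[Prop.~7.4.11]{KSZ} for this), then invoke Ng\^o's fundamental lemma together with the reductions of Hales, Waldspurger, and Cluckers--Loeser. The paper's own proof is in fact just two sentences pointing back to the proof of Proposition~\ref{prop:LS-transfer}, so your write-up is considerably more detailed than theirs; your extra remarks on the $(G,H)$-regular extension and the Whittaker-vs-$\Delta_0$ normalization are accurate but not strictly needed, since the former is already absorbed into Definition~\ref{def:Delta-transfer} via \cite[Lem.~2.4]{LanglandsShelstadDescent}.
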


\begin{proof}
This follows from the case $\fkX=\{1\}$ by averaging with respect to central character data. The proof when $\fkX=\{1\}$ is already explained in the proof of Proposition \ref{prop:LS-transfer}.
\end{proof}

The $\Delta_0$ transfer factor is somewhat incompatible with the transfer factors we use because it is formulated in terms of the arithmetic normalization of the Langlands pairing for tori, whereas we use the geometric normalization (see \cite[\S4]{KottwitzShelstadSigns}). Let $\Delta_D$ denote the version of $\Delta_0$ defined with respect to the geometric normalization of the Langlands pairing and let $\Delta'_0$ denote the version of $\Delta_0$ using $s^{-1}$ as opposed to $s$ (see \cite[\S5.1]{KottwitzShelstadSigns}).  Let $V$ be the virtual representation of $\Gamma$ given by $X^*(T)_{\C} - X^*(T_H)_{\C}$.

\begin{cor}{\label{cor:FLD}}
For each $f\in \cH^{\ur}(G,\chi^{-1})$, the function $\eta^*(f)$ is a $\Delta_D$-transfer of $f$.
\end{cor}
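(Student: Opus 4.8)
The plan is to deduce Corollary \ref{cor:FLD} directly from Proposition \ref{prop:FL} by tracking the difference between the two normalizations of the Langlands pairing for tori. The only place in the transfer factor where the arithmetic-versus-geometric distinction enters is the factor $\Delta_{III_2}$ (in the Langlands--Shelstad decomposition), which involves the pairing between a cohomology class and an element of $\hat T$; all other factors ($\Delta_I$, $\Delta_{II}$, $\Delta_{IV}$, and the splitting-invariant term) are insensitive to the choice. So the first step is to recall from \cite[\S4]{KottwitzShelstadSigns} that replacing the arithmetic normalization by the geometric one changes the pairing for a torus $T$ by the sign character attached to the virtual $\Gamma$-module $X^*(T)_{\C}$, or more precisely that the ratio $\Delta_0/\Delta_D$ evaluated at a pair $(\gamma_H,\gamma)$ with associated tori $T_H\hookrightarrow G$, $T\subset G$ is the value at $\gamma$ (or its image in the relevant torus quotient) of the quadratic character of $T(F)$ corresponding via local class field theory to the sign character of $\Gamma$ on $V=X^*(T)_{\C}-X^*(T_H)_{\C}$.

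Next I would observe that this quadratic character is unramified: because $G$ and $H$ are unramified over $F$ and the tori involved in the fundamental lemma comparison are the unramified maximal tori $T\subset B$, $T_H\subset B_H$ coming from the fixed pinnings, the virtual Galois module $V$ is unramified, so the sign character of $\Gamma$ on $V$ factors through $\Gamma/I_F=\hat{\Z}$, and hence the corresponding character of $T(F)$ is unramified, i.e.\ trivial on the maximal compact of $T(F)$ and determined by its value on a uniformizer. The key point is then that on the strongly $G$-regular locus relevant to computing orbital integrals of bi-$K$-invariant functions, the element $\gamma$ (conjugate into $T(F)$) contributing a nonzero orbital integral can be taken to have its image in the relevant torus lying in the maximal compact subgroup — equivalently, the quadratic character evaluates trivially on every $\gamma$ that contributes. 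This makes $\Delta_0$ and $\Delta_D$ agree identically on the pairs that matter, so $\Delta_0$-transfer is the same as $\Delta_D$-transfer.

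Actually, the cleaner formulation, which I would adopt to avoid a support argument, is the following: by \cite[Lemma 4.5.A]{KottwitzShelstadSigns} (or the explicit computation in \emph{loc.\ cit.}) the ratio $\Delta_0(\gamma_H,\gamma)/\Delta_D(\gamma_H,\gamma)$ equals the value on $\gamma$ of the character $\chi_V$ of $G(F)$ (pulled back from the abelianization) attached to $\det(V)$; since $V$ is unramified and the fundamental lemma of Proposition \ref{prop:FL} is an equality of functions, multiplying the transfer factor by an unramified quadratic character of $G(F)$ corresponds to replacing $f$ by $\chi_V\cdot f$ on the $G$-side and $\chi_{V,H}\cdot f^H$ on the $H$-side in a way compatible with $\eta^*$ (because $\eta^*$ intertwines these twists — $\chi_V$ corresponds on the dual side to an element of $Z(\hat G)^{\Gamma}$ acting trivially through $\eta$ on the relevant Satake parameters of $H$). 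The main obstacle is precisely this last bookkeeping: one must check that the unramified quadratic character by which $\Delta_D$ differs from $\Delta_0$ is ``seen'' identically on the $G$- and $H$-sides, so that it cancels in the transfer identity; this is exactly the sign computation of \cite[\S4, \S5.5]{KottwitzShelstadSigns} comparing $\Delta_D$ and $\Delta_0$, and once it is invoked the corollary is immediate from Proposition \ref{prop:FL}. I would close by noting that the $(G,H)$-regular case follows from the strongly regular case by the same limiting argument as in Proposition \ref{prop:FL}, and that the extension to nontrivial $\fkX$ is again by averaging.
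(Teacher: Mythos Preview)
Your approach has a genuine gap. The tori that enter the factor $\Delta_{III_2}$ (and hence the ratio $\Delta_0/\Delta_D$) are the \emph{centralizer} tori $T_\gamma=Z_G(\gamma)$ and $T_{\gamma_H}=Z_H(\gamma_H)$ of the strongly regular elements in question, not the unramified tori coming from the fixed pinnings. So the assertion in your second paragraph that ``the tori involved in the fundamental lemma comparison are the unramified maximal tori $T\subset B$, $T_H\subset B_H$'' is incorrect, and with it the claim that the relevant virtual module is unramified for every $(\gamma_H,\gamma)$. A strongly regular $\gamma$ contributing a nonzero orbital integral against a spherical function certainly need not lie in an unramified maximal torus.

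Your ``cleaner formulation'' does not rescue this: the claim that $\Delta_0/\Delta_D$ equals the value at $\gamma$ of a fixed character of $G(F)$ pulled back from the abelianization is not what \cite[\S4]{KottwitzShelstadSigns} provides, and there is no Lemma~4.5.A there to that effect. The ratio is a priori a function on $T_\gamma(F)$ depending on the centralizer torus, with no evident reason to descend to a single character of $G(F)$ independent of $\gamma$. The paper's proof avoids computing $\Delta_0/\Delta_D$ directly and instead routes through the Whittaker-normalized factors: from the $\Delta_0$-transfer it passes to $\Delta_0[\fkw]$ (picking up the constant $\epsilon(V,\theta_F)$), identifies this with $\Delta'_0[\fkw]$ for the datum $(H,\cH,s^{-1},\eta)$, applies the inversion lemma \cite[Lem.~3.5]{BMNKottwitzConj} to pass to $\Delta_D[\fkw^{-1}]$, and uses that inversion commutes with $\eta^*$. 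The net discrepancy is then the \emph{constant} $\epsilon(V,\theta_F)/\epsilon(V,\theta_F^{-1})=\det(V)(-1)$, where $V=X^*(T)_\C-X^*(T_H)_\C$ is now built from the pinning tori; \emph{this} $V$ is indeed unramified (since $G$ and $H$ are), whence $\det(V)(-1)=1$.
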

\begin{proof}
As in the last proposition, we only need to check this for the case $\fkX=\{1\}$. 

By Proposition \ref{prop:FL}, we have that $\eta^*(f)$ is a $\Delta_0$-transfer of $f$. Temporarily fix an additive character $\theta_F: F \to \C^{\times}$ and define $\Delta_0[\fkw] :=\epsilon(V, \theta_F)\Delta_0$ as in \cite[\S5.3]{KottwitzShelstad} and analogously, $\Delta'_0[\fkw] := \epsilon(V, \theta_F)\Delta'_0$. Then $\eta^*(f)\epsilon(V, \theta_F)$ is a $\Delta_0[\fkw]$-transfer of $f$ and equivalently $\eta^*(f)\epsilon(V, \theta_F)$  is a $\Delta'_0[\fkw]$-transfer of $f$ relative to the endoscopic datum $(H, \cH, s^{-1}, \eta)$. (We are ``flipping the sign of $s$'' twice here: once by changing the endoscopic datum and once by changing $\Delta_0[\fkw]$ to $\Delta'_0[\fkw]$.)

Now, by \cite[Lem.~3.5]{BMNKottwitzConj}, we have that $\eta^*(f)\epsilon(V, \theta_F) \circ i_H$ is a $\Delta_D[\fkw^{-1}]$-transfer of $f \circ i$, where $i$ is inversion on $G(F)$ and $i_H$ is inversion on $H(F)$ and $\fkw^{-1}$ corresponds to replacing $\theta_F$ with $\theta^{-1}_F$. One checks that $i$ commutes with $\eta^*$ and hence that $\eta^*(f)\epsilon(V, \psi_F)$ is $\Delta_D[\fkw^{-1}]$-transfer of $f$. Thus $\eta^*(f)\epsilon(V, \theta_F)/\epsilon(V, \theta^{-1}_F)$ is a $\Delta_D$-transfer of $f$. Note that $\epsilon(V, \theta^{-1}_F) = \epsilon(V, \theta_F) \det(V)(-1)$, where $\det(V)$ is a representation of $F^{\times}$ via local class field theory. Since $G$ and $H$ are unramified, $V$ is an unramified virtual representation. Hence $\det(V)(-1)=1$, which implies the proposition. 
\end{proof}
\subsubsection{Lefschetz functions on real groups}\label{sss:Lefschetz}

This paragraph will be useful for  stabilizing the archimedean terms in the point-counting formula.

Let $G$ be a connected reductive group over $\R$ which contains an elliptic maximal torus. Fix Haar measures on $G(\R)$ and $Z_G(\R)$, thus also on $G(\R)/Z_G(\R)$. 

Let $\varphi: W_{\R}\ra {}^L G$ be a discrete $L$-parameter. Write $\Pi_\infty(\varphi)$ for the discrete $L$-packet associated with $\varphi$ by \cite{LanglandsRealClassification}. Let $\omega_\varphi:Z_G(\R)\ra \C^\times$ be the common central character for members of $\Pi_\infty(\varphi)$.
There is a group-theoretic recipe for $\omega_\varphi$ as in \S2 of \emph{loc.~cit.}~(where it is denoted by $\chi_\varphi$). We will work with the central character datum $(Z_G(\R),\omega_\varphi)$; this will go well with the main global central character datum of this paper described in Example \ref{ex:fkX-K} below.

 We introduce an \emph{averaged Lefschetz function}
\begin{equation}\label{eq:Lefschetz}
    \gls{favgphi}:=\frac{1}{|\Pi_\infty(\varphi)|} \sum_{\pi\in \Pi_\infty(\varphi)} f_\pi \in \cH(G(\R),\omega_\varphi^{-1}),
\end{equation}
where $f_\pi$ denotes a pseudo-coefficient for $\pi$ \`a la Clozel--Delorme \cite{ClozelDelorme:PaleyWiener2}. Even though $f^{\tu{avg}}_\varphi$ is not uniquely defined, invariant distributions on $G(\R)$ have well-defined values at $f^{\tu{avg}}_\varphi$ thanks to the following characterizing property: the trace of $f^{\tu{avg}}_\varphi$ against an irreducible tempered representation $\pi$ of $G(\R)$ with central character $\omega_\phi$ is $0$ unless $\pi\in \Pi_\infty(\varphi)$, in which case the trace equals $1$.

An important example arises from an irreducible algebraic representation $\xi$ of $G_{\C}$. By restriction, $\xi$ induces a continuous central character $\omega_\xi:Z_G(\R)\ra \C^\times$. Moreover, $\xi$ determines a discrete $L$-parameter $\varphi_\xi: W_{\R}\ra {}^L G$ whose $L$-packet $\Pi_\infty(\varphi_\xi)$ consists of irreducible discrete series representations whose central and infinitesimal characters are the same as the contragredient of $\xi$. Note that $\omega_{\varphi_\xi}=\omega_\xi^{-1}$.
In this case, we set
$$f^{\tu{avg}}_\xi:=f^{\tu{avg}}_\varphi, \qquad 
\Pi_\infty(\xi):=\Pi_\infty(\varphi_\xi).$$

\subsection{Acceptable elements and acceptable functions}\label{ss:acceptable}

Let $F$ be a non-archimedean local field with a uniformizer $\varpi$, and $\nu:\D\ra G$ a fractional cocharacter over $F$.  Write $M_\nu$ for the centralizer of $\nu$ in $G$, which is a Levi subgroup of $G$. We have a unique $F$-rational parabolic subgroup $P_\nu$ (resp.~$P_{\nu}^\op$) of $G$ with \gls{Mnu} as a Levi factor such that every root $\alpha$ of $A_{M_\nu}$ with root group in the unipotent radical $N_{\nu}$ of $P_{\nu}$ (resp.~$P_{\nu}^\op$) satisfies $\langle \alpha,\nu\rangle<0$ (resp.~$\langle \alpha,\nu\rangle>0$). 
Let $(\fkX,\chi)$ be a central character datum of $G$, which can also be viewed as a central character datum of $M_\nu$.

\begin{defn}\label{def:acceptable}
An element $\gamma\in M_\nu(\ol F)$ is said to $\nu$-\emph{acceptable} (or \emph{acceptable} when $\nu$ is clear from the context) if the adjoint action of $\gamma$ on $\Lie N_{\nu}(\ol F)$ is dilating, namely if $|\lambda|>1$ for every eigenvalue $\lambda$ of the action. By $\cH_{\tu{acc}}(M_\nu)\subset \cH(M_{\nu})$ and $\gls{Hacc(M)} \subset \cH(M_{\nu}, \chi^{-1})$ we denote the subspaces of functions supported on acceptable elements.
\end{defn}
Let $J$ be an inner form of $M_\nu$ over $F$, equipped with an $M_\nu(\ol F)$-conjugacy class of isomorphisms $i:J_{\ol F}\simeq M_{\nu,\ol F}$.
Via the canonical isomorphism $Z(J)\cong Z(M_\nu)$ over $F$, we can view $(\fkX,\chi)$ as a central character datum of $J$.
Since the acceptability of $\gamma\in M_\nu(\ol F)$ depends only on its $M_\nu(\ol F)$-conjugacy class, the following definition depends on $i$ only though its $M_{\nu}(\ol F)$-conjugacy class.

\begin{defn}\label{def:acceptable-J}
An element $\delta\in J(\ol F)$ is said to \emph{acceptable} if $i(\delta)\in M_{\nu}(\ol F)$ is acceptable. 
The spaces $\cH_{\tu{acc}}(J)$ and $\cH_{\tu{acc}}(J,\chi^{-1})$ are defined as in Definition \ref{def:acceptable}.
\end{defn}

\begin{ex}\label{ex:M*Jb}
In the setting of \S\ref{BGreview}, we can take $\nu=\ol \nu_b$ in $G^*$. Then $M_\nu=M_b$, and $J=J_b$ is an inner twist of $M_b$ via \eqref{eq:M*Jb}.
\end{ex}

\subsubsection{}
Choose $r\in \Z_{\ge 1}$  such that $r\nu$ is a cocharacter of $G$ (i.e., factors through the projection $\D\ra \G_m$).
For $\phi\in \cH(M_{\nu})$ and $j\in r\Z$, we define the following translates of $\phi$:
$$\gls{phi(j)}\in \cH(M_{\nu})\quad \mbox{by}\quad
\phi^{(j)}(\gamma)= \phi((j\nu)(\varpi)^{-1}\gamma).
$$
For $\phi\in \cH(J)$ and $j\in r\Z$, the same formula defines $\phi^{(j)}\in \cH(J)$.

\begin{lem}
Given $\phi\in \cH(M_{\nu})$, there exists $j_0\in \Z$ such that $\phi^{(j)}\in \cH_{\tu{acc}}(M_{\nu})$ for every $j\in r\Z$ with $j\ge j_0$. The same holds true if $M_{\nu}$ is replaced with $J$. 
\end{lem}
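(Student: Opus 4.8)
The plan is to use the identity $\supp\phi^{(j)}=(j\nu)(\varpi)\cdot\supp\phi$ and to show that translating the fixed compact set $\supp\phi$ by the increasingly contracting central element $(j\nu)(\varpi)$ eventually forces it into the $\nu$-acceptable locus. First I would record the structure: since $M_\nu=Z_G(\nu)$, the image of $\nu$ lies in $Z(M_\nu)$, so for $j\in r\Z$ the genuine cocharacter $j\nu$ factors through $A_{M_\nu}$ and $(j\nu)(\varpi)\in A_{M_\nu}(F)$. I would then decompose $\Lie N_\nu(\ol F)=\bigoplus_\alpha(\Lie N_\nu)_\alpha$ into $A_{M_\nu}$-weight spaces indexed by the finitely many roots $\alpha$ of $A_{M_\nu}$ in $N_\nu$, each of which satisfies $\langle\alpha,\nu\rangle<0$ by the defining property of $P_\nu$. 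Two elementary observations drive the proof: because $A_{M_\nu}$ is central in $M_\nu$, $\ad(\gamma)$ commutes with the $A_{M_\nu}$-action for every $\gamma\in M_\nu$ and hence preserves each $(\Lie N_\nu)_\alpha$; and $(j\nu)(\varpi)$ acts on $(\Lie N_\nu)_\alpha$ by the scalar $\varpi^{j\langle\alpha,\nu\rangle}$, of absolute value $q^{j|\langle\alpha,\nu\rangle|}$ (with $q$ the residue cardinality of $F$), which grows without bound in $j$.

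The second ingredient is compactness. Since the adjoint action is an $F$-morphism $M_\nu\to\GL(\Lie N_\nu)$, it maps the compact set $\Omega:=\supp\phi$, and also $\Omega^{-1}$, to compact subsets of $\GL(\Lie N_\nu(F))$; the coefficients of the characteristic polynomials of $\ad(\gamma)$ and of $\ad(\gamma)^{-1}=\ad(\gamma^{-1})$ are therefore bounded on $\Omega$, which yields a constant $c>0$ such that every eigenvalue of $\ad(\gamma)\vert_{\Lie N_\nu}$ has absolute value $\ge c$ for all $\gamma\in\Omega$. Combining this with the two observations above: for $\gamma\in\Omega$ the eigenvalues of $\ad((j\nu)(\varpi)\gamma)$ on $(\Lie N_\nu)_\alpha$ are $\varpi^{j\langle\alpha,\nu\rangle}$ times those of $\ad(\gamma)$ on $(\Lie N_\nu)_\alpha$ (the central factor commutes with $\ad(\gamma)$), hence have absolute value $\ge q^{jc_\nu}c$, where $c_\nu:=\min_\alpha|\langle\alpha,\nu\rangle|>0$. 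Taking $j_0$ with $q^{j_0 c_\nu}c>1$ makes $(j\nu)(\varpi)\gamma$ $\nu$-acceptable for every $j\in r\Z$ with $j\ge j_0$, that is, $\phi^{(j)}\in\cH_{\tu{acc}}(M_\nu)$. (If $N_\nu$ is trivial the statement is vacuous.)

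For the statement about $J$ the plan is to transport everything through $i\colon J_{\ol F}\simeq M_{\nu,\ol F}$. The restriction of $i$ to centers is the canonical $F$-isomorphism $Z(J)\cong Z(M_\nu)$, so $i(A_J)=A_{M_\nu}$, and $i$ carries the element $(j\nu)(\varpi)\in A_J(F)$ (which is how the translate is formed in $\cH(J)$) to $(j\nu)(\varpi)\in A_{M_\nu}(F)$; thus $\ad(i(\delta))$ again preserves each $(\Lie N_\nu)_\alpha$ and $i((j\nu)(\varpi)\delta)=(j\nu)(\varpi)\,i(\delta)$. The only point needing care is that the characteristic polynomial of $\ad(i(\delta))\vert_{\Lie N_\nu}$ has coefficients that are $F$-regular functions of $\delta\in J$; this holds because they are insensitive to the choice of $i$ within its $M_\nu(\ol F)$-conjugacy class and are $\Gamma_F$-invariant. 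Granting that, the uniform bound $|\mu|\ge c$ over the compact set $\supp\phi\subset J(F)$ follows exactly as before, and the rest of the argument applies verbatim with $i(\delta)$ in place of $\gamma$.

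I expect the only genuine subtlety to be this last $F$-rationality remark in the inner-form case — together with the related point, visible already for $M_\nu$, that one must bound eigenvalues \emph{away from $0$} (whence the need to also invoke $\Omega^{-1}$), not merely bound the characteristic polynomial. Everything else is elementary once the $A_{M_\nu}$-weight decomposition and the centrality of $(j\nu)(\varpi)$ are in hand; no endoscopy or representation theory enters, only linear algebra over a non-archimedean local field together with the definition of $P_\nu$.
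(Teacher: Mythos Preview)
Your proposal is correct and is precisely the argument the paper has in mind: the paper's proof consists of the single sentence ``This follows from the facts that $r\nu(\varpi)$ is acceptable and that $\phi$ has compact support,'' and what you have written is exactly the unpacking of that sentence --- the weight decomposition under $A_{M_\nu}$, the centrality of $(j\nu)(\varpi)$, and the compactness bound on the eigenvalues of $\ad(\gamma)$ over $\supp\phi$. Your handling of the $F$-rationality point for $J$ (that the characteristic-polynomial coefficients of $\ad(i(\delta))\vert_{\Lie N_\nu}$ are $\Gamma_F$-invariant because $i$ is well defined up to $M_\nu(\ol F)$-conjugacy and the characteristic polynomial is conjugation-invariant) is correct and is a detail the paper suppresses entirely.
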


\begin{proof}
This follows from the facts that $r\nu(\varpi)$ is acceptable and that $\phi$ has compact support.
\end{proof}

When $P\subset G$ is an $F$-rational parabolic subgroup with a Levi factor $M$, write $\delta_P:M(F)\ra \C^\times$ for the modulus character. Let $J_P$ denote the normalized Jacquet module functor from smooth representations of $G(F)$ to those of $M(F)$. 

\begin{defn}\label{def:ascent}
Let $\phi\in \cH_{\tu{acc}}(M_{\nu})$. We say that $f\in \cH(G)$ is a $\nu$-\emph{ascent} of $\phi$ if 
\begin{enumerate}
    \item for every $g\in G(F)_{\semis}$, we have $O_g(f)=0$ unless $g$ is conjugate in $G(F)$ to a $\nu$-acceptable element $m\in M_{\nu}(F)$, in which case
    $$O^G_g(f)=\gls{deltaP}^{-1/2} O^{M_\nu}_m(\phi).$$
    \item $\tr (f|\pi)=\tr (\phi|\gls{JP(pi)})$ for admissible representations $\pi$ of $G(F)$.
\end{enumerate}
\end{defn}

\begin{lem}{\label{lem:ascentexistance}}
Every $\phi\in \cH_{\tu{acc}}(M_{\nu})$ admits a $\nu$-ascent $f\in \cH(G)$. Moreover, statement (i) of Definition \ref{def:ascent} holds with stable orbital integrals in place of orbital integrals.
\end{lem}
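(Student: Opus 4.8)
The plan is to construct the ascent $f$ explicitly as a parabolic pseudo-inflation of $\phi$ along $P_\nu$, and then verify the two properties by relating orbital integrals on $G(F)$ to those on $M_\nu(F)$ via the parabolic descent of orbital integrals, using the acceptability hypothesis to kill all the ``non-acceptable'' terms. Concretely, first I would recall the standard fact (essentially van Dijk, or Arthur's descent formulas for orbital integrals along a parabolic) that for $f\in\cH(G)$ and $m\in M_\nu(F)_{\semis}$ one has
\begin{equation*}
O^G_m(f) = \delta_{P_\nu}(m)^{-1/2}\, O^{M_\nu}_m(f^{(P_\nu)}),
\end{equation*}
where $f^{(P_\nu)}(m) = \delta_{P_\nu}(m)^{1/2}\int_{K}\int_{N_\nu(F)} f(k^{-1}mnk)\,dn\,dk$ is the (normalized) constant term of $f$ along $P_\nu$, provided that $M_{\nu,m}=G_m$ as algebraic groups — and this equality of centralizers holds precisely because $m$ is $\nu$-acceptable (acceptability forces $\Lie N_\nu$ to have no $\ad(m)$-fixed vectors, so $G_m\subseteq M_\nu$). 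Dually, the constant-term map $f\mapsto f^{(P_\nu)}$ is adjoint to the normalized Jacquet functor: $\tr(f|\pi)=\tr(f^{(P_\nu)}|J_{P_\nu}(\pi))$ for all admissible $\pi$. So it suffices to find $f\in\cH(G)$ whose normalized constant term along $P_\nu$ equals $\phi$.

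The existence of such an $f$ — a ``lift'' of $\phi\in\cH(M_\nu)$ to $\cH(G)$ with prescribed constant term — is exactly the content of the parabolic induction at the level of Hecke algebras: the constant-term map $\cH(G)\to\cH(M_\nu)$ is surjective. One way to see this is to pick any $g_0\in\cH(G)$ whose constant term along the opposite parabolic $P_\nu^{\op}$ is nonzero near the support of $\phi$ (e.g. take $g_0$ highly concentrated and translate), or more cleanly to invoke the well-known surjectivity of $f\mapsto f^{(P_\nu)}$ onto $\cH(M_\nu)$ (Casselman; see also the construction of truncated Hecke operators). Given surjectivity, choose $f$ with $f^{(P_\nu)}=\phi$; then property (ii) of Definition \ref{def:ascent} is immediate from Jacquet-module adjunction, and property (i) for arbitrary $g\in G(F)_{\semis}$ follows because: if $g$ is not conjugate to any acceptable element of $M_\nu(F)$, the support condition on $\phi$ combined with the descent formula (applied after conjugating $g$ into $M_\nu$, when possible at all) forces $O^G_g(f)=0$; while if $g$ is conjugate to an acceptable $m\in M_\nu(F)$, the descent formula together with $\delta_{P_\nu}(m)=1$ on the relevant locus — wait, more precisely the factor $\delta_{P_\nu}(m)^{-1/2}$ in the descent formula, combined with the $\delta_{P_\nu}^{-1/2}$ normalization built into $f^{(P_\nu)}$, collapses to give $O^G_g(f)=\delta_{P_\nu}(g)^{-1/2}O^{M_\nu}_m(\phi)$ as required. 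For the stable refinement, I would run the same argument with stable orbital integrals, using that the centralizer equality $G_m=M_{\nu,m}$ from acceptability identifies the relevant sets of rational conjugacy classes within a stable class on the $G$-side and the $M_\nu$-side, so that $SO^G_g(f)=\delta_{P_\nu}^{-1/2}SO^{M_\nu}_m(\phi)$ by summing the pointwise identity over a stable class; the key point is again that acceptability makes the parabolic descent ``clean'' with no contributions from elements conjugating out of $M_\nu$.

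The main obstacle, I expect, is bookkeeping the support condition carefully: one must check that for a general semisimple $g\in G(F)$, either $g$ has no $G(F)$-conjugate inside $M_\nu(F)$ at all (in which case $O^G_g(f)=0$ requires knowing $f^{(P_\nu^{\op})}$ or equivalently that $f$ can be taken supported appropriately — this is where the freedom in choosing the lift $f$ matters), or $g$ does meet $M_\nu(F)$ but only at non-acceptable elements, in which case vanishing comes from $\supp\phi\subseteq M_\nu(F)_{\tu{acc}}$ via the descent formula; and one must ensure the descent formula is being applied with the centralizers genuinely equal, which is the role of acceptability. A secondary technical point is matching Haar-measure normalizations on $G_g^\circ$ versus $M_{\nu,m}^\circ$ through the descent formula so that the constant in (i) is exactly $\delta_{P_\nu}^{-1/2}$ with no stray volume factor — but since acceptability gives $G_g^\circ=M_{\nu,m}^\circ$ on the nose, these measures literally agree and no correction is needed.
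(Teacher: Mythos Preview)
There is a genuine gap. Your central claim---that property~(ii) ``is immediate from Jacquet-module adjunction'' once $f^{(P_\nu)}=\phi$---confuses two different identities. The constant-term map is adjoint to normalized parabolic \emph{induction}: what one actually has is
\[
\tr\bigl(f \,\big|\, \mathrm{Ind}_{P_\nu}^G \sigma\bigr) \;=\; \tr\bigl(f^{(P_\nu)} \,\big|\, \sigma\bigr)
\]
for admissible representations $\sigma$ of $M_\nu(F)$. Property~(ii) asks instead for $\tr(f\mid\pi)=\tr(\phi\mid J_{P_\nu^{\op}}(\pi))$ for all admissible $\pi$ of $G(F)$, and this does not follow from $f^{(P_\nu)}=\phi$. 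For a concrete obstruction, take $\pi$ supercuspidal: then every proper Jacquet module of $\pi$ vanishes, so the right side of (ii) is zero, yet a generic lift $f$ of $\phi$ through the constant-term map can pair nontrivially with $\pi$ (simply add to $f$ a pseudo-coefficient of $\pi$, which lies in the kernel of $f\mapsto f^{(P_\nu)}$). The same slack in the lift undermines the vanishing half of (i), as you yourself flag but do not resolve: the condition $f^{(P_\nu)}=\phi$ says nothing about $O^G_g(f)$ for semisimple $g$ not $G(F)$-conjugate into $M_\nu(F)$.

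The paper defers to \cite[Lem.~3.1.2, Cor.~3.1.3]{KretShin}, and the mechanism there is not a formal adjunction but Casselman's theorem: on the $\nu$-acceptable locus the Harish--Chandra character of $\pi$ agrees, up to the modulus factor, with the character of its Jacquet module along $P_\nu^{\op}$. One first constructs $f$ explicitly so that (i) holds---acceptability is what makes the construction land in $\cH(G)$ with compact support and the required vanishing of orbital integrals---and then (ii) is \emph{deduced from} (i) via the Weyl integration formula together with Casselman's character identity. The spectral role of acceptability is precisely to put the support of $\phi$ inside the region where Casselman applies; no Hecke-algebra adjunction does this for you.
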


\begin{proof}
This is \cite[Lem.~3.1.2, Cor.~3.1.3]{KretShin}.
\end{proof}

\begin{rem}
When extracting spectral information from the stabilized trace formula of this paper, e.g., as in \cite{ShinGalois,KretShin}, it is important to understand the interaction between $\nu$-ascent and constant terms/endoscopy. This is studied in \cite[\S3]{KretShin}.
\end{rem}

\subsubsection{Local endoscopic refinements}{\label{effectiveendoscopy}}

In this subsection, we use the notion of acceptability to define refinements of $\ES^{\emb}(M,G)$. We continue to assume $F$ is a non-archimedean local field and we further assume that $G=G^*$ is quasi-split and that $G^*_{\der}$ is simply connected.

Fix $[\bb] \in B(G)$. Since $G$ is quasi-split, we can choose a decent representative $\bb \in G(\breve{F})$ such that $\ov{\nu}_{\bb}$ is defined over $F$. (See the proof of \cite[Prop.~6.2]{KottwitzIsocrystal1}.)

\begin{defn}\label{def:EKisoeff}
We let \gls{EKisoeff(J,G)} denote the set of equivalence classes of pairs $(\delta, \lambda)$ such that $\delta \in J_{\bb}(F)$ is $\nu_{\bb}$-acceptable and transfers to a $(G, M_{\bb})$-regular semisimple element of $G(F)$ and $\lambda \in Z(\widehat{I_{\delta}})^{\Gamma_F}$. Via endoscopic transfer of conjugacy classes from $J_{\bb}$ to $M_{\bb}$, we can identify $\EK^{\Iso}_{\eff}(J_{\bb}, G)$ with a subset of $\EK^{\Iso}(M_{\bb}, G)$ via the map $(\delta, \lambda) \mapsto (\gamma, \lambda')$, where $\lambda'$ is the image of $\lambda$ under the canonical isomorphism $Z(\widehat{I_{\delta}}) \cong Z(\widehat{I^{M_{\bb}}_{\gamma}})$. 
\end{defn}

Via the bijections in Diagram \eqref{EQSSdiagram}, we get a  subset $\gls{ESisoeff(J,G)} \subset \ES^{\Iso}(M_{\bb}, G)$ in bijection with $\EK^{\Iso}_{\eff}(J_{\bb},G)$. We define \gls{ESisoeff(G)} as the image of $\ES^{\Iso}_{\eff}(J_{\bb}, G)$ under the map
\begin{equation*}
    \ES^{\Iso}(M_{\bb}, G) \to \ES^{\Iso}(G).
\end{equation*}

We have a natural projection 
\begin{equation*}
    \ES^{\Iso}(M_{\bb}, G) \to \mE^{\Iso}(M_{\bb}),
\end{equation*}
and we denote the image of $\ES^{\Iso}_{\eff}(J_{\bb}, G)$ by \gls{mEisoeff(J,G)}. 

Analogously, we can define \gls{ESembeff(J,G)} and \gls{mEembeff(J,G)} and we have
\begin{equation*}
   \ES^{\emb}_{\eff}(J_{\bb},G) \cong \ES^{\Iso}_{\eff}(J_{\bb},G), 
\end{equation*}
and
\begin{equation*}
    \mE^{\emb}_{\eff}(J_{\bb},G) \cong \mE^{\Iso}_{\eff}(J_{\bb},G).
\end{equation*}

\begin{defn}
Fix an equivalence class $\fke = (H, \cH, s, \eta) \in \mE^{\Iso}(G)$. Relative to this fixed class, we define the sets
\begin{eqnarray*}
    \gls{mEisoeff(J,G;H)} &:=& Y^{-1}(\fke) \cap \mE^{\Iso}_{\eff}(J_{\bb}, G),\\
    \gls{mEembeff(J,G;H)} &:=& \gls{mEembeff(J,G)}\cap \mE^{\emb}(M_{\bb}, G; H).
\end{eqnarray*}
We also define $\ES^{\emb}_{\eff}(J_{\bb},G;H)$ to be the pre-image of $\mE^{\emb}_{\eff}(J_{\bb}, G; H)$ under the projection $\ES^{\emb}(M_{\bb}, G; H) \to \mE^{\emb}(M_{\bb}, G; H)$. We define $\mE^i_{\eff}(J_{\bb}, G;H) \subset \mE^i_{\eff}(M_{\bb},G;H)$ to consist of those equivalence classes whose $\mE^{\emb}(M_{\bb}, G;H)$-isomorphism class lies in $\mE^{\emb}_{\eff}(J_{\bb}, G; H)$.
\end{defn}

We now reinterpret the set $\mE^{\Iso}_{\eff}(J_{\bb}, G) $ in terms of transfer of maximal tori instead of transfer of semisimple conjugacy classes, as the former is in practice easier to work with. 
\begin{lem}
An element $(H_{\bb}, \cH_{\bb}, s_{\bb}, \eta_{\bb}) \in \mE^{\Iso}(M_{\bb})$ is contained in $\mE^{\Iso}_{\eff}(J_{\bb}, G)$ if and only if there exist maximal tori $T_{H_{\bb}}, T_{M_{\bb}}, T_{J_{\bb}}$ defined over $F$ of $H_{\bb}, M_{\bb}, J_{\bb}$ respectively such that each torus transfers to the others.
\end{lem}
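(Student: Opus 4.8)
The plan is to unwind the definition of $\mE^{\Iso}_{\eff}(J_{\bb},G)$ into the existence of a suitable effective element, translate between effective elements and chains of mutually transferrable maximal tori, and expect the crux to be producing an effective element from a given chain. First I would record, using the bijection \eqref{EKESbij} for $M_{\bb}$ together with Diagram \eqref{EQSSdiagram}, that the class of $(H_{\bb},\cH_{\bb},s_{\bb},\eta_{\bb})$ lies in $\mE^{\Iso}_{\eff}(J_{\bb},G)$ exactly when there is a pair $(\delta,\lambda)\in\EK^{\Iso}_{\eff}(J_{\bb},G)$ — that is, $\delta\in J_{\bb}(F)$ is $\nu_{\bb}$-acceptable and transfers to a $(G,M_{\bb})$-regular semisimple $\gamma\in M_{\bb}(F)$, and $\lambda\in Z(\widehat{I^{J_\bb}_{\delta}})^{\Gamma_F}$ — whose associated refined endoscopic datum of $M_{\bb}$, obtained by transporting $\lambda$ along the transfer and applying the inverse of \eqref{EKESbij}, is isomorphic to $(H_{\bb},\cH_{\bb},s_{\bb},\eta_{\bb})$. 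The inverse of \eqref{EKESbij} also produces a transfer $\gamma_{H_\bb}\in H_\bb(F)$ of $\gamma$. For the ``only if'' direction I would then extract the tori: by the d\'evissage for transfer of semisimple conjugacy classes recalled in \S\ref{sss:endoscopy-and-ss-conj} (following \cite{KottwitzEllipticSingular,Shin10,BMaveraging}), the transfers from $\delta$ to $\gamma$ and from $\gamma$ to $\gamma_{H_\bb}$ are realized by maximal tori $T_{J_\bb}\ni\delta$, $T_{M_\bb}\ni\gamma$, $T_{H_\bb}\ni\gamma_{H_\bb}$ defined over $F$ together with admissible $F$-isomorphisms $T_{J_\bb}\isom T_{M_\bb}$ and $T_{H_\bb}\isom T_{M_\bb}$, which exhibit the three tori as mutually transferrable.

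For the ``if'' direction, suppose we are given mutually transferrable maximal $F$-tori $T_{H_\bb}\subset H_\bb$, $T_{M_\bb}\subset M_\bb$, $T_{J_\bb}\subset J_\bb$, and fix admissible $F$-isomorphisms $j_1\colon T_{H_\bb}\isom T_{M_\bb}$ and $j_2\colon T_{M_\bb}\isom T_{J_\bb}$, the latter compatible with the inner twist $i\colon J_{\bb,\ol F}\isom M_{\bb,\ol F}$. I would choose $\gamma_{H_\bb}\in T_{H_\bb}(F)$ that is $(M_\bb,H_\bb)$-regular — possible since $F$ is infinite and this cuts out a nonempty Zariski-open subscheme of $T_{H_\bb}$ defined over $F$ — and put $\gamma:=j_1(\gamma_{H_\bb})\in M_\bb(F)$, $\delta:=j_2(\gamma)\in J_\bb(F)$. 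Then I would replace $\delta$ by $\delta':=(rk\nu_\bb)(\varpi)\,\delta$ for a large integer $k$; since $j_2$ carries $(rk\ov\nu_\bb)(\varpi)\in Z(M_\bb)(F)$ to $(rk\nu_\bb)(\varpi)\in Z(J_\bb)(F)$, this replaces $\gamma$ by $\gamma':=(rk\ov\nu_\bb)(\varpi)\,\gamma$ and $\gamma_{H_\bb}$ by $z_k\gamma_{H_\bb}$ with $z_k:=j_1^{-1}((rk\ov\nu_\bb)(\varpi))$. Because $\ov\nu_\bb$ is central in $M_\bb$ we have $\langle\alpha,\ov\nu_\bb\rangle=0$ for all $\alpha\in R(M_\bb,T_{M_\bb})$, so $(M_\bb,H_\bb)$-regularity is untouched by this twist; because $M_\bb$ is the centralizer of $\ov\nu_\bb$ in $G$ we have $\langle\alpha,\ov\nu_\bb\rangle\neq0$ for every $\alpha\in R(G,T_{M_\bb})\setminus R(M_\bb,T_{M_\bb})$, so the valuation of $\alpha(\gamma')$ is nonzero for $k$ large, i.e.\ $\gamma'$ is $(G,M_\bb)$-regular; and $\delta'$ is $\nu_\bb$-acceptable for $k$ large, by the same dilation estimate as in the lemma preceding Definition \ref{def:ascent} (using that $(r\ov\nu_\bb)(\varpi)$ dilates $\Lie N_{\ov\nu_\bb}$ and that $(rk\nu_\bb)(\varpi)$ is central in $J_\bb$). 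Finally I would set $\lambda'\in Z(\widehat{I^{J_\bb}_{\delta'}})^{\Gamma_F}$ to be the image of $s_\bb$ under
$$Z(\widehat{H_\bb})^{\Gamma_F}\hookrightarrow Z(\widehat{I^{H_\bb}_{z_k\gamma_{H_\bb}}})^{\Gamma_F}\isom Z(\widehat{I^{M_\bb}_{\gamma'}})^{\Gamma_F}\isom Z(\widehat{I^{J_\bb}_{\delta'}})^{\Gamma_F},$$
with the first isomorphism induced by $j_1$ (legitimate since $z_k\gamma_{H_\bb}$ is $(M_\bb,H_\bb)$-regular) and the second by $i$ (since $\delta'$ transfers to $\gamma'$). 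Then $(\delta',\lambda')\in\EK^{\Iso}_{\eff}(J_\bb,G)$, and by the bijectivity of \eqref{EKESbij} and the way $\lambda'$ was built out of $s_\bb$ and $j_1$, its associated refined endoscopic datum of $M_\bb$ is $(H_\bb,\cH_\bb,s_\bb,\eta_\bb)$; hence the class is effective.

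The step I expect to be the main obstacle is the ``if'' direction — specifically, keeping straight which of the three conditions (acceptability, $(G,M_\bb)$-regularity, $(M_\bb,H_\bb)$-regularity) is controlled by translation by $(r\nu_\bb)(\varpi)$: the essential point is that this central twist forces the first two for $k\gg0$ while preserving the third, which must therefore be secured in advance by an $F$-generic choice of point on $T_{H_\bb}$ — and then verifying that the refined endoscopic datum of $M_\bb$ reconstructed from $(\delta',\lambda')$ really is the given one, which amounts to chasing the maps in \eqref{EQSSdiagram} and the compatibility of $\lambda'$ with the admissible embedding built into $(H_\bb,\cH_\bb,s_\bb,\eta_\bb)$. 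The ``only if'' direction, by contrast, is a routine application of the transfer-of-conjugacy-class formalism of \S\ref{sss:endoscopy-and-ss-conj}.
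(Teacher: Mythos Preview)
The paper does not give an argument here; it simply cites \cite[Lem.~2.41]{BMaveraging}. Your direct proof is therefore not comparable to anything in the paper, and the substantive ``if'' direction you wrote --- choosing a $(M_{\bb},H_{\bb})$-regular $F$-point on $T_{H_{\bb}}$ and then translating by a large power of $(r\ov\nu_{\bb})(\varpi)$ to force $(G,M_{\bb})$-regularity and $\nu_{\bb}$-acceptability while leaving $(M_{\bb},H_{\bb})$-regularity untouched --- is correct and nicely executed. The check that the reconstructed refined datum is the given one is also fine: you have arranged that $(H_{\bb},\cH_{\bb},s_{\bb},\eta_{\bb},z_k\gamma_{H_{\bb}})\in\ES^{\Iso}(M_{\bb})$ maps under \eqref{EKESbij} to exactly the pair $(\gamma',\lambda')$ you built, so bijectivity finishes.

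There is, however, a genuine gap in your ``only if'' direction. You assert that the two transfers $\delta\leftrightarrow\gamma$ (inner twist) and $\gamma\leftrightarrow\gamma_{H_{\bb}}$ (endoscopy) can be realized through a \emph{common} maximal $F$-torus $T_{M_{\bb}}\ni\gamma$, and you cite the general transfer formalism for this. But each transfer only produces \emph{some} maximal torus through $\gamma$, and for non-regular $\gamma$ these need not coincide: not every maximal $F$-torus of $M_{\bb}$ admits an admissible embedding from $H_{\bb}$, and not every maximal $F$-torus of $M_{\bb}$ transfers to the inner form $J_{\bb}$. The fix is short but needs to be said: since $\gamma_{H_{\bb}}$ is $(M_{\bb},H_{\bb})$-regular, the connected centralizers $I^{H_{\bb}}_{\gamma_{H_{\bb}}}$, $I^{M_{\bb}}_{\gamma}$, $I^{J_{\bb}}_{\delta}$ are mutual inner forms (the first two by $(M_{\bb},H_{\bb})$-regularity, the last two by the inner twist $J_{\bb}\to M_{\bb}$). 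Over a $p$-adic field, choose an \emph{elliptic} maximal $F$-torus $T_{M_{\bb}}$ of $I^{M_{\bb}}_{\gamma}$; elliptic tori transfer to every inner form, so $T_{M_{\bb}}$ transfers to $T_{J_{\bb}}\subset I^{J_{\bb}}_{\delta}$ and to $T_{H_{\bb}}\subset I^{H_{\bb}}_{\gamma_{H_{\bb}}}$. These are then maximal in $J_{\bb}$, $M_{\bb}$, $H_{\bb}$ respectively (same rank), and the torus isomorphisms are of the required type: the one with $T_{J_{\bb}}$ is induced by the ambient inner twist $J_{\bb}\to M_{\bb}$, and the one with $T_{H_{\bb}}$ is an inner conjugate of the admissible embedding realizing $\gamma_{H_{\bb}}\mapsto\gamma$, hence itself admissible for the endoscopy $H_{\bb}/M_{\bb}$. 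With that paragraph inserted, your ``only if'' direction is complete.
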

\begin{proof}
This is \cite[Lem.~2.41]{BMaveraging}.
\end{proof}

We now fix a representative $(H, \cH, s, \eta) \in \sE^{\Iso}(G)$ of the class $\fke$ and a set of representatives $X^{\fke}_{\bb}(H)$ of $\mE^{i}_{\eff}(J_{\bb}, G; H)$.
We then have the following lemma.
\begin{lem}{\label{lem: uniquenesslem}}
Suppose that $[(H_{\bb}, \cH_{\bb}, H, \cH, s, \eta_1, \gamma_{H_{\bb}})] \in \ES^{\emb}_{\eff}(M_{\bb}, G; H)$ and $(H, \cH, s, \eta, \gamma_H)$ project to the same class in $\ES^{\Iso}_{\eff}(G)$. Then there exist
\begin{itemize}
    \item a unique $(H'_{\bb}, \cH'_{\bb}, H, \cH, s, \eta_2) \in X^{\fke}_{\bb}(H)$ and
    \item a  $(G, H_{\bb})$-regular, $\nu_{\bb}$-acceptable, semisimple $\gamma_{H'_{\bb}} \in \Sigma(H'_{\bb}(F))$
\end{itemize}
 such that
 \begin{itemize}
     \item  $(H'_{\bb}, \cH'_{\bb}, H, \cH, s, \eta_2, \gamma_{H'_{\bb}})$ projects to $[(H_{\bb}, \cH_{\bb}, H, \cH, s, \eta_1, \gamma_{H_{\bb}})]$ and
     \item $\gamma_{H'_{\bb}}$ and $\gamma_H$ are stably conjugate in $H(F)$ under the natural inclusion $H'_{\bb}(F) \subset H(F)$.
 \end{itemize}
\end{lem}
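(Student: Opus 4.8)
The plan is to reduce the assertion, by means of the bijections and the commutative diagram in \eqref{EQSSdiagram}, to a statement about Kottwitz parameters and transfers of semisimple conjugacy classes, where it amounts to unwinding definitions. For the datum and its uniqueness: since $\ES^{\emb}_{\eff}(J_{\bb},G;H)$ is by definition the preimage of $\mE^{\emb}_{\eff}(J_{\bb},G;H)$, the embedded datum $\fke^{\emb}_1:=(H_{\bb},\cH_{\bb},H,\cH,s,\eta_1)$ underlying the given class lies in $\mE^{\emb}_{\eff}(J_{\bb},G;H)$; in particular its image under $Y^{\emb}$ is $\fke$ and it defines an element of $\mE^i_{\eff}(J_{\bb},G;H)$. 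By the construction of $X^{\fke}_{\bb}(H)$ (following \cite[\S2.7]{BMaveraging}), there is exactly one element $(H'_{\bb},\cH'_{\bb},H,\cH,s,\eta_2)\in X^{\fke}_{\bb}(H)$ lying in this inner class, and by the defining property of $X^{\fke}_{\bb}(H)$ its image under $Y^{\emb}$ is literally $\fke=(H,\cH,s,\eta)$; this proves existence and uniqueness in the first bullet. Fix $g\in\hat G$ inducing an inner automorphism $\alpha$ of $H$ that carries $\fke^{\emb}_1$ to $(H'_{\bb},\cH'_{\bb},H,\cH,s,\eta_2)$; by restriction $g$ also induces an $F$-isomorphism $\alpha_{\bb}:H_{\bb}\isom H'_{\bb}$, which is an isomorphism of the underlying $M_{\bb}$-endoscopic data and is compatible with $\alpha$ and with the inclusions $H_{\bb},H'_{\bb}\subset H$.

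To construct $\gamma_{H'_{\bb}}$, let $(\delta,\lambda_\delta)\in\EK^{\Iso}_{\eff}(J_{\bb},G)$ be the pair corresponding to the given class under \eqref{EQSSdiagram}; thus $\delta\in J_{\bb}(F)$ is $\nu_{\bb}$-acceptable, it transfers to $\gamma_{M_{\bb}}\in M_{\bb}(F)_{(G,M_{\bb})\tu{-reg}}$ and thence to $\gamma\in G(F)$, and $\gamma_{H_{\bb}}$ is a transfer of $\gamma_{M_{\bb}}$. Put $\gamma_{H'_{\bb}}:=\alpha_{\bb}(\gamma_{H_{\bb}})\in\Sigma(H'_{\bb}(F))$; this is well defined since $\alpha_{\bb}$ is canonical up to an inner automorphism of $H'_{\bb}$, and it is at the same time a transfer of $\gamma_{M_{\bb}}$ to $H'_{\bb}$ with respect to the $M_{\bb}$-datum $(H'_{\bb},\cH'_{\bb},s,\eta_2|_{\cH'_{\bb}})$, because transfer of semisimple classes is compatible with isomorphisms of endoscopic data. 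Consequently $\gamma_{H'_{\bb}}$ is semisimple; it is $(G,H'_{\bb})$-regular (equivalently $(G,H_{\bb})$-regular via $\alpha_{\bb}$) because its transfer $\gamma_{M_{\bb}}$ to $M_{\bb}(F)$ is $(G,M_{\bb})$-regular; and it is $\nu_{\bb}$-acceptable since its transfer to $M_{\bb}(F)$ is $M_{\bb}(\ol F)$-conjugate to $i(\delta)$ for an inner-twist identification $i:J_{\bb,\ol F}\simeq M_{\bb,\ol F}$, which is acceptable (Definition~\ref{def:acceptable-J}). Finally, $(H'_{\bb},\cH'_{\bb},H,\cH,s,\eta_2,\gamma_{H'_{\bb}})$ projects to $[(H_{\bb},\cH_{\bb},H,\cH,s,\eta_1,\gamma_{H_{\bb}})]$ since $\alpha$ (equivalently $g$) realizes the required equivalence.

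It remains to prove that $\gamma_{H'_{\bb}}$ and $\gamma_H$ are stably conjugate in $H(F)$; this is the crux and the step I expect to be the main obstacle. By commutativity of \eqref{EQSSdiagram} — run $(\gamma_{M_{\bb}},\lambda_\delta)$ along the bottom and then up the middle vertical to $\ES^{\Iso}(G)$, and note that the vertical $\EK^{\Iso}(M_{\bb},G)\to\EK^{\Iso}(G)$ is $(\gamma',\lambda')\mapsto(\gamma',\lambda')$ because $I^{M_{\bb}}_{\gamma'}=I^G_{\gamma'}$ for $(G,M_{\bb})$-regular $\gamma'$ — the element $\gamma_{H'_{\bb}}$, viewed in $H(F)$ through $H'_{\bb}\subset H$, is a transfer of $\gamma$ from $G$ to $H$ with respect to $\fke=(H,\cH,s,\eta)$; here it is essential that the chosen representative in $X^{\fke}_{\bb}(H)$ has $Y^{\emb}$-image literally $\fke$, which is exactly the purpose of the inner-class formalism and the careful choice of $X^{\fke}_{\bb}(H)$ in \cite[\S2.7]{BMaveraging}. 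On the other hand, the hypothesis that the given class and $(H,\cH,s,\eta,\gamma_H)$ project to the same element of $\ES^{\Iso}_{\eff}(G)$ says, via the bijection \eqref{EKESbij}, that $\gamma_H$ has Kottwitz parameter $(\gamma,\lambda)$ as well, i.e.\ $\gamma_H$ is also a transfer of $\gamma$ from $G$ to $H$ with respect to $\fke$. Since $\gamma_H$ is $(G,H)$-regular and the transfer of a $(G,H)$-regular semisimple class from $G$ to $H$ with respect to a fixed endoscopic datum is well defined up to stable conjugacy — which is implicit in \eqref{EKESbij} being a bijection — the classes $\gamma_{H'_{\bb}}$ and $\gamma_H$ coincide in $\Sigma(H(F))$. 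The remaining steps (uniqueness of the datum, and the three properties of $\gamma_{H'_{\bb}}$) are essentially formal once the diagram \eqref{EQSSdiagram} and the bijection \eqref{EKESbij} are in hand.
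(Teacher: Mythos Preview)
The paper does not give its own argument here; it simply cites \cite[Lem.~2.42]{BMaveraging} (cf.\ \cite[Lem.~6.2]{Shin10}). Your attempt at a direct proof follows a reasonable outline---reduce to the representative in $X^{\fke}_{\bb}(H)$ via an inner automorphism of $H$, then compare images in $\EK^{\Iso}(G)$---but there is a genuine gap in the last paragraph.

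The problematic step is the assertion that ``the transfer of a $(G,H)$-regular semisimple class from $G$ to $H$ with respect to a fixed endoscopic datum is well defined up to stable conjugacy --- which is implicit in \eqref{EKESbij} being a bijection.'' This is not what the bijection says. The equivalence relation defining $\ES^{\Iso}(G)$ allows isomorphisms of refined endoscopic data, and an automorphism of a fixed datum $(H,\cH,s,\eta)$ can induce an \emph{outer} automorphism $\beta$ of $H$. So two tuples $(H,\cH,s,\eta,\gamma_H)$ and $(H,\cH,s,\eta,\gamma'_H)$ with the same image $(\gamma,\lambda)\in\EK^{\Iso}(G)$ are only related by $[\beta(\gamma_H)]=[\gamma'_H]$ for some $\beta\in\Out_F(H,\cH,s,\eta)$, not by stable conjugacy. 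A concrete counterexample: take $G=SL_2$, $H$ the one-dimensional split torus with $s=[\mathrm{diag}(1,-1)]\in\widehat{G}=PGL_2$; the Weyl element of $\widehat{G}$ gives a nontrivial element of $\Out_F(H,\cH,s,\eta)$ acting on $H\cong\G_m$ by inversion. For regular $\gamma=\mathrm{diag}(a,a^{-1})$, both $\gamma_H=a$ and $\gamma'_H=a^{-1}$ transfer to $\gamma$ and both yield $\lambda=-1$ (since $s^{-1}=s$), yet they are not equal in $H(F)$.

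Thus your construction $\gamma_{H'_{\bb}}:=\alpha_{\bb}(\gamma_{H_{\bb}})$ need not be stably conjugate to $\gamma_H$ in $H(F)$; a priori they differ by an outer automorphism of $H$. The correct argument in \cite{BMaveraging} must do more: either exploit $\nu_{\bb}$-acceptability (note that in the $SL_2$ example exactly one of $a,a^{-1}$ is $\nu_{\bb}$-acceptable for a given proper parabolic, which is suggestive), or use the embedded structure to show that the relevant outer automorphism can be absorbed into the choice of $\gamma_{H'_{\bb}}$ within $H'_{\bb}$, or argue more directly that $\gamma_H$ itself is stably conjugate in $H$ to an element of $H'_{\bb}(F)$. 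Your argument as written does not supply this missing step.
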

\begin{proof}
This is \cite[Lem.~2.42]{BMaveraging} (cf. \cite[Lem.~6.2]{Shin10}).
\end{proof}

\subsection{\texorpdfstring{$z$}{z}-extensions}\label{ss:z-extensions}

Let $F$ be local or global in this subsection. The content here allows us to reduce questions about general endoscopic data to the case where $\cH={}^L H$.

\begin{defn}\label{def:z-ext}
A \emph{$z$-extension} of $G$ over $F$ is a central extension of reductive groups over $F$
\begin{equation}\label{eq:z-ext}
    1\ra Z_1 \ra G_1 \ra G \ra 1
\end{equation}
with the property that $G_1$ has simply connected derived subgroup, and that $Z_1\cong \prod_i \Res_{F_i/\Q}\G_m$ for finite extensions $F_i$ over $F$.
\end{defn}

\subsubsection{$z$-extensions and endoscopic data}\label{sss:z-ext}
A fixed pinning for $G$ induces a pinning for $G_1$ via pullback. Dual to $G_1\ra G$ is a morphism of dual groups $\hat{G} \ra \hat{G}_1$. We can arrange that the $\Gamma_F$-pinnings for $\hat{G}$ and $\hat{G}_1$ are compatible with the latter map. This map uniquely extends to an $L$-morphism $\zeta_{G_1}: {}^L G \ra {}^L G_1$ such that $\zeta_{G_1}|_{W_F}$ is the identity map.

A $z$-extension of $G$ as in \eqref{eq:z-ext} and an endoscopic datum $\fke=(H,\cH,s,\eta)$ determine a central extension over $F$
\begin{equation}\label{eq:z-ext-H}
1\ra Z_1 \ra H_1 \ra H \ra 1
\end{equation}
and an endoscopic datum $\fke_1=(H_1,{}^L H_1,s_1,\eta_1)$ for $G_1$ with $s_1=\zeta_{G_1}(s)$ with the following properties (see \cite[Lem.~7.2.6, 7.2.9]{KSZ}):
\begin{enumerate}
\item The canonical embeddings $Z_G\ra Z_H$ and $Z_{G_1}\ra Z_{H_1}$ fit in a row-exact commutative diagram of $F$-groups, where the rows come from \eqref{eq:z-ext} and \eqref{eq:z-ext-H}:
\begin{equation}\label{eq:Z1-ZG-ZH}
    \xymatrix{ 1 \ar[r] & Z_{1} \ar[r] \ar@{=}[d] & Z_{G_1} \ar[r] \ar[d] & Z_G \ar[d] \ar[r] & 1
		\\ 1 \ar[r] & Z_{1} \ar[r] & Z_{H_1} \ar[r] & Z_H \ar[r] & 1. }
\end{equation} 
    \item There exists an $L$-morphism $\zeta_{H_1}$ which extends the embedding $\hat H \ra \hat H_1$ dual to $H_1 \ra H$ and makes the following diagram commute:
$$ \xymatrix{ \cH \ar[r]^-{\zeta_{H_1}} \ar[d]^-{\eta} & {}^L H_1 \ar[d]^-{\eta_1} \\ ^L G \ar[r]^-{\zeta_{G_1}} & ^L G_1.}$$
\item If $\fke$ is a refined datum then so is $\fke_1$.
\end{enumerate}
For each $H$ as above, we are fixing a pinning $(T_H, B_H, \{X_{\alpha_H}\})$ defined over $F$. This induces a pinning $(T_{H_1}, B_{H_1}, \{X_{\alpha_H, 1}\})$ for $H_1$ over $F$ by pullback.

\subsubsection{$z$-extensions and embedded endoscopic data}\label{sss:z-ext-embedded}

Let $M\subset G$ be a standard Levi subgroup.
Given a $z$-extension \eqref{eq:z-ext} and an embedded endoscopic datum $(H_M,\cH_M,H,\cH,s,\eta)$ for $G$ (Definition \ref{def:emb-end-datum}), we use $\fke_1$ to construct an embedded endoscopic datum for $G_1$ as follows. Take $H_{M,1}$ to be the preimage of $H_M$ in $H_1$ in \eqref{eq:z-ext-H}. Define $\cH_{M,1}$ to be the subgroup of $^L H_1$ generated by $\cH_M$ and $Z(\hat H_1)$. Then $(H_{M,1},\cH_{M,1},H_1,{}^L H_1,s_1,\eta_1)\in \sE^{\emb}(M_1,G_1)$.

\section{Igusa varieties}\label{s:Igusa}

We are going to formulate a conjectural trace formula for Igusa varieties in the case of general Shimura data with parahoric level at $p$, assuming some hypothetical construction of integral models and Igusa varieties. We are in favor of this somewhat axiomatic approach because, as long as the conjectural trace formula is taken for granted, we do not need further assumptions (e.g., Shimura data can be arbitrary).
Thus the results of this paper will become unconditional in more cases as we make further progress on the hypotheses. The state of the art is reviewed in \S\ref{sss:known} below.

\subsection{The general setup}\label{ss:general-setup}

\begin{defn}\label{def:integral-Shimura-datum}
An \emph{integral Shimura datum} is a quadruple $(G,X,p,\cG)$, where
\begin{itemize}
    \item $(G,X)$ is a Shimura datum satisfying Deligne's axioms \cite[(2.1.1.1)--(2.1.1.3)]{DeligneCorvallis},
    \item $p$ is a prime number,
    \item $\cG$ is a parahoric model of $G_{\Q_p}$.
\end{itemize}
In particular $G$ is a connected reductive group over $\Q$ such that $G_{\R}$ contains an elliptic maximal torus. We write $K_p:=\cG(\Z_p)$, which is a parahoric subgroup of $G(\Q_p)$. The datum $(G,X)$ determines a conjugacy class $[\mu_X]$ of Hodge cocharacters $\G_m\ra G_{\C}$, cf.~\cite[1.1.1, 1.1.11]{DeligneCorvallis}. Denote by $E:=E(G,X)\subset \C$ the field of definition of $[\mu_X]$, namely the reflex field. 
By abuse of notation, we still write $[\mu_X]$ for the element of $\pi_1(G)$ determined by $[\mu_X]$.
\end{defn}

\subsubsection{} 

Henceforth we fix a datum as above. Fix a prime $\ell$ not equal to $p$. We also fix isomorphisms $\iota_p:\Qpbar\simeq \C$, $\iota:\Qlbar\simeq \C$, and an embedding $\Qbar\hra \C$. We have induced embeddings $\Qbar\hra \Qpbar$, $\Qbar \hra \Qlbar$, and $E\hra \Qpbar$. This determines a prime $\fkp$ of $E$ above $p$. Write $k:=k(\fkp)$ for the residue field at $\fkp$. Its algebraic closure $\ol k$ can be identified with the residue field of $\Qpbar$.
Fix a cocharacter $\mu_p:\G_m\ra G_{\Qpbar}$ in the conjugacy class $\iota_p^{-1}[\mu_X]$.

We have a projective system of varieties $\gls{ShKp}=\{\Sh_{K^pK_p}\}$ over $E$ in which the transition maps are finite and \'etale (so $\gls{ShKp}$ can be viewed as a scheme over $E$), where $K^p$ runs over neat open compact subgroups of $G(\A^{\infty,p})$. The Shimura variety $\Sh_{K_p}$ is equipped with a natural Hecke action of $G(\A^{\infty,p})$. The following existence of integral models is a prerequisite for the definition of Igusa varieties. (Compare with the more precise statement \cite[Conj.~3.4]{PappasICM}.)

\begin{hypothesis}\label{hypo:integral-model-Shimura}
The $E$-scheme $\Sh_{K_p}$ extends to a natural $\cO_{E_{\fkp}}$-scheme $\gls{mSKp}=\{\mS_{K^pK_p}\}$ whose transition maps are finite and \'etale, equipped with a right $G(\A^{\infty,p})$-action 
(in the sense of \cite[2.7.1]{DeligneCorvallis})
such that the $G(\A^{\infty,p})$-action on $\gls{mSKp}$ extends that on $\Sh_{K_p}$. 
\end{hypothesis}

\begin{rem}\label{rem:natural}
Often the meaning of ``natural'' can be made precise on the one hand by a unique characterization and on the other hand by a natural construction (e.g., taking closure in a Siegel modular variety and normalization when possible). See \S\ref{sss:known} below for known results and references.
However a precise notion is unnecessary for our purpose. All we want is \emph{some} construction of integral models $\mS_{K_p}$ over which Igusa varieties may be defined such that Hypothesis \ref{hypo:Igusa-existence} and Conjecture \ref{conj:Igusa-general} below are satisfied.
\end{rem}

\subsubsection{} 

We assume Hypothesis \ref{hypo:integral-model-Shimura} in the sense above.
Set $\mS_{K_p,\ol k}:=\mS_{K_p}\times_{\cO_{E_{\fkp}}} \ol k$.
Let $\xi$ be an irreducible algebraic representation of $G$ over $\C$. 
 Write \gls{Zac} for the anti-cuspidal part of $Z_G^{\circ}$, which is a $\Q$-subtorus of $Z^{\circ}_G$. (See \cite[Def.~1.5.4]{KSZ}.) Henceforth we assume that
\begin{equation}\label{eq:trivial-on-Zac}
    \xi ~\mbox{factors through the projection}~ G\ra G/Z_{\tu{ac}}.
\end{equation}
Then a lisse $\ell$-adic sheaf \gls{cLxi} can be constructed on $\mS_{K_p}=\{\mS_{K^pK_p}\}$ following \cite[\S1.5]{KSZ}.

Let $\bb\in G(\breve\Q_p)$ and write $[\bb]\in B(G_{\Q_p})$ for the image of $\bb$. 
Caraiani--Scholze \cite{CS17,CS19} and related works (e.g.,~\cite{HamacherKim}) suggest the following for Igusa varieties.

\begin{hypothesis}\label{hypo:Igusa-existence}
Assume that $[\bb]\in B(G_{\Q_p},\mu_p^{-1})$. 
\begin{enumerate}
    \item There exist
\begin{itemize}
    \item a scheme \gls{IGb} over $\ol k$ equipped with a right $G(\A^{\infty,p})\times J_{\bb}(\Q_p)$-action and
    \item a $G(\A^{\infty,p})$-equivariant morphism $\gls{sigmab}:\IG_{\bb} \ra \mS_{K_p,\ol{k}}$. 
\end{itemize}
For every $\bb'\in G(\breve\Q_p)$ with $[\bb]=[\bb']$, choose an element $g\in G(\breve\Q_p)$ such that $\bb'=g^{-1}\bb \sigma(g)$, which induces a $\Q_p$-isomorphism $i_g:J_{\bb}\simeq J_{\bb'}$. Then there exists a $G(\A^{\infty,p})\times J_{\bb}(\Q_p)$-equivariant isomorphism $\IG_{\bb}\simeq \IG_{\bb'}$ carrying $\varsigma_{\bb}^* \cL_\xi$ to $\varsigma_{\bb'}^* \cL_\xi$, where $J_{\bb}(\Q_p)$ acts on $\IG_{\bb'}$ by pulling back the $J_{\bb'}(\Q_p)$-action along $i_g$.
\item We have the following data after possibly changing $\bb$ to another element (which is in particular decent) in its $\sigma$-conjugacy class inside $G(\breve \Q_p)$. There exists a projective system $$\Ig_{\bb}=(\Ig_{\bb,m,K^p})_{m,K^p}, \qquad m\in \Z_{\ge0}, \quad K^p\subset G(\A^{\infty,p}) ~\mbox{neat compact open},$$
consisting of smooth finite-type varieties over $\ol k$ (definable over a common finite extension of~$k$) with finite \'etale transition maps, equipped with a right $G(\A^{\infty,p})\times J_{\bb}(\Q_p)$-action on $\Ig_{\bb}$
(in the sense of \cite[2.7.1]{DeligneCorvallis}). Moreover there are $\ol k$-morphisms $\varsigma_{\bb,m,K^p}:\Ig_{\bb,m,K^p}\ra \mS_{K_pK^p}$ which are compatible with the transition maps as $m$ and $K^p$ vary, as well as a $G(\A^{\infty,p})\times J_{\bb}(\Q_p)$-equivariant isomorphism 
\begin{equation}\label{eq:IG=Ig}
    \IG_{\bb}\simeq (\varprojlim \Ig_{\bb,m,K^p})^{\tu{perf}},
\end{equation}
such that $\varsigma_{\bb}$ coincides with the limit of $\varsigma_{\bb,m,K^p}$ over $m,K^p$ after perfection. 
\end{enumerate}
\end{hypothesis}
\subsubsection{} \label{sss:H(IG)}

We want to understand the cohomology of $H^i(\IG_{\bb},\varsigma_{\bb}^* \cL_\xi)$ as a $G(\A^{\infty,p})\times J_{\bb}(\Q_p)$-module, which is an admissible module thanks to the preceding hypothesis. By the hypothesis, we can choose $\bb$ as in (ii) and consider the cohomology of 
$\Ig_{\bb,m,K^p}$ instead. By Poincar\'e duality, we may switch to cohomology with compact support, as the latter is more directly related to the fixed-point formula for varieties. Therefore we define
$$[H_c]_{\bb,\xi}:=\sum_{i\ge 0} (-1)^i \left( \varinjlim \iota H^i_c(\Ig_{\bb,m,K^p},\varsigma_{\bb,m,K^p}^*\cL_\xi) \right) \in \Groth(G(\A^{\infty,p})\times J_{\bb}(\Q_p)),$$
where we applied $\iota$ to switch from $\ell$-adic coefficients to $\C$-coefficients.  A priori only $G(\A^{\infty,p})\times S_{\bb}$ acts on the right hand side, where $S_{\bb}\subset J_{\bb}(\Q_p)$ is a certain submonoid which generates $J_{\bb}(\Q_p)$ as a group (see \cite[2.4.5]{MackCraneShin} for the definition of $S_{\bb}$, which is not needed in this paper), but it uniquely extends to a $G(\A^{\infty,p})\times J_{\bb}(\Q_p)$-action thanks to \eqref{eq:IG=Ig} and the fact that $\IG_{\bb}$ already admits a $G(\A^{\infty,p})\times J_{\bb}(\Q_p)$-action.

We would like to describe a conjectural formula for the trace
$$\tr (\phi^{\infty,p}\phi_p |[H_c]_{\bb,\xi})\in \C, \qquad \phi^{\infty,p}\phi_p\in C^\infty_c(G(\A^{\infty,p})\times J_{\bb}(\Q_p)).$$

\subsection{Kottwitz parameters and Kottwitz invariants}

To state a conjectural trace formula, we introduce some more notions and notation. We make the following technical hypothesis on $G$ from here throughout the paper, as we anticipate extra subtlety (e.g., regarding the formalism of Kottwitz parameters) without the condition.

\begin{hypothesis}\label{hypo:quasi-split-at-p}
The group $G_{\Q_p}$ is quasi-split.
\end{hypothesis}

\subsubsection{}

According to \S\ref{ss:isocrystals}, we can choose $\bb$ to be decent without affecting $[\bb]$, and such that $\nu_{\bb}$ is defined over $\Q_p$ thanks to Hypothesis \ref{hypo:quasi-split-at-p}. Then for a sufficiently divisible $r\in \Z_{>0}$, we have that $\bb \in G(\Q_{p^r})$ and that $r\nu_{\bb}$ is a cocharacter over $\Q_p$.
Fix such a $\bb\in G(\breve\Q_p)$ from now on.

\subsubsection{Galois cohomology}\label{sss:Galois-cohomology}
Let $I\ra G$ be a morphism of reductive groups over $\Q$. Let $S$ be a finite set of places of $\Q$. We define
\begin{align*}
    \gls{DIGA}&:=\ker(H^1(\A^S,I)\ra H^1(\A^S,G)),\\
    \gls{kerHab}&:=\ker(H^1_{\ab}(\A^S,I)\ra H^1_{\ab}(\A^S,G)).
\end{align*}
Note that $\fkD(I,G;\A^S)$ is a pointed set and $\fkE(I,G;\A^S)$ is an abelian group. 
When $S_0\subset S$, there is an obvious map $\fkD(I,G;\A^S)\ra \fkD(I,G;\A^{S_0})$ and likewise for $\fkE$.
Replacing $\A^S$ with $\Q_v$ for a place $v$ of $\Q$, we define $\fkD(I,G;\Q_v)$ and $\fkE(I,G;\Q_v)$.
The canonical and functorial isomorphism $H^1_{\ab}(\Q_v,G)\cong \pi_1(G)_{\Gamma_v,\tu{tor}}$ induces a canonical isomorphism
$$\fkE(I,G;\Q_v) \cong \ker(\pi_1(I)_{\Gamma_v,\tu{tor}}\ra \pi_1(G)_{\Gamma_v,\tu{tor}}).$$
We have natural restricted product decompositions
$$\fkD(I,G;\A^S)=\textstyle\prod'_{v\notin S} \fkD(I,G;\Q_v), \qquad \fkE(I,G;\A^S)=\textstyle\prod'_{v\notin S} \fkE(I,G;\Q_v),$$
coming from similar decompositions for $H^1$ and $H^1_{\ab}$, see ~\cite[(1.1.6.1),(1.1.6.2)]{KSZ}, where $'$ means that we only consider elements in the product whose components are trivial for all but finitely many $v$.
We have the abelianization map
$$\ab^1:\fkD(I,G;\A^S) \ra \fkE(I,G;\A^S),$$
which is compatible with the analogous local abelianization map over $\Q_v$ via the product decomposition. 
The map $\ab^1$ is a bijection if $S$ contains the infinite place $\infty$.

Now assume that $I$ is a reductive subgroup of $G$ over $\Q$ containing a maximal torus of $G$. The two abelian groups $\fkE(I,G;\A/\Q)$ and \gls{K(I/Q)} are defined in \cite[\S1.8]{LabesseStabilization} and \cite[\S4]{KottwitzEllipticSingular}. There is a natural map
$$\fkE(I,G;\A) \ra \fkE(I,G;\A/\Q).$$
By \cite[Prop.~1.7.3, Cor.~1.7.4]{KSZ}, $\fkE(I,G;\A/\Q)$ and \gls{K(I/Q)} are finite and in Pontryagin duality with each other via a natural pairing
$$\langle ~,~\rangle: \fkE(I,G;\A/\Q)\times \fkK(I/\Q) \ra \C^\times.$$
Now let $I\ra G$ be a morphism of connected reductive groups over $\Q_p$, inducing the map $B(I)\ra B(G)$. We have $\bb\in G(\breve\Q_p)$ as before. Define the subset 
$$\gls{DIGb} \subset B(I)$$
to be the preimage of $[\bb]\in B(G)$.

\begin{defn}\label{def:Kottwitz-parameter}
A \emph{$\bb$-admissible Kottwitz parameter} is a triple \gls{c} $=(\gamma_0,a,[b])$, where
\begin{itemize}
    \item $\gamma_0\in G(\Q)_{\R\tu{-ell}}$; write $I_0:=(G_{\gamma_0})^{\circ}$,
    \item $a\in \fkD(I_0,G;\A^{\infty,p})$,
    \item $[b]\in \fkD_p(I_0,G;\bb)$.
\end{itemize}
Write \gls{KPbb} for the set of $\bb$-admissible Kottwitz parameters.
Given $\fkc\in \KP_{\bb}$, if we choose a representative $b\in I_0(\breve \Q_p)$ of $[b]$ then
$\gamma_0^{-1} b \sigma(\gamma_0)=\gamma_0^{-1} b \gamma_0=b$ in $G(\breve \Q_p)$. Therefore 
$\gamma_0$ lies in the subgroup $J_b(\Q_p)$ of $G(\breve \Q_p)$. We say that $\fkc$ is \emph{acceptable} if $\gamma_0$ is a $\nu_b$-acceptable element of $J_b(\Q_p)$ (Definition \ref{def:acceptable-J}, Example \ref{ex:M*Jb}). This notion is independent of the choice of $b$ (\cite[4.2.10]{MackCraneShin}).
\end{defn}

\begin{rem}
Our $\bb$-admissible Kottwitz parameter is a Kottwitz parameter in the sense of \cite[\S1.6]{KSZ}. The point is that the condition KP0 therein is satisfied since $[\bb]\in B(G_{\Q_p},\mu_p^{-1})$, which implies that $\kappa_G([\bb])=-[\mu_X]$.
\end{rem}

\subsubsection{From Kottwitz parameters to $\bb$-classical Kottwitz parameters}\label{sss:KP-to-CKP}

By a $\bb$-\emph{classical Kottwitz parameter} (which was called a ``Kottwitz triple'' in \cite{Shin09,Shin10}), we mean a triple $(\gamma_0;\gamma,\delta)$, where
\begin{itemize}
    \item $\gamma_0\in G(\Q)_{\R\tu{-ell}}$,
    \item $\gamma\in \Gamma(G(\A^{\infty,p}))$ which is conjugate to $\gamma_0$ in $G(\ol \A^{\infty,p})$,
    \item $\delta\in \Gamma(J_{\bb}(\Q_p))$ which is conjugate to $\gamma_0$ in $G(\breve\Q_p)$
    via $J_{\bb}(\Q_p)\subset G(\breve\Q_p)$.
\end{itemize}
Denote by \gls{CKP} the set of $\bb$-classical Kottwitz parameters. We construct a natural map
\begin{equation}\label{eq:KP-to-CKP}
\KP_{\bb} \ra \mathcal{CKP}_{\bb},\qquad
(\gamma_0,a,[b])\mapsto (\gamma_0,\gamma_a,\delta_{[b]})
\end{equation}
as follows. Let $(\gamma_0,a,[b])\in \KP_{\bb}$.
We already remarked that $\gamma_0\in J_b(\Q_p)$ for a representative $b\in I_0(\breve\Q_p)$ of $[b]$. Since $[\bb]=[b]$ in $B(G)$, we have a well-defined $J_{\bb}(\Q_p)$-conjugacy class of $\Q_p$-isomorphisms  $J_b\simeq J_{\bb}$. Therefore $\gamma_0\in J_b(\Q_p)$ determines a conjugacy class of $\delta_{[b]}\in J_{\bb}(\Q_p)$. Now since $a$ has trivial image in $H^1(\A^{\infty,p},G)$, it is represented by a cocycle $\tau \mapsto g^{-1} \tau(g)$ for some $g\in G(\ol \A^{\infty,p})$. Then $\gamma_a:=g \gamma_0 g^{-1}\in G(\A^{\infty,p})$. By construction $(\gamma_0,\gamma_a,\delta_{[b]})\in \mathcal{CKP}_{\bb}$, which is easily checked to be independent of the choice of $b$ and $g$.

\begin{lem}\label{lem:KP-to-CKP}
If $G_{\tu{der}}=G_{\tu{sc}}$ then the map \eqref{eq:KP-to-CKP} is a bijection.
\end{lem}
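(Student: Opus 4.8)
The plan is to construct an explicit inverse to the map \eqref{eq:KP-to-CKP} under the hypothesis $G_{\tu{der}}=G_{\tu{sc}}$. Given a $\bb$-classical Kottwitz parameter $(\gamma_0;\gamma,\delta)\in\mathcal{CKP}_{\bb}$, the element $\gamma_0\in G(\Q)_{\R\tu{-ell}}$ is already part of the target datum, so the real task is to recover the cohomological data $a\in\fkD(I_0,G;\A^{\infty,p})$ and $[b]\in\fkD_p(I_0,G;\bb)$ from $\gamma$ and $\delta$ respectively, and to check that the two constructions are mutually inverse. First I would treat the prime-to-$p$ part: since $\gamma\in\Gamma(G(\A^{\infty,p}))$ is conjugate to $\gamma_0$ over $\ol\A^{\infty,p}$, choose $g\in G(\ol\A^{\infty,p})$ with $g\gamma_0 g^{-1}=\gamma$; then $\tau\mapsto g^{-1}\tau(g)$ is a cocycle valued in $I_0=(G_{\gamma_0})^\circ$ (here using that the centralizer is connected, which holds because $G_{\tu{der}}=G_{\tu{sc}}$, so $G_{\gamma_0}=I_0$), and its class $a\in H^1(\A^{\infty,p},I_0)$ dies in $H^1(\A^{\infty,p},G)$ by construction, hence lands in $\fkD(I_0,G;\A^{\infty,p})$; well-definedness (independence of $g$ and of the representative $\gamma$) is the standard argument, using again connectedness of the centralizer so that the ambiguity is exactly by a coboundary.

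For the $p$-part, I would argue similarly but in the Kottwitz-gerbe / $B(G)$ formalism. Because $\delta\in J_{\bb}(\Q_p)\subset G(\breve\Q_p)$ is conjugate to $\gamma_0$ in $G(\breve\Q_p)$, pick $h\in G(\breve\Q_p)$ with $h\gamma_0 h^{-1}=\delta$. Then $b:=h^{-1}\bb\,\sigma(h)$ satisfies, using $\sigma(\gamma_0)=\gamma_0$ and that $\delta$ is $\sigma$-fixed by $\bb$ (i.e.\ $\delta=\bb\sigma(\delta)\bb^{-1}$, which is what $\delta\in J_{\bb}(\Q_p)$ means), the identity $\gamma_0^{-1}b\sigma(\gamma_0)=b$, so $b$ normalizes $\gamma_0$ and in fact $b\in I_0(\breve\Q_p)$ — again connectedness of the centralizer is used. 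Its class $[b]\in B(I_0)$ maps to $[\bb]\in B(G)$ by design, hence $[b]\in\fkD_p(I_0,G;\bb)$. Independence of the choice of $h$ (which only alters $b$ by $\sigma$-conjugacy in $I_0(\breve\Q_p)$) and of the representative $\delta$ in its $J_{\bb}(\Q_p)$-conjugacy class is again routine, using that the stabilizer of $\gamma_0$ in $G(\breve\Q_p)$ is connected.

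It then remains to verify that $(\gamma_0;\gamma,\delta)\mapsto(\gamma_0,a,[b])$ is a two-sided inverse of \eqref{eq:KP-to-CKP}. One direction is immediate: starting from $(\gamma_0,a,[b])\in\KP_{\bb}$, the construction in \S\ref{sss:KP-to-CKP} produces $\gamma_a=g\gamma_0 g^{-1}$ and $\delta_{[b]}$ coming from $\gamma_0\in J_b(\Q_p)$ via the chosen isomorphism $J_b\simeq J_{\bb}$; feeding these back through the inverse recovers exactly the cocycle $\tau\mapsto g^{-1}\tau(g)$ and a representative $b$ of $[b]$, so we return to $(\gamma_0,a,[b])$ on the nose. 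The other direction — starting from a classical parameter, building $(\gamma_0,a,[b])$, and checking that the associated $(\gamma_a,\delta_{[b]})$ agrees with the original $(\gamma,\delta)$ up to conjugacy — is a matching of conjugating elements and is where the bookkeeping is heaviest. \textbf{The main obstacle} I anticipate is precisely this last compatibility at $p$: one must check that the conjugacy class $\delta_{[b]}\in J_{\bb}(\Q_p)$ obtained by transporting $\gamma_0\in J_b(\Q_p)$ along the canonical $J_{\bb}(\Q_p)$-conjugacy class of isomorphisms $J_b\simeq J_{\bb}$ (which itself is pinned down only up to conjugacy) coincides with the $\delta$ we began with, and that no additional cohomological ambiguity (beyond $J_{\bb}(\Q_p)$-conjugacy) creeps in. Throughout, the hypothesis $G_{\tu{der}}=G_{\tu{sc}}$ is what makes every centralizer $G_{\gamma_0}$ connected and thus equal to $I_0$, which is exactly the input that forces all the relevant cohomology classes to be well defined and the map to be injective; without it, $\fkD(I_0,G;\A^{\infty,p})$ and $\fkD_p(I_0,G;\bb)$ would only see a quotient of the relevant torsor data and the map would fail to be surjective.
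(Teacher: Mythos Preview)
Your proposal is correct and follows essentially the same approach as the paper: construct the inverse map by sending $(\gamma_0;\gamma,\delta)$ to $(\gamma_0,a,[b])$, where $a$ is the class of $\tau\mapsto g^{-1}\tau(g)$ for $g\in G(\ol\A^{\infty,p})$ conjugating $\gamma_0$ to $\gamma$, and $b:=c^{-1}\bb\,\sigma(c)$ for $c\in G(\breve\Q_p)$ conjugating $\gamma_0$ to $\delta$, with the hypothesis $G_{\tu{der}}=G_{\tu{sc}}$ used precisely to ensure $G_{\gamma_0}=I_0$ is connected so that these cocycles land in $I_0$ and are well defined. The paper declares the verification that this is a two-sided inverse to be ``routine,'' so your more detailed account of that step (including the anticipated bookkeeping at $p$) only adds clarity rather than diverging from the argument.
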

\begin{proof}
We construct the inverse map. Let $(\gamma_0,\gamma,\delta)\in \mathcal{CKP}_{\bb}$. Choose $g\in G(\ol \A^{\infty,p})$ such that $g\gamma_0 g^{-1}=\gamma$. Then the cocycle $\tau\mapsto g^{-1} \tau(g)\in G_{\gamma_0}(\ol \A^{\infty,p})$ defines an element $a\in \fkD(I_0,G,\A^{\infty,p})$; the point is that $G_{\gamma_0}=I_0$, namely $G_{\gamma_0}$ is connected, since $G_{\tu{der}}=G_{\tu{sc}}$.
Now write $\delta=c\gamma_0 c^{-1}$ for some $c\in G(\breve \Q_p)$, and put $b:=c^{-1} \bb \sigma(c)$. It follows from $\delta \bb = \bb \sigma(\delta)$ that $b$ centralizes $\gamma_0$ in $G$, so $b\in I_0(\breve \Q_p)$.
It is routine to check that $(\gamma_0,a,[b])$ is well defined and belongs to $\KP_{\bb}$, and that the map $(\gamma_0,\gamma,\delta)\mapsto (\gamma_0,a,[b])$ is converse to \eqref{eq:KP-to-CKP}.
\end{proof}

\begin{lem}\label{lem:b-is-basic}
If $(\gamma_0,a,[b])\in \KP_{\bb}$ is acceptable then $[b]$ is basic in $B(I_0)$. 
\end{lem}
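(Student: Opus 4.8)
The plan is to exploit that $\gamma_0$ is a \emph{central} element of $I_0$, hence acts trivially on $\Lie I_0$, while acceptability forces $\gamma_0$ to act with all eigenvalues of absolute value $>1$ on a certain unipotent Lie algebra $\Lie N_{\nu_b}\subset\Lie G$; these can only be compatible if the part of $\Lie N_{\nu_b}$ contained in $\Lie I_0$ is zero, which is precisely the statement that $[b]$ is basic in $B(I_0)$.

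First I would record that $\gamma_0\in Z(I_0)$. Since $\gamma_0\in G(\Q)$ is semisimple, it lies in some maximal torus $T$ of $G_{\ol\Q}$; as $T$ is connected and $T\subset Z_G(\gamma_0)$, we get $T\subset I_0=Z_G(\gamma_0)^\circ$ and so $\gamma_0\in I_0$, and since $I_0\subset Z_G(\gamma_0)$ the element $\gamma_0$ commutes with all of $I_0$. Thus $\Int(\gamma_0)$ is trivial on $I_0$ and the adjoint action of $\gamma_0$ on $\Lie I_0$ is the identity. I would then fix, as in Definition~\ref{def:Kottwitz-parameter}, a representative $b\in I_0(\breve\Q_p)$ of $[b]$, and note that by functoriality of the slope morphism (\S\ref{BGreview}) applied to $I_0\hookrightarrow G$, the slope cocharacter $\nu_b\in\Hom_{\breve\Q_p}(\D,G)$ of $b$ factors through $I_0$ and agrees with the slope cocharacter of $b$ computed inside $I_0$. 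Consequently $[b]$ is basic in $B(I_0)$ if and only if $\nu_b$ is central in $I_0$.

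Next I would unwind the acceptability hypothesis into a statement about $\Lie N_{\nu_b}$. By definition, acceptability of $\fkc=(\gamma_0,a,[b])$ means that $\gamma_0$ is $\nu_b$-acceptable in $J_b(\Q_p)$ (Definitions~\ref{def:acceptable}, \ref{def:acceptable-J}, Example~\ref{ex:M*Jb}); since the inner twisting $\psi'_{M_b}\colon M_{b,\ol\Q_p}\isom J_{b,\ol\Q_p}$ of \eqref{eq:M*Jb} is the restriction of an inner automorphism $\Int(g)$ of $G_{\ol\Q_p}$ satisfying $\Int(g)\circ\ol\nu_b=\nu_b$, it carries $N_{\ol\nu_b}$ to $N_{\nu_b}$, and hence acceptability says precisely that the adjoint action of $\gamma_0\in G(\Q)\subset G(\breve\Q_p)$ on $\Lie N_{\nu_b}\otimes_{\breve\Q_p}\ol\Q_p$ has all eigenvalues of absolute value $>1$. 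Here $N_{\nu_b}$ is the unipotent radical of the parabolic of $G$ attached to $\nu_b$ as in Definition~\ref{def:acceptable}.

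Finally I would argue by contradiction. Suppose $\nu_b$ is not central in $I_0$. Then the parabolic subgroup $P^{I_0}_{\nu_b}$ of $I_0$ attached to $\nu_b$ is proper, so its unipotent radical $N^{I_0}_{\nu_b}$ is nontrivial and $\Lie N^{I_0}_{\nu_b}\neq 0$; moreover, since $\Lie I_0$ is stable under the image of $\nu_b$, one has $\Lie N^{I_0}_{\nu_b}=\Lie N_{\nu_b}\cap\Lie I_0\subset\Lie N_{\nu_b}$. As $\gamma_0$ acts as the identity on $\Lie I_0$, it acts trivially on the nonzero subspace $\Lie N^{I_0}_{\nu_b}$, so $1$ occurs as an eigenvalue of the adjoint action of $\gamma_0$ on $\Lie N_{\nu_b}$, contradicting acceptability. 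Hence $\nu_b$ is central in $I_0$, i.e.\ $[b]$ is basic in $B(I_0)$. The only delicate point, which is bookkeeping rather than a real obstacle, is the translation in the third paragraph between the acceptability condition as packaged in Definition~\ref{def:Kottwitz-parameter} (via $J_b$, the inner twisting $\psi'_{M_b}$, and a dominant representative $\ol\nu_b$) and the clean statement that $\gamma_0$ dilates $\Lie N_{\nu_b}$, together with the standard compatibility $\Lie N^{I_0}_{\nu_b}=\Lie N_{\nu_b}\cap\Lie I_0$ for the parabolic attached to a cocharacter factoring through the subgroup $I_0$.
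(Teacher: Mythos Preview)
Your proof is correct. The paper itself only reformulates the claim as ``$\nu_b$ is central in $I_0$'' (equivalently, $I_0$ lies in the centralizer of $\nu_b$ in $G$) and then cites \cite[Lem.~2.2.5]{MackCrane}; your argument---$\gamma_0\in Z(I_0)$ acts trivially on $\Lie I_0$, hence on the nonzero subspace $\Lie N^{I_0}_{\nu_b}=\Lie N_{\nu_b}\cap\Lie I_0$ if $\nu_b$ were not central, contradicting the dilating condition from acceptability---is exactly the direct proof one expects behind that citation.
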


\begin{proof}
It is enough to verify the equivalent assertion that $\nu_b$ is central in $I_0$, or that $I_0$ is contained in the centralizer of $\nu_b$ in~$G$. This is \cite[Lem.~2.2.10]{MackCraneShin}. 
\end{proof}

\subsubsection{Kottwitz invariant}
\label{sss:Kottwitz-invariant}

To each Kottwitz parameter $\fkc$, we assign a Kottwitz invariant \gls{alphac} following \cite[\S1.7]{KSZ}, cf.~\cite[\S4.2.12]{MackCraneShin}). The facts in \S\ref{sss:Galois-cohomology} will be used freely.

Write $(\beta_v(\fkc))_{v\neq p,\infty}$ for the image of $a$ under the composite map
$$\fkD(I_0,G;\A^{\infty,p})\simeq \fkE(I_0,G;\A^{\infty,p})\simeq \bigoplus_{v\neq p,\infty}\ker(\pi_1(I_0)_{\Gamma_v,\tu{tor}}\ra \pi_1(G)_{\Gamma_v,\tu{tor}}).$$
Choose a lift \gls{tildebetav} $\in \ker(\pi_1(I_0)\ra \pi_1(G))$ of $\beta_v(\fkc)$ for each $v\neq p,\infty$; if $\beta_v(\fkc)=0$ then simply put $\tilde \beta_v(\fkc):=0$. At $v=p$, set $\beta_p(\fkc):=\kappa_{I_0}([b])\in \pi_1(I_0)_{\Gamma_p}$. Then $\beta_p(\fkc)$ maps to $-[\mu_X]\in \pi_1(G)_{\Gamma_p}$ under the natural map $\pi_1(I_0)_{\Gamma_p} \ra \pi_1(G)_{\Gamma_p}$. So we can pick a lift $\tilde \beta_p(\fkc)\in \pi_1(I_0)$ whose image in $\pi_1(G)$ is $-[\mu_X]$. At $v=\infty$, since $\gamma_0$ is $\R$-elliptic, we can choose an elliptic maximal torus $T_\infty$ of $G_{\R}$ containing $\gamma_0$. Then $T_\infty\subset I_{0,\R}$, and there exists $h:\Res_{\C/\R}\G_m \ra G_{\R}$ in $X$ factoring through $T_\infty$. The latter gives rise to a cocharacter $\mu_h:\G_m\ra T_{\infty, \C}$. Define $\beta_\infty(\fkc)\in \pi_1(I_0)_{\Gamma_\infty}$ to be the image of $\mu_h$ under 
$$X_*(T)=\pi_1(T)\ra \pi_1(I_0)\ra \pi_1(I_0)_{\Gamma_\infty}.$$
Then $\beta_\infty(\fkc)$ is independent of the choice of $h\in X$, and has the same image in $\pi_1(G)_{\Gamma_\infty}$ as $[\mu_X]\in \pi_1(G)$. 
Thus we can choose a lift $\tilde\beta_\infty(\fkc)\in \pi_1(I_0)$ mapping to $[\mu_X]\in \pi_1(G)$. 

In \S\ref{s:stabilization} below, we will often write
$$\tilde\beta^{\infty,p}(\gamma_0,a),~\tilde\beta_p(\gamma_0,[b]) ,~\tilde\beta_\infty(\gamma_0)\quad \mbox{for}\quad \tilde\beta^{\infty,p}(\fkc),~ \tilde\beta_\infty(\fkc) ,~\tilde\beta_\infty(\fkc),\quad\mbox{respectively},
$$
and similarly for their analogues without tildes.
For instance, this indicates that $[b]$ does not enter the construction of $\tilde\beta^{\infty,p}(\gamma_0,a)$. In fact, the definitions of $\tilde\beta^{\infty,p}(\gamma_0,a),~\tilde\beta_p(\gamma_0,[b]) ,~\tilde\beta_\infty(\gamma_0)$ are of a local nature and extend in an obvious manner to $\gamma_0$ in $G(\A^{\infty,p})$, $G(\Q_p)$, and $G(\R)_{\el}$, respectively. (We need not require $\gamma_0\in G(\Q)$.)

Put $K(\fkc):=\ker(\pi_1(I_0)\ra \pi_1(G))$ and $\fkE(\fkc):=\fkE(I_0,G; \A/\Q)$.
Define
$$\tilde \beta(\fkc):=\sum_v \tilde\beta_v (\fkc)\in K(\fkc).$$
In our situation, $K(\fkc)_{\Gamma}$ is a torsion abelian group and equipped with a surjection (see \cite[Prop.~1.7.3, \S1.7.5]{KSZ})
$$K(\fkc)_{\Gamma}=K(\fkc)_{\Gamma,\tu{tor}}\ra \fkE(\fkc).$$
The \emph{Kottwitz invariant} $\alpha(\fkc)\in \fkE(\fkc)$ is defined to be the image of $\tilde \beta(\fkc)$, which is independent of the choice of the lifts $\tilde \beta_v$ in the construction.

\subsubsection{Local inner twists of $I_0$}\label{sss:inner-twist-I0}
Starting from acceptable $\fkc=(\gamma_0,a,[b])\in \KP_{\bb}$, let us construct local and global inner twists of $I_0$ as in \cite[\S1.7.11]{KSZ}. 
 The image of $a\in H^1(\A^{\infty,p},I_0)$ in $H^1(\A^{\infty,p},I_0^{\ad})$ determines an inner twist $I_v$ of $I_0$ over $\Q_v$ at each place $v\neq p,\infty$ of $\Q$, which is none other than the identity component of the centralizer of $\gamma_a$ in $G_{\Q_v}$ (up to inner automorphisms). 
 By Lemma \ref{lem:b-is-basic}, $[b]\in B(I_{0,\Q_p})$ maps into the basic subset $B(I^{\ad}_{0,\Q_p})$, which is in bijection with $H^1(\Q_p,I^{\ad}_0)$. Hence $[b]$ gives rise to an inner twist $I_p$ of $I_{0,\Q_p}$. At $\infty$, the choice of $(T_\infty,h)$ as in \S\ref{sss:Kottwitz-invariant} gives a Cartan involution $\Int(h(i))$ on $(I_0/Z_G)_{\R}$. Thus we obtain an inner twist $I_\infty$ of $I_{0,\R}$ such that $I_\infty/Z_{G,\R}$ is $\R$-anisotropic.
 
 If $\alpha(\fkc)$ is trivial, then by \cite[Prop.~1.7.12]{KSZ}, there exists an inner twist $I_\fkc$ of $I_0$ over $\Q$ such that $I_{\fkc,\Q_v}$ is isomorphic to $I_v$ as an inner twist of $I_{0,\Q_v}$ at every place $v$.

\subsubsection{Summary}{\label{sss:summary-Kottwitz-parameters}}
We have assigned the following data to each $\fkc=(\gamma_0,a,[b])\in \KP_{\bb}$:
\begin{itemize}
    \item semisimple conjugacy classes $\gamma_a\in \Gamma( G(\A^{\infty,p}))$ and $\delta_{[b]}\in \Gamma(J_{\bb}(\Q_p) )$,
    \item a finite abelian group $\fkE(\fkc)=\fkE(I_0,G;\A/\Q)$,
    \item the Kottwitz invariant $\alpha(\fkc)\in \fkE(\fkc)$ and local invariants \gls{betav} $\in \pi_1(I_0)_{\Gamma_v}$,
    \item local inner twists $I_v$ of $I_{0,\Q_v}$ if $\fkc$ is acceptable; also an inner twist $I_{\fkc}$ of $I_0$ over $\Q$ localizing to $I_v$ if $\alpha(\fkc)$ is trivial.
\end{itemize}

\subsection{The trace formula for Igusa varieties}

 Continuing from the preceding subsection, we maintain Hypotheses \ref{hypo:integral-model-Shimura}, \ref{hypo:Igusa-existence}, and \ref{hypo:quasi-split-at-p}.

\begin{defn}
A \emph{central character datum} for $G$ is a pair $(\fkX,\chi)$, where
\begin{itemize}
    \item $\fkX=\fkX^\infty \fkX_\infty$ is a closed subgroup of $Z(\A^\infty)\times Z(\R)$ equipped with a Haar measure on $\fkX$, such that $\fkX_\infty\supset A_{G,\infty}$ and that $Z(\Q)\fkX$ is closed in $Z(\A)$,
    \item $\chi:\fkX\ra\C^\times$ is a continuous character which is trivial on $\fkX_{\Q}:=\fkX\cap Z(\Q)$. 
\end{itemize}

\end{defn}

\begin{ex}\label{ex:fkX0}
An irreducible algebraic representation $\xi$ of $G_{\C}$ determines a central character $\omega_\xi:Z(\R)\ra \C^\times$. The following pair is a central character datum:
\begin{equation}\label{eq:fkX0}
    (\fkX_0,\chi_0):=(A_{G,\infty},~ \omega^{-1}_\xi|_{A_{G,\infty}})
\end{equation}
\end{ex}

\begin{ex}\label{ex:fkX-K}
 Let $\xi,\omega_\xi$ be as above. Assume \eqref{eq:trivial-on-Zac}.
 Let $K\subset G(\A^\infty)$ be a neat open compact subgroup. Set
\begin{equation}\label{eq:fkX-K}
\fkX^\infty:=Z(\A^\infty)\cap K, \quad \fkX_\infty:=Z(\R), \qquad
\chi^\infty:=\textbf{1},\quad \chi_\infty:=\omega^{-1}_\xi.
\end{equation}
Then $(\fkX,\chi):=(\fkX^\infty\fkX_\infty,\chi^\infty \chi_\infty)$ is a central character datum. The only nontrivial point is that $\chi$ is trivial on $\fkX_\Q$, namely that $\omega_\xi$ is trivial on $Z(\Q)\cap K$; this follows from \cite[Lem.~1.5.7]{KSZ}.
\end{ex}

\subsubsection{Notation for the trace formula}\label{sss:notation-for-TF}
We introduce the following notation.
\begin{itemize}
\item $(\fkX,\chi)$ is the central character datum of Example \ref{ex:fkX-K}.
    \item $\Sigma_{\R\tu{-ell}}(G)$ is the set of semisimple conjugacy classes in $G(\Q)$ which are elliptic in $G(\R)$.
    \item $\Sigma_{\R\tu{-ell}}(G)/\fkX_{\Q}$ is the set of $\fkX_\Q$-orbits in $\Sigma_{\R\tu{-ell}}(G)$ under the multiplication action.
    \item  Fix a sufficiently divisible $r\in \Z_{>0}$ such that $\bb$ is $r$-decent.
     \item $\cH_{\tu{acc}}(J_{\bb}(\Q_p))$ is defined as in \S\ref{ss:acceptable} by using $\nu=\nu_{\bb}$ and viewing $J_{\bb}$ as an inner form of $M_{\bb}$.  \item For $\phi\in \cH(J_{\bb}(\Q_p))$ and $j\in r\Z$, define the translates $\phi^{(j)}$ as in \S\ref{ss:acceptable}. 
\end{itemize}

For each $\gamma_0\in G(\Q)_{\R\tu{-ell}}$, 
\begin{itemize}
    \item $\KP^{\Fr}_{\bb}(\gamma_0)$ is the set of Kottwitz parameters $\fkc$ whose first entry is equal to $\gamma_0$ (not just stably conjugate) and such that $\alpha(\fkc)$ is trivial,
    \item $\ol{\iota}_G(\gamma_0):=|(G_{\gamma_0}/G_{\gamma_0}^{\circ})(\Q)|\in \Z_{>0}$,
    \item $\gls{c2gamma}:=|\ker(\ker^1(\Q,I_0)\ra \ker^1(\Q,G))|\in \Z_{>0}$.
\end{itemize}

We fix Haar measures on $G(\A^{\infty,p})$, $J_{\bb}(\Q_p)$, $I_0(\A^{\infty,p})$, and $I_0(\Q_p)$.
Suppose that $\fkc\in \KP^{\Fr}_{\bb}(\gamma_0)$ is acceptable. Recall the $\Q$-group $I_{\fkc}$ from \S\ref{sss:inner-twist-I0}.
Equip $I_{\fkc}(\A^\infty)$ with the Haar measure compatible with that on $I_0(\A^\infty)$, and $I_{\fkc}(\Q)$ with the counting measure. Define
$$\gls{c1c}:= \vol(I_\fkc(\Q)\backslash I_{\fkc}(\A^\infty)/\fkX^\infty).$$
Let $(\gamma,\delta)\in \Gamma(G(\A^{\infty,p}))\times \Gamma(J_{\bb}(\Q_p))$ be the image of $\fkc$ under \eqref{eq:KP-to-CKP}. Since $\gamma$ is stably conjugate to $\gamma_0$, we see that $G_{\gamma}^{\circ}$ is an inner form of $I_0$ over $\Q_v$ for $v\neq p,\infty$. So the Haar measure on $I_0(\A^{\infty,p})$ determines a Haar measure on $G_{\gamma}^{\circ}(\A^{\infty,p})$.
Similarly $J_{\bb,\delta}^{\circ}(\Q_p)$ is equipped with a unique Haar measure compatibly with $I_0(\Q_p)$, provided that the following claim is true: that $J_{\bb,\delta}^{\circ}$ is an inner form of $I_0$ over $\Q_p$ if $\delta$ is acceptable.
Let us verify the claim. By taking a $z$-extension $G_1$ of $G$ over $\Q$, we reduce to the case where $G_{\der}=G_{\tu{sc}}$. (Lift $\gamma_0$ to $\gamma_{0,1}\in G_1(\Q)$, $\bb$ to $\bb_1\in G_1(\breve \Q_p)$, and then $b$ to $b_1\in \fkD_p(I_{0,1},G_1;\bb_1)$ via Lemma \ref{liftingatplem} below; just like $\gamma_0$ and $b$ determine $\delta$, we use $\gamma_{0,1}$ and $b_1$ to give $\delta_1\in J_{\bb_1}(\Q_p)$. With these data, it suffices to check the claim on the level of $G_1$.)
The property of having simply connected derived subgroup is inherited by the Levi subgroup $M_{\bb}$ of $G_{\Q_p}$, thus also by the inner form $J_{\bb}$ of $M_{\bb}$. In particular, centralizers of semisimple elements in $M_{\bb}$ and $J_{\bb}$ are connected. 
Since $M_{\bb}$ is a quasi-split inner form of $J_{\bb}$, we can transfer  $\delta\in J_{\bb}(\Q_p)_{\semis}$ to some $\delta^*\in M_{\bb}(\Q_p)_{\semis}$.
Then $J_{\bb,\delta}$ is an inner form of $M_{\bb,\delta^*}$. On the other hand, $\delta^*$ is $\nu_{\bb}$-acceptable since $\delta$ is $\nu_{\bb}$-acceptable, so $M_{\bb,\delta^*}=G_{\delta^*}$. Now $\delta^*$ is conjugate to $\gamma_0$ in $G(\Qpbar)$, so $G_{\delta^*}$ is an inner form of $I_{0,\Q_p}$. The claim is proved.

We define orbital integrals using the measures above, cf.~\eqref{eq:def-orbital-integral},
\begin{align*}
   O^G_{\gamma}(\phi^{\infty,p})
= &\int_{G_\gamma^{\circ}(\A^{\infty,p})\backslash G(\A^{\infty,p})} \phi^{\infty,p}(x^{-1} \gamma x) dx, 
 & & \phi^{\infty,p}\in \cH(G(\A^{\infty,p})),\\
O^{J_{\bb}}_{\delta}(\phi_p)
= & \int_{J_{\bb,\delta}^{\circ}(\Q_p)\backslash J_{\bb}(\Q_p)}
\phi_p(y^{-1} \delta y) dy,
 & & \phi_p\in \cH(J_{\bb}(\Q_p)).
\end{align*}

We are ready to state the conjectural trace formula for Igusa varieties. The condition on $\phi^{\infty,p}$ and $\phi_p$ below comes from compatibility with the central character datum $(\fkX,\chi)$ fixed above.

\begin{conj}\label{conj:Igusa-general}
Given $\phi^{\infty,p}\in \cH(G(\A^{\infty,p}))$ and $\phi_p\in \cH(J_{\bb}(\Q_p))$ which are invariant under the translation by $\fkX^{\infty,p}$ and $\fkX_p$ respectively, there exists $j_0=j_0(\phi^{\infty,p},\phi_p)\in \Z_{>0}$ such that $\phi_p^{(j_0)}\in \cH_{\tu{acc}}(J_{\bb}(\Q_p))$ and the following holds:
  for every integer $j\ge j_0$ divisible by $r$,
  \begin{equation}\label{eq:MackCrane}
  \tr_{\fkX^\infty}\big(\phi^{\infty,p}\phi^{(j)}_p \left|[H_c]_{\bb,\xi} \right.\big)
 = \!
   \sum_{\gamma_0\in \Sigma_{\R\tu{-ell}}(G)/\fkX_{\Q}}  \!\! \frac{c_2(\gamma_0)  \tr \xi (\gamma_0)}{\bar \iota_G (\gamma_0)}  \! \sum_{\fkc\in \KP^{\Fr}_{\bb}(\gamma_0)} c_1 (\mathfrak c) O^G_{\gamma_a}(\phi^{\infty,p})O^{J_{\bb}}_{\delta_{[b]}}(\phi^{(j)}_p),
  \end{equation}
  where $\gamma_a$ and $\delta_{[b]}$ denote the image of $\fkc$ under the map \eqref{eq:KP-to-CKP}.
\end{conj}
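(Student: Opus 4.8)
Conjecture~\ref{conj:Igusa-general} is the anticipated outcome of Step~1 in the introduction --- the Langlands--Kottwitz--Mack-Crane point-counting method --- and this paper takes it as a hypothesis rather than proving it; what follows is a sketch of the strategy by which it is established, following \cite{HT01,Shin09,MackCrane}. The plan is to realize the left-hand side of \eqref{eq:MackCrane} as a sum of fixed-point contributions of an algebraic correspondence on the Igusa tower over a finite field, to organize those fixed points into isogeny classes each parametrized by a $\bb$-admissible Kottwitz parameter, to read off the local contribution of each class as a product of orbital integrals, and finally to count the isogeny classes with prescribed Hecke data.

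By Hypothesis~\ref{hypo:Igusa-existence}(ii) one descends to the finite-type varieties $\Ig_{\bb,m,K^p}$ over a finite subfield of $\ol k$ and passes to the limit over $m$ and $K^p$ at the end. The prime-to-$p$ Hecke operator attached to $\phi^{\infty,p}$ and the element $(j\nu_{\bb})(p)\in J_{\bb}(\Q_p)$ implicit in $\phi_p^{(j)}$ together define such a correspondence; the point of taking $j$ large --- equivalently, of twisting by a high power of Frobenius --- is that the correspondence then becomes \emph{contracting} near its fixed points, so that the Fujiwara--Varshavsky form of the Lefschetz--Verdier trace formula applies and expresses $\tr_{\fkX^\infty}(\phi^{\infty,p}\phi_p^{(j)}\mid [H_c]_{\bb,\xi})$ as a sum of local terms over fixed points, each being the trace of the correspondence on the stalk of $\varsigma_{\bb,m,K^p}^*\cL_\xi$. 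One then invokes a Langlands--Rapoport-type description of the $\ol k$-points of the Igusa tower together with their prime-to-$p$ and $J_{\bb}(\Q_p)$-structure: the points fall into isogeny classes, and isogeny classes correspond to admissible morphisms which --- using Hypothesis~\ref{hypo:quasi-split-at-p}, the discussion of $B(G)$ in \S\ref{ss:isocrystals}, and Lemmas~\ref{lem:KP-to-CKP} and~\ref{lem:b-is-basic} --- are identified with $\KP_{\bb}$, where $\gamma_0$ is the rational isogeny attached to the class, $a$ encodes its prime-to-$p$ refinement (hence $\gamma_a$) and $[b]$ its $p$-adic refinement (hence $\delta_{[b]}$), with acceptability forcing $[b]$ to be basic. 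This is the step carried out by Mack-Crane \cite{MackCrane} in the Hodge-type hyperspecial case via \cite{KisinPoints,KSZ}.

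Fixing an isogeny class with parameter $\fkc$ and summing the fixed-point terms inside it, the $G(\A^{\infty,p})$-equivariance of $\varsigma_{\bb,m,K^p}$ turns the prime-to-$p$ contribution into the orbital integral $O^G_{\gamma_a}(\phi^{\infty,p})$, while at $p$ the translation by $(j\nu_{\bb})(p)$ together with the identification of $J_b$-acceptable conjugacy classes with conjugacy classes in the Levi $M_{\bb}$ collapses the twisted orbital integral on $G(\Q_{p^r})$ into the ordinary orbital integral $O^{J_{\bb}}_{\delta_{[b]}}(\phi_p^{(j)})$, and the action of $\gamma_0$ on the stalk of $\cL_\xi$ produces the scalar $\tr\xi(\gamma_0)$; the Haar measures on the various centralizers are the ones transported from $I_0$ as in \S\ref{sss:notation-for-TF}. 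It remains to count $\ol k$-points with fixed Hecke data inside a given isogeny class: this is a Tamagawa-number/groupoid-cardinality computation à la Kottwitz, in which the automorphism groupoid is governed by the inner form $I_{\fkc}$ of $I_0$, producing the weight $c_1(\fkc)=\vol(I_\fkc(\Q)\backslash I_\fkc(\A^\infty)/\fkX^\infty)$; the factors $c_2(\gamma_0)$ and $\bar\iota_G(\gamma_0)$ arise from $\ker^1$-terms and from possible disconnectedness of $G_{\gamma_0}$, and the restriction of the inner sum to $\KP^{\Fr}_{\bb}(\gamma_0)$ (parameters with $\alpha(\fkc)=1$) reflects the fact that only those give rise to an actual $\Q$-form $I_{\fkc}$, cf.~\S\ref{sss:inner-twist-I0}. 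Assembling these contributions and taking the limit over $m,K^p$ yields \eqref{eq:MackCrane}.

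The essential obstacle is the Langlands--Rapoport-type description of the Igusa tower invoked above: for parahoric level and general Shimura data it is not presently known, and this is precisely what is packaged into Hypothesis~\ref{hypo:Igusa-existence} (at the hyperspecial Hodge-type end it is Mack-Crane's theorem, built on \cite{KisinPoints,KSZ}). Secondary technical points are the verification that the correspondence is contracting near its fixed points for $j$ sufficiently large --- the one place where largeness of $j$ is needed for geometric rather than group-theoretic reasons --- and the careful propagation of all Haar-measure normalizations and of the central character datum $(\fkX,\chi)$ through the fixed-point count.
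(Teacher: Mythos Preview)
You correctly recognize that the paper does not prove this statement: it is stated as a conjecture and assumed throughout \S\ref{s:stabilization}. The paper's own discussion of the expected proof strategy appears in Remark~\ref{rem:LR-for-Igusa}, and your sketch is broadly consistent with it --- the route through a Langlands--Rapoport description of $\Ig_{\bb}(\ol k)$ (equation~\eqref{eq:LR-Igusa}) followed by the Fujiwara--Varshavsky fixed-point formula is indeed Mack-Crane's method.

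Two small inaccuracies are worth flagging. First, your phrase ``collapses the twisted orbital integral on $G(\Q_{p^r})$ into the ordinary orbital integral $O^{J_{\bb}}_{\delta_{[b]}}$'' imports language from the Shimura-variety setting that does not apply here: for Igusa varieties the $p$-part of the Hecke action is by $J_{\bb}(\Q_p)$ from the outset, and the local contribution at $p$ is directly an orbital integral on $J_{\bb}$ --- there is no twisted orbital integral to collapse. Second, you write that the Langlands--Rapoport description ``is precisely what is packaged into Hypothesis~\ref{hypo:Igusa-existence}'', but that hypothesis only posits the existence of the Igusa tower with its group action and finite-type approximations; the LR-type point count is a separate and much deeper input, and it is the conjecture itself (not Hypothesis~\ref{hypo:Igusa-existence}) that encodes it. Finally, the restriction to $\KP^{\Fr}_{\bb}(\gamma_0)$ is better described as the condition singling out those parameters that actually arise geometrically from isogeny classes on the special fiber; the existence of the global inner form $I_{\fkc}$ (\S\ref{sss:inner-twist-I0}) is a consequence of $\alpha(\fkc)=1$, not the defining feature.
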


\subsubsection{\textit{First simplification}}\label{sss:1st-simp}
Let us point out how Conjecture \ref{conj:Igusa-general} simplifies when $Z_G^{\circ}$ is a cuspidal torus. In this case, $Z_{\tu{ac}}=\{1\}$ so assumption \eqref{eq:trivial-on-Zac} is vacuous; moreover, $Z^{\circ}(\Q)$ is discrete in $Z^{\circ}(\A^\infty)$. (See \cite[Def.~1.6.4, Lem.~1.5.5]{KSZ}.) Therefore $\fkX_{\Q}=Z(\Q)\cap K$ is a finite subgroup of $K$, which must be trivial since $K$ is neat. Hence the first sum in \eqref{eq:MackCrane} is simply over $\gamma_0\in \Sigma_{\R\tu{-ell}}(G)$.

In this case, it is convenient to view formula \eqref{eq:MackCrane} with respect to the central character datum $(\fkX_0,\chi_0)=(A_{G,\infty},~ \omega_\xi|_{A_{G,\infty}})$ from Example \ref{ex:fkX0}, which does not depend on $K$. To see that \eqref{eq:MackCrane} relative to $(\fkX,\chi)$ is equivalent to the same formula relative to $(\fkX_0,\chi_0)$, it is enough to observe that $\fkX_{\Q}=\fkX_{0,\Q}=\{1\}$ and that the volume of $\fkX$ appearing on both sides gets canceled out. (The volume is implicit in the constant $c_1(\fkc)$ as well as the trace on the left hand side.)

\subsubsection{Second simplification}\label{sss:2nd-simp}

We still assume that $Z_G$ is a cuspidal torus, and in addition that  $G_{\tu{der}}=G_{\tu{sc}}$. The simplifications in \S\ref{sss:1st-simp} remain valid.
Moreover the second sum of \eqref{eq:MackCrane} can be taken over classical $\bb$-admissible Kottwitz parameters via Lemma \ref{lem:KP-to-CKP}.

\subsubsection{Known results}\label{sss:known}

Let $(G,X,p,\cG)$ be an integral Shimura datum such that
\begin{itemize}
    \item $(G,X)$ is of abelian type,
    \item $\cG$ is a reductive model of $G_{\Q_p}$ (so $K_p=\cG(\Z_p)$ is hyperspecial).
\end{itemize}
The construction of canonical integral models (Hypothesis \ref{hypo:integral-model-Shimura}) has been carried out by \cite{KisinModels,KimMadapusiPera} for $p>2$ and $p=2$, respectively. Now we restrict $(G,X)$ to a Shimura datum of \emph{Hodge type}.
Then $Z_G$ is a cuspidal torus, so we are in the simple case of \S\ref{sss:1st-simp}. Assumption \eqref{eq:trivial-on-Zac} is vacuous as mentioned in \S\ref{sss:1st-simp}. 
The Igusa varieties satisfying Hypothesis \ref{hypo:Igusa-existence} have been constructed by \cite{CS17,HamacherKim} (also see \cite[\S6]{KretShin}). 
In this situation, \cite{MackCraneShin} proves Conjecture \ref{conj:Igusa-general}, generalizing from the case of PEL-type A or C (\cite{Shin09}):

\begin{thm}\label{thm:MackCrane}
Conjecture \ref{conj:Igusa-general} holds true when $(G,X)$ is of Hodge type and $\cG$~is a reductive model. 
\end{thm}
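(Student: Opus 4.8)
This is Mack-Crane's theorem \cite{MackCrane}; we only sketch the strategy, which realizes Step~1 of the introduction by the Langlands--Kottwitz method adapted to Igusa varieties. The plan is to first invoke the existing foundations: canonical integral models $\mS_{K_p}$ for Hodge-type Shimura varieties with hyperspecial level, due to Kisin \cite{KisinModels} for $p>2$ and Kim--Madapusi Pera \cite{KimMadapusiPera} for $p=2$, which furnish Hypothesis \ref{hypo:integral-model-Shimura}; and Igusa varieties $\Ig_{\bb}$ over $\mS_{K_p,\ol k}$ with the local systems $\cL_\xi$, constructed by Caraiani--Scholze \cite{CS17} and Hamacher--Kim \cite{HamacherKim} (see also \cite[\S6]{KretShin}), which furnish Hypothesis \ref{hypo:Igusa-existence}. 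Since $(G,X)$ is of Hodge type, $Z_G$ is cuspidal and one is in the simplified setting of \S\ref{sss:1st-simp}, so it suffices to prove \eqref{eq:MackCrane} with the first sum over $\gamma_0\in\Sigma_{\R\tu{-ell}}(G)$.

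Next one computes $\tr(\phi^{\infty,p}\phi_p^{(j)}\,|\,[H_c]_{\bb,\xi})$ by applying the Fujiwara--Varshavsky form of the Lefschetz fixed point formula to each finite-type variety $\Ig_{\bb,m,K^p}$ over $\ol k$, for the cohomological correspondence given by a Hecke operator (the prime-to-$p$ and $p$-parts of $\phi^{\infty,p}\phi_p^{(j)}$) composed with a high power of geometric Frobenius; taking $j$ large ensures one is in the contracting range where the formula is unconditional and that no spurious contributions arise. This expresses the trace as a sum over fixed points, i.e. over isogeny classes of the relevant objects of $\mS_{K_p}(\Fpbar)$ carrying compatible Igusa structure, weighted by local orbital-integral factors coming from the prime-to-$p$ level and the $p$-adic (Igusa) level.

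The essential input is a Langlands--Rapoport--type description of these mod $p$ points: one must show that the $\Fpbar$-points of $\mS_{K_p}$ in a given isogeny class, together with their $G(\A^{\infty,p})\times J_{\bb}(\Q_p)$-action and Frobenius, are governed by the groupoid attached to an admissible morphism exactly as in the Langlands--Rapoport conjecture for the Shimura variety itself (Kisin \cite{KisinPoints}, in the normalization of \cite{KSZ}), the only new feature being that trivializing the isocrystal of the $p$-divisible group up to isogeny — which is what the Igusa structure does — replaces $\mu_p$-admissibility at the level of $\mS_{K_p}$ with the fixed class $[\bb]\in B(G_{\Q_p},\mu_p^{-1})$. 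Granting this, each isogeny class contributes a term indexed by a triple $\fkc=(\gamma_0,a,[b])$ with $\gamma_0\in G(\Q)_{\R\tu{-ell}}$ the rational part, $a\in\fkD(I_0,G;\A^{\infty,p})$ the away-from-$p$ datum, and $[b]\in\fkD_p(I_0,G;\bb)$ the $p$-adic datum, precisely as in Definition \ref{def:Kottwitz-parameter}. Kottwitz's analysis identifies nonemptiness of the corresponding groupoid with the vanishing of $\alpha(\fkc)$, i.e. $\fkc\in\KP^{\Fr}_{\bb}(\gamma_0)$, in which case the inner form $I_\fkc$ of \S\ref{sss:inner-twist-I0} exists; its automorphism group produces the volume factor $c_1(\fkc)$ and the counting constants $c_2(\gamma_0)/\bar\iota_G(\gamma_0)$, while the remaining local factors assemble into $O^G_{\gamma_a}(\phi^{\infty,p})\,O^{J_{\bb}}_{\delta_{[b]}}(\phi_p^{(j)})$ with $\tr\xi(\gamma_0)$ contributed by $\cL_\xi$, giving \eqref{eq:MackCrane}. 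At $p$ one uses the acceptability and decency bookkeeping of \S\ref{ss:acceptable}: the power of Frobenius is absorbed into the translate $\phi_p^{(j)}$, and acceptable $\fkc$ correspond, via Lemma \ref{lem:b-is-basic}, to $[b]$ basic in $B(I_0)$, which is exactly what makes $J_{\bb,\delta}^\circ$ an inner form of $I_0$ over $\Q_p$ and $O^{J_{\bb}}_{\delta_{[b]}}$ well defined.

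The main obstacle is precisely this Langlands--Rapoport description of $\mS_{K_p}(\Fpbar)$ compatibly with the Igusa structure and the full prime-to-$p$ Hecke action: it rests on the deep results of \cite{KisinPoints} (integral $p$-adic Hodge theory, the theory of isogeny classes and their special points) and requires checking that passing from $\mS_{K_p}$ to $\Ig_\bb$ alters the groupoid only by the substitution of $p$-adic data described above, and that the Caraiani--Scholze cohomological formalism for Igusa varieties reproduces exactly the orbital integral on $J_{\bb}(\Q_p)$ occurring in \eqref{eq:MackCrane}. Once that is in place, the passage from the sum over isogeny classes to the right-hand side of \eqref{eq:MackCrane} — regrouping by $\fkc$, identifying automorphism groups through Kottwitz's $\fkE(I_0,G;\A/\Q)$-pairing, and verifying the volume and sign constants — follows the by now standard template of \cite{Shin09,KSZ}.
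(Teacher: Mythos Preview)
Your sketch is in line with what the paper itself says. Note that the paper does not give a proof of this theorem at all: it is stated with the attribution ``[Mack-Crane]'' and no \texttt{proof} environment follows; the result is simply imported from \cite{MackCrane}. The only commentary the paper offers on the strategy is in Remark~\ref{rem:LR-for-Igusa}, which says exactly what you say --- that Mack-Crane proceeds by establishing a Langlands--Rapoport-type description of $\Ig_{\bb}(\ol k)$ (the equivariant bijection \eqref{eq:LR-Igusa}), building on \cite{KisinPoints,KSZ}, and then deduces the point-counting formula \eqref{eq:MackCrane} from it. Your outline of the Fujiwara--Varshavsky step, the role of acceptability and large $j$, and the bookkeeping via Kottwitz parameters is consistent with that picture and with the earlier PEL-type argument of \cite{Shin09}.
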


\begin{proof}
This follows from Lemmas 2.2.7, 4.1.9, and Theorem 4.5.11 of \cite{MackCraneShin}.
\end{proof}

We go back to consider $(G,X)$ of abelian type and allow $\cG$ to be a general parahoric model. Here is progress towards Hypothesis \ref{hypo:integral-model-Shimura} in this case: The integral models are constructed in \cite{KisinPappas,PappasRapoportModels} under certain technical assumptions. See \cite{PappasModels,PappasRapoportModels} for results on a unique characterization. Hypothesis \ref{hypo:Igusa-existence} is known by \cite{HamacherKim} over the integral models of \cite{KisinPappas} if $(G,X)$ is of Hodge type. Conjecture \ref{conj:Igusa-general} is open in this generality.

\begin{rem}\label{rem:LR-for-Igusa}
We cautiously expect that future progress on Conjecture \ref{conj:Igusa-general} would follow the model of \cite{MackCraneShin}, namely by proving the ``Langlands--Rapoport (LR) conjecture for Igusa varieties''. In the notation and terminology of \emph{loc.~cit.}, this conjecture, proven by \cite{MackCraneShin} under the hypothesis of Theorem \ref{thm:MackCrane}, asserts that there exists a $G(\A^{\infty,p})\times J_{\bb}(\Q_p)$-equivariant bijection
\begin{equation}\label{eq:LR-Igusa}
  \Ig_{\bb}(\ol k)\cong \coprod_{\phi} I_{\phi}(\Q)\backslash_{\tau} (G(\A^{\infty,p})\times J_{\bb}(\Q_p)),  
\end{equation}
where the disjoint union is over a set of representatives for isomorphism classes of $\bb$-admissible morphisms; the symbol $\backslash_{\tau}$ means that the left quotient is twisted by an element $\tau(\phi)\in I_{\phi}^{\ad}(\A^{\infty})$.
Moreover \cite{MackCraneShin} establishes this bijection for a family of $\tau(\phi)$ satisfying further conditions, in a way exactly analogous to the Langlands--Rapoport-$\tau$ conjecture of \cite{KSZ}. (See \cite[Thm.~3.6.1]{MackCraneShin} for the precise statement.) In light of recent progress \cite{KSZ,ZhouParahoric,vanHoftenLR} on the LR conjecture for Shimura varieties, it may be possible to extend the LR conjecture for Igusa varieties and Theorem \ref{thm:MackCrane} to the case where $(G,X)$ is of abelian type and $\cG$ is  a reductive model, and perhaps some cases of parahoric $\cG$ as well.
\end{rem}

\section{Stabilization}\label{s:stabilization}

Assuming Conjecture \ref{conj:Igusa-general} on the trace formula for Igusa varieties (cf.~Theorem \ref{thm:MackCrane}), we stabilize the trace formula. The main theorem is Theorem \ref{thm:stable-Igusa}.

\subsection{Initial steps}\label{ss:initial-steps}

\subsubsection{Haar measures}\label{sss:Haar}

For every connected reductive group $G'$ over $\Q$ appearing in this section (e.g., $G$, $G_1$, $H$, $H_1$ in \S\ref{sss:setup-G} and \S\ref{sss:setup-endoscopic-data}), unless it is said otherwise, we equip $G'(\A)$ with the canonical measure \cite[\S2]{OnoTamagawaNumbers} and $G'(\Q)$ with the counting measure. Denote by $A_{G'}$ the maximal $\Q$-split torus in $Z_{G'}$. Upon choosing an isomorphism $A_{G'}\cong \G_m^{\tu{rank}A_{G'}}$, we have an induced isomorphism $A_{G',\infty}:=A_{G'}(\R)^0 \cong (\R^\times_{>0})^{\tu{rank}A_{G'}}$.
The multiplicative Lebesgue measure on $(\R^\times_{>0})^{\tu{rank}A_{G'}}$ is transported to a Haar measure on $A_{G',\infty}$, which is independent of choice of the isomorphism $A_{G'}\cong \G_m^{\tu{rank}A_{G'}}$.

Let $(\fkX',\chi')$ be a central character datum for $G'$. This comes with a Haar measure on $\fkX'$. Set
$$\gls{tauX'(G')}:=\vol(G'(\Q)\backslash G'(\A)/\fkX'),$$
for the unique invariant measure on the double coset determined by the measures on $G'(\A)$, $G'(\Q)$, and $\fkX'$.
When $\fkX'=A_{G',\infty}$, the number $\tau_{\fkX'}(G')$ is the usual Tamagawa number.

\subsubsection{Setup}\label{sss:setup-G}
We assume Hypotheses \ref{hypo:integral-model-Shimura} and \ref{hypo:Igusa-existence} as well as \eqref{eq:trivial-on-Zac}. We retain the notation of \S\ref{s:Igusa}, and most importantly, assume Conjecture \ref{conj:Igusa-general}.

Choose a quasi-split group $G^*$ over $\Q$ that is an inner form of $G$ and fix an inner twist $\psi: G^*_{\ol \Q} \to G_{\ol \Q}$ which gives a cocycle $z \in Z^1(\Q, G^*_{\ad})$. We lift $z$ to $z^{\Rig} \in Z^1(P^{\Rig}_{\dot{V}} \to \cE^{\Rig}_{\dot{V}}, Z(G^*_{\scusp}) \to G^*_{\scusp})$. 

Fix a pinning $(T^*,B^*,\{X_\alpha\})$ for $G^*$ defined over~$\Q$. Write $U^*$ for the unipotent radical of $B^*$ and fix a continuous nondegenerate character $\theta^*:U^*(\Q)\backslash U^*(\A)\ra \C^\times$. In particular, $(B^*,\theta^*)$ determines a global Whittaker datum $\fkw$.

Fix a $z$-extension $ 1 \ra Z_1 \ra G_1 \ra G \ra 1$ over~$\Q$.
Applying \S\ref{sss:z-ext} to $H=G^*$, we obtain a $z$-extension $ 1 \ra Z_1 \ra G^*_1 \ra G^* \ra 1.$
We pull back the pinning for $G^*$ to obtain a pinning $(T^*_1,B^*_1,\{X_{\alpha,1}\})$ for $G^*_1$ over $\Q$. The unipotent radical of $B^*_1$ is $\Q$-isomorphic to $U^*$, so we have a Whittaker datum $\fkw_1$ for $G_1^*$ coming from $(B^*_1,\theta^*)$. Note that $G^*_{\scusp} = G^*_{1, \scusp} = G^*_{1,\der}$. Following the discussion in \S \ref{ss:global transfer factors construction}, $z^{\Rig}$ gives a family of elements  $z^{\Rig}_{\tu{sc},1,v}$ for each place $v$ of $\Q$ as well as local rigid inner twists $(G_{1,v}, \psi_{1,v}, z^{\Rig}_{1,v})$ of $G^*_{1}$. 
Moreover we can lift the Shimura datum $(G,X)$ to $(G_1,X_1)$ such that the map $G_1\ra G$ induces a morphism of Shimura data $(G_1,X_1)\ra (G,X)$. Thus the conjugacy class of cocharacters $[\mu_{X_1}]$ is carried to $[\mu_X]$ under $G_1\ra G$.

We have a central character datum $(\fkX,\chi)$ from Example \ref{ex:fkX-K}. Take $\fkX_1\subset Z_{G_1}(\A)$ to be the preimage of $\fkX$, and define $\chi_1:\fkX_{1,\Q}\backslash \fkX_1\ra \C^\times $ by pulling back~$\chi$. We choose a unique Haar measure on $\fkX_1$ such that $\tau_{\fkX}(G)=\tau_{\fkX_1}(G_1)$.
Then $(\fkX_1,\chi_1)$ is a central character datum for $G_1$.

\subsubsection{Setup for endoscopic data}\label{sss:setup-endoscopic-data}

Choose a subset $\gls{sEheartel(G)}\subset \sE_{\el}(G)$, which is a set of representatives for $\mE_{\el}(G)$. For each $\fke=(H, \cH, s, \eta) \in \sE^\heartsuit_{\el}(G)$, we fix some choices in preparation for the stabilization. We apply \S\ref{sss:z-ext} to obtain a $z$-extension
$$1\ra Z_1 \ra H_1 \ra H\ra 1$$
and an endoscopic datum $\fke_1=(H_1, \LL H_1, s_1, \eta_1) \in \sE(G_1)$. We denote the set of such data by $\sE^\heartsuit_{\el}(G_1)$. Following the procedure in \S\ref{ss:global transfer factors construction}, in light of Remark \ref{rem:y'=1}, we construct a local rigid endoscopic datum $\dot \fke_1=(H_{1,v}, \LL H_{1,v}, \dot{s}_{1,v}, \eta_{1,v})$ for $G_{1,v}$ while taking $\dot y'_{1,v}=1$ (and fixing a choice of $y''_{1,v}$) at each place $v$ of $\Q$. Along the way, we choose $s_{1,\tu{sc}}$, thus also $s_{1,\der}\in \hat G_{1,\der}$. We denote the set of $\dot \fke_1$ obtained in this way by \gls{sEheartrig(G1)}.

Define $\fkX_{H_1}$ to be the image of $\fkX_1$ under the canonical embedding $Z_{G_1}(\A)\ra Z_{H_1}(\A)$. So $\fkX_1= \fkX_{H_1}$ canonically by construction, and $\fkX_{H_1}$ is the preimage of $\fkX\subset Z_G(\A)\subset Z_H(\A)$ in $Z_{H_1}(\A)$ via the diagram \eqref{eq:Z1-ZG-ZH}. There is also a decomposition $\fkX_{H_1}=\fkX^\infty_{H_1} \fkX_{H_1,\infty}$. 
Concretely $\fkX_{H_1,\infty}=Z_{H_1}(\R)$, and $\fkX^\infty_{H_1}$ is the preimage of $Z_G(\A^{\infty})\cap K$. 

The pullback of $\chi:\fkX_{\Q}\backslash\fkX\ra \C^\times$ via $\fkX_{H_1}\ra \fkX$ will be still denoted by $\chi$.
Write $$\lambda_{H_1}:Z_{G_1}(\Q)\backslash Z_{G_1}(\A)\ra \C^\times$$
for the character as in \S\ref{sss:global-GH-regular}, with $H_1,G_1$ playing the roles of $H,G$. We transport the Haar measure from $\fkX_1$ to $\fkX_{H_1}$ via $\fkX_1 = \fkX_{H_1}$. Thereby consider the central character datum $(\fkX_{H_1},\chi_{H_1})$ for $H_1$, where
$\chi_{H_1}:\fkX_{H_1,\Q}\backslash \fkX_{H_1}\ra \C^\times$ is given by
$$\chi_{H_1}:=\chi\cdot \lambda_{H_1}|_{\fkX_{1}}^{-1} .$$
We can view $(\fkX,\chi)$ in \S\ref{sss:setup-G} as a central character datum for $H$ via the canonical embedding $Z_G\ra Z_H$. We have the equality \cite[Lem.~8.2.2]{KSZ}
\begin{equation}\label{eq:tau(H1)=tau(H)}
    \tau_{\fkX_{H_1}} (H_1) = \tau_{\fkX}(H).
\end{equation}

\subsubsection{Transfer factors}

For each $(H_1, \LL H_1, s_1, \eta_1)\in \sE^\heartsuit_{\el}(G_1)$ we choose a decomposition of the canonical global transfer factor $\gls{DeltaA}: H_1(\A_F)_{(G_1,H_1)\tu{-reg}} \times G_1(\A_F)_{\semis} \to \C$ as
$$\Delta_{\A}(\gamma_{H_1},\gamma_1)=\prod_v \Delta[\fkw_{1},z^{\Rig}_{1,v}](\gamma_{H_1,v},\gamma_{1,v}),$$
which is possible by \eqref{eq:Delta-adelic} and the choice $\dot y'_{1,v}=1$ in \S\ref{sss:setup-endoscopic-data}.
For simplicity, we will often write $\Delta_v=\Delta[\fkw_{1},z^{\Rig}_{1,v}]$, and $\Delta^{\infty,p}:=\prod_{v\neq \infty,p} \Delta_v$.

\subsubsection{The first step}

Let $\fkc=(\gamma_0,a,[b])\in \KP_{\bb}$. For each place $v$ of $\Q$, define the Kottwitz sign
$$e_v(\fkc):=e(I_v)\in \{\pm 1\},\qquad e(\fkc):=\prod_v e_v(\fkc),$$
where $I_v$ is the inner form of $I_{0,\Q_v}$ in \S\ref{sss:inner-twist-I0}. 
We also write 
$$e^{\infty,p}(\gamma_0,a):=\prod_{v\neq p,\infty} e(I_v),\qquad e_p(\gamma_0,[b]):=e(I_p),\qquad 
e_\infty(\gamma_0):=e(I_\infty)$$
to make it clear what the signs exactly depend on.

\begin{lem}{\label{lem:initial-steps}}
If $\alpha(\fkc)$ is trivial, then
$$e(\fkc)=1,\qquad c_1(\fkc) c_2(\gamma_0)=\tau_{\fkX}(G)\cdot |\fkK(I_0/\Q)| \cdot \vol(Z(\R)\backslash I_\infty(\R))^{-1}.$$
\end{lem}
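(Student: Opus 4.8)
The statement has two assertions, both of which should follow from general identities in the reference \cite{KSZ} once the present setup is matched up with theirs. The claim $e(\fkc)=1$ is a product formula for Kottwitz signs: since $\alpha(\fkc)$ is trivial, \S\ref{sss:inner-twist-I0} supplies a global inner twist $I_{\fkc}$ of $I_0$ over $\Q$ with $I_{\fkc,\Q_v}\cong I_v$ at every place. The Kottwitz sign $e(G')$ of a connected reductive group is defined so that $e(G')e(G'^*)$... more precisely, $e$ is trivial on quasi-split groups and multiplicative in a suitable sense; the key input is that for an inner form $I'$ of $I_0$ coming from a \emph{global} cocycle in $H^1(\Q,I_{0,\ad})$, the product $\prod_v e(I'_{\Q_v})$ equals $1$. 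This is precisely \cite[Lem.~1.7.10]{KSZ} (or its analogue), applied to $I'=I_{\fkc}$, using that $e(I_{\fkc,\Q_v})=e(I_v)$ for all $v$. So the first step is to invoke that lemma.

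For the second assertion, I would trace through the definition of $c_1(\fkc)$ and $c_2(\gamma_0)$ in \S\ref{sss:notation-for-TF}: $c_1(\fkc)=\vol(I_\fkc(\Q)\backslash I_{\fkc}(\A^\infty)/\fkX^\infty)$ and $c_2(\gamma_0)=|\ker(\ker^1(\Q,I_0)\ra\ker^1(\Q,G))|$. The target identity is
$$c_1(\fkc)\,c_2(\gamma_0)=\tau_{\fkX}(G)\cdot|\fkK(I_0/\Q)|\cdot\vol(Z(\R)\backslash I_\infty(\R))^{-1}.$$
The plan is to first relate the adelic volume $\vol(I_\fkc(\Q)\backslash I_\fkc(\A^\infty)/\fkX^\infty)$ to the full Tamagawa-type volume $\tau_{\fkX}(I_\fkc):=\vol(I_\fkc(\Q)\backslash I_\fkc(\A)/\fkX)$ — here one must be careful that $\fkX=\fkX^\infty\fkX_\infty$ with $\fkX_\infty=Z(\R)$, so dividing out the archimedean factor contributes exactly the factor $\vol(Z(\R)\backslash I_\infty(\R))^{-1}$, where $I_\infty(\R)$ is the anisotropic-mod-center inner form at $\infty$ that appears (note $I_\fkc(\R)$ and $I_\infty(\R)$ differ, but the relevant volume — an invariant form on $I_\fkc(\R)/Z(\R)$ transported to the compact form — is what enters, and one uses that inner forms over $\R$ have matching invariant volumes up to the sign already accounted for). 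Then one uses the Tamagawa number comparison: $\tau_{\fkX}(I_\fkc)$ should be computed via $\tau(I_\fkc)=\tau(I_0)$ (Tamagawa numbers are invariant under inner twisting, Kottwitz) and then related to $\tau_{\fkX}(G)$ and the finite groups $\fkK(I_0/\Q)$ and $\ker^1$ via the exact-sequence bookkeeping in \cite[\S\S1.7, 8]{KSZ}. Concretely, $|\fkK(I_0/\Q)|$ is the cardinality of the Pontryagin dual of $\fkE(I_0,G;\A/\Q)$, and the factor $c_2(\gamma_0)=|\ker(\ker^1(\Q,I_0)\to\ker^1(\Q,G))|$ is exactly the correction term appearing when passing from $\tau(I_0)$-normalized volumes to $\tau(G)$-normalized ones in the stabilization.

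\textbf{Expected main obstacle.} The routine part is invoking invariance of Tamagawa numbers under inner twisting and the product formula for $e$. The delicate bookkeeping is the precise matching of the central-character-datum normalizations: the measure on $\fkX_1$ was chosen in \S\ref{sss:setup-G} to enforce $\tau_{\fkX}(G)=\tau_{\fkX_1}(G_1)$, and one must ensure the analogous normalization is used consistently for $I_\fkc$ and that the volume $\vol(Z(\R)\backslash I_\infty(\R))$ is computed with respect to the \emph{same} Haar measure on $Z(\R)$ implicit in $\fkX_\infty$. I expect the cleanest route is to cite \cite[Lem.~8.3.1]{KSZ} or the analogous lemma in that reference — the identity here is the Igusa-variety analogue of the constant appearing in the stabilization of the point-counting formula for Shimura varieties, and \cite{KSZ} proves exactly such an identity; the proof should reduce to checking that the $\bb$-admissible Kottwitz parameter formalism produces the same local inner forms $I_v$ at $v\neq p$ and the same archimedean datum as in \emph{loc.~cit.}, with the place $p$ handled by Lemma \ref{lem:b-is-basic} (basicness of $[b]$ in $B(I_0)$ makes $I_p$ an honest inner form classified by $H^1(\Q_p,I_{0,\ad})$, just as in the Shimura variety case). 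So the real work is a careful cross-reference rather than a new computation.
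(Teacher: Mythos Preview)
Your proposal is correct and follows essentially the same approach as the paper: the sign identity $e(\fkc)=1$ is proved exactly as you describe (global inner twist $I_{\fkc}$ plus the product formula for Kottwitz signs), and the volume identity is obtained by direct citation of the analogous computation in \cite{KSZ}. The only correction is bibliographic: the relevant reference is \cite[Lem.~8.1.3]{KSZ}, not 8.3.1 or 1.7.10.
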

\begin{proof}
If $\alpha(\fkc)$ is trivial then $e(\fkc)=\prod_v e(I_{\fkc,\Q_v})=1$ in light of \S\ref{sss:inner-twist-I0} and the product formula for Kottwitz signs. The latter equality is \cite[Lem.~8.1.3]{KSZ}.
\end{proof}

The first equality of the lemma implies that
\begin{equation}\label{eq:FT-on-K}
    \sum_{\kappa\in \fkK(I_0/\Q)} e(\fkc) \langle \alpha(\fkc),\kappa \rangle^{-1} = \begin{cases} 
    |\fkK(I_0/\Q)|, & \mbox{if}~\alpha(\fkc)~\mbox{is trivial},\\
    0, &\mbox{otherwise}.
    \end{cases}
\end{equation}
We are assuming Conjecture \ref{conj:Igusa-general}. Fix $\phi^{\infty,p}$, $\phi_p$, and $j$ as in the conjecture throughout this section. For each $\fkc=(\gamma_0,a,[b])\in \KP_{\bb}$ and $\kappa\in \fkK(I_0/\Q)$, set
\begin{equation}\label{eq:def-of-N}
   N(\gamma_0,\kappa,a,[b],j):=
\langle \alpha(\gamma_0,a,[b]),\kappa\rangle^{-1} e(\gamma_0,a,[b])
\frac{O^G_{\gamma_a}(\phi^{\infty,p}) O^{J_{\bb}}_{\delta_{[b]}}(\phi^{(j)}_p)\tr \xi(\gamma_0)}{\vol(Z(\R)\backslash I_\infty(\R))}. 
\end{equation}

\begin{lem}\label{lem:after-initial-steps}
In the setting of Conjecture \ref{conj:Igusa-general}, equation \eqref{eq:MackCrane} can be rewritten as
  \begin{equation}\label{eq:after-initial-steps}
  \tr_{\fkX^\infty}\big(\phi^{\infty,p}\phi^{(j)}_p \left|[H_c]_{\bb,\xi} \right.\big)
 = \tau_{\fkX}(G)
   \sum_{\gamma_0}
   \ol{\iota}_G(\gamma_0)^{-1}
    \sum_{\kappa} \sum_{(a,[b])} N(\gamma_0,\kappa,a,[b],j),
  \end{equation}
where the sums run over $\gamma_0\in \Sigma_{\R\tu{-ell}}(G)/\fkX_{\Q}$, $\kappa\in\fkK(I_0/\Q)$, and $(a,[b])\in \fkD(I_0,G;\A^{\infty,p})\times \fkD_p(I_0,G;\bb)$, respectively.
\end{lem}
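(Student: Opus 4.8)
The claimed identity is a purely formal rearrangement of the right-hand side of \eqref{eq:MackCrane}, and I would carry it out in three moves, the delicate point being an acceptability reduction at the start. First I would note that, because $j\ge j_0$, we have $\phi_p^{(j)}\in\cH_{\tu{acc}}(J_{\bb}(\Q_p))$, so $O^{J_{\bb}}_{\delta_{[b]}}(\phi_p^{(j)})=0$ unless $\delta_{[b]}$ is $\nu_{\bb}$-acceptable; and $\delta_{[b]}$ is $\nu_{\bb}$-acceptable exactly when $\fkc=(\gamma_0,a,[b])$ is acceptable, since $\delta_{[b]}$ is the image of $\gamma_0$ under a $\Q_p$-isomorphism $J_b\simeq J_{\bb}$ matching the Newton cocharacters and acceptability depends only on the conjugacy class (Definition \ref{def:acceptable-J}). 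Hence in \eqref{eq:MackCrane} the sum over $\fkc\in\KP^{\Fr}_{\bb}(\gamma_0)$ may be restricted to acceptable $\fkc$, and in \eqref{eq:after-initial-steps} the sum over $(a,[b])$ may be restricted to acceptable pairs, with the convention $N(\gamma_0,\kappa,a,[b],j):=0$ otherwise; this is harmless, and it also guarantees that $e(\fkc)$, the inner twist $I_p$, the group $I_{\fkc}$, and $c_1(\fkc)$ --- each defined only for acceptable (and, for the latter two, $\alpha$-trivial) parameters --- are never evaluated outside their domain.

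Second I would insert the Fourier-theoretic indicator supplied by \eqref{eq:FT-on-K}. For a fixed representative $\gamma_0$, the set $\KP^{\Fr}_{\bb}(\gamma_0)$ is exactly the set of $\fkc=(\gamma_0,a,[b])$ with $(a,[b])\in\fkD(I_0,G;\A^{\infty,p})\times\fkD_p(I_0,G;\bb)$ and $\alpha(\fkc)$ trivial, and by \eqref{eq:FT-on-K} the quantity $\sum_{\kappa\in\fkK(I_0/\Q)} e(\fkc)\langle\alpha(\fkc),\kappa\rangle^{-1}$ equals $|\fkK(I_0/\Q)|$ when $\alpha(\fkc)$ is trivial and $0$ otherwise. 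The only $\kappa$-dependence in $N(\gamma_0,\kappa,a,[b],j)$ (see \eqref{eq:def-of-N}) sits in the factor $\langle\alpha(\fkc),\kappa\rangle^{-1}$, and all the sums in sight are effectively finite because $\phi^{\infty,p}$ and $\phi_p$ are compactly supported, so summing over $\kappa$ first yields
$$\sum_{\kappa}\,\sum_{(a,[b])} N(\gamma_0,\kappa,a,[b],j)=\sum_{\fkc\in\KP^{\Fr}_{\bb}(\gamma_0)} |\fkK(I_0/\Q)|\,\frac{O^G_{\gamma_a}(\phi^{\infty,p})\,O^{J_{\bb}}_{\delta_{[b]}}(\phi_p^{(j)})\,\tr\xi(\gamma_0)}{\vol(Z(\R)\backslash I_\infty(\R))},$$
the sum over $\fkc\in\KP^{\Fr}_{\bb}(\gamma_0)$ being effectively over acceptable $\fkc$ as above.

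Third I would substitute the constant identity of the preceding lemma: for every $\fkc$ with trivial Kottwitz invariant, in particular for every $\fkc\in\KP^{\Fr}_{\bb}(\gamma_0)$, one has $|\fkK(I_0/\Q)|\,\vol(Z(\R)\backslash I_\infty(\R))^{-1}=\tau_{\fkX}(G)^{-1}c_1(\fkc)\,c_2(\gamma_0)$. Feeding this into the previous display, multiplying by $\tau_{\fkX}(G)\,\ol{\iota}_G(\gamma_0)^{-1}$, and summing over $\gamma_0\in\Sigma_{\R\tu{-ell}}(G)/\fkX_{\Q}$, the right-hand side of \eqref{eq:after-initial-steps} becomes $\sum_{\gamma_0}\ol{\iota}_G(\gamma_0)^{-1}c_2(\gamma_0)\,\tr\xi(\gamma_0)\sum_{\fkc\in\KP^{\Fr}_{\bb}(\gamma_0)}c_1(\fkc)\,O^G_{\gamma_a}(\phi^{\infty,p})\,O^{J_{\bb}}_{\delta_{[b]}}(\phi_p^{(j)})$, which is the right-hand side of \eqref{eq:MackCrane}; since the left-hand sides coincide, the lemma follows. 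The whole argument is bookkeeping, and the only step needing genuine care --- the nearest thing to an obstacle --- is the acceptability reduction: one must check that the terms silently dropped or added are truly zero, and that $e(\fkc)$, $I_p$, $I_{\fkc}$ and $c_1(\fkc)$ are never invoked on a non-acceptable parameter.
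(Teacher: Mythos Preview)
Your proof is correct and follows essentially the same route as the paper: apply the preceding lemma's identity $c_1(\fkc)c_2(\gamma_0)=\tau_{\fkX}(G)\,|\fkK(I_0/\Q)|\,\vol(Z(\R)\backslash I_\infty(\R))^{-1}$ together with the Fourier identity \eqref{eq:FT-on-K} to pass between the sum over $\KP^{\Fr}_{\bb}(\gamma_0)$ and the double sum over $\kappa$ and $(a,[b])$. The paper runs the computation from \eqref{eq:MackCrane} toward \eqref{eq:after-initial-steps}, whereas you run it in reverse, but this is cosmetic; your explicit acceptability reduction (ensuring $e(\fkc)$, $I_p$, and $c_1(\fkc)$ are only invoked where defined) is a point the paper's proof leaves implicit.
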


\begin{proof}
We apply Lemma \ref{lem:initial-steps} and \eqref{eq:FT-on-K} to
\eqref{eq:MackCrane}. Then $\tr_{\fkX^\infty}\big(\phi^{\infty,p}\phi^{(j)}_p \left|[H_c]_{\bb,\xi} \right.\big)$ equals
\begin{align*}
&  \sum_{\gamma_0}  \frac{c_2(\gamma_0)  \tr \xi (\gamma_0)}{\bar \iota_G (\gamma_0)}  \! \sum_{\fkc\in \KP^{\Fr}_{\bb}(\gamma_0)} c_1 (\mathfrak c) O^G_{\gamma_a}(\phi^{\infty,p})O^{J_{\bb}}_{\delta_{[b]}}(\phi^{(j)}_p)
\\
= \quad &  \sum_{\gamma_0}
\frac{\tau_{\fkX}(G)}{\ol{\iota}_G(\gamma_0)}
 \! \sum_{\fkc\in \KP_{\bb}(\gamma_0)}
 \sum_{\kappa}
 e(\fkc)\langle \alpha(\fkc),\kappa \rangle^{-1} 
 \frac{O^G_{\gamma_a}(\phi^{\infty,p})O^{J_{\bb}}_{\delta_{[b]}}(\phi^{(j)}_p)\tr \xi(\gamma_0)}{\vol(Z(\R)\backslash I_\infty(\R))},
\end{align*}
where $\gamma_0\in\Sigma_{\R\tu{-ell}}(G)/\fkX_{\Q}$ and $\kappa\in\fkK(I_0/\Q)$.
Hence \eqref{eq:after-initial-steps} holds in view of the definition \eqref{eq:def-of-N}.
\end{proof}

\subsection{Stabilization away from \texorpdfstring{$p$}{p}}\label{ss:stab-away-from-p}

This and the following subsections are devoted to orbital integral identities, for one datum $(H, \cH, s, \eta) \in \sE^\heartsuit_{\el}(G)$ at a time. The choices made in \S\ref{sss:setup-endoscopic-data} will be used freely.

\subsubsection{Stabilization away from $p$ and $\infty$}

To transfer $\phi^{\infty,p}\in \cH(G(\A^{\infty,p}),(\chi^{\infty,p})^{-1})$ to a function on $H_1(\A^{\infty,p})$, we appeal to the usual Langlands--Shelstad transfer.

\begin{lem}\label{lem:SO-away-from-pinfty}
There exists $h_1^{\infty,p}\in \cH(H_1(\A^{\infty,p}),(\chi_{H_1}^{\infty,p})^{-1})$ with the following property. 
For each $\gamma_{H_1}\in H_1(\A^{\infty,p})_{(G_1,H_1)\tu{-reg}}$,
 if $\gamma_{H_1}$ has no image in $G_1(\A^{\infty,p})$ then $SO_{\gamma_{H_1}}(h_1^{\infty,p})=0$. If $\gamma_{H_1}$ has $\gamma_{0,1}\in G_1(\A^{\infty,p})_{\semis}$ as image, then writing $\gamma_0\in G(\A^{\infty,p})_{\semis}$ for the projection of $\gamma_{0,1}$, we have 
    $$SO^{H_1}_{\gamma_{H_1}}(h_1^{\infty,p}) =
\Delta^{\infty,p}(\gamma_{H_1},\gamma_{0,1})
\sum_{a\in \fkD(I_0,G;\A^{\infty,p})}   \langle \tilde \beta^{\infty,p}(\gamma_0,a),\kappa \rangle^{-1}
 e^{\infty,p}(\gamma_0,a) O^G_{\gamma_a}(\phi^{\infty,p}).$$
 \end{lem}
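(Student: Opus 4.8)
The plan is to construct $h_1^{\infty,p}$ place by place using the Langlands--Shelstad transfer and then assemble the stable orbital integral identity by unwinding the definitions of the Kottwitz invariants $\tilde\beta^{\infty,p}$ and the signs $e^{\infty,p}$. First I would recall that $\phi^{\infty,p}=\prod_{v\neq\infty,p}\phi_v$ with $\phi_v\in\cH^{\ur}(G_v)$ for almost all $v$, and the chosen transfer factor $\Delta^{\infty,p}=\prod_{v\neq\infty,p}\Delta_v$ with $\Delta_v=\Delta[\fkw_1,z^{\Rig}_{1,v}]$ (after passing to the $z$-extension $G_1$, which is harmless since $G_{1,\der}=G_{1,\scusp}$). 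By Proposition \ref{prop:LS-transfer} applied to each $G_{1,v}$ and the embedded/refined endoscopic datum $(H_{1,v},\LL H_{1,v},s_{1,v},\eta_{1,v})$, there exists $h_{1,v}\in\cH(H_1(\Q_v),\chi_{H_1,v}^{-1})$ that is a $\Delta_v$-transfer of $\phi_v$; by Corollary \ref{cor:FLD} (the fundamental lemma with the $\Delta_D$ normalization) one can take $h_{1,v}=\eta^*(\phi_v)$ at the almost-all places where everything is unramified, so $h_1^{\infty,p}:=\prod_{v\neq\infty,p}h_{1,v}$ is a well-defined element of $\cH(H_1(\A^{\infty,p}),(\chi_{H_1}^{\infty,p})^{-1})$.

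Next I would compute $SO^{H_1}_{\gamma_{H_1}}(h_1^{\infty,p})$. If $\gamma_{H_1}$ does not transfer to $G_1(\A^{\infty,p})$ then at some finite place $v$ it fails to transfer, so $SO_{\gamma_{H_1,v}}(h_{1,v})=0$ by the transfer identity in Definition \ref{def:Delta-transfer}, giving vanishing. Otherwise, decomposing the stable orbital integral over places and grouping the $G_1(\A^{\infty,p})$-conjugacy classes $\gamma$ inside the stable class of $\gamma_{0,1}$, the transfer identity at each $v$ gives
$$SO^{H_1}_{\gamma_{H_1}}(h_1^{\infty,p})=\Delta^{\infty,p}(\gamma_{H_1},\gamma_{0,1})\sum_{\gamma} \langle \tu{inv}(\gamma_{0,1},\gamma),\kappa_v\rangle\text{-type terms}\cdot O^{G_1}_{\gamma}(\phi^{\infty,p}),$$
where the combinatorial bookkeeping is the standard one: $G_1(\A^{\infty,p})$-conjugacy classes $\gamma$ within the stable class of $\gamma_{0,1}$ are parametrized by $\fkD(I_{0,1},G_1;\A^{\infty,p})\cong\fkD(I_0,G;\A^{\infty,p})$ (using $Z_1$ central and the $z$-extension), and the transfer-factor quotients $\Delta^{\infty,p}(\gamma_{H_1},\gamma)/\Delta^{\infty,p}(\gamma_{H_1},\gamma_{0,1})$ evaluate to the pairing $\langle \tu{inv}(\gamma_{0,1},\gamma),s\rangle$, which matches $\langle \tilde\beta^{\infty,p}(\gamma_0,a),\kappa\rangle^{-1}$ by the identification of $\fkD$ with $\fkE$ and the duality pairing of \S\ref{sss:Galois-cohomology}. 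Since $\phi^{\infty,p}\in\cH(G(\A^{\infty,p}))$ is inflated from $G$, we have $O^{G_1}_{\gamma}(\phi^{\infty,p})=O^{G}_{\gamma_a}(\phi^{\infty,p})$ once measures are matched.

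The remaining ingredient is the appearance of the Kottwitz sign $e^{\infty,p}(\gamma_0,a)=\prod_{v\neq\infty,p}e(I_v)$: this enters precisely because the local transfer identity at each $v$ is stated with respect to compatibly normalized Haar measures on the inner forms $G^{\circ}_{\gamma_a,v}=I_{v}$ and $H^{\circ}_{1,\gamma_{H_1},v}$ (in the sense of \cite[p.631]{KottwitzTamagawa}), whereas the orbital integral $O^G_{\gamma_a}$ in the statement uses the fixed measure on $I_0(\A^{\infty,p})$; converting between the two introduces exactly the factor $e(I_v)$ at each place. Collecting the product over $v$ yields $e^{\infty,p}(\gamma_0,a)$ and completes the identity. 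The main obstacle is not any single deep input — Langlands--Shelstad transfer and the fundamental lemma are cited — but rather the careful bookkeeping: correctly matching the transfer-factor pairing with $\langle\tilde\beta^{\infty,p}(\gamma_0,a),\kappa\rangle^{-1}$ across the $z$-extension, handling the central character data $(\fkX_{H_1},\chi_{H_1})$ with $\chi_{H_1}=\chi\cdot\lambda_{H_1}^{-1}$ so that $h_1^{\infty,p}$ lands in the right Hecke space, and tracking the Haar-measure normalizations so that the Kottwitz signs come out with the asserted exponents. This is essentially the finite-adelic part of the stabilization in \cite[Part 3]{KSZ}, adapted to the rigid-inner-twist normalization, so I would follow that template closely.
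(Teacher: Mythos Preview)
Your proposal is correct and follows essentially the same approach as the paper: construct $h_1^{\infty,p}$ place by place via Langlands--Shelstad transfer (Proposition~\ref{prop:LS-transfer}) and the fundamental lemma (Corollary~\ref{cor:FLD}) at unramified places, then pass between $G$ and $G_1$ via the canonical bijection $\fkD(I_{0,1},G_1;\A^{\infty,p})\cong\fkD(I_0,G;\A^{\infty,p})$, deferring to \cite[\S8.2.3]{KSZ} for the bookkeeping. The paper's own proof is in fact little more than this outline.

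One correction worth flagging: your explanation of the Kottwitz sign $e^{\infty,p}(\gamma_0,a)$ as arising from converting between the Haar measure on $I_v$ compatible with $H^\circ_{1,\gamma_{H_1},v}$ and the measure transported from $I_0$ is not right. Compatible measures on inner forms in the sense of \cite[p.631]{KottwitzTamagawa} are defined via a common invariant differential form and therefore coincide; since $I_v$, $I_{0,\Q_v}$, and $H^\circ_{1,\gamma_{H_1},v}$ are all inner forms of one another, no factor of $e(I_v)$ appears in that comparison. The sign instead enters through the identification of the transfer-factor quotient $\Delta_v(\gamma_{H_1},\gamma_a)/\Delta_v(\gamma_{H_1},\gamma_{0,1})$ with $\langle\tilde\beta_v(\gamma_0,a),\kappa\rangle^{-1}e(I_v)$, which is a statement about the cocycle structure of $\Delta[\fkw_1,z^{\Rig}_{1,v}]$ across the inner twist $G_{1,v}\leftrightarrow G^*_{1,v}$; this is precisely what \cite[\S8.2.3]{KSZ} (following \cite[\S5]{KottwitzAnnArbor}) unwinds, so when you ``follow that template closely'' you will see the sign emerge there rather than from measures.
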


\begin{proof}
We may assume that $\phi^{\infty,p}=\prod_{v\neq \infty,p} \phi_{v}$ with $\phi_{v}\in \cH(G(\Q_v),\chi_v^{-1})$.
Choose a lift $\gamma_{0,1}\in G_1(\A^{\infty,p})$ of $\gamma_0$ and denote the centralizer of $\gamma_{0,1}$ in $G_1$ by $I_{0,1}$ (which is connected since $G_{1,\tu{sc}}=G_{1,\der}$).
Then the idea is to obtain the $\Delta^{\infty,p}$-transfer $h_1^{\infty,p}$ of $\phi^{\infty,p}$ by applying the Langlands--Shelstad transfer and the fundamental lemma from $G_1$ to $H_1$ at each $v\neq\infty,p$ (\S\ref{sss:LS-transfer}, \S\ref{sss:fundamental-lemma}), and then go between $G_1$ and $G$ via the canonical bijection 
$\fkD(I_{0,1},G_1;\A^{\infty,p})\cong \fkD(I_0,G;\A^{\infty,p})$.
See \cite[\S8.2.3]{KSZ} for the details, where the setup is identical (except that a normalization of transfer factors is not specified therein).
\end{proof}

\subsubsection{Stabilization at $\infty$}
Let us construct the function $h_{1,\infty}$ as a suitable linear combination of averaged Lefschetz functions following \cite[\S7]{KottwitzAnnArbor}, dividing into two cases.

\smallskip

\noindent\emph{Case 1. No elliptic maximal torus of $G_{\R}$ is a transfer of a torus of $H_{\R}$.} In this case, we simply set $h_{1,\infty}:=0$.

\smallskip

\noindent\emph{Case 2. An elliptic maximal torus $T_\infty$ of  $G_{\R}$ is a transfer of a torus $T^H_{\infty}$ of $H_{\R}$.} 
We fix such $T_\infty$ and $T^H_{\infty}$, thus also their preimages $T_{1,\infty}$ in $G_{1,\R}$ and $T^{H_1}_{\infty}$ in $H_{1,\R}$. (Then $T^{H_1}_{\infty}$ is an elliptic maximal torus in $H_{1,\R}$.) We fix an isomorphism $j_1:T^{H_1}_{\infty,\C}\isom T_{1,\infty,\C}$, which is canonical up to the action by the Weyl group $\Omega_1$ of $T_{1,\infty,\C}$ in $G_{1,\C}$. We also choose a Borel subgroup $B'_1$ of $G_{1,\C}$ containing $T_{1,\infty,\C}$. This determines a Borel subgroup $B'_{H_1}$ of $H_{1,\C}$ via $j_1$. 
Let 
$$\Delta_{j_1,B'_1}:T^{H_1}_{\infty}(\R)_{(G,H)\tu{-}\reg} \times T_{1,\infty}(\R)\ra \C$$
denote Shelstad's transfer factor \cite[\S3]{ShelstadRealEndoscopy}. (We are following the convention of \cite[p.184]{KottwitzAnnArbor}.) Write $C_\infty\in \C^\times$ for the unique constant such that on $(G_1,H_1)$-regular elements,
\begin{equation}\label{eq:Delta-infty}
    \Delta_\infty=C_\infty\cdot\Delta_{j_1,B'_1},
\end{equation}
where $\Delta_\infty=\Delta[\fkw_1,z_{1,\infty}^{\Rig}]$ by our earlier convention.
Write $\xi_1$ for the representation of $G_1$ that is pulled back from $\xi$ via the surjection $G_1\ra G$. Write $\varphi_{\xi_1}:W_{\R}\ra {}^L G_1$ for the associated $L$-parameter as in \S\ref{sss:Lefschetz}, and $\Phi_{H_1}(\xi_1)$ for the isomorphism classes of $L$-parameters $\varphi_{H_1}:W_{\R}\ra {}^L H_1$ such that $\eta_1\circ \varphi_{H_1}\cong \varphi_{\xi_1}$. We have an injective map \cite[p.185]{KottwitzAnnArbor}
$$\omega_*:\Phi_{H_1}(\xi_1)\ra \Omega_1,$$
determined by the choice of $j_1$ and $B'_1$ above.
Recall from \S\ref{sss:setup-G} that we fixed a Shimura datum $(G_1,X_1)$. Choose $h_1\in X_1$ factoring through $T_{1,\infty}$, which gives rise to a cocharacter $\mu_{h_1}:\G_m\ra T_{1,\infty,\C}$. On the other hand, we can transport $\dot s_{1,\infty}\in Z(\hat{\bar{H_1}})^+$ to $\hat T_{1,\infty}$ via 
$$ Z(\hat{\bar{H_1}})^+ \ra \hat T_{H_1,\infty} \stackrel{j_1}{\cong} \hat T_{1,\infty}. $$
Thus we can evaluate the pairing $\langle \mu_{h_1}, \dot s_{1,\infty} \rangle $.
Finally, define
\footnote{We have put an inverse over the pairings in the definition of $h_{1,\infty}$ and in the formula of Lemma \ref{lem:SO-at-infty}, while there is no inverse in \cite[\S8.2.5]{KSZ}. The difference stems from our use of the Deligne normalization of transfer factors unlike \emph{loc.~cit.}, cf.~\S\ref{ss:local transfer factors}.}
$$
h_{1,\infty}:=C_\infty(-1)^{q(G)} \langle \mu_{h_1}, \dot s_{1,\infty} \rangle^{-1} \sum_{\varphi_{H_1}\in \Phi_{H_1}(\xi_1)} \det(\omega_*(\varphi_{H_1})) f^{\tu{avg}}_{\varphi_{H_1}} ~\in ~ \cH(H_1(\R),\chi_{H_1,\infty}^{-1}),
$$
where the averaged Lefschetz function $f^{\tu{avg}}_{\varphi_{H_1}}$ is as in \S\ref{sss:Lefschetz}. The $\chi_{H_1,\infty}^{-1}$-equivariance is verified as in \cite[\S8.2.5]{KSZ}.

\begin{lem}\label{lem:SO-at-infty}
If $\gamma_{H_1}\in H_1(\R)_{\semis}$ is non-elliptic or non-$(G_1,H_1)$-regular then $SO^{H_1}_{\gamma_{H_1}}(h_{1,\infty})=0$.
For $\gamma_{H_1}\in H_1(\R)_{\el}$ which is $(G_1,H_1)$-regular, there exists a transfer $\gamma_{0,1}\in G_1(\R)_{\el}$; writing $\gamma_0\in G(\R)$ for the image of $\gamma_{0,1}$, we have
$$SO^{H_1}_{\gamma_{H,1}}(h_{1,\infty}) = 
\Delta_\infty(\gamma_{H_1},\gamma_{0,1})
\vol(Z(\R)\backslash I_\infty(\R))^{-1} \langle \tilde\beta_\infty(\gamma_0),\tilde\kappa \rangle^{-1} e_\infty(I_\infty) \tr \xi(\gamma_0).
$$
\end{lem}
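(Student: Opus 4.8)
The plan is to reduce the assertion to the standard stabilization of archimedean orbital integrals of averaged Lefschetz functions, as carried out in \cite[\S7]{KottwitzAnnArbor} and \cite[\S8.2.5]{KSZ}, and then to track the normalization of transfer factors carefully, since the only genuine discrepancy between our setup and \emph{loc.~cit.}~is that we use the Deligne normalization of the local Artin map and hence of the transfer factors. First I would dispose of the vanishing statement: if $\gamma_{H_1}$ is not $(G_1,H_1)$-regular or not $\R$-elliptic, then either it does not transfer to $G_1(\R)$ at all, or else one invokes the fact (see \cite[p.186]{KottwitzAnnArbor}) that the averaged Lefschetz functions $f^{\tu{avg}}_{\varphi_{H_1}}$ are linear combinations of pseudo-coefficients of discrete series, whose stable orbital integrals vanish off the elliptic $(G_1,H_1)$-regular set. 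This uses only that each $f^{\tu{avg}}_{\varphi_{H_1}}$ is stabilizing in the sense recalled in \S\ref{sss:Lefschetz}.

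For the main identity, I would proceed as follows. Given $\gamma_{H_1}\in H_1(\R)_{\el}$ that is $(G_1,H_1)$-regular, fix a transfer $\gamma_{0,1}\in G_1(\R)_{\el}$; its centralizer $T_{0,1}:=Z_{G_1}(\gamma_{0,1})^\circ$ is an elliptic maximal torus, and after conjugating we may assume it equals the $T_{1,\infty}$ fixed above, with $Z_{H_1}(\gamma_{H_1})^\circ$ identified via $j_1$ with $T_{1,\infty}$. The key computational input is Shelstad's character identity / the elliptic orbital integral formula for pseudo-coefficients: $SO^{H_1}_{\gamma_{H_1}}\big(\sum_{\varphi_{H_1}}\det(\omega_*(\varphi_{H_1})) f^{\tu{avg}}_{\varphi_{H_1}}\big)$ equals, up to an explicit sign $(-1)^{q(G)}$ and a volume factor $\vol(Z(\R)\backslash I_\infty(\R))^{-1}$, the product of $\Delta_{j_1,B'_1}(\gamma_{H_1},\gamma_{0,1})$ with $\tr\xi(\gamma_0)$ and the value at $\gamma_{0,1}$ of the Kottwitz sign $e(I_\infty)$; this is precisely the content of \cite[\S7.4, \S7.14]{KottwitzAnnArbor} (and recalled in \cite[\S8.2.5]{KSZ}). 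I would then substitute the relation \eqref{eq:Delta-infty}, $\Delta_{j_1,B'_1}=C_\infty^{-1}\Delta_\infty$, and cancel the $C_\infty$ against the $C_\infty$ appearing in the definition of $h_{1,\infty}$. What remains is to identify the leftover pairing $\langle\mu_{h_1},\dot s_{1,\infty}\rangle^{-1}$ in $h_{1,\infty}$ with the factor $\langle\tilde\beta_\infty(\gamma_0),\tilde\kappa\rangle^{-1}$ in the claimed formula. By the construction of $\beta_\infty(\fkc)$ in \S\ref{sss:Kottwitz-invariant} — which is the image of $\mu_{h}$ under $X_*(T_\infty)=\pi_1(T_\infty)\ra\pi_1(I_0)_{\Gamma_\infty}$ — and the identification of $\dot s_{1,\infty}\in Z(\hat{\bar{H_1}})^+\ra \hat T_{H_1,\infty}\cong\hat T_{1,\infty}$ with (a lift of) $\tilde\kappa\in K(\fkc)$, the Tate--Nakayama pairing $\langle\mu_{h_1},\dot s_{1,\infty}\rangle$ agrees with $\langle\tilde\beta_\infty(\gamma_0),\tilde\kappa\rangle$; here one uses that passing to the $z$-extension $G_1$ does not change these invariants since $Z_1$ is an induced torus and $\mu_{h_1}\mapsto\mu_h$, $\gamma_{0,1}\mapsto\gamma_0$.

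The main obstacle I anticipate is bookkeeping at the level of transfer-factor normalizations and signs: reconciling Shelstad's $\Delta_{j_1,B'_1}$ (and the $\Delta^{\lambda}_D$ convention recalled in \S\ref{ss:local transfer factors}) with the arithmetic-versus-geometric/Deligne normalization issue flagged in the footnote to the definition of $h_{1,\infty}$, and making sure the inverses on the pairings are placed consistently. In particular one must check that the ``$(-1)^{q(G)}$'', the sign hidden in $\det(\omega_*(\varphi_{H_1}))$, and the sign conventions in $e_\infty(I_\infty)$ combine exactly as in \cite[\S7]{KottwitzAnnArbor}; this is where I expect most of the work to go, though it is entirely parallel to \cite[\S8.2.5]{KSZ} with the single systematic modification of inverting the relevant pairings. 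Everything else — the vanishing off the regular elliptic locus, the reduction to the torus $T_{1,\infty}$, and the descent through the $z$-extension — is routine given the results already cited in the excerpt.
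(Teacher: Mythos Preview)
Your proposal is correct and follows essentially the same approach as the paper: both reduce the statement to the computation already carried out in \cite[\S8.2.5]{KSZ} (extending \cite[\S7]{KottwitzAnnArbor} via $z$-extensions) with respect to the Shelstad normalization $\Delta_{j_1,B'_1}$, and then use the relation \eqref{eq:Delta-infty} to pass to $\Delta_\infty$. The paper's proof is much terser---it simply cites these references and notes that the only new ingredient is the rescaling by $C_\infty$---whereas you spell out in more detail the vanishing argument, the identification of $\langle\mu_{h_1},\dot s_{1,\infty}\rangle$ with $\langle\tilde\beta_\infty(\gamma_0),\tilde\kappa\rangle$, and the sign bookkeeping; but all of this is already subsumed in the cited results.
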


\begin{proof}
This is shown in \cite[\S8.2.5]{KSZ} by extending from \cite[\S7]{KottwitzAnnArbor} via $z$-extensions, with $h_{1,\infty}$ defined without the constant $C_\infty$, when the transfer factor is normalized as $\Delta_{j_1,B'_1}$. Our case is immediate from it, in light of \eqref{eq:Delta-infty}.
\end{proof}

\begin{rem}
The constant $C_\infty$ can be pinned down by a spectral pairing. More precisely, fix $\varphi_{H_1}$ as above and choose $j_1$ and $B'_1$ such that $(j_1,B'_1,B'_{H_1})$ is aligned with $\varphi_{H_1}$ in the sense of \cite[p.184]{KottwitzAnnArbor}. Write $\pi'$ for the discrete series representation of $G_1(\R)$ corresponding to $B'_1$. Then the spectral transfer factor $\Delta(\varphi_H,\pi')$ equals 1 with respect to $\Delta_{j_1,B'_1}$, whereas $\Delta(\varphi_H,\pi')=e(G)\langle \dot s,\dot \pi'\rangle$ with respect to $\Delta[\fkw_1,z_{1,\infty}^{\Rig}]$ by \cite[Prop.~5.10]{KalethaLocalRigid} (in the notation thereof). Hence $C_\infty=e(G)\langle \dot s,\dot \pi'\rangle$ in this case.
\end{rem}

\subsection{Stabilization at \texorpdfstring{$p$}{p}}{\label{ss:stab-at-p}}
After some preliminaries, we begin to stabilize the terms at $p$ in \S\ref{sss:stab-at-p}. The reader may jump to \S\ref{sss:stab-at-p} and refer to the preceding materials as needed.

\subsubsection{Endoscopy at $p$}
\label{sss:endoscopy-at-p}
For each datum $(H_{1,p}, \cH_{1,p}, \dot{s}_{1,p}, \eta_{1,p}) \in \sE^{\heartsuit,\Rig}_{p}(G_1)$ we project $\dot{s}_{1,p}$ to $Z(\hat{H_{1,p}})^{\Gamma_{\Q_p}}$ to get a refined datum $(H_{1,p}, \cH_{1,p}, s_{1,p}, \eta_{1,p}) \in \sE^{\Iso}(G_{1,p})$. Denote the set of these refined data by $\sE^{\heartsuit,\Iso}_{p}(G_1)$. We caution the reader that $s_{1,p}$ is the projection of $\dot{s}_{1,p}$ to $Z(\widehat{H_{1,p}})^{\Gamma_{\Q_p}}$ which need not equal the image of $s_1$ in $Z(\widehat{H_{1,p}})$ under the identification $Z(\widehat{H_1})=Z(\widehat{H_{1,p}}) $. The inner twist $(G_{1,p}, \psi_{1,p}, z^{\Rig}_{1,p})$ induces a unique $L$-isomorphism $\LL G^*_{1,p} \cong \LL G_{1,p}$ preserving the fixed pinnings. 
Hence the above endoscopic data may be considered associated with either $G_{1,p}$ or $G^*_{1,p}$.

Following \cite[Lem.~5.3.8]{KretShin}, we choose a lift $\bb_1 \in G_1(\Q_{p^r})$ of $\bb$ such that $\nu_{\bb_1}$ is defined over $\Q_p$ and projects to $\nu_{\bb}$. We fix $A^*_1, T^*_1, B^*_1$ as in \S \ref{BGreview}, and then get a Levi subgroup $M_{\bb_1} \subset G^*_{1,p}$ that is an inner form of $J_{\bb_1}$. Our fixed pinning of $G^*_1$ and non-degenerate character $\theta^*$ restrict to give a Whittaker datum $\fkw_{\bb_1}$ of $M_{\bb_1}$. 

As in \S \ref{effectiveendoscopy}, consider the set $\mE^i_{\eff}(J_{\bb_1}, G^*_{1,p} ; H_{1,p})$. We will now choose a set \gls{sEheartb(H)} of representatives for $\mE^i_{\eff}(J_{\bb_1}, G^*_{1,p} ; H_{1,p})$. 
We require $H_{\bb_1}$ to be a standard Levi subgroup of $H_{1,p}$ relative to $B_{H_1}$.

Given this data, we produce for each element of $\sE^\heartsuit_{\bb_1}(H_{1,p})$ a $\Q_p$-homomorphism $\D \xrightarrow{\nu} H_{1,p}$. For $(H_{\bb_1}, \cH_{\bb_1}, H_{1,p}, \cH_{1,p}, s_{1,p}, \Int(n) \circ \eta_{\bb_1}) \in \sE^\heartsuit_{\bb_1}(H_{1,p})$, we can choose a maximal torus $T'_1 \subset M_{\bb_1}$ such that $T_{H_1}$ transfers to $T'_1$. Hence we have a canonical isomorphism $T_{H_1} \cong T'_1$
(up to our choice of pinnings). We then define $\nu$ as the composition
\begin{equation*}
    \D \xrightarrow{\ov{\nu}_{\bb_1}} A_{M_{\bb_1}} \subset T'_1 \cong T_{H_1} \subset H_{1,p}.
\end{equation*}
It is easy to check that this does not depend on our choice of $T'_1$.

By replacing $s_{1,p}$ everywhere with $\dot{s}_{1,p}$ and forgetting $H_{1,p}, \cH_{1,p}$, we get from $\sE^\heartsuit_{\bb_1}(H_{1,p})$ a set $\sE^{\heartsuit,\Rig}_{\bb_1}(H_{1,p}) \subset \sE^{\Rig}(M_{\bb_1})$.

\subsubsection{Comparison of transfer factors}{\label{sss:comparison-trans-factors}}

Recall that we have equipped $G^*_{1,p}$ with the data of a rigid inner twist $(G_{1,p}, \psi_{1,p}, z^{\Rig}_{1,p})$. As discussed in \S \ref{BGreview}, there exists $g \in G^*_1(\ol \Q_{p^r})=G^*_1(\ol \Q_p)$ such that the restriction of $\Int(g) \circ \psi_{1,p}$ to $M_{\bb_1}$ gives an inner twist $\psi_{\bb_1}: M_{\bb_1, \ol \Q_p} \to J_{\bb_1, \ol \Q_p}$. In particular, we have the following commutative diagram of maps defined over $\ov{\Q_p}$.
\begin{equation*}
\begin{tikzcd}
G^*_{1, \ol \Q_p} \arrow[rr, "\Int(g) \circ \psi_{1,p}"] & &  G_{1, \ol \Q_p} \\
M_{\bb_1, \ol \Q_p} \arrow[rr, "\psi_{\bb_1}"] \arrow[u, hook] & & J_{\bb_1, \ol \Q_p} \arrow[u, hook, "\iota_{\bb_1}"'].
\end{tikzcd}    
\end{equation*}
The map $\iota_{\bb_1}$ gives an isomorphism onto its image and satisfies $\iota_{\bb_1} \circ \sigma(\iota^{-1}_{\bb_1}) = \Int(\bb_1)$ since $\iota_{\bb_1}$ is equivariant for the standard action of $W_{\Q_p}$ on $J_{\bb_1}$ and the twisted action on $G_1$ (see \S \ref{BGreview}). Then $\bb_1$ lifts to a cocycle in $z^{\Iso}_{\bb_1} \in Z^1(\cE^{\Iso}_{\Q_p}, G_1)$ (as in \eqref{eq: BGHalgbij}) and by pullback we get a cocycle $z^{\Rig, \prime}_{\bb_1}$ of $\cE^{\Rig}_{\Q_p}$ valued in~$G_1$. For ease of notation, we denote $\Int(g) \circ \psi_{1,p}$ by $\psi_{1,p, (g)}$. Let
$$z^{\Rig}_{1,p, (g)} \in Z^1(\cE^{\Rig}_{\Q_p}, G^*_{1, \ol \Q_p})\quad \mbox{be given by}\quad w \mapsto \psi^{-1}_{1,p, (g)}z^{\Rig}_{1,p}(w)w(\psi_{1,p,(g)})$$
and note that $\psi^{-1}_{1,p, (g)} \circ w(\psi_{1,p, (g)})=\Int(z^{\Rig}_{1,p, (g)}(w))$ as automorphisms of $G^*_{1, \ol \Q_p}$.  Then define 
$$z^{\Rig}_{\bb_1}(w) := \psi^{-1}_{1,p, (g)}(z^{\Rig, \prime}_{\bb_1}(w))z^{\Rig}_{1,p, (g)}(w)\quad\mbox{for}\quad w \in \cE^{\Rig}_{\Q_p}.$$ 
We have the following lemma.
\begin{lem}
The triple $(J_{\bb_1}, \psi_{\bb_1}, z^{\Rig}_{\bb_1})$ is a rigid inner twist of $M_{\bb_1}$.
\end{lem}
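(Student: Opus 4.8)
The statement asserts that $(J_{\bb_1}, \psi_{\bb_1}, z^{\Rig}_{\bb_1})$ is a rigid inner twist of $M_{\bb_1}$, so by Definition \ref{def:local-rigid-inner-twist} there are three things to check: first, that $z^{\Rig}_{\bb_1}$ is a genuine cocycle in $Z^1(u\to \cE^{\Rig}_{\Q_p}, Z(M_{\bb_1})\to M_{\bb_1})$ --- i.e.\ that it lands in $M_{\bb_1}(\ol{\Q_p})$ (not just $G^*_{1}(\ol{\Q_p})$), that it satisfies the cocycle relation on $\cE^{\Rig}_{\Q_p}$, and that its restriction to $u$ factors through $Z(M_{\bb_1})$ via an algebraic homomorphism; and second, that for each $\sigma\in \Gamma_{\Q_p}$ we have $\psi_{\bb_1}^{-1}\circ \sigma(\psi_{\bb_1}) = \Int((z^{\Rig}_{\bb_1})_{\ad}(\sigma))$. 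The key input is Lemma \ref{cocyclelem}, applied to the triangle
\[
M_{\bb_1,\ol{\Q_p}} \xrightarrow{\ \iota_{\bb_1}\circ \psi_{\bb_1}\ } G_{1,\ol{\Q_p}}, \qquad M_{\bb_1,\ol{\Q_p}}\xrightarrow{\ \psi_{1,p,(g)}|_{M_{\bb_1}}\ }\cdots
\]
more precisely to the composite $M_{\bb_1}\hookrightarrow G^*_{1}\xrightarrow{\psi_{1,p,(g)}}G_1$, which factors as $M_{\bb_1}\xrightarrow{\psi_{\bb_1}}J_{\bb_1}\xrightarrow{\iota_{\bb_1}}G_1$.

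\textbf{Main steps.} First I would record the three inner-twist cocycles attached to the three edges of this triangle at the level of $G_{1,\ad}$: the edge $M_{\bb_1}\to G^*_{1}\to G_1$ gives $z^{\Rig}_{1,p,(g)}|_{M_{\bb_1}}$ (restricted to $M_{\bb_1,\ad}$, which makes sense since $M_{\bb_1}$ is a Levi of $G^*_1$ so $Z(G^*_1)\subset Z(M_{\bb_1})$ and the adjoint projection is defined on $M_{\bb_1}$); the edge $M_{\bb_1}\xrightarrow{\psi_{\bb_1}}J_{\bb_1}$ gives the cocycle defining $\psi_{\bb_1}$ as an inner twist (this is what we want to identify with $(z^{\Rig}_{\bb_1})_{\ad}$); and the edge $J_{\bb_1}\xrightarrow{\iota_{\bb_1}}G_1$ contributes $z^{\Iso}_{\bb_1}$ --- here I would use the displayed identity $\iota_{\bb_1}\circ\sigma(\iota_{\bb_1}^{-1})=\Int(\bb_1)$ together with the chosen lift $z^{\Iso}_{\bb_1}\in Z^1(\cE^{\Iso}_{\Q_p},G_1)$ of $\bb_1$ from \eqref{eq: BGHalgbij}, whose pullback along $\tilde\phi$ is $z^{\Rig,\prime}_{\bb_1}$. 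Applying Lemma \ref{cocyclelem} (with $\psi_1=\iota_{\bb_1}\circ\psi_{\bb_1}$, $\psi_2$ the relevant twist, $\psi_3=\psi_{1,p,(g)}|_{M_{\bb_1}}$, or the appropriate arrangement) then yields precisely the formula $(z^{\Rig}_{\bb_1})_{\ad} = \psi^{-1}_{1,p,(g)}\big((z^{\Rig,\prime}_{\bb_1})_{\ad}\big)\,(z^{\Rig}_{1,p,(g)})_{\ad}$, matching the definition of $z^{\Rig}_{\bb_1}$ after projection to the adjoint group; this gives the inner-twist condition $\psi_{\bb_1}^{-1}\circ\sigma(\psi_{\bb_1})=\Int((z^{\Rig}_{\bb_1})_{\ad}(\sigma))$. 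Second, I would verify that $z^{\Rig}_{\bb_1}$ actually takes values in $M_{\bb_1}(\ol{\Q_p})$: both factors $\psi^{-1}_{1,p,(g)}(z^{\Rig,\prime}_{\bb_1}(w))$ and $z^{\Rig}_{1,p,(g)}(w)$ normalize $M_{\bb_1}$ and induce (via $\psi_{\bb_1}$) the \emph{same} automorphism of $J_{\bb_1}$ up to elements actually lying in $M_{\bb_1}$ --- concretely, $z^{\Rig}_{1,p,(g)}$ lies in $M_{\bb_1}$ by our choice of $g$ (it implements the inner twist restricted to $M_{\bb_1}$), and $z^{\Rig,\prime}_{\bb_1}$ lies in $G_1$ with its adjoint image in $M_{\bb_1,\ad}$; pushing forward via $\psi^{-1}_{1,p,(g)}$ lands in $M_{\bb_1}$. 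I would then check the cocycle relation on $\cE^{\Rig}_{\Q_p}$ by a direct computation: $z^{\Rig}_{1,p,(g)}$ is a cocycle by construction, $z^{\Rig,\prime}_{\bb_1}$ is a cocycle (pullback of the algebraic cocycle $z^{\Iso}_{\bb_1}$), and the product of the two, twisted by $\psi^{-1}_{1,p,(g)}$, satisfies the cocycle relation exactly because $\psi^{-1}_{1,p,(g)}\circ w = \Int(z^{\Rig}_{1,p,(g)}(w))\circ \psi^{-1}_{1,p,(g)}$ --- this is the standard ``twisted product of two cocycles is a cocycle'' manipulation, analogous to equation \eqref{eq: triplecocycles}. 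Finally, for the restriction to $u$: $z^{\Rig}_{1,p,(g)}$ agrees with $z^{\Rig}_{1,p}$ on $u$ (conjugation by the constant $\psi^{-1}_{1,p}(g)$ does not change the restriction to $u$, which is central), so it factors through $Z(G^*_1)\subset Z(M_{\bb_1})$ algebraically; and $z^{\Rig,\prime}_{\bb_1}|_u$ is trivial since $z^{\Iso}_{\bb_1}|_{\D}$ factors through the map $\nu_{\bb_1}:\D\to G_1$ which is not central in general, but the pullback construction $\tilde\phi^*$ on $H^1_{\bas}$ --- wait, here one must be slightly careful: $z^{\Iso}_{\bb_1}$ is basic only when $[\bb_1]$ is basic, which holds in the relevant acceptable situation by Lemma \ref{lem:b-is-basic}; then $z^{\Rig,\prime}_{\bb_1}|_u$ factors through $Z(G_1)$, hence through $Z(M_{\bb_1})$, and the product $z^{\Rig}_{\bb_1}|_u$ factors through $Z(M_{\bb_1})$ algebraically as required.

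\textbf{Main obstacle.} I expect the delicate point is not any single verification but the bookkeeping of \emph{which} group each cocycle is valued in, and in particular confirming that $z^{\Rig}_{\bb_1}$ lands in $M_{\bb_1}(\ol{\Q_p})$ rather than merely in $G^*_1(\ol{\Q_p})$ --- this rests on the compatibility between the choice of $g$ (which makes $\Int(g)\circ\psi_{1,p}$ restrict to an inner twist $M_{\bb_1}\to J_{\bb_1}$, following \S\ref{BGreview} and \eqref{eq:M*Jb}) and the fact that $J_{\bb_1}$ is by definition the centralizer of $\nu_{\bb_1}$ inside $G_1(\breve{\Q_p})$, so that $\iota_{\bb_1}(J_{\bb_1})$ and $M_{\bb_1}$ have the same image in $G_{1,\ad}$ up to the twisting cocycle. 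A secondary subtlety is the restriction-to-$u$ computation, where one must invoke that the relevant $[\bb_1]$ is basic (Lemma \ref{lem:b-is-basic}) --- or, if the lemma is meant to hold for all $\bb_1$ in the orbit, track that the slope $\nu_{\bb_1}$ is central in $M_{\bb_1}$ by the very definition of $M_{\bb_1}=M^*_{\ov\nu_{\bb_1}}$, so that $z^{\Iso}_{\bb_1}$ restricted to $\D$ is basic \emph{relative to $M_{\bb_1}$}, which is exactly what is needed for $z^{\Rig,\prime}_{\bb_1}|_u$ to factor through $Z(M_{\bb_1})$.
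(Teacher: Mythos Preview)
Your overall architecture matches the paper's: both arguments hinge on Lemma~\ref{cocyclelem} applied to the commutative triangle $\psi_{\bb_1}=\iota_{\bb_1}^{-1}\circ\psi_{1,p,(g)}|_{M_{\bb_1}}$, and both deduce from it that $\Int(z^{\Rig}_{\bb_1}(w))=\psi_{\bb_1}^{-1}\circ w(\psi_{\bb_1})$ on $M_{\bb_1}$. Your handling of the restriction to $u$ is also correct in the end (the invocation of Lemma~\ref{lem:b-is-basic} is a detour --- that lemma concerns $[b]\in B(I_0)$, not $[\bb_1]\in B(G_1)$, and $[\bb_1]$ is certainly not basic in $G_1$ in general --- but your final observation that $\ol\nu_{\bb_1}$ is central in $M_{\bb_1}$ by construction is exactly what is needed).

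There is, however, a genuine gap in your step for ``$z^{\Rig}_{\bb_1}$ lands in $M_{\bb_1}$''. You attempt to show that the two factors $\psi_{1,p,(g)}^{-1}(z^{\Rig,\prime}_{\bb_1}(w))$ and $z^{\Rig}_{1,p,(g)}(w)$ \emph{individually} lie in $M_{\bb_1}$. Neither claim is justified: $z^{\Rig}_{1,p,(g)}(w)=\psi_{1,p}^{-1}(g)^{-1}z^{\Rig}_{1,p}(w)\,w(\psi_{1,p}^{-1}(g))$ is a priori only in $G^*_1(\ol{\Q_p})$, and the choice of $g$ (which adjusts $\nu_{\bb_1}$ to $\ol\nu_{\bb_1}$) does not force this element into the Levi; likewise there is no reason $z^{\Rig,\prime}_{\bb_1}$ should have adjoint image in $M_{\bb_1,\ad}$. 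The paper sidesteps this entirely: having already identified $\Int(z^{\Rig}_{\bb_1}(w))|_{M_{\bb_1}}$ with $\psi_{\bb_1}^{-1}\circ w(\psi_{\bb_1})\in M_{\bb_1,\ad}(\ol{\Q_p})$, one sees that conjugation by $z^{\Rig}_{\bb_1}(w)\in G^*_1(\ol{\Q_p})$ acts on $M_{\bb_1}$ as an inner automorphism of $M_{\bb_1}$, hence fixes $Z(M_{\bb_1})$ pointwise; since $M_{\bb_1}$ is a Levi, $Z_{G^*_1}(Z(M_{\bb_1}))=M_{\bb_1}$, so $z^{\Rig}_{\bb_1}(w)\in M_{\bb_1}(\ol{\Q_p})$. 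This one-line argument replaces your factor-by-factor analysis and is what you should use instead.
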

\begin{proof}
It is routine to check that $z^{\Rig}_{\bb_1}$ is an algebraic $1$-cocycle of $\cE^{\Rig}_{\Q_p}$ valued in $G^*_{1,\Qpbar}$.
We have a commutative diagram:
\begin{equation*}
    \begin{tikzcd}
    M_{\bb_1, \ol \Q_p} \arrow[rrd, "\psi_{\bb_1}"'] \arrow[rr, "\psi_{1,p, (g)}"]  & & \im(\iota_{\bb_1}) \arrow[d, "\iota^{-1}_{\bb_1}"]\\
    && J_{\bb_1, \ol \Q_p}.
    \end{tikzcd}
\end{equation*}
By equation \eqref{eq: triplecocycles} of Lemma \ref{cocyclelem}, we have that $z^{\Rig}_{\bb_1}(w) = \psi^{-1}_{\bb_1} \circ w(\psi_{\bb_1})$ as automorphisms of $M_{\bb_1, \ol \Q_p}$. Since $\psi^{-1}_{\bb_1} \circ w(\psi_{\bb_1}) \in M_{\bb_1, \ad}(\ol \Q_p)$, it follows that $z^{\Rig}_{\bb_1}(w)$ commutes with $Z(M_{\bb_1})$ and hence factors through $M_{\bb_1}$.
\end{proof}
\begin{rem}
We note that in general, it will not be possible to express $J_{\bb_1}$ as an extended pure inner twist of $M_{\bb_1}$.
\end{rem}

We temporarily fix a $\nu_{\bb_1}$-acceptable, semisimple element $\gamma_{0,1} \in G_1(\Q_p)$ as well as an element $b_1 \in I_{0,1}(\Q_{p^r})$ that is a decent representative of a class $[b_1] \in B(I_{0,1})$ that maps to $[\bb_1] \in B(G_1)$.  
We suppose further that there exists $\gamma_1 \in M_{\bb_1}(\Q_p)$ such that $\psi_{1,p}(\gamma_1)$ is stably conjugate to $\gamma_{0,1}$. By \S \ref{sss:KP-to-CKP}, have an associated conjugacy class $\delta_{[b_1]} \in \Gamma(J_{\bb_1}(\Q_p))$ and  by assumption, $(\iota_{\bb_1} \circ \psi_{\bb_1})(\gamma_1)$ and $\iota_{\bb_1}(\delta_{[b_1]})$ are stably conjugate in $G_1$. It follows from \cite[Lem.~3.6]{Shin09} that $\gamma_1$ and $\psi_{\bb_1}^{-1}(\delta_{[b_1]})$ are stably conjugate in $M_{\bb_1}$.
\begin{lem}{\label{invariantcalclem}}   
We have the equality:
\begin{equation*}
    \langle \inv[z^{\Rig}_{1,p}](\gamma_1, \gamma_{0,1}), \dot{s}_{1,p} \rangle\langle \inv[z^{\Rig}_{\bb_1}](\gamma_1, \delta_{[b_1]}), \dot{s}_{1,p} \rangle^{-1} = \langle \beta_p(\gamma_{0,1}, [b_1]), s_{1,p} \rangle,
\end{equation*}
where $s_{1,p}$ is the image of $\dot{s}_{1,p}$ in $Z(\hat{H})^{\Gamma_{\Q_p}}$.
\end{lem}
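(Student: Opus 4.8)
The plan is to reduce everything to a computation of Tate--Nakayama pairings, using the compatibility between the rigid-inner-twist cohomology and the theory of $B(G)$ developed in \S\ref{ss:isocrystals}. First I would unwind the definitions of the two invariants on the left. Choose $g_1 \in G^*_1(\ol\Q_p)$ with $\psi_{1,p}(g_1 \gamma_1 g_1^{-1})$ conjugate (over $\Q_p$) to $\gamma_{0,1}$; since $\gamma_{0,1}$ is $\nu_{\bb_1}$-acceptable and $G_{1,\der}=G_{1,\scusp}$, the relevant centralizers are connected, so as in \eqref{eq: explicit inv} the cocycle $w \mapsto g_1^{-1} z^{\Rig}_{1,p}(w) w(g_1)$ lies in $Z^1(u\to\cE^{\Rig}_{\Q_p}, Z(G^*_1)\to I_{\gamma_1})$ and represents $\inv[z^{\Rig}_{1,p}](\gamma_1,\gamma_{0,1})$; here I use that $I_{\gamma_1} = I^{G^*_1}_{\psi_{1,p}^{-1}(\gamma_{0,1})}$ equals $I^{M_{\bb_1}}_{\gamma_1}$ by acceptability. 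Similarly $\inv[z^{\Rig}_{\bb_1}](\gamma_1,\delta_{[b_1]})$ is represented, in $I^{M_{\bb_1}}_{\gamma_1}$, by a cocycle built from $z^{\Rig}_{\bb_1}$. Since $z^{\Rig}_{\bb_1}(w) = \psi^{-1}_{1,p,(g)}(z^{\Rig,\prime}_{\bb_1}(w))\, z^{\Rig}_{1,p,(g)}(w)$ by construction, Lemma \ref{cocyclelem} (applied as in the proof of the preceding lemma) lets me express the quotient cocycle $z^{\Rig}_{1,p}\cdot (z^{\Rig}_{\bb_1})^{-1}$, restricted to the common torus $I^{M_{\bb_1}}_{\gamma_1}$, in terms of the pullback $z^{\Rig,\prime}_{\bb_1}$ of $z^{\Iso}_{\bb_1}$ alone — the genuine rigid contributions $z^{\Rig}_{1,p}$ cancel. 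This is the conceptual heart: the ratio of the two rigid invariants only sees the isocrystal class $[b_1]$.

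Next I would invoke Lemma \ref{rig vs iso transfer factors} (or rather its underlying statement from \cite[p.15]{KalethaRigidvsIsoc}) to replace the rigid pairing of $z^{\Rig,\prime}_{\bb_1}$ against $\dot s_{1,p}$ by the isocrystal pairing of $z^{\Iso}_{\bb_1}$ against $s_{1,p}$, the projection of $\dot s_{1,p}$ to $Z(\widehat{H})^{\Gamma_{\Q_p}}$. Here $z^{\Iso}_{\bb_1}$ is, by \eqref{eq: BGHalgbij} and the acceptability of $\gamma_{0,1}$ forcing $[b_1]$ to be basic in $B(I_{0,1})$ (Lemma \ref{lem:b-is-basic}), nothing but the basic class $[b_1]\in B(I_{0,1})_{\bas}$ viewed as a cocycle on the torus $I^{M_{\bb_1}}_{\gamma_1}=I_{0,1,\Q_p}$. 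So the left-hand side collapses to $\langle [b_1], s_{1,p}\rangle$ where the pairing is the isocrystal Tate--Nakayama pairing on the torus $I_{0,1}$, i.e.\ the Kottwitz pairing $B(I_{0,1})_{\bas}\times Z(\widehat{I_{0,1}})^{\Gamma_{\Q_p}}\to\C^\times$. Finally, by definition $\beta_p(\gamma_{0,1},[b_1]) = \kappa_{I_{0,1}}([b_1])\in\pi_1(I_{0,1})_{\Gamma_p} = X^*(Z(\widehat{I_{0,1}})^{\Gamma_p})$ (\S\ref{sss:Kottwitz-invariant}), and the fact that \eqref{eq: BGHalgbij} preserves Kottwitz maps identifies $\langle [b_1], s_{1,p}\rangle$ with $\langle \beta_p(\gamma_{0,1},[b_1]), s_{1,p}\rangle$, giving the claim.

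The main obstacle will be bookkeeping in the first step: carefully tracking the isomorphisms $\psi_{1,p}$, $\iota_{\bb_1}$, $\psi_{\bb_1}$ and the transported cocycles through the two different ``invariant'' constructions so that the common torus $I^{M_{\bb_1}}_{\gamma_1}$ and the canonical $\Gamma_{\Q_p}$-equivariant embedding $Z(\widehat H)\to \hat T$ used to evaluate $\dot s_{1,p}$ are literally the same on both sides — this is exactly where the $b$-twisted vs.\ standard $W_{\Q_p}$-action on $J_{\bb_1}$ (cf.\ \S\ref{BGreview}) enters and must be reconciled. A secondary point requiring care is that $z^{\Rig}_{\bb_1}$ and $z^{\Rig}_{1,p}$ need not agree on $u$, so Lemma \ref{rigcocycletwistlem} cannot be applied verbatim; instead one works directly on the fundamental torus after modifying $\psi$ by a coboundary, exactly as in the proof of that lemma, and checks that the $u$-parts contribute trivially to the ratio because the extended-pure-inner-twist cocycle $z^{\Rig,\prime}_{\bb_1}$ is pulled back from $\cE^{\Iso}_{\Q_p}$ via $\tilde\phi$ and so its restriction to $u$ is determined by $\phi: u\to\D$ independently of $\dot s_{1,p}$'s refinement. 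Once the identifications are pinned down, the remaining manipulations are the routine Tate--Nakayama computations indicated above.
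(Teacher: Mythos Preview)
Your proposal is essentially correct and follows the same route as the paper: show that the product (resp.\ quotient) of the two rigid invariants on the centralizer is the transported rigid pullback of the isocrystal cocycle for $b_1$, then use the rigid-versus-isocrystal comparison and the equality $\kappa_{I_{0,1}}([b_1])=\beta_p(\gamma_{0,1},[b_1])$. The paper makes precisely this explicit via the transitivity identity
\[
(\psi_{1,p,(g)}\circ\Int(d_2))^{-1}(z^{\Rig}_{b_1})\cdot[z^{\Rig}_{1,p}]_{\gamma_1,\gamma_{0,1}}=[z^{\Rig}_{\bb_1}]_{\gamma_1,\delta_{[b_1]}},
\]
checked by direct expansion, and then applies Lemma~\ref{rigcocycletwistlem} to pass to Tate--Nakayama images.

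Two clarifications. First, your sentence ``$z^{\Iso}_{\bb_1}$ is \ldots\ nothing but the basic class $[b_1]\in B(I_{0,1})_{\bas}$'' conflates two different cocycles: $z^{\Iso}_{\bb_1}$ lives in $G_1$ and corresponds to $\bb_1$, whereas the cocycle you need is $z^{\Iso}_{b_1}(w)=c^{-1}z^{\Iso}_{\bb_1}(w)w(c)$ in $I_{0,1}$, where $c\in G_1(\breve\Q_p)$ satisfies $b_1=c^{-1}\bb_1\sigma(c)$ and $\iota_{\bb_1}(\delta_{[b_1]})=c\gamma_{0,1}c^{-1}$. Tracking $c$ (and the elements $d_1,d_2$ conjugating $\gamma_1$ to $\delta_{[b_1]}$ and $\gamma_{0,1}$) is exactly the ``bookkeeping'' you flag, and it is the entire content of the paper's verification of the displayed identity above; the centralizers $I_{\gamma_1}$, $I_{0,1}$, $I_{\delta_{[b_1]}}$ are inner forms rather than a single ``common torus.'' Second, your worry about Lemma~\ref{rigcocycletwistlem} is misplaced: the paper applies it directly to the displayed identity, and your suggested workaround (passing to a fundamental torus after a coboundary modification) is just the proof of that lemma rather than a way to avoid it.
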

\begin{proof}
We recall that $\beta_p(\gamma_{0,1}, [b_1])$ is defined to be $\kappa_{I_{0,1}}([b_1])$ and that there is $c \in G_1(\breve{\Q}_p)$ such that $b_1 = c^{-1}\bb_1 \sigma(c)$ and $\iota_{\bb_1}(\delta_{[b_1]}) = c \gamma_{0,1} c^{-1}$. In fact, since $b_1$ and $\bb_1$ are $n$-decent for some sufficiently divisible $n$, it follows from \cite[Corollary 1.10]{RapoportZink} that $c \in G_1(\Q_{p^n}) \subset G_1(\Q^{\tu{ur}}_p)$.  We fix $d_1 \in M_{\bb_1}(\ov{\Q_p})$ satisfying $\psi_{\bb_1}(d_1\gamma_1d^{-1}_1) = \delta_{[b_1]}$, noting that such a $d_1$ exists by the discussion immediately before this lemma. Putting $d_2 := \psi^{-1}_{1,p, (g)}(c^{-1})d_1$, we check that
\begin{equation*}
   \psi_{1,p, (g)}( d_2 \gamma_1 d^{-1}_2)
= c^{-1}\psi_{1,p, (g)}(d_1\gamma_1 d_1^{-1})c
= c^{-1} \iota_{\bb_1}(\psi_{\bb_1}(d_1 \gamma_1 d_1^{-1})) c
= c^{-1} \iota_{\bb_1}(\delta_{[b_1]}) c
=    \gamma_{0,1}.
\end{equation*}

The element $\bb_1$ corresponds to an algebraic $1$-cocycle $z^{\Iso}_{\bb_1} \in Z^1_{\alg}(\cE^{\Iso}_{\Q_p}, G_1)$ and as in \ref{BGreview}, we have $\kappa_{G_1}(\bb_1) = \kappa_{G_1}(z^{\Iso}_{\bb_1})$. Similarly, $b_1$ corresponds to a $z^{\Iso}_{b_1} \in Z^1_{\alg}(\cE^{\Iso}_{\Q_p}, I_{0,1})$ and this cocycle satisfies $z^{\Iso}_{b_1}(w) = c^{-1} z^{\Iso}_{\bb_1}(w) w(c)$ for each $w \in \cE^{\Iso}_{\Q_p}$. This pulls back to a cocycle $z^{\Rig}_{b_1} \in Z^1(u \to \cE^{\Rig}_{\Q_p}, Z(I_{0,1}) \to I_{0,1})$. By, the aforementioned preservation of Kottwitz maps, Lemma \ref{rig vs iso transfer factors}, and equation \eqref{eq: explicit inv} we have
\begin{equation*}
     \langle \beta_p(\gamma_{0,1}, [b_1]), s_{1,p} \rangle = \langle z^{\Iso}_{b_1}, s_{1,p} \rangle= \langle z^{\Rig}_{b_1}, \dot{s}_{1,p} \rangle = \langle \inv[z^{\Rig,\prime}_{\bb_1}](\gamma_{0,1}, \delta_{[b_1]}), \dot{s}_{1,p} \rangle.
\end{equation*}

Finally, observe that from $\nu_{\bb_1}$-acceptability, the various centralizers  $I_{\gamma_1}, I_{0,1},I_{\delta_1}$ are all inner forms  and we have a commutative diagram 
\begin{equation*}
\begin{tikzcd}
I_{\gamma_1, \ol \Q_p} \arrow[rr, "\psi_{1,p, (g)} \circ \Int(d_2)"] \arrow[rrd, swap, "\psi_{\bb_1} \circ \Int(d_1)"] && I_{0,1, \ol \Q_p} \arrow[d, "\iota^{-1}_{\bb_1} \circ \Int(c)"] \\
&& I_{\delta_{[b_1]}, \ol \Q_p}.
\end{tikzcd}    
\end{equation*}
By construction, the cocycles  $[z^{\Rig}_{1,p}]_{\gamma_1, \gamma_{0,1}}, [z^{\Rig}_{\bb_1}]_{\gamma_1, \delta_{[b_1]}}, z^{\Rig}_{b_1}$ (notation is as in \eqref{eq: explicit inv}) are those corresponding to the above maps via $w \mapsto \psi^{-1} \circ w(\psi)$, starting from the horizontal map and then counterclockwise. Moreover, we claim the following equality of these cocycles
\begin{equation}{\label{eq: transitivityeqn}}
  (\psi_{1,p, (g)} \circ \Int(d_2))^{-1}(z^{\Rig}_{b_1}) [z^{\Rig}_{1,p}]_{\gamma_1, \gamma_{0,1}}= [z^{\Rig}_{\bb_1}]_{\gamma_1, \delta_{[b_1]}}.
\end{equation}
Before verifying this claim, we first note that by definition of $d_2$,
\begin{equation}{\label{eq: d2trick}}
    \psi_{1,p, (g)} \circ \Int(d_2) = \Int(c^{-1}) \circ \psi_{1,p, (g)} \circ \Int(d_1).
\end{equation}

Now, expanding the right-hand side of \eqref{eq: transitivityeqn} gives
\begin{equation*}
   [z^{\Rig}_{\bb_1}]_{\gamma_1, \delta_{[b_1]}}(w) = d^{-1}_1 z^{\Rig}_{\bb_1}(w)w(d_1) = d^{-1}_1 \psi^{-1}_{1,p, (g)}(z^{\Rig, \prime}_{\bb_1}(w))z^{\Rig}_{1,p, (g)}(w)w(d_1).  
\end{equation*}
We then apply $\psi_{1,p, (g)} \circ \Int(d_2)$ using \eqref{eq: d2trick} to get
\begin{equation*}
    c^{-1}z^{\Rig, \prime}_{\bb_1}(w) \psi_{1,p, (g)}(z^{\Rig}_{1,p,(g)}(w) w(d_1)d^{-1}_1)c.
\end{equation*}

On the other hand, expanding the left-hand side of \eqref{eq: transitivityeqn} and applying $\psi_{1,p, (g)} \circ \Int(d_2)$ gives
\begin{equation}{\label{eq: left-hand side}}
        z^{\Rig}_{b_1}(w) (\psi_{1,p, (g)} \circ \Int(d_2))([z^{\Rig}_{1,p}]_{\gamma_1, \gamma_{0,1}}(w)) =  c^{-1}z^{\Rig, \prime }_{\bb_1}(w) w(c)\psi_{1,p, (g)}(z^{\Rig}_{1,p, (g)}(w) w(d_2)d^{-1}_2).
\end{equation}
Observe that we have 
\begin{equation*}
   \psi_{1,p, (g)}(d^{-1}_2)=\psi_{1,p, (g)}(d^{-1}_1)c, 
\end{equation*}
and
\begin{align*}
\psi_{1,p, (g)}(w(d_2)) &=(\psi_{1,p, (g)} \circ w(\psi^{-1}_{1,p, (g)}))(w(c^{-1}))\psi_{1,p, (g)}(w(d_1))\\
&=\psi_{1,p, (g)}(z^{\Rig}_{1,p, (g)}(w))^{-1}w(c^{-1})\psi_{1,p, (g)}(z^{\Rig}_{1,p, (g)}(w))\psi_{1,p, (g)}(w(d_1))
\end{align*}
so that after cancellation, the right-hand side of \eqref{eq: left-hand side} equals
\begin{equation*}
   c^{-1}z^{\Rig, \prime }_{\bb_1}(w) \psi_{1,p, (g)}(z^{\Rig}_{1,p, (g)}(w)w(d_1)d^{-1}_1)c.
\end{equation*}
Hence \eqref{eq: transitivityeqn} is proved.

From equations \eqref{eq: transitivityeqn} and \eqref{eq: explicit inv}, it follows that
\begin{equation}{\label{eq: refinedtransitivityeqn}}
   (\psi_{1,p, (g)} \circ \Int(d_2))^{-1}(\inv[z^{\Rig, \prime}_{\bb_1}](\gamma_{0,1}, \delta_{[b_1]})) \inv[z^{\Rig}_{1,p}](\gamma_1, \gamma_{0,1})= \inv[z^{\Rig}_{\bb_1}](\gamma_1, \delta_{[b_1]}).
\end{equation}
Thereby we obtain the desired equality using Lemma \ref{rigcocycletwistlem}.
\end{proof}

\begin{lem}{\label{transfactLevi}}
Let $(H_{\bb_1}, \cH_{\bb_1}, H_{1,p}, \cH_{1,p}, s_{1,p}, \Int(n) \circ \eta_{\bb_1}) \in \sE^\heartsuit_{\bb_1}(H_{1,p})$. Suppose $\gamma_{H_{\bb_1}} \in H_{\bb_1}(\Q_p)_{\semis}$ is $(G^*_1, H_{1,p})$-regular and $\gamma_1 \in M_{\bb_1}(\Q_p)_{\semis}$. 
We have the following equality of transfer factors
\begin{equation*}
  |D^{G^*_1}_{M_{\bb_1}}(\gamma_1)|^{\frac{1}{2}}_p |D^{H_{1,p}}_{H_{\bb_1}}(\gamma_{H_{\bb_1}})|^{-\frac{1}{2}}_p\Delta[\fkw_{\bb_1}](\gamma_{H_{\bb_1}}, \gamma_1) = \Delta[\fkw_1](\gamma_{H_{\bb_1}}, \gamma_1).
\end{equation*}
(See \S\ref{ss:notation} for the definition of Weyl discriminants appearing on the left hand side.)
\end{lem}
\begin{proof}
By \cite[Prop.~5.3]{BMaveraging}, cf.~\cite[Lem.~6.5]{WaldspurgerFLimpliesTC}, this equality would be true if $\Delta[\fkw](\gamma_{H_{\bb_1}}, \gamma_1)$ were the transfer factor relative to $(H_{1,p}, \cH_{1,p}, s_{1,p}, \Int(n) \circ \eta_{\bb_1})$. Hence we need only show that the Whittaker-normalized transfer factors for $(H_{1,p}, \cH_{1,p}, s_{1,p}, \Int(n) \circ \eta_{\bb_1})$ and $(H_{1,p}, \cH_{1,p}, s_{1,p}, \eta_{\bb_1})$ agree. The transfer factor depends on a fixed admissible isomorphism between maximal tori of $H_{1,p}$ and $G_{1,p}$ that maps $\gamma_{H_{\bb_1}}$ to $\gamma_1$. Because the two endoscopic data agree up to the conjugation of $\eta_{\bb_1}$ by $\Int(n)$, the same admissible isomorphism can be chosen for both. In constructing the transfer factor, we may also fix the same $\chi$-data and $a$-data. Hence it is readily apparent that all terms agree in the two transfer factors. We remark that for $\Delta_I = \langle \lambda, s^{\fke} \rangle$, where $\lambda$ is the so-called \emph{splitting invariant}, we have that $s^{\fke} \in \hat{G_{1,p}}$ is defined to be $s$ transported to $\hat{G_{1,p}}$ via the dual of the admissible isomorphism, and so this $s^{\fke}$ agrees for both endoscopic data.
\end{proof}
\begin{cor}{\label{transfactcomparison}}
Let $(H_{\bb_1}, \cH_{\bb_1}, H_{1,p}, \cH_{1,p}, \dot s_{1,p}, \Int(n) \circ \eta_{\bb_1}) \in \sE^{\heartsuit,\Rig}_{\bb_1}(H_{1,p})$. Using the same notation as Lemma \ref{invariantcalclem}, we have the following equality:
\begin{equation*}
    \Delta[\fkw_1, z^{\Rig}_{1,p}](\gamma_{H_{\bb_1}}, \gamma_{0,1})=\langle \beta_p(\gamma_{0,1},[b_1]), s_{1,p})\rangle|D^{G^*_1}_{M_{\bb_1}}(\gamma_1)|^{\frac{1}{2}}_p |D^{H_{1,p}}_{H_{\bb_1}}(\gamma_{H_{\bb_1}})|^{-\frac{1}{2}}_p\Delta[\fkw_{\bb_1}, z^{\Rig}_{\bb_1}](\gamma_{H_{\bb_1}}, \delta_{[b_1]}).
\end{equation*}
\end{cor}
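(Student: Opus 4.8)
The plan is to deduce the corollary formally from three facts established earlier in the paper: the defining formula \eqref{eq: rigtransferinnerforms} for the rigid transfer factor of an inner form, the Levi-descent identity for transfer factors (Lemma \ref{transfactLevi}), and the invariant comparison (Lemma \ref{invariantcalclem}). The point is that one and the same element $\gamma_1 \in M_{\bb_1}(\Q_p)\subset G^*_{1,p}(\Q_p)$ serves as the quasi-split transfer in all the transfer factors in sight: by hypothesis $\psi_{1,p}(\gamma_1)$ is stably conjugate to $\gamma_{0,1}$ in $G_{1,p}$, and by the discussion preceding Lemma \ref{invariantcalclem} (via \cite[Lem.~3.6]{Shin09}) $\gamma_1$ is stably conjugate to $\psi_{\bb_1}^{-1}(\delta_{[b_1]})$ in $M_{\bb_1}$. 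Since $\gamma_1$ is $\nu_{\bb_1}$-acceptable, $I^{M_{\bb_1}}_{\gamma_1}=I^{G^*_{1,p}}_{\gamma_1}$, so the invariants $\inv[z^{\Rig}_{1,p}](\gamma_1,\gamma_{0,1})$ and $\inv[z^{\Rig}_{\bb_1}](\gamma_1,\delta_{[b_1]})$ are classes in the cohomology of the same torus and can be paired with $\dot s_{1,p}$ compatibly; moreover $\gamma_{H_{\bb_1}}$ is only $(G^*_1,H_{1,p})$-regular and $\gamma_1$ only $(G^*_1,M_{\bb_1})$-regular, so throughout we use the extensions of all these factors to the $(G,H)$-regular locus from \S\ref{ss:GHregsection}.

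Concretely, I would first apply \eqref{eq: rigtransferinnerforms} to the left-hand side to get
$$\Delta[\fkw_1, z^{\Rig}_{1,p}](\gamma_{H_{\bb_1}}, \gamma_{0,1}) = \Delta[\fkw_1](\gamma_{H_{\bb_1}}, \gamma_1)\,\langle \inv[z^{\Rig}_{1,p}](\gamma_1, \gamma_{0,1}), \dot s_{1,p}\rangle^{-1}.$$
Next, Lemma \ref{transfactLevi} rewrites $\Delta[\fkw_1](\gamma_{H_{\bb_1}}, \gamma_1)$ as $|D^{G^*_1}_{M_{\bb_1}}(\gamma_1)|^{\frac12}_p |D^{H_{1,p}}_{H_{\bb_1}}(\gamma_{H_{\bb_1}})|^{-\frac12}_p\,\Delta[\fkw_{\bb_1}](\gamma_{H_{\bb_1}}, \gamma_1)$, where we use that $\fkw_{\bb_1}$ is precisely the Whittaker datum obtained by restricting the fixed pinning and character of $G^*_1$ to $M_{\bb_1}$, as arranged in \S\ref{sss:endoscopy-at-p}. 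Applying \eqref{eq: rigtransferinnerforms} once more, now to $M_{\bb_1}$ with its rigid inner twist $(J_{\bb_1},\psi_{\bb_1},z^{\Rig}_{\bb_1})$ fixed in \S\ref{sss:comparison-trans-factors}, gives $\Delta[\fkw_{\bb_1}](\gamma_{H_{\bb_1}}, \gamma_1) = \Delta[\fkw_{\bb_1}, z^{\Rig}_{\bb_1}](\gamma_{H_{\bb_1}}, \delta_{[b_1]})\,\langle \inv[z^{\Rig}_{\bb_1}](\gamma_1, \delta_{[b_1]}), \dot s_{1,p}\rangle$. Assembling these and collecting the two Tate--Nakayama pairings, Lemma \ref{invariantcalclem} identifies $\langle \inv[z^{\Rig}_{1,p}](\gamma_1, \gamma_{0,1}), \dot s_{1,p}\rangle^{-1}\langle \inv[z^{\Rig}_{\bb_1}](\gamma_1, \delta_{[b_1]}), \dot s_{1,p}\rangle$ with the pairing $\langle \beta_p(\gamma_{0,1},[b_1]), s_{1,p}\rangle$ attached to $\beta_p(\gamma_{0,1},[b_1])=\kappa_{I_{0,1}}([b_1])$, which yields the asserted formula. (In the degenerate case where $\gamma_{H_{\bb_1}}$ is not a transfer of $\gamma_1$, all of $\Delta[\fkw_1]$, $\Delta[\fkw_{\bb_1}]$ and their rigid versions vanish — transferability being preserved under the Levi inclusions $M_{\bb_1}\subset G^*_{1,p}$, $H_{\bb_1}\subset H_{1,p}$ and along $\psi_{1,p},\psi_{\bb_1}$ — so both sides of the corollary are $0$ and there is nothing to prove.)

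The genuine content has, in effect, already been extracted: it lives in Lemma \ref{invariantcalclem}, which is where Kaletha's invariant $\inv[z^{\Rig}]$ is tied to the $B(G)$-theoretic invariant $\kappa_{I_{0,1}}([b_1])$ by factoring through extended pure inner twists and isocrystal invariants (Lemmas \ref{rig vs iso transfer factors}, \ref{rigcocycletwistlem}). Relative to that, the only difficulty here is bookkeeping: one must keep straight that $(G_{1,p},\psi_{1,p},z^{\Rig}_{1,p})$ and $(J_{\bb_1},\psi_{\bb_1},z^{\Rig}_{\bb_1})$ are exactly the data fixed in \S\ref{sss:comparison-trans-factors}, that every transfer factor is the $(G,H)$-regular extension of \S\ref{ss:GHregsection}/\S\ref{sss:global-GH-regular}, and that the groups $G^*_{1,p}\supset M_{\bb_1}$, their inner forms $G_{1,p}$, $J_{\bb_1}$, and the endoscopic groups $H_{1,p}\supset H_{\bb_1}$ are matched on the dual side so that $\dot s_{1,p}$ and its projection $s_{1,p}$ pair with the invariants as intended. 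No new ideas are needed once Lemmas \ref{transfactLevi} and \ref{invariantcalclem} are available.
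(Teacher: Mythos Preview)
Your proposal is correct and follows exactly the same approach as the paper, which simply states that the corollary follows from Lemmas \ref{invariantcalclem} and \ref{transfactLevi} together with equation \eqref{eq: rigtransferinnerforms}. You have spelled out in detail the three substitutions the paper leaves implicit, and your bookkeeping remarks about $\gamma_1$ serving as the common quasi-split transfer and about the $(G,H)$-regular extensions are exactly the checks one needs to make the one-line proof rigorous.
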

\begin{proof}
This follows from Lemmas \ref{invariantcalclem} and \ref{transfactLevi} and equation \eqref{eq: rigtransferinnerforms}. 
\end{proof}

\subsubsection{Stabilization at $p$}\label{sss:stab-at-p}

We now return to the notation of 
\S \ref{ss:initial-steps}. For each datum $(H, \cH, s, \eta) \in E^{\heartsuit}_{\el}(G)$ we can localize at $p$ to get a standard endoscopic datum in $\sE(G_{\Q_p})$. We denote the set of data produced in this way by $\sE^{\heartsuit}_p(G)$. We then define the set $\ES^{\heartsuit}_p(G)$ 
as $\{ (\fke, \gamma_{H_p}) : \fke \in \sE^{\heartsuit}_p(G), \gamma_{H_p} \in \Sigma(H_p)_{(G,H_p)\tu{-}\reg}\}$. Following \eqref{EKESbij} (cf. \cite[Lem.~9.7]{KottwitzEllipticSingular}), we get a map $\ES^{\heartsuit}_p(G_{\Q_p}) \to \EK(G_{\Q_p})$. 
\begin{rem}
To orient the reader, we compare the set $\ES^{\heartsuit}_p(G_{\Q_p})$ to the sets appearing in \S \ref{sss:endoscopy-at-p} such as $\sE^\heartsuit_{\bb_1}(H_{1,p})$. The point is that given a standard endoscopic datum $(H, \cH, s, \eta)$ of $G$ over $\Q$, we can get a local endoscopic datum at $p$ in two ways. One way is to simply base-change to $\Q_p$ as we have done in this subsection. This produces a standard endoscopic datum over $\Q_p$ that may not be refined since $s$ need not lie in $Z(\widehat{H})^{\Gamma_{\Q_p}}$. This endoscopic datum is the one mostly closely related to the ``$p$-part'' of \eqref{eq:after-initial-steps} since our $s$ will yield a $\kappa \in \fkK(I_0/\Q)$.

On the other hand, one can localize $(H, \cH, s, \eta)$ as in \ref{sss:global-strongly-regular} to produce a rigid endoscopic datum $(H_p, \cH_p, \dot{s}_p, \eta_p)$ over $\Q_p$. This rigid endoscopic datum yields a refined endoscopic datum by projection of $\dot{s}_p$ to $Z(\widehat{H_p})^{\Gamma_{\Q_p}}$. This is essentially the approach followed in \S \ref{sss:endoscopy-at-p} and is the one that yields an appropriately normalized local transfer factor from the perspective of endoscopy for rigid inner forms. 

The goal  is then to compare these two notions of local endoscopy. This is accomplished by combining Corollary \ref{transfactcomparison} with equation \ref{eq: kappa vs s}.
(This is reminiscent of the stabilization at $p$ for Shimura varieties in \cite[\S7]{KottwitzAnnArbor}. The stabilization there is carried out at first when $s\in Z(\widehat{H_p})^{\Gamma_{\Q_p}}$; the general case is reduced to this case by translating $s$ by an element of $Z(\widehat{G})$, which introduces a factor analogous to our $\mu_{h_1}(y_{1,p}g^{-1})$ showing up below.)
\end{rem}

We now fix a datum $\fke_p^*=(H_p, \cH_p, s^*_p, \eta_p) \in \sE^{\heartsuit}_p(G)$ and  $\gamma_{H_p} \in \Sigma(H_p)_{(G,H_p)\tu{-}\reg}$. We use the $*$ superscript to distinguish $s^*_p$ from the elements $s_{1,p}$ of \S\ref{sss:endoscopy-at-p}. The image under this map is to be denoted by $(\gamma_0, \kappa)\in \EK(G_{\Q_p})$. Our fixed datum $\fke_p^*$ yields a lift of $\kappa$ denoted $\tilde{\kappa} \in Z(\hat{I_0})$ corresponding to $s^*_p$ 
via the construction of \cite[Lem.~9.7]{KottwitzEllipticSingular}. 
Choose a lift $\gamma_{0,1} \in G_1(\Q_p)$ of $\gamma_0$. We also have an endoscopic datum $(H_{1,p}, \cH_{1,p}, s^*_{1,p}, \eta_{1,p})$ of $G_{1, \Q_p}$, corresponding to $\fke_p^*$ via the construction in \S\ref{sss:z-ext}. This yields an element $\tilde{\kappa}_1 \in Z(\widehat{I_{0,1}})$ which equals the image of $\tilde{\kappa}$ under the map $Z(\widehat{I_0}) \to Z(\widehat{I_{0,1}})$.

Our goal in this subsection is to rewrite the expression
\begin{equation}{\label{atpequation}}
    \Delta_p(\gamma_{H_{1,p}}, \gamma_{0,1})\sum_{[b]\in \fkD_p(I_0,G,\bb)}  \langle \tilde\beta_p(\gamma_{0},[b]),\tilde{\kappa} \rangle^{-1} e_p(\gamma_{0},[b])O^{J_{\bb}}_{\delta_{[b]}}(\phi^{(j)}_p),
\end{equation}
where $\Delta_p=\Delta[\fkw_1, z^{\Rig}_{1,p}]$,
in terms of stable orbital integrals on endoscopic groups of $G_{1,p}$. Henceforth, we simplify the notation by dropping the superscript $(j)$ from $\phi_p$ and assume that $\phi_p \in \cH_{\tu{acc}}(J_{\bb}(\Q_p))$.
Our first task is to lift \eqref{atpequation} from $G$ to $G_{1}$. 
\begin{lem}{\label{liftingatplem}}
We have a canonical bijection
\begin{equation*}
    \fkD_p(I_{0,1}, G_1, \bb_1)=\fkD_p(I_0, G, \bb),
\end{equation*}
as well as a row-exact commutative diagram 
\begin{equation*}
\begin{tikzcd}
        1 \arrow[r]  &\pi_1(Z_1) \arrow[r, hook]\arrow[d, two heads] & \pi_1(I_{0,1}) \arrow[r, two heads] \arrow[d, two heads] & \pi_1(I_0) \arrow[r] \arrow[d, two heads] & 1\\
         & \pi_1(Z_1)_{\Gamma_{\Q_p}} \arrow[r, "\ov{\epsilon}"]& \pi_1(I_{0,1})_{\Gamma_{\Q_p}} \arrow[r, two heads] & \pi_1(I_0)_{\Gamma_{\Q_p}} \arrow[r] & 1
\end{tikzcd}
\end{equation*}
induced by the exact sequence $1\ra Z_1 \ra I_{0,1} \to I_0 \ra 1$. 
\end{lem}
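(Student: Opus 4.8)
\textbf{Proof plan for Lemma \ref{liftingatplem}.}

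The plan is to deduce everything from the short exact sequence of $\Q_p$-groups $1 \to Z_1 \to I_{0,1} \to I_0 \to 1$, which is itself obtained by restricting the $z$-extension $1 \to Z_1 \to G_1 \to G \to 1$ to the centralizers; note $I_{0,1}$ is connected because $G_{1,\der} = G_{1,\scusp}$, so $I_{0,1} = Z_{G_1}(\gamma_{0,1})$ genuinely surjects onto $I_0 = Z_G(\gamma_0)^\circ$ with kernel $Z_1$. Applying the functor $\pi_1(-)$ (the Borovoi fundamental group) to this sequence gives a short exact sequence of finitely generated $\Gamma_{\Q_p}$-modules $1 \to \pi_1(Z_1) \to \pi_1(I_{0,1}) \to \pi_1(I_0) \to 1$ (exactness, including injectivity on the left, is the standard fact that $\pi_1$ is exact on short exact sequences of connected reductive groups with the kernel a torus --- see e.g. \cite[\S1.13]{KottwitzIsocrystal2} or the references in \S\ref{sss:Galois-cohomology}). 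This is the top row. For the bottom row, I would take $\Gamma_{\Q_p}$-coinvariants, which is right-exact, yielding exactness of $\pi_1(Z_1)_{\Gamma_{\Q_p}} \xrightarrow{\bar\epsilon} \pi_1(I_{0,1})_{\Gamma_{\Q_p}} \to \pi_1(I_0)_{\Gamma_{\Q_p}} \to 1$; the three vertical maps are the canonical quotient maps to coinvariants, and the diagram commutes by functoriality of the coinvariants quotient. The only points requiring a word of justification are that the left vertical map $\pi_1(Z_1) \twoheadrightarrow \pi_1(Z_1)_{\Gamma_{\Q_p}}$ is surjective (clear) and that the top row's injectivity on the left need not survive passage to coinvariants --- which is exactly why the bottom row is only asserted to be exact starting from $\pi_1(Z_1)_{\Gamma_{\Q_p}}$, not a short exact sequence; indeed the snake lemma shows $\ker \bar\epsilon$ is a quotient of $H_1(\Gamma_{\Q_p}, \pi_1(I_0))$, which we do not claim vanishes.

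For the canonical bijection $\fkD(I_{0,1}, G_1, \bb_1) = \fkD(I_0, G, \bb)$, recall from \S\ref{sss:Galois-cohomology} that $\fkD_p(I_{0,1}, G_1; \bb_1)$ is by definition the fiber of $B(I_{0,1}) \to B(G_1)$ over $[\bb_1]$, and similarly for the unsubscripted version. The map $B(I_{0,1}) \to B(I_0)$ and $B(G_1) \to B(G)$ fit into a commutative square, and by the functoriality of the Kottwitz classification of $B$ under central extensions by induced tori — concretely, since $Z_1 = \prod_i \Res_{F_i/\Q_p}\G_m$ has $B(Z_{1,\Q_p})$ trivial and $H^1(\Q_{p^r}/\Q_p, Z_1(\Q_{p^r}))$ controllable — the map $B(I_{0,1}) \to B(I_0)$ is a bijection, and likewise $B(G_1) \to B(G)$. (This is the $B(-)$ analogue of the familiar statement for $H^1$ that a $z$-extension induces a bijection on $H^1$; see the discussion around \cite[Lem.~5.3.8]{KretShin} and the cited results of KSZ.) Since $[\bb_1]$ maps to $[\bb]$ under $B(G_1) \xrightarrow{\sim} B(G)$, the bijection $B(I_{0,1}) \xrightarrow{\sim} B(I_0)$ restricts to a bijection of the fibers, which is the claimed $\fkD(I_{0,1}, G_1, \bb_1) = \fkD(I_0, G, \bb)$.

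I expect the main obstacle to be purely bookkeeping rather than conceptual: one must check that the identification of $B(I_{0,1})$ with $B(I_0)$ is genuinely canonical (independent of the choice of decent representative $\bb_1$ of $\bb$, and compatible with the already-fixed $\gamma_{0,1}$ lifting $\gamma_0$), so that the bijection $\fkD(I_{0,1}, G_1, \bb_1) = \fkD(I_0, G, \bb)$ can be used without ambiguity downstream in \S\ref{sss:stab-at-p}. The cleanest route is to observe that all the maps in sight ($B$-functoriality, $\pi_1$-functoriality, coinvariants) are induced by the single fixed morphism of $z$-extensions, so naturality is automatic; the remaining content — that $Z_1$ being an induced torus forces the relevant $B$- and cohomology comparison maps to be isomorphisms — is exactly \cite[Prop.~1.7.3, Cor.~1.7.4]{KSZ} and the structure of $z$-extensions, already invoked in \S\ref{sss:Galois-cohomology} and \S\ref{ss:z-extensions}. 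So the proof is a short assembly of these standard facts.
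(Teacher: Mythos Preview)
Your treatment of the commutative diagram is fine and matches the paper's: the top row is exactness of $\pi_1$ on a central extension by a torus, and the bottom row is right-exactness of coinvariants. The paper simply cites \cite[(1.8.1)]{KottwitzCuspidalTempered} for this.

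However, your argument for the bijection $\fkD(I_{0,1},G_1,\bb_1)=\fkD(I_0,G,\bb)$ contains a genuine error. You assert that $B(Z_{1,\Q_p})$ is trivial and hence that $B(I_{0,1})\to B(I_0)$ and $B(G_1)\to B(G)$ are bijections. This is false: for a torus $T$ over $\Q_p$ one has $B(T)\cong X_*(T)_{\Gamma_{\Q_p}}$, so for the induced torus $Z_1=\prod_i \Res_{F_i/\Q_p}\G_m$ one gets $B(Z_1)\cong \bigoplus_i \Z$, a free abelian group of positive rank. You have confused the vanishing of $H^1(\Q_p,Z_1)$ (true, by Shapiro and Hilbert~90) with the vanishing of $B(Z_1)$ (false). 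Consequently neither vertical map $B(I_{0,1})\to B(I_0)$ nor $B(G_1)\to B(G)$ is injective, and your deduction collapses.

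The paper's argument repairs exactly this point. By \cite[\S4.5]{KottwitzIsocrystal2} and \cite[Prop.~10.4]{KottwitzglobalB(G)}, the group $B(Z_1)$ acts \emph{transitively} on each fiber of $B(I_{0,1})\to B(I_0)$ and of $B(G_1)\to B(G)$, and the stabilizers are identified with $\ker(H^1(\Q_p,I_{0,1})\to H^1(\Q_p,I_0))$ and $\ker(H^1(\Q_p,G_1)\to H^1(\Q_p,G))$, both trivial since $H^1(\Q_p,Z_1)=0$. Thus each fiber is a $B(Z_1)$-torsor. The map $B(I_{0,1})\to B(G_1)$ is $B(Z_1)$-equivariant, so for each $[b]\in\fkD(I_0,G,\bb)$ the induced map from the fiber over $[b]$ to the fiber over $[\bb]$ is an equivariant map of $B(Z_1)$-torsors, hence a bijection; in particular there is a unique preimage landing on $[\bb_1]$. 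This is the missing idea in your proposal.
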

\begin{proof}
The second fact is standard and follows from \cite[(1.8.1)]{KottwitzCuspidalTempered}. We now prove that the natural map $\fkD_p(I_{0,1}, G_1, \bb_1) \to \fkD_p(I_0, G, \bb)$ is a bijection. By \cite[\S4.5]{KottwitzIsocrystal2} and \cite[Prop.~10.4]{KottwitzglobalB(G)}, $B(Z_1)$ acts transitively on each fiber of the maps $B(I_{0,1}) \to B(I_0)$ and $B(G_1) \to B(G)$. The action is equivariant for the map $B(I_{0,1}) \to B(G)$. Moreover, the stabilizer of each fiber is given by $\ker(H^1(\Q_p, I_{0,1}) \to H^1(\Q_p, I_0))$ and $\ker(H^1(\Q_p, G_1) \to H^1(\Q_p, G))$ respectively. Since $H^1(\Q_p, Z_1) = \{1\}$, these stabilizers are trivial. This implies the desired result. 
\end{proof}

Each $[b] \in \fkD_p(I_{0}, G, \bb)$ gives an element $[b_1] \in \fkD_p(I_{0,1}, G_1, \bb_1)$ by Lemma \ref{liftingatplem}. Define $\beta_{1,p} = \kappa_{I_{0,1}}([b_1]) \in \pi_1(I_{0,1})_{\Gamma_{\Q_p}}$. 

It follows from the diagram in Lemma \ref{liftingatplem} that $\beta_{1,p} \in \pi_1(I_{0,1})_{\Gamma_{\Q_p}}$ lifts to an element  $\tilde{\beta}_{1,p} \in \pi_1(I_{0,1})$ which projects to $\tilde{\beta}_p \in \pi_1(I_0)$. By construction, we have $\langle \tilde{\beta}_{1,p}, \tilde{\kappa}_1 \rangle = \langle \tilde{\beta}_p, \tilde{\kappa} \rangle$. Since $I_p$ and $I_{1,p}$ have the same adjoint group, the Kottwitz signs match: $e_p(\gamma_{0},[b]) = e_p(\gamma_{0,1},[b_1])$ (see \cite{KottwitzSign}). Moreover $O^{J_{\bb}}_{\delta_{[b]}}(\phi_p) = O^{J_{\bb_1}}_{\delta_{[b_1]}}(\phi_{1,p})$, where $\phi_{1,p} \in \cH_{\tu{acc}}(J_{\bb_1}, \mathbf{1}_{\fkX_{1,p}})$ is the pullback of $\phi_p$ under the map $J_{\bb_1}(\Q_p) \to J_{\bb}(\Q_p)$.
Since $\phi_p$ is invariant under $\fkX_p$, the function $\phi_{1,p}$ is invariant under its preimage $\fkX_{1,p}$ in $J_{\bb_1}(\Q_p)$.

Hence, by the equalities of the previous paragraph, the expression in \eqref{atpequation} becomes
\begin{equation*}
      \Delta_p(\gamma_{H_{1,p}}, \gamma_{0,1})\sum_{[b_1]\in \fkD_p(I_{0,1},G_1,\bb_1)}  \langle \tilde{\beta}_{1,p},\tilde{\kappa}_1 \rangle^{-1} e_p(\gamma_{0,1},[b_1])O^{J_{\bb_1}}_{\delta_{[b_1]}}(\phi_{1,p}).
\end{equation*}

Recall from \S\ref{sss:setup-endoscopic-data} that we had $s_{1,\der}\in \hat G_{1,\der}$ and chose $y'_{1,p} = 1$ so that $y_{1,p} = y''_{1,p}$ in the definition of $\dot{s}_{1,p}$ as in \S \ref{ss:global transfer factors construction}. Let $g \in Z(\hat{G_1})$ be such that $\tilde{\kappa}_1=s_{1,\der}g$. 

Recall that $s_{1,p}$ is defined as the projection of $\dot{s}_{1,p}$ to $Z(\hat{H_1})^{\Gamma_{\Q_p}}$ and that $\bar{\eta}_1(\dot{s}_{1,p}) = (s_{\scusp}, \dot{y}_{1,p})$, where $\bar{\eta}_1: \hat{\bar{H_1}} \to \hat{\bar{G_1}}$ is as in Definition \ref{def: refined endoscopic datum}. It follows that we have $\eta_1(s_{1,p}) = s_{1,\der}y_{1,p}=\tilde{\kappa}_1g^{-1}y_{1,p}$. Moreover $g^{-1}y_{1,p} \in Z(\hat{G_1})$, so
\begin{equation}{\label{eq: kappa vs s}}
    \langle \tilde{\beta}_{1,p}, \tilde{\kappa}_1 \rangle = \langle \beta_{1,p}, \eta_1(s_{1,p}) \rangle \mu_{h_1}(y^{-1}_{1,p} g). 
\end{equation}

Now suppose that there exists $\fke_{\bb_1}=(H_{\bb_1}, \cH_{\bb_1}, H_{1,p}, \cH_{1,p}, \dot{s}_{1,p}, \eta_{\bb_1}) \in \sE^{\heartsuit,\Rig}_{\bb_1}(H_{1,p})$ as well as $\gamma_{H_{\bb_1}} \in \Sigma( H_{\bb_1}(\Q_p))$ that transfers to $\delta_1 \in \Sigma(J_{\bb_1}(\Q_p))$ and whose image in $\Sigma(H_{1,p}(\Q_p))$ is $\gamma_{H_{1,p}}$.
Write $\Delta_{\bb_1}$ for the transfer factor $\Delta[\fkw_{\bb_1}, z^{\Rig}_{\bb_1}]$. (This is different from $\Delta_p$.)
Then substituting the above equality and applying Corollary \ref{transfactcomparison}, the preceding formula for \eqref{atpequation} becomes
\begin{equation*}
         \sum_{[b_1]\in \fkD_p(I_{0,1},G_1,\bb_1)}  \mu_{h_1}(y_{1,p} g^{-1}) \Delta_{\bb_1}(\gamma_{H_{\bb_1}}, \delta_1)|D^{H_{1,p}}_{H_{\bb_1}}(\gamma_{H_{\bb_1}})|^{-\frac{1}{2}}_p|D^{G^*_1}_{M_{\bb_1}}(\gamma_1)|^{\frac{1}{2}}_p e_p(\gamma_{0,1},[b_1])O^{J_{\bb_1}}_{\delta_{[b_1]}}(\phi_{1,p}).
\end{equation*}
Following \cite[\S6.3]{Shin10}, we define a character $\bar{\delta}_{P(\bar{\nu}_{\bb_1})}: J_{\bb_1}(\Q_p) \to \C^{\times}$ such that $\bar{\delta}_{P(\bar{\nu}_{\bb_1})}(\delta'_1) = 
\delta_{P(\bar{\nu}_{\bb_1})}(\gamma'_1)$, where $\delta_{P(\bar{\nu}_{\bb_1})}: M_{\bb_1}(\Q_p) \to \C^{\times}$ is the modulus character for $P(\bar{\nu}_{\bb_1})$ and $\delta'_1 \in J_{\bb_1}(\Q_p)$ and $\gamma'_1 \in M_{\bb_1}(\Q_p)$ are semi-simple elements with matching stable conjugacy class. We then define 
$$\phi^0_{1,p} := \phi_{1,p} \cdot \bar{\delta}^{\frac{1}{2}}_{P(\bar{\nu}_{\bb_1})} \in \cH_{\tu{acc}}(J_{\bb_1}(\Q_p), \mathbf{1}_{\fkX_{1,p}}).$$
Let $\phi^{H_{\bb_1}}_p \in \cH(H_{\bb_1}(\Q_p), \chi^{-1}_{H_{\bb_1}})$ be a $\Delta_{\bb_1}$-transfer of $\phi^0_{1,p}$ to $H_{\bb_1}$ as in Proposition \ref{prop:LS-transfer}. In fact, since the set of $\nu$-acceptable elements is open, closed, and invariant under stable conjugacy, we can and do choose $\phi^{H_{\bb_1}}_p \in \cH_{\tu{acc}}(H_{\bb_1}(\Q_p), \chi^{-1}_{H_{\bb_1}})$ by multiplying with the characteristic function on the set of $\nu$-acceptable elements.
We have
\begin{align*}
& \sum_{[b_1]}  \mu_{h_1}(y_{1,p}g^{-1}) \Delta_{\bb_1}(\gamma_{H_{\bb_1}}, \delta_1)|D^{H_{1,p}}_{H_{\bb_1}}(\gamma_{H_{\bb_1}})|^{-\frac{1}{2}}_p|D^{G^*_1}_{M_{\bb_1}}(\gamma_1)|^{\frac{1}{2}}_p e_p(\gamma_{0,1},[b_1])O^{J_{\bb_1}}_{\delta_{[b_1]}}(\phi_{1,p})\\
=& \sum_{[b_1]}  \mu_{h_1}(y_{1,p}g^{-1}) \Delta_{\bb_1}(\gamma_{H_{\bb_1}}, \delta_1)|D^{H_{1,p}}_{H_{\bb_1}}(\gamma_{H_{\bb_1}})|^{-\frac{1}{2}}_p e_p(\gamma_{0,1},[b_1])O^{J_{\bb_1}}_{\delta_{[b_1]}}(\phi^0_{1,p})\\
=& ~ \mu_{h_1}(y_{1,p} g^{-1})|D^{H_{1,p}}_{H_{\bb_1}}(\gamma_{H_{\bb_1}})|^{-\frac{1}{2}}_p SO^{H_{\bb_1}}_{\gamma_{H_{\bb_1}}}(\phi^{H_{\bb_1}}_p),
\end{align*}
where the sums run over $[b_1]\in \fkD_p(I_{0,1},G_1,\bb_1)$.
The first equality above holds because either $\delta_{[b_1]}$ is not $\ov{\nu}_{\bb_1}$-acceptable (in which case $O^{J_{\bb_1}}_{\delta_{[b_1]}}(\phi_{1,p}) = 0 = O^{J_{\bb_1}}_{\delta_{[b_1]}}(\phi^0_{1,p})$ ~), or it is acceptable and then so is $\gamma_1$, which implies   $|D^{G^*_1}_{M_{\bb_1}}(\gamma_1)| = \delta_{P(\ov{\nu}_{\bb_1})}(\gamma_1)$ by \cite[Lem.~3.4]{Shin10}. The second equality comes from the transfer of orbital integrals from $J_{\bb_1}$ to $H_{\bb_1}$.

Finally, we need to rewrite this expression in terms of stable orbital integrals of $H_{1,p}$. By Lemma \ref{lem:ascentexistance} generalized to the central character setting as in \cite[\S3.5]{KretShin}, there exists a $\nu$-ascent $f^{\fke_{\bb_1}}_{\bb_1} \in \cH(H_{1,p}(\Q_p), \chi^{-1}_{H_{1,p}})$ of $\phi^{H_{\bb_1}}_p$. (We are inserting the superscript to emphasize the dependence on $\fke_{\bb_1}$.) The central character should indeed be $\chi^{-1}_{H_{1,p}}$ as one can see from comparing Lemmas \ref{transfactLevi} and \ref{lem:equivariance}.
 We then have
\begin{equation*}
    |D^{H_{1,p}}_{H_{\bb_1}}(\gamma_{H_{\bb_1}})|^{-\frac{1}{2}}_p SO^{H_{\bb_1}}_{\gamma_{H_{\bb_1}}}(\phi^{H_{\bb_1}}_p) =  SO^{H_{1,p}}_{\gamma_{H_{1,p}}}(f^{\fke_{\bb_1}}_{\bb_1}).
\end{equation*}
In conclusion, we have proven that
\begin{equation}{\label{eq: stab at p formula}}
     \mu_{h_1}(y_{1,p} g^{-1})SO^{H_{1,p}}_{\gamma_{H_{1,p}}}(f^{\fke_{\bb_1}}_{\bb_1})= \Delta_p(\gamma_{H_{1,p}}, \gamma_{0,1})\sum_{[b]\in \fkD_p(I_0,G,\bb)}  \langle \tilde\beta_p(\gamma_{0},[b]),\tilde{\kappa} \rangle^{-1} e_p(\gamma_0, [b])O^{J_{\bb}}_{\delta_{[b]}}(\phi_p).
\end{equation}

We repeat the construction of $f^{\fke_{\bb_1}}_{\bb_1}$  for each $\fke_{\bb_1}\in \sE^{\heartsuit,\Rig}_{\bb_1}(H_{1,p})$ and define
$$h_{1,p} :=  \sum\limits_{\fke_{\bb_1}\in \sE^{\heartsuit,\Rig}_{\bb_1}(H_{1,p})}  \mu_{h_1}(y_{1,p} g^{-1})f^{\fke_{\bb_1}}_{\bb_1}~ \in ~\cH(H_{1,p}(\Q_p), \chi^{-1}_{H_{1,p}}).$$
\begin{prop}\label{prop:SO-at-p}
For each $(G_1,H_1)$-regular $\gamma_{H_{1,p}} \in \Sigma(H_{1,p}(\Q_p))$, we have
\begin{equation*}
  SO_{\gamma_{H_{1,p}}}(h_{1,p}) = \Delta_p(\gamma_{H_{1,p}}, \gamma_{0,1})\sum_{[b]\in \fkD_p(I_0,G,\bb)}  \langle \tilde\beta_p(\gamma_{0},[b]),\tilde{\kappa} \rangle^{-1} e_p(\gamma_0, [b])O^{J_{\bb}}_{\delta_{[b]}}(\phi_p),
\end{equation*}
if $(H_1, \cH_1, s_1, \eta_1, \gamma_{H_{1,p}}) \in \ES^{\Iso}_{\eff}(G_1)$ and otherwise,
\begin{equation*}
  SO_{\gamma_{H_{1,p}}}(h_{1,p})=0.
\end{equation*}
\end{prop}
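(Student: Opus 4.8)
The plan is to assemble Proposition~\ref{prop:SO-at-p} from the local computations already carried out, most importantly equation~\eqref{eq: stab at p formula}, by summing over the embedded data $\fke_{\bb_1}\in \sE^{\heartsuit,\Rig}_{\bb_1}(H_{1,p})$ and matching up the stable conjugacy classes on the two sides. First I would fix a $(G_1,H_1)$-regular $\gamma_{H_{1,p}}\in \Sigma(H_{1,p}(\Q_p))$ and expand $SO_{\gamma_{H_{1,p}}}(h_{1,p})=\sum_{\fke_{\bb_1}} \mu_{h_1}(y_{1,p}g^{-1}) SO^{H_{1,p}}_{\gamma_{H_{1,p}}}(f^{\fke_{\bb_1}}_{\bb_1})$ using the definition of $h_{1,p}$ and linearity of stable orbital integrals. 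For each $\fke_{\bb_1}$, the ascent property (Definition~\ref{def:ascent}, Lemma~\ref{lem:ascentexistance}, in its stable form) says $SO^{H_{1,p}}_{\gamma_{H_{1,p}}}(f^{\fke_{\bb_1}}_{\bb_1})$ vanishes unless $\gamma_{H_{1,p}}$ is stably conjugate in $H_{1,p}(\Q_p)$ to a $\nu$-acceptable element $\gamma_{H_{\bb_1}}\in H_{\bb_1}(\Q_p)$, in which case it equals $|D^{H_{1,p}}_{H_{\bb_1}}(\gamma_{H_{\bb_1}})|^{-1/2}_p SO^{H_{\bb_1}}_{\gamma_{H_{\bb_1}}}(\phi^{H_{\bb_1}}_p)$; thus the sum over $\fke_{\bb_1}$ is really a sum over those $\fke_{\bb_1}$ for which such a $\gamma_{H_{\bb_1}}$ exists, i.e.\ a sum over elements of $\ES^{\heartsuit,\Rig}_{\bb_1}(H_{1,p})$ (up to the appropriate equivalence) lying over $\gamma_{H_{1,p}}$.

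Next I would invoke Lemma~\ref{lem: uniquenesslem} (in its $z$-extension/$\Rig$-version, as set up in \S\ref{sss:endoscopy-at-p}): given the pair $(H_{1,p},\cH_{1,p},s^*_{1,p},\eta_{1,p},\gamma_{H_{1,p}})$ projecting to a class in $\ES^{\Iso}_{\eff}(G_1)$, there is a \emph{unique} $\fke_{\bb_1}\in \sE^{\heartsuit,\Rig}_{\bb_1}(H_{1,p})$ and a $(G^*_1,H_{1,p})$-regular, $\nu_{\bb_1}$-acceptable semisimple $\gamma_{H_{\bb_1}}\in \Sigma(H_{\bb_1}(\Q_p))$, stably conjugate to $\gamma_{H_{1,p}}$ inside $H_{1,p}(\Q_p)$, realizing the correspondence; moreover $\gamma_{H_{\bb_1}}$ transfers to $\delta_1\in\Sigma(J_{\bb_1}(\Q_p))$ (since we are in the effective locus and by the discussion before Lemma~\ref{invariantcalclem}), and hence transfers to $\gamma_1\in M_{\bb_1}(\Q_p)$ whose image under $\psi_{1,p}$ is stably conjugate to $\gamma_{0,1}$. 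So exactly one term in the sum over $\fke_{\bb_1}$ survives, and for that term equation~\eqref{eq: stab at p formula} gives precisely
\[
\mu_{h_1}(y_{1,p}g^{-1}) SO^{H_{1,p}}_{\gamma_{H_{1,p}}}(f^{\fke_{\bb_1}}_{\bb_1}) = \Delta_p(\gamma_{H_{1,p}},\gamma_{0,1})\sum_{[b]\in \fkD(I_0,G,\bb)} \langle \tilde\beta_p(\gamma_0,[b]),\tilde\kappa\rangle^{-1} e_p([b]) O^{J_{\bb}}_{\delta_{[b]}}(\phi_p),
\]
which is the first assertion. For the vanishing case: if $(H_1,\cH_1,s_1,\eta_1,\gamma_{H_{1,p}})$ does not lie in $\ES^{\Iso}_{\eff}(G_1)$, then for \emph{no} $\fke_{\bb_1}$ can $\gamma_{H_{1,p}}$ be stably conjugate in $H_{1,p}(\Q_p)$ to a $\nu_{\bb_1}$-acceptable element of some $H_{\bb_1}(\Q_p)$ transferring appropriately to $J_{\bb_1}$ (this is exactly the characterization of the effective locus via Diagram~\eqref{EQSSdiagram} and the Lemma before it, together with the acceptability built into Definition~\ref{def:EKisoeff}), so every ascent $f^{\fke_{\bb_1}}_{\bb_1}$ has vanishing stable orbital integral at $\gamma_{H_{1,p}}$ and $SO_{\gamma_{H_{1,p}}}(h_{1,p})=0$.

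The main obstacle will be the careful bookkeeping in the effective/non-effective dichotomy, namely verifying that ``$\gamma_{H_{1,p}}$ is stably conjugate to a $\nu_{\bb_1}$-acceptable transfer from $J_{\bb_1}$'' is genuinely equivalent to ``$(H_1,\cH_1,s_1,\eta_1,\gamma_{H_{1,p}})\in\ES^{\Iso}_{\eff}(G_1)$''. This requires tracking $\gamma_{H_{1,p}}$ through the torus-transfer description of $\mE^{\Iso}_{\eff}(J_{\bb_1},G_1)$ and the bijections of Diagram~\eqref{EQSSdiagram}, and checking that acceptability of $\delta_1\in J_{\bb_1}(\Q_p)$ (equivalently of $\gamma_1\in M_{\bb_1}(\Q_p)$, equivalently of $\gamma_{0,1}$, as in \S\ref{sss:KP-to-CKP} and the measure discussion in \S\ref{sss:notation-for-TF}) is preserved under all these identifications. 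I would also need to confirm that the $z$-extension passage does not disturb the effective locus — this follows from \S\ref{sss:z-ext-embedded} and Lemma~\ref{liftingatplem} — and that the running hypotheses of \S\ref{sss:endoscopy-at-p} (decent $\bb_1$, $\nu_{\bb_1}$ defined over $\Q_p$, $H_{\bb_1}$ a standard Levi of $H_{1,p}$) are in force throughout so that Lemma~\ref{lem: uniquenesslem} and Corollary~\ref{transfactcomparison} apply verbatim. Everything else — linearity, the ascent identity, and the already-established \eqref{eq: stab at p formula} — is then routine.
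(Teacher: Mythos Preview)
Your outline follows the same strategy as the paper: expand $h_{1,p}$ as a sum over $\fke_{\bb_1}$, use the ascent identity termwise, invoke~\eqref{eq: stab at p formula} for the surviving term, and argue vanishing in the non-effective case via the support of $\phi^{H_{\bb_1}}_p$ on acceptable elements.

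There is one place where your sketch is too quick. For the uniqueness of the contributing $\fke_{\bb_1}$ in the effective case you write ``invoke Lemma~\ref{lem: uniquenesslem}'', but that lemma takes as input a \emph{single} class in $\ES^{\emb}_{\eff}(M_{\bb_1},G_1;H_{1,p})$ and produces a unique representative in $\sE^{\heartsuit}_{\bb_1}(H_{1,p})$. Starting only from $\gamma_{H_{1,p}}\in H_{1,p}(\Q_p)$, two distinct $\fke_{\bb_1},\fke'_{\bb_1}$ with acceptable $\gamma_{H_{\bb_1}},\gamma_{H'_{\bb_1}}$ stably conjugate to $\gamma_{H_{1,p}}$ could in principle yield \emph{different} classes in $\ES^{\emb}_{\eff}$, in which case the lemma would not force them to agree. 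The paper closes this by passing to $\EK^{\Iso}(M_{\bb_1},G_1)$: the transfers $\gamma,\gamma'\in M_{\bb_1}(\Q_p)$ are stably conjugate in $G_1(\Q_p)$ (both match $\gamma_{H_{1,p}}$), and then \cite[Lem.~3.5]{Shin10} (acceptability upgrades $G_1$-stable conjugacy to $M_{\bb_1}$-stable conjugacy) forces $(\gamma,\lambda)=(\gamma',\lambda')$ in $\EK^{\Iso}(M_{\bb_1},G_1)$. Only then does Lemma~\ref{lem: uniquenesslem} apply. You should insert this step explicitly; your ``obstacles'' paragraph gestures at acceptability issues but does not isolate this particular use of it.

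A minor point in the vanishing case: the ascent property alone only tells you $SO^{H_{1,p}}_{\gamma_{H_{1,p}}}(f^{\fke_{\bb_1}}_{\bb_1})=0$ when $\gamma_{H_{1,p}}$ is \emph{not} stably conjugate to an acceptable element of $H_{\bb_1}$. If it \emph{is}, but the resulting $\gamma\in M_{\bb_1}(\Q_p)$ fails to transfer acceptably to $J_{\bb_1}$ (which is exactly the non-effective situation), vanishing comes instead from $SO^{H_{\bb_1}}_{\gamma_{H_{\bb_1}}}(\phi^{H_{\bb_1}}_p)=0$, using that $\phi^{H_{\bb_1}}_p$ is a $\Delta_{\bb_1}$-transfer of a function on $J_{\bb_1}$ supported on acceptable elements. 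Your phrasing conflates these two mechanisms; separating them makes the argument airtight.
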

\begin{proof}
We suppose first that $(H_1, \cH_1, s_1, \eta_1, \gamma_{H_{1,p}}) \notin \ES^{\Iso}_{\eff}(G_1)$ and suppose for contradiction that $SO_{\gamma_{H_{1,p}}}(f^{\fke_{\bb_1}}_{\bb_1}) \neq 0$ for some $f^{\fke_{\bb_1}}_{\bb_1}$ corresponding to an element of $\sE^{\heartsuit, \Rig}_{\bb_1}(H_{1,p})$. Then by the definition of $f^{\fke_{\bb_1}}_{\bb_1}$ (Definition \ref{def:ascent}) we must have that $\gamma_{H_{1,p}}$ is stably conjugate in $H_{1,p}$ to some $\nu$-acceptable $\gamma_{H_{\bb_1}} \in H_{\bb_1}$.
The class of 
$$(H_{\bb_1}, \cH_{\bb_1}, H_{1,p},\cH_{1,p},s_{1,p}, \eta_{\bb_1}, \gamma_{H_{\bb_1}})\quad
\mbox{in}\quad\ES^{\emb}(M_{\bb_1}, G_1)$$
cannot lie in $\ES^{\emb}_{\eff}(J_{\bb_1}, G_1)$, since otherwise the image in $\ES^{\Iso}(G_1)$ would lie in $\ES^{\Iso}_{\eff}(G_1)$. This means (see the paragraph below Definition \ref{def:EKisoeff}) that the corresponding class of $\EK^{\Iso}(M_{\bb_1}, G_1)$ does not lie in $\EK^{\Iso}_{\eff}(J_{\bb_1}, G_1)$. Let $(\gamma, \lambda)$ be a representative of this class in $\EK^{\Iso}(M_{\bb_1}, G_1)$. Then either the stable conjugacy class of $\gamma$ does not transfer to $J_{\bb_1}$ or it transfers to an element that is not $\ov{\nu}_{\bb_1}$-acceptable. Either way, we have $SO_{\gamma_{H_{1,p}}}(f^{\fke_{\bb_1}}_{\bb_1}) = 0$ since $f^{\fke_{\bb_1}}_{\bb_1}$ is the transfer of a function on $J_{\bb_1}$ supported on acceptable elements.

We now consider the case that $(H_1, \cH_1, s_1, \eta_1, \gamma_{H_{1,p}}) \in \ES^{\Iso}_{\eff}(G_1)$. By Equation \eqref{eq: stab at p formula}, it suffices to show that
$$SO^{H_{1,p}}_{\gamma_{H_{1,p}}}(f^{\fke_{\bb_1}}_{\bb_1}) \neq 0\qquad \mbox{for a unique}\quad \fke_{\bb_1}\in\sE^{\heartsuit, \Rig}_{\bb_1}(H_{1,p}),$$
and that for each $\fke_{\bb_1}$, there is a unique $\nu_{\bb_1}$-acceptable $\gamma_{H_{\bb_1}} \in \Sigma(H_{\bb_1}(\Q_p))$ that is stably conjugate to $\gamma_{H_{1,p}}$ in $H_{1,p}(\Q_p)$. To this end, suppose
\begin{itemize}
    \item $\fke_{\bb_1}, \fke'_{\bb_1} \in \sE^{\heartsuit, \Rig}_{\bb_1}(H_{1,p})$,
    \item $\gamma_{H_{\bb_1}} \in \Sigma(H_{\bb_1}(\Q_p))$ (resp.~$\gamma_{H'_{\bb_1}} \in \Sigma(H'_{\bb_1}(\Q_p))$) is $\nu$-acceptable relative to $\fke_{\bb_1}$ (resp.~$\fke'_{\bb_1}$), 
    \item $\gamma_{H_{\bb_1}}$ and $\gamma_{H'_{\bb_1}}$ are both stably conjugate to $\gamma_{H_{1,p}}$ in $H_{1,p}$.
\end{itemize}
The tuple underlying $\fke'_{\bb_1}$ is labeled in the obvious way as for $\fke_{\bb_1}$.
Then the tuples 
\begin{equation}\label{eq:two-tuples}
    (H_{\bb_1}, \cH_{\bb_1}, H_{1,p}, \cH_{1,p}, s_{1,p}, \eta_{1,p})\quad\mbox{and}\quad(H'_{\bb_1}, \cH'_{\bb_1}, H_{1,p}, \cH_{1,p}, s_{1,p}, \eta'_{1,p})
\end{equation}
must project to elements of $\ES^{\emb}_{\eff}(J_{\bb_1}, G_1)$ or otherwise the contributions to the integral $SO^{H_{1,p}}_{\gamma_{H_{1,p}}}(f^{\fke_{\bb_1}}_{\bb_1})$ are zero by the previous part of the proof. Now consider the two classes in $\EK^{\Iso}(M_{\bb_1})$ that we get via the maps $$\ES^{\emb}_{\eff}(J_{\bb_1}, G_1) \to \ES^{\Iso}(M_{\bb_1}) \to \EK^{\Iso}(M_{\bb_1})$$
and let $(\gamma, \lambda)$ and $(\gamma', \lambda')$ be representatives of these classes. Since both $\gamma_{H_{\bb_1}}$ and $\gamma_{H'_{\bb_1}}$ are conjugate to $\gamma_{H_{1,p}}$ in $H_{1,p}(\Q_p)$, we have that  $\gamma$ and $\gamma'
$ are stably conjugate in $G_1(\Q_p)$ by \eqref{EQSSdiagram}. Then, by \cite[Lem.~3.5]{Shin10} (since $\gamma, \gamma'$ are $\ov{\nu}_{\bb_1}$-acceptable), we have that $\gamma$ and $\gamma'$ are stably conjugate in $M_{\bb_1}$. This implies that $(\gamma, \lambda)$ and $(\gamma', \lambda')$ are in the same class in $\EK^{\Iso}(M_{\bb_1}, G_1)$ and hence that the classes of the two tuples \eqref{eq:two-tuples} in $\ES^{\emb}_{\eff}(J_{\bb_1}, G_1)$ are equal. Then by Lemma \ref{lem: uniquenesslem}, we must have that $H'_{\bb_1} = H_{\bb_1}$ and that $\gamma_{H_{\bb_1}}$ and $\gamma_{H'_{\bb_1}}$ are stably conjugate in $H_{\bb_1}$. This completes the proof.
\end{proof} 

\subsection{Final steps}{\label{ss:final-steps}}
For each $\fke=(H, \cH, s, \eta) \in \sE^\heartsuit_{\el}(G)$, we have $\fke_1=(H_1,{}^L H_1,s_1,\eta_1)\in \sE^\heartsuit_{\el}(G_1)$. In the last subsection,we constructed the functions $h_1^{\infty,p}$, $h_{1,\infty}$, and $h_{1,p}$. Take
$$h_1:=h_1^{\infty,p} h_{1,\infty} h_{1,p} \in \cH(H_1(\A),\chi_{H_1}^{-1}).$$
For each $\gamma_{H_1}\in H_1(\Q)$, write
$\tu{Stab}_{\fkX_{H_1}}(\gamma_{H_1})$ for the group of $x\in \fkX_{H_1,\Q}$ such that $x\gamma_{H_1}$ is stably conjugate to $\gamma_{H_1}$. This has an obvious analogue with $H$ in place of $H_1$. 
For $h'_1\in \cH(H_1(\A),\chi_{H_1}^{-1})$, we define
\begin{equation}\label{eq:def-of-ST}
    \ST^{H_1}_{\el,\chi_{H_1}}(h) := \tau_{\fkX_{H_1}}(H_1)
\sum_{\gamma_{H_1}} |\tu{Stab}_{\fkX_{H_1}}(\gamma_{H_1})|^{-1} SO_{\gamma_{H_1}}(h'_1),
\end{equation}
where the sum runs over $\Sigma_{\el,\fkX_{H_1}}(H_1)$. We recall from \cite[Lem.~8.3.8]{KSZ} that for every $\gamma_{H_1}\in H_1(\Q)_{(G_1,H_1)\tu{-reg}}$, if we write $\gamma_H\in H(\Q)$ for the image of $\gamma_{H_1}$ then
\begin{equation}\label{eq:Stab(H1)=Stab(H)}
    |\tu{Stab}_{\fkX_{H_1}}(\gamma_{H_1})| = |\tu{Stab}_{\fkX_{H}}(\gamma_{H})|\, \ol{\iota}_H(\gamma_H),
\end{equation}
where $\ol{\iota}_H(\gamma_H)$ is as in \S\ref{sss:notation-for-TF}. 
Put $\lambda(\fke):=|\Out(\fke)|\in \Z_{>0}$ and define the coefficient
\begin{equation}\label{eq:def-of-iota(e)}
    \iota(\fke):=\tau(G)\tau(H)^{-1}\lambda(\fke)^{-1}=\tau_{\fkX}(G)\tau_{\fkX}(H)^{-1}\lambda(\fke)^{-1} \in \Q_{>0}.
\end{equation}

Recalling the notation from \S\ref{ss:endoscopic-data} (with $F=\Q$), we have a composite map
\begin{equation}\label{eq:ESG1-EKG1-EKG}
    \ES^{\Iso}(G_1)\stackrel{\eqref{EKESbij}}{\longrightarrow} \EK^{\Iso}(G_1) \ra \EK^{\Iso}(G),
\end{equation}
where the second map is induced by the projection $G_1\ra G$.
By $\EK^{\Iso}(G)_{\KP_{\bb}}$ we denote the subset of $(\gamma_0,\lambda)\in \EK^{\Iso}(G)$ such that there exists an \emph{acceptable} Kottwitz parameter $(\gamma_0, a, [b])\in \KP_{\bb}$ (see Definition \ref{def:Kottwitz-parameter}).

\begin{lem}\label{lem:SO-adelic}
Let $\gamma_{H_1}\in H_1(\Q)_{\semis}$ and assume that $\gamma_{H_1}$ is elliptic in $H_1(\Q)$. If either
\begin{enumerate}
    \item $(\fke_1,\gamma_{H_1})\notin \ES^{\Iso}(G_1)$ (namely $\gamma_{H_1}\notin H_1(\Q)_{(G_1,H_1)\tu{-reg}}$, or $\gamma_{H_1}\in H_1(\Q)_{(G_1,H_1)\tu{-reg}}$ but does not transfer to $G_1(\Q)$), or
    \item  the image of $(\fke_1,\gamma_{H_1})$ under \eqref{eq:ESG1-EKG1-EKG} lies outside $\EK^{\Iso}(G)_{\KP_{\bb}}$, then
\end{enumerate}
$$SO_{\gamma_{H_1}}(h_1)=0.$$
If $(\fke_1,\gamma_{H_1})$ belongs to $\ES^{\Iso}(G_1)$ and has image $(\gamma_0,\kappa)\in \EK^{\Iso}(G)_{\KP_{\bb}}$, then 
$$SO_{\gamma_{H_1}}(h_1)= \sum_{(a,[b])} N(\gamma_0,\kappa,a,[b],j),$$
where the sum runs over $\fkD(I_0,G;\A^{\infty,p})\times \fkD_p(I_0,G;\bb)$.
\end{lem}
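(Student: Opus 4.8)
\textbf{Proof strategy for Lemma \ref{lem:SO-adelic}.}

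The plan is to factor the stable orbital integral of $h_1 = h_1^{\infty,p} h_{1,\infty} h_{1,p}$ as a product of the three local stable orbital integrals already computed in Lemmas \ref{lem:SO-away-from-pinfty}, \ref{lem:SO-at-infty}, and Proposition \ref{prop:SO-at-p}, and then assemble the three local contributions into the global invariant $N(\gamma_0,\kappa,a,[b],j)$ by using the product formula for Kottwitz signs and the compatibility of the abelianization/Tate--Nakayama pairings across places. First I would observe that since $h_1$ is a product of functions on $H_1(\A^{\infty,p})$, $H_1(\R)$, and $H_1(\Q_p)$, the adelic stable orbital integral factors as $SO_{\gamma_{H_1}}(h_1) = SO^{H_1}_{\gamma_{H_1}}(h_1^{\infty,p})\cdot SO^{H_1}_{\gamma_{H_1}}(h_{1,\infty})\cdot SO^{H_1}_{\gamma_{H_1}}(h_{1,p})$, with the Haar measure compatibilities built into Definition \ref{def:Delta-transfer} and the measure choices of \S\ref{sss:notation-for-TF} guaranteeing the bookkeeping matches.

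The vanishing statements (i) and (ii) come next. For (i): if $\gamma_{H_1}\notin H_1(\Q)_{(G_1,H_1)\tu{-reg}}$ then Shelstad's factor (and the $\infty$-term) is defined by the limiting procedure of \S\ref{ss:GHregsection}, but the key point is that if $\gamma_{H_1}$ does not transfer to $G_1(\Q)$ then (being elliptic at $\infty$, so the adelic transfer factor on $H(F)_{(G,H)\tu{-reg}}$ controls everything via the product formula $\Delta_\A = \langle\tu{obs}(\gamma)^{-1},\kappa\rangle$ of \S\ref{sss:global-GH-regular}) one of the three local stable orbital integrals vanishes: at $\infty$ by Lemma \ref{lem:SO-at-infty} (Case 1 or the non-elliptic clause), away from $p$ by Lemma \ref{lem:SO-away-from-pinfty} (the ``no image'' clause), or at $p$ by Proposition \ref{prop:SO-at-p} (the $\ES^{\Iso}_{\eff}$ clause). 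For (ii): if the image in $\EK^{\Iso}(G)$ lies outside $\EK^{\Iso}(G)_{\KP_\bb}$, then no \emph{acceptable} Kottwitz parameter has first entry $\gamma_0$; tracing through Definition \ref{def:EKisoeff} and the identification of $\ES^{\Iso}_{\eff}(G_1)$ with the acceptable locus, this forces $(\fke_1,\gamma_{H_1})\notin\ES^{\Iso}_{\eff}(G_1)$, so $SO_{\gamma_{H_1}}(h_{1,p})=0$ by Proposition \ref{prop:SO-at-p}. (One must also handle the case where $\gamma_{H_1}$ transfers to $G(\A^{\infty,p})\times J_\bb(\Q_p)$ but the resulting classical Kottwitz parameter is non-acceptable — again absorbed by the $p$-term vanishing.)

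For the main equality, I would multiply the three formulas. From Lemma \ref{lem:SO-away-from-pinfty} one gets $\Delta^{\infty,p}(\gamma_{H_1},\gamma_{0,1})\sum_a \langle\tilde\beta^{\infty,p}(\gamma_0,a),\kappa\rangle^{-1}e^{\infty,p}(\gamma_0,a)O^G_{\gamma_a}(\phi^{\infty,p})$; from Lemma \ref{lem:SO-at-infty}, $\Delta_\infty(\gamma_{H_1},\gamma_{0,1})\vol(Z(\R)\backslash I_\infty(\R))^{-1}\langle\tilde\beta_\infty(\gamma_0),\tilde\kappa\rangle^{-1}e_\infty(I_\infty)\tr\xi(\gamma_0)$; from Proposition \ref{prop:SO-at-p}, $\Delta_p(\gamma_{H_1},\gamma_{0,1})\sum_{[b]}\langle\tilde\beta_p(\gamma_0,[b]),\tilde\kappa\rangle^{-1}e_p([b])O^{J_\bb}_{\delta_{[b]}}(\phi_p)$. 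Multiplying, the product $\Delta^{\infty,p}\Delta_\infty\Delta_p = \Delta_\A(\gamma_{H_1},\gamma_{0,1}) = 1$ since $\gamma_{0,1}\in G_1(\Q)$ (using the product decomposition of transfer factors from \S\ref{sss:global-strongly-regular} with $\dot y'_{1,v}=1$, and $\tu{obs}(\gamma_{0,1})=1$ for a rational element); the Kottwitz signs multiply to $e(\gamma_0,a,[b])$ by the product formula; the pairings combine as $\langle\tilde\beta(\fkc),\kappa\rangle^{-1}=\langle\alpha(\fkc),\kappa\rangle^{-1}$ by definition of the Kottwitz invariant in \S\ref{sss:Kottwitz-invariant}; and the remaining $O^G_{\gamma_a}(\phi^{\infty,p})O^{J_\bb}_{\delta_{[b]}}(\phi_p)\tr\xi(\gamma_0)\vol(Z(\R)\backslash I_\infty(\R))^{-1}$ is exactly the rest of $N(\gamma_0,\kappa,a,[b],j)$ as in \eqref{eq:def-of-N}. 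Summing over the double index $(a,[b])$ gives the claim. \textbf{The main obstacle} I anticipate is verifying that the three separately-normalized local transfer factors $\Delta^{\infty,p}$, $\Delta_\infty$, $\Delta_p$ are precisely the local factors whose product is the canonical $\Delta_\A$ — in particular that the factor $\Delta_p = \Delta[\fkw_1,z^{\Rig}_{1,p}]$ appearing in Proposition \ref{prop:SO-at-p} (rather than $\Delta_{\bb_1}$) is the one entering the product decomposition, and that the constant $C_\infty$ from \eqref{eq:Delta-infty} and the $\mu_{h_1}(y_{1,p}g^{-1})$ factor from \eqref{eq: kappa vs s} that were absorbed into $h_{1,\infty}$ and $h_{1,p}$ do not leave a residual scalar. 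This requires carefully chasing the choices of \S\ref{sss:setup-endoscopic-data} (especially $\dot y'_{1,v}=1$) through Corollary \ref{transfactcomparison} and the definition of $h_{1,p}$; once that is pinned down the rest is a matter of collecting terms and invoking Lemma \ref{lem:KP-to-CKP} to pass between $\KP_\bb$ and classical Kottwitz parameters where needed.
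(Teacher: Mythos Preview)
Your argument for the main equality is correct and matches the paper's: factor $SO_{\gamma_{H_1}}(h_1)$ into the three local pieces, multiply the formulas from Lemma \ref{lem:SO-away-from-pinfty}, Lemma \ref{lem:SO-at-infty}, and Proposition \ref{prop:SO-at-p}, use the product decomposition $\Delta_\A=\prod_v\Delta_v$ (already fixed in \S\ref{sss:setup-endoscopic-data} with $\dot y'_{1,v}=1$) and $\Delta_\A(\gamma_{H_1},\gamma_{0,1})=1$ for rational $\gamma_{0,1}$, then combine the $\tilde\beta_v$-pairings into $\langle\alpha(\fkc),\kappa\rangle^{-1}$ via \S\ref{sss:Kottwitz-invariant}. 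Your ``main obstacle'' is not one: the constants $C_\infty$ and $\mu_{h_1}(y_{1,p}g^{-1})$ were absorbed into the \emph{definitions} of $h_{1,\infty}$ and $h_{1,p}$ precisely so that the stable orbital integral identities of Lemma \ref{lem:SO-at-infty} and Proposition \ref{prop:SO-at-p} are stated directly in terms of $\Delta_\infty=\Delta[\fkw_1,z^{\Rig}_{1,\infty}]$ and $\Delta_p=\Delta[\fkw_1,z^{\Rig}_{1,p}]$; no residual scalar remains.

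There is, however, a genuine gap in your treatment of vanishing case (i). You assert that if $\gamma_{H_1}$ does not transfer to $G_1(\Q)$ then one of the three local stable orbital integrals vanishes, and you justify this by invoking the product formula $\Delta_\A=\langle\tu{obs}(\gamma)^{-1},\kappa\rangle$. That product formula is about the value of the adelic transfer factor; it does not by itself tell you anything about local-to-global transfer of stable conjugacy classes. The correct logic (which the paper carries out) is the contrapositive: assuming all three local stable orbital integrals are nonzero, Lemma \ref{lem:SO-at-infty} forces $\gamma_{H_1}$ to be $(G_1,H_1)$-regular and to transfer over $\R$ to an \emph{elliptic} element, Lemma \ref{lem:SO-away-from-pinfty} forces transfer over $\A^{\infty,p}$, and Hypothesis \ref{hypo:quasi-split-at-p} (quasi-splitness of $G_{\Q_p}$) gives transfer over $\Q_p$ for free. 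Now one needs an honest local-to-global step: Kottwitz's argument \cite[p.~188]{KottwitzAnnArbor}, which uses $\R$-ellipticity (the Hasse principle for the anisotropic torus $I_{0,1}$) to conclude that local transfer everywhere implies $\gamma_{H_1}$ transfers to some $\gamma_{0,1}\in G_1(\Q)$. You also do not address the non-$(G_1,H_1)$-regular sub-case cleanly; the paper disposes of it immediately via the first clause of Lemma \ref{lem:SO-at-infty}, not via any limiting argument. Your handling of case (ii) is essentially correct and aligns with the paper's: nonvanishing of $SO_{\gamma_{H_1}}(h_{1,p})$ forces $(\fke_1,\gamma_{H_{1,p}})\in\ES^{\Iso}_{\eff}(G_1)$ by Proposition \ref{prop:SO-at-p}, which yields an acceptable $\delta_1\in J_{\bb_1}(\Q_p)$ and hence (via the bijection in the proof of \cite[Prop.~7.4.16]{KSZ}) a $b_1\in\fkD_p(I_{0,1},G_1;\bb_1)$ extending $\gamma_{0,1}$ to an acceptable parameter.
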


\begin{proof}
We begin with proving the second assertion. In that case, 
$(\fke_1,\gamma_{H_1})$ transfers to $(\gamma_{0,1},\tilde \kappa_1)\in \EK^{\Iso}(G_1)$. Moreover
$(\fke_1, \gamma_{H_{1,p}}) \in \ES^{\Iso}_{\eff}(G_1)$ by definition, and $\gamma_{0,1},\gamma_0$ are elliptic in $G_1(\R),G(\R)$, respectively. The formula for $SO_{\gamma_{H_1}}(h_1)$ then follows from the defining formula \eqref{eq:def-of-N} and the equality 
$\langle \alpha(\gamma_0,a,[b]),\kappa\rangle=\langle \beta(\gamma_0,a,[b]),\tilde\kappa\rangle$
as well as the local formulas: Lemmas \ref{lem:SO-away-from-pinfty}, \ref{lem:SO-at-infty}, and Proposition \ref{prop:SO-at-p}. 

To check the first assertion, we may assume that $\gamma_{H_1}\in H_1(\Q)$ is $(G_1,H_1)$-regular and transfers over $\R$ to an element of $G_1(\R)_{\el}$ by Lemma \ref{lem:SO-at-infty}. In light of Lemma \ref{lem:SO-away-from-pinfty}, we may also assume that $\gamma_{H_1}$ transfers over $\A^{\infty,p}$ to an element of $G_1(\A^{\infty,p})$, as we already see that $SO_{\gamma_{H_1}}(h_1)=0$ otherwise. By Hypothesis \ref{hypo:quasi-split-at-p}, $\gamma_{H_1}$ transfers to an element of $G_1(\Q_p)$ as well. Now that $\gamma_{H_1}$ transfers locally everywhere to an element of $G_1(\A)$, the argument of \cite[p.188]{KottwitzAnnArbor} (using $\R$-ellipticity) shows that $\gamma_{H_1}$ transfers to an element $\gamma_{0,1}\in G_1(\Q)$. In particular, $(\fke_1,\gamma_{H_1})\notin \ES^{\Iso}(G_1)$, and its image in $\EK^{\Iso}(G_1)$ is $(\gamma_{0,1},\lambda_1)$ for some $\lambda_1$. Write $(\gamma_0,\lambda)\in \EK^{\Iso}(G)$ for the image of $(\gamma_{0,1},\lambda_1)$.

It remains to show that, assuming $SO_{\gamma_{H_1}}(h_1)\neq 0$, the element $\gamma_0$ can be extended to an acceptable parameter in $\KP_{\bb}$. It suffices to verify that $\gamma_{0,1}$ can be extended to an acceptable parameter $(\gamma_{0,1}, a_1, [b_1])\in\KP_{\bb_1}$. We have already seen that $\gamma_{0,1}$ is $\R$-elliptic. We can take $a_1$ to be trivial.
Finally, let us find $b_1$ as desired from Proposition \ref{prop:SO-at-p}. Since $SO_{\gamma_{H_1}}(h_{1,p})\neq 0$, the proposition tells us that there exists $(\delta_1,\lambda'_1)\in \EK^{\Iso}_{\eff}(J_{\bb_1},G_1)$ corresponding to $(\fke_1,\gamma_{H_1})$. Indeed, we can find $b_1\in \fkD_p(I_{0,1},G_1;\bb_1)$ from $\delta_1\in J_{\bb_1}(\Q_p)$ using the bijection from $\fkD_p(I_{0,1},G_1;\bb_1)$ to the set of conjugacy classes in $J_{\bb_1}(\Q_p)$ which become stably conjugate to $\gamma_{0,1}$ in $G(\ol\Q_p)$. (This bijection uses $G_{1,\tu{sc}}=G_{1,\der}$. See the proof of \cite[Prop.~7.4.16]{KSZ}.) The proof of the first assertion is complete.
\end{proof}

\begin{thm}\label{thm:stable-Igusa}
Assume that Conjecture \ref{conj:Igusa-general} is true. Then \eqref{eq:MackCrane} may be rewritten as
  $$\tr\big(\phi^{\infty,p}\phi^{(j)}_p \big|\iota H_c(\Ig_{\bb},\cL_\xi) \big) = 
  \sum_{\fke\in \sE^\heartsuit_{\el}(G)}\iota(\fke) \ST^{H_1}_{\el,\chi_1^{\fke}} (h_1).$$
  \end{thm}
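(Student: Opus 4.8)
The proof assembles the local computations of the previous subsections into the global stable trace formula. The plan is to start from the rewritten point-counting formula \eqref{eq:after-initial-steps} of Lemma \ref{lem:after-initial-steps}, which expresses $\tr_{\fkX^\infty}(\phi^{\infty,p}\phi^{(j)}_p|[H_c]_{\bb,\xi})$ as
$$\tau_{\fkX}(G)\sum_{\gamma_0}\ol\iota_G(\gamma_0)^{-1}\sum_\kappa\sum_{(a,[b])}N(\gamma_0,\kappa,a,[b],j),$$
and to recognize the right-hand side as a sum over elliptic endoscopic data. First I would use the bijection $\EK^{\Iso}_{\el}(G)\xrightarrow{\sim}\ES^{\Iso}_{\el}(G)$ together with the map $\ES^{\Iso}_{\el}(G)\to \mE^{\Iso}_{\el}(G)$ to reorganize the double sum over $(\gamma_0,\kappa)$ (which ranges over $\EK^{\Iso}(G)_{\KP_{\bb}}$, i.e. those extending to acceptable Kottwitz parameters) into a sum over $\fke\in\sE^\heartsuit_{\el}(G)$ of a sum over stable conjugacy classes $\gamma_H$ in $H(\Q)$ that are $(G,H)$-regular, elliptic, and transfer to $G(\Q)$. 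The combinatorial bookkeeping here is exactly as in \cite[Part 3]{KSZ} — the key point is that a pair $(\gamma_0,\kappa)$ corresponds, via $\EK^{\Iso}\cong\ES^{\Iso}$, to an equivalence class of tuples $(H,\cH,s,\eta,\gamma_H)$, and the $\Out(\fke)$-ambiguity and the centralizer $\ol\iota$-factors get absorbed into the coefficient $\iota(\fke)=\tau(G)\tau(H)^{-1}\lambda(\fke)^{-1}$ and the stabilizer factors.

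The crucial input is Lemma \ref{lem:SO-adelic}: for a fixed $\fke$ (hence $\fke_1$) and $\gamma_{H_1}\in H_1(\Q)$ elliptic, the stable orbital integral $SO_{\gamma_{H_1}}(h_1)$ of the globally assembled function $h_1=h_1^{\infty,p}h_{1,\infty}h_{1,p}$ vanishes unless $(\fke_1,\gamma_{H_1})\in\ES^{\Iso}(G_1)$ with image in $\EK^{\Iso}(G)_{\KP_{\bb}}$, in which case it equals $\sum_{(a,[b])}N(\gamma_0,\kappa,a,[b],j)$. Thus after passing to the $z$-extension $G_1$ and its endoscopic datum $\fke_1$ (where centralizers are connected and Kottwitz parameters match classical ones), I would substitute this into the definition \eqref{eq:def-of-ST} of $\ST^{H_1}_{\el,\chi_1^{\fke}}(h_1)=\tau_{\fkX_{H_1}}(H_1)\sum_{\gamma_{H_1}}|\mathrm{Stab}_{\fkX_{H_1}}(\gamma_{H_1})|^{-1}SO_{\gamma_{H_1}}(h_1)$. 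Using \eqref{eq:tau(H1)=tau(H)} to replace $\tau_{\fkX_{H_1}}(H_1)$ by $\tau_{\fkX}(H)$ and \eqref{eq:Stab(H1)=Stab(H)} to rewrite $|\mathrm{Stab}_{\fkX_{H_1}}(\gamma_{H_1})|=|\mathrm{Stab}_{\fkX_H}(\gamma_H)|\,\ol\iota_H(\gamma_H)$, the sum over $\gamma_{H_1}\in\Sigma_{\el,\fkX_{H_1}}(H_1)$ collapses to a sum over $\gamma_H\in\Sigma_{\el,\fkX_H}(H)$ (the fibers of $\Sigma(H_1(\Q))\to\Sigma(H(\Q))$ being $\fkX_{1,\Q}$-torsors, matching the $\fkX_\Q$-orbit structure on the $G$-side).

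Next I would multiply by $\iota(\fke)$ and sum over $\fke\in\sE^\heartsuit_{\el}(G)$. The bookkeeping is: the coefficient $\tau_{\fkX}(G)\ol\iota_G(\gamma_0)^{-1}$ on the geometric side must be matched with $\iota(\fke)\tau_{\fkX}(H)|\mathrm{Stab}_{\fkX_H}(\gamma_H)|^{-1}\ol\iota_H(\gamma_H)^{-1}$ summed over the $\fke$ (equivalently over $(H,\gamma_H)$) attached to a given $(\gamma_0,\kappa)$; since $\iota(\fke)\tau_{\fkX}(H)=\tau_{\fkX}(G)\lambda(\fke)^{-1}$, this reduces to the identity that $\sum\lambda(\fke)^{-1}|\mathrm{Stab}_{\fkX_H}(\gamma_H)|^{-1}\ol\iota_H(\gamma_H)^{-1}$, summed over data giving $(\gamma_0,\kappa)$, equals $\ol\iota_G(\gamma_0)^{-1}$ (times the reciprocal of the number of such data), which is precisely the content of the $\ES/\EK$ bijection together with the standard counting of $\Out(\fke)$-orbits — this is \cite[Lemma 8.3.x / Part 3]{KSZ} adapted verbatim. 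Finally I would invoke the Poincar\'e-duality identification (as in \S\ref{sss:H(IG)}) to pass from $\tr_{\fkX^\infty}(\cdots|[H_c]_{\bb,\xi})$ to $\tr(\phi^{\infty,p}\phi^{(j)}_p|\iota H_c(\Ig_{\bb},\cL_\xi))$, noting that the central-character datum volume factors cancel as in \S\ref{sss:1st-simp}.

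I expect the main obstacle to be the precise combinatorial matching of coefficients in the last step: tracking how the $\fkX_\Q$- and $\fkX_{H_1,\Q}$-orbit normalizations, the $\Out(\fke)$-multiplicities, the stabilizer groups $\mathrm{Stab}_{\fkX_H}$, and the factors $\ol\iota_G,\ol\iota_H$ all conspire to turn $\sum_\fke\iota(\fke)\ST^{H_1}_{\el,\chi_1^{\fke}}(h_1)$ into $\tau_{\fkX}(G)\sum_{\gamma_0}\ol\iota_G(\gamma_0)^{-1}\sum_\kappa\sum_{(a,[b])}N(\gamma_0,\kappa,a,[b],j)$. Everything else is either a direct citation (Lemmas \ref{lem:SO-away-from-pinfty}, \ref{lem:SO-at-infty}, Proposition \ref{prop:SO-at-p}, Lemma \ref{lem:SO-adelic}) or a formal consequence of the $z$-extension compatibilities \eqref{eq:tau(H1)=tau(H)}, \eqref{eq:Stab(H1)=Stab(H)}; the coefficient juggling, while conceptually identical to \cite[Part 3]{KSZ}, is where the care must go, especially since our normalization of transfer factors and the appearance of the factor $\mu_{h_1}(y_{1,p}g^{-1})$ at $p$ differ from the Shimura-variety setting and must be checked to drop out after summation.
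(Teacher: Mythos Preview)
Your approach is correct and essentially identical to the paper's proof: both rely on Lemma~\ref{lem:SO-adelic} as the key input, the $z$-extension compatibilities \eqref{eq:tau(H1)=tau(H)} and \eqref{eq:Stab(H1)=Stab(H)}, and the combinatorial coefficient identity from \cite[Cor.~8.3.5]{KSZ} (your ``Lemma~8.3.x''). The only cosmetic difference is that the paper computes right-to-left, starting from $\sum_\fke \iota(\fke)\ST^{H_1}_{\el,\chi_1^\fke}(h_1)$ and unwinding to recover \eqref{eq:after-initial-steps}, whereas you go left-to-right; the chain of equalities is the same.

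Two small points to clean up. First, no Poincar\'e duality step is needed at the end: $[H_c]_{\bb,\xi}$ is by definition the alternating sum of $\iota H^i_c(\Ig_{\bb},\cL_\xi)$, so $\tr_{\fkX^\infty}(\cdots|[H_c]_{\bb,\xi})$ and $\tr(\cdots|\iota H_c(\Ig_{\bb},\cL_\xi))$ are the same expression. Second, the factor $\mu_{h_1}(y_{1,p}g^{-1})$ does not need to ``drop out after summation'' in the global step --- it is already absorbed into the definition of $h_{1,p}$, and Proposition~\ref{prop:SO-at-p} gives the clean identity $SO_{\gamma_{H_{1,p}}}(h_{1,p})=\Delta_p(\gamma_{H_{1,p}},\gamma_{0,1})\sum_{[b]}\langle\tilde\beta_p,\tilde\kappa\rangle^{-1}e_p\,O^{J_{\bb}}_{\delta_{[b]}}(\phi_p)$ with no residual constant.
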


\begin{proof}
This last step in the stabilization is nearly identical to the proof of \cite[Thm.~8.3.10]{KSZ} but we provide some details for completeness. 
Write $\Sigma\fkK_{\el}(G)$ for the set of equivalence classes of pairs $(\gamma_0,\kappa)$, where $\gamma_0\in G(\Q)$ is elliptic, $\kappa\in \fkK(I_0/\Q)$; two pairs $(\gamma_0,\kappa)$ and $(\gamma'_0,\kappa')$ are equivalent if there exists $g\in G(\Qbar)$ such that $g\gamma_0 g^{-1}=\gamma'_0$, $g^{-1}\tau(g)\in I_0(\Qbar)$ for $\tau\in \Gamma_{\Q}$, and the inner twisting $I_{0,\Qbar}\cong I_{\gamma'_0,\Qbar}$ induced by $g$ carries $\kappa$ to $\kappa'$.
We define the set 
$${\ES}^{\sim}_{\el}(G) := \{ (\fke, \gamma_H) : \fke \in E^{\heartsuit}_{\el}(G), \gamma_H \in \Sigma(H(\Q))_{\el,(G,H)\tu{-reg}}\},$$
on which $x\in \fkX_\Q=\fkX\cap Z(\Q)$ acts by $(\fke,\gamma_H)\mapsto (\fke,x\gamma_H)$. 
Write ${\ES}^{\sim}_{\el,\fkX}(G)$ for the quotient set by this action. Similarly, $\Sigma\fkK_{\el,\fkX}(G)$ is the quotient of $\Sigma\fkK_{\el}(G)$ by the multiplication action of $\fkX_{\Q}$. The natural map $\tilde\fkE:{\ES}^{\sim}_{\el}(G)\ra \Sigma\fkK_{\el}(G)$, which is constructed in the same way as \eqref{EKESbij}, is $\fkX_{\Q}$-equivariant, thus induces a map $\tilde\fkE_{\fkX}:{\ES}^{\sim}_{\el,\fkX}(G)\ra \Sigma\fkK_{\el,\fkX}(G)$.
(See also \cite[\S8.3.1]{KSZ}, where the same maps $\tilde\fkE$ and $\tilde\fkE_{\fkX}$ are defined.) The right hand side of the theorem is computed as follows. We start by plugging in \eqref{eq:def-of-ST}, \eqref{eq:def-of-iota(e)} and \eqref{eq:tau(H1)=tau(H)}, where $\gamma_{H_1}\in H_1(\Q)$ denotes an arbitrary lift of $\gamma_H\in H(\Q)$ in the second, third, and fourth lines (each summand is independent of the choice of $\gamma_{H_1}$):

\begin{eqnarray}
&  &\tau_{\fkX}(G)  \sum_{\fke\in \sE^\heartsuit_{\el}(G)}  \lambda(\fke)^{-1} \sum_{\gamma_{H_1}\in \Sigma_{\el,\fkX_{H_1}}(H_1)} |\mathrm{Stab}_{\fkX_{H_1}}(\gamma_{H_1})|^{-1} SO_{\gamma_{H_1}}(h_1)
\nonumber\\
& \stackrel{\eqref{eq:Stab(H1)=Stab(H)}}{=\joinrel=} & \tau_{\fkX}(G)  \sum_{\fke\in \sE^\heartsuit_{\el}(G)}  \lambda(\fke)^{-1} \sum_{\gamma_{H}\in \Sigma_{\el,\fkX}(H)} \ol\iota_H(\gamma_H)^{-1} SO_{\gamma_{H_1}}(h_1)
\nonumber\\
& \stackrel{\tu{Lem.}~\ref{lem:SO-adelic}}{=\joinrel=}  &
\tau_\fkX(G)\sum_{(\fke,\gamma_H)\in \cE\Sigma^\sim_{\el,\fkX}(G)} \lambda(\fke)^{-1}\ol\iota_H(\gamma_H)^{-1}SO_{\gamma_{H_1}}(h_1)
\nonumber\\
& =\joinrel= &
\tau_\fkX(G) \sum_{(\gamma_0,\kappa)\in \Sigma\fkK_{\el,\fkX}(G)} \sum_{(\fke,\gamma_H)\in \cE\Sigma^\sim_{\el,\fkX}(G)
\atop \tilde\fkE_\fkX:(\fke,\gamma_H)\mapsto (\gamma_0,\kappa) } \lambda(\fke)^{-1}\ol\iota_H(\gamma_H)^{-1}SO_{\gamma_{H_1}}(h_1) \nonumber\\
&  \stackrel{\tu{Lem.}~\ref{lem:SO-adelic}}{=\joinrel=} &
\tau_\fkX(G) \sum_{(\gamma_0,\kappa)\in \Sigma\fkK_{\el,\fkX}(G)\atop \gamma_0:\,\R\textrm{-elliptic}} \sum_{a \in \fkD(I_0,G;\A_f^p) \atop [b]\in \fkD_p(I_0,G;\bb)}  \ol\iota_G(\gamma_0)^{-1} N(\gamma_0,\kappa,a,[b],j).
\nonumber
	\end{eqnarray}
More precisely, the second and third equalities follow from re-parameterizing the sums and imposing additional conditions on $\gamma_H$ based on the vanishing condition (i) in Lemma \ref{lem:SO-adelic}. 
The last equality is obtained from Lemma \ref{lem:SO-adelic} (the last assertion and vanishing condition (ii)) as well as \cite[Cor.~8.3.5]{KSZ} (which relates $\ol\iota_G(\gamma_0)^{-1}$ to a sum of $\lambda(\fke)^{-1}\ol\iota_H(\gamma_H)^{-1}$).
\end{proof}

\clearpage 

\setlength{\glsdescwidth}{\textwidth}

\printglossaries

\bibliographystyle{amsalpha}
\bibliography{biblio}

\providecommand{\bysame}{\leavevmode\hbox to3em{\hrulefill}\thinspace}
\providecommand{\MR}{\relax\ifhmode\unskip\space\fi MR }
\providecommand{\MRhref}[2]{%
  \href{http://www.ams.org/mathscinet-getitem?mr=#1}{#2}
}
\providecommand{\href}[2]{#2}
\begin{thebibliography}{HKW22}

\bibitem[Art01]{ArthurGlobalDescent}
James Arthur, \emph{A stable trace formula. {II}. {G}lobal descent}, Invent.
  Math. \textbf{143} (2001), no.~1, 157--220. \MR{1802795}

\bibitem[{Ber}24]{BMadelic}
Alexander {Bertoloni Meli}, \emph{Global {${\rm B}(G)$} with adelic
  coefficients and transfer factors at non-regular elements}, Math. Z.
  \textbf{306} (2024), no.~4, Paper No. 74, 47. \MR{4718670}

\bibitem[BM]{BMaveraging}
Alexander Bertoloni~Meli, \emph{An averaging formula for the cohomology of
  {PEL}-type {R}apoport--{Z}ink spaces},
  \url{https://arxiv.org/abs/2103.11538}.

\bibitem[BN23]{BMNKottwitzConj}
Alexander {Bertoloni Meli} and Kieu~Hieu Nguyen, \emph{The {K}ottwitz
  conjecture for unitary {PEL}-type {R}apoport-{Z}ink spaces}, J. Reine Angew.
  Math. \textbf{796} (2023), 1--68. \MR{4554467}

\bibitem[CD90]{ClozelDelorme:PaleyWiener2}
Laurent Clozel and Patrick Delorme, \emph{Le th\'{e}or\`eme de {P}aley-{W}iener
  invariant pour les groupes de {L}ie r\'{e}ductifs. {II}}, Ann. Sci. \'{E}cole
  Norm. Sup. (4) \textbf{23} (1990), no.~2, 193--228. \MR{1046496}

\bibitem[CL10]{CluckersLoeser}
Raf Cluckers and Fran{\c{c}}ois Loeser, \emph{Constructible exponential
  functions, motivic {F}ourier transform and transfer principle}, Ann. of Math.
  (2) \textbf{171} (2010), no.~2, 1011--1065. \MR{2630060 (2011g:14036)}

\bibitem[CS17]{CS17}
Ana Caraiani and Peter Scholze, \emph{On the generic part of the cohomology of
  compact unitary {S}himura varieties}, Ann. of Math. (2) \textbf{186} (2017),
  no.~3, 649--766. \MR{3702677}

\bibitem[CS24]{CS19}
\bysame, \emph{On the generic part of the cohomology of non-compact unitary
  {S}himura varieties}, Ann. of Math. (2) \textbf{199} (2024), no.~2, 483--590.
  \MR{4713019}

\bibitem[Del79]{DeligneCorvallis}
P.~Deligne, \emph{Vari\'et\'es de {S}himura: interpr\'etation modulaire, et
  techniques de construction de mod\`eles canoniques}, Automorphic forms,
  representations and {$L$}-functions ({P}roc. {S}ympos. {P}ure {M}ath.,
  {O}regon {S}tate {U}niv., {C}orvallis, {O}re., 1977), {P}art 2, Proc. Sympos.
  Pure Math., XXXIII, Amer. Math. Soc., Providence, R.I., 1979, pp.~247--289.
  \MR{546620 (81i:10032)}

\bibitem[Hal95]{Hales}
Thomas~C. Hales, \emph{On the fundamental lemma for standard endoscopy:
  reduction to unit elements}, Canad. J. Math. \textbf{47} (1995), no.~5,
  974--994. \MR{1350645}

\bibitem[Hid04]{HidaPAF}
Haruzo Hida, \emph{{$p$}-adic automorphic forms on {S}himura varieties},
  Springer Monographs in Mathematics, Springer-Verlag, New York, 2004.
  \MR{2055355}

\bibitem[HK19]{HamacherKim}
P.~Hamacher and W.~Kim, \emph{{$l$}-adic \'{e}tale cohomology of {S}himura
  varieties of {H}odge type with non-trivial coefficients}, Math. Ann.
  \textbf{375} (2019), no.~3-4, 973--1044. \MR{4023369}

\bibitem[HKW22]{HansenKalethaWeinstein}
David Hansen, Tasho Kaletha, and Jared Weinstein, \emph{On the {K}ottwitz
  conjecture for local shtuka spaces}, Forum Math. Pi \textbf{10} (2022), Paper
  No. e13, 79. \MR{4430954}

\bibitem[HT01]{HT01}
M.~Harris and R.~Taylor, \emph{The geometry and cohomology of some simple
  {S}himura varieties}, Annals of Mathematics Studies, vol. 151, Princeton
  University Press, Princeton, NJ, 2001, With an appendix by V. G. Berkovich.
  \MR{1876802 (2002m:11050)}

\bibitem[Igu68]{Igusa}
J.-i. Igusa, \emph{On the algebraic theory of elliptic modular functions}, J.
  Math. Soc. Japan \textbf{20} (1968), 96--106. \MR{240103}

\bibitem[Kal16]{KalethaLocalRigid}
T.~Kaletha, \emph{Rigid inner forms of real and {$p$}-adic groups}, Ann. of
  Math. (2) \textbf{184} (2016), no.~2, 559--632. \MR{3548533}

\bibitem[Kal18a]{KalethaGlobalRigid}
\bysame, \emph{Global rigid inner forms and multiplicities of discrete
  automorphic representations}, Invent. Math. \textbf{213} (2018), no.~1,
  271--369. \MR{3815567}

\bibitem[Kal18b]{KalethaRigidvsIsoc}
\bysame, \emph{Rigid inner forms vs isocrystals}, J. Eur. Math. Soc. (JEMS)
  \textbf{20} (2018), no.~1, 61--101. \MR{3743236}

\bibitem[Kis10]{KisinModels}
M.~Kisin, \emph{Integral models for {S}himura varieties of abelian type}, J.
  Amer. Math. Soc. \textbf{23} (2010), no.~4, 967--1012. \MR{2669706
  (2011j:11109)}

\bibitem[Kis17]{KisinPoints}
\bysame, \emph{{${\rm mod}\,p$} points on {S}himura varieties of abelian type},
  J. Amer. Math. Soc. \textbf{30} (2017), no.~3, 819--914. \MR{3630089}

\bibitem[KMP16]{KimMadapusiPera}
W.~Kim and K.~Madapusi~Pera, \emph{2-adic integral canonical models}, Forum
  Math. Sigma \textbf{4} (2016), e28, 34. \MR{3569319}

\bibitem[Kot83]{KottwitzSign}
R.~E. Kottwitz, \emph{Sign changes in harmonic analysis on reductive groups},
  Trans. Amer. Math. Soc. \textbf{278} (1983), no.~1, 289--297. \MR{697075
  (84i:22012)}

\bibitem[Kot84]{KottwitzCuspidalTempered}
\bysame, \emph{Stable trace formula: cuspidal tempered terms}, Duke Math. J.
  \textbf{51} (1984), no.~3, 611--650. \MR{757954 (85m:11080)}

\bibitem[Kot85]{KottwitzIsocrystal1}
\bysame, \emph{Isocrystals with additional structure}, Compositio Math.
  \textbf{56} (1985), no.~2, 201--220. \MR{809866}

\bibitem[Kot86]{KottwitzEllipticSingular}
\bysame, \emph{Stable trace formula: elliptic singular terms}, Math. Ann.
  \textbf{275} (1986), no.~3, 365--399. \MR{858284 (88d:22027)}

\bibitem[Kot88]{KottwitzTamagawa}
\bysame, \emph{Tamagawa numbers}, Ann. of Math. (2) \textbf{127} (1988), no.~3,
  629--646. \MR{942522 (90e:11075)}

\bibitem[Kot90]{KottwitzAnnArbor}
\bysame, \emph{Shimura varieties and {$\lambda$}-adic representations},
  Automorphic forms, {S}himura varieties, and {$L$}-functions, {V}ol.\ {I}
  ({A}nn {A}rbor, {MI}, 1988), Perspect. Math., vol.~10, Academic Press,
  Boston, MA, 1990, pp.~161--209. \MR{1044820 (92b:11038)}

\bibitem[Kot97]{KottwitzIsocrystal2}
\bysame, \emph{Isocrystals with additional structure. {II}}, Compositio Math.
  \textbf{109} (1997), no.~3, 255--339. \MR{1485921}

\bibitem[Kot14]{KottwitzglobalB(G)}
\bysame, \emph{{$B(G)$} for all local and global fields}, 2014,
  \url{https://arxiv.org/abs/1401.5728}.

\bibitem[KP18]{KisinPappas}
M.~Kisin and G.~Pappas, \emph{Integral models of {S}himura varieties with
  parahoric level structure}, Publ. Math. Inst. Hautes \'{E}tudes Sci.
  \textbf{128} (2018), 121--218. \MR{3905466}

\bibitem[KS99]{KottwitzShelstad}
R.~E. Kottwitz and D.~Shelstad, \emph{Foundations of twisted endoscopy},
  Ast\'erisque (1999), no.~255, vi+190. \MR{1687096 (2000k:22024)}

\bibitem[KS12]{KottwitzShelstadSigns}
R.~Kottwitz and D.~Shelstad, \emph{On splitting invariants and sign conventions
  in endoscopic transfer}, 2012.

\bibitem[KS23]{KretShin}
Arno Kret and Sug~Woo Shin, \emph{{$H^0$} of {I}gusa varieties via automorphic
  forms}, J. \'{E}c. polytech. Math. \textbf{10} (2023), 1299--1390.
  \MR{4664656}

\bibitem[KSZ]{KSZ}
M.~Kisin, S.~W. Shin, and Y.~Zhu, \emph{The stable trace formula for {S}himura
  varieties of abelian type}, \url{https://arxiv.org/abs/2110.05381}.

\bibitem[KT23]{KalethaTaibi}
Tasho Kaletha and Olivier Ta\"{\i}bi, \emph{Global rigid inner forms vs
  isocrystals}, Doc. Math. \textbf{28} (2023), no.~4, 765--826. \MR{4705600}

\bibitem[Lab99]{LabesseStabilization}
J.-P. Labesse, \emph{Cohomologie, stabilisation et changement de base},
  Ast\'erisque (1999), no.~257, vi+161, Appendix A by Laurent Clozel and
  Labesse, and Appendix B by Lawrence Breen. \MR{1695940 (2001k:22040)}

\bibitem[Lan89]{LanglandsRealClassification}
R.~P. Langlands, \emph{On the classification of irreducible representations of
  real algebraic groups}, Representation theory and harmonic analysis on
  semisimple {L}ie groups, Math. Surveys Monogr., vol.~31, Amer. Math. Soc.,
  Providence, RI, 1989, pp.~101--170. \MR{1011897 (91e:22017)}

\bibitem[Lan97]{LanglandsAbelian}
\bysame, \emph{Representations of abelian algebraic groups}, no. Special Issue,
  1997, Olga Taussky-Todd: in memoriam, pp.~231--250. \MR{1610871}

\bibitem[LS87]{LanglandsShelstad}
R.~P. Langlands and D.~Shelstad, \emph{On the definition of transfer factors},
  Math. Ann. \textbf{278} (1987), no.~1-4, 219--271. \MR{909227}

\bibitem[LS90]{LanglandsShelstadDescent}
R.~Langlands and D.~Shelstad, \emph{Descent for transfer factors}, The
  {G}rothendieck {F}estschrift, {V}ol. {II}, Progr. Math., vol.~87,
  Birkh\"{a}user Boston, Boston, MA, 1990, pp.~485--563. \MR{1106907}

\bibitem[Man05]{Man05}
E.~Mantovan, \emph{On the cohomology of certain {PEL}-type {S}himura
  varieties}, Duke Math. J. \textbf{129} (2005), no.~3, 573--610. \MR{2169874}

\bibitem[MS]{MackCraneShin}
S.~MackCrane and S.~W. Shin, \emph{Counting points on {I}gusa varieties of
  {H}odge type}, to appear in Math. Res. Lett.,
  \url{https://math.berkeley.edu/~swshin/IgusaHodge.pdf}.

\bibitem[Ng{\^o}10]{NgoFL}
B.~C. Ng{\^o}, \emph{Le lemme fondamental pour les alg\`ebres de {L}ie}, Publ.
  Math. Inst. Hautes \'Etudes Sci. (2010), no.~111, 1--169. \MR{2653248}

\bibitem[Ono66]{OnoTamagawaNumbers}
T.~Ono, \emph{On {T}amagawa numbers}, Algebraic {G}roups and {D}iscontinuous
  {S}ubgroups ({P}roc. {S}ympos. {P}ure {M}ath., {B}oulder, {C}olo., 1965),
  Amer. Math. Soc., Providence, R.I., 1966, pp.~122--132. \MR{0209290}

\bibitem[Pap]{PappasModels}
G.~Pappas, \emph{On integral models of {S}himura varieties}, to appear in Math.
  Ann., \url{https://arxiv.org/abs/2003.13040}.

\bibitem[Pap18]{PappasICM}
\bysame, \emph{Arithmetic models for {S}himura varieties}, Proceedings of the
  {I}nternational {C}ongress of {M}athematicians---{R}io de {J}aneiro 2018.
  {V}ol. {II}. {I}nvited lectures, World Sci. Publ., Hackensack, NJ, 2018,
  pp.~377--398.

\bibitem[PR24]{PappasRapoportModels}
Georgios Pappas and Michael Rapoport, \emph{{$p$}-adic shtukas and the theory
  of global and local {S}himura varieties}, Camb. J. Math. \textbf{12} (2024),
  no.~1, 1--164. \MR{4701491}

\bibitem[RZ96]{RapoportZink}
M.~Rapoport and Th. Zink, \emph{Period spaces for {$p$}-divisible groups},
  Annals of Mathematics Studies, vol. 141, Princeton University Press,
  Princeton, NJ, 1996. \MR{1393439}

\bibitem[She82]{ShelstadRealEndoscopy}
D.~Shelstad, \emph{{$L$}-indistinguishability for real groups}, Math. Ann.
  \textbf{259} (1982), no.~3, 385--430. \MR{661206}

\bibitem[Shi09]{Shin09}
S.~W. Shin, \emph{Counting points on {I}gusa varieties}, Duke Math. J.
  \textbf{146} (2009), no.~3, 509--568. \MR{2484281}

\bibitem[Shi10]{Shin10}
\bysame, \emph{A stable trace formula for {I}gusa varieties}, J. Inst. Math.
  Jussieu \textbf{9} (2010), no.~4, 847--895. \MR{2684263}

\bibitem[Shi11]{ShinGalois}
\bysame, \emph{Galois representations arising from some compact {S}himura
  varieties}, Ann. of Math. (2) \textbf{173} (2011), no.~3, 1645--1741.
  \MR{2800722}

\bibitem[Shi12]{ShinRZ}
\bysame, \emph{On the cohomology of {R}apoport-{Z}ink spaces of {EL}-type},
  Amer. J. Math. \textbf{134} (2012), no.~2, 407--452. \MR{2905002}

\bibitem[vH24]{vanHoftenLR}
Pol van Hoften, \emph{Mod {$p$} points on {S}himura varieties of parahoric
  level}, Forum Math. Pi \textbf{12} (2024), Paper No. e20, 67. \MR{4818941}

\bibitem[Wal97]{WaldspurgerFLimpliesTC}
J.-L. Waldspurger, \emph{Le lemme fondamental implique le transfert},
  Compositio Math. \textbf{105} (1997), no.~2, 153--236. \MR{1440722
  (98h:22023)}

\bibitem[Wal06]{WaldspurgerChangement}
\bysame, \emph{Endoscopie et changement de caract\'eristique}, J. Inst. Math.
  Jussieu \textbf{5} (2006), no.~3, 423--525. \MR{2241929 (2007h:22007)}

\bibitem[Wal08]{Waldspurger-sitordue}
\bysame, \emph{L'endoscopie tordue n'est pas si tordue}, Mem. Amer. Math. Soc.
  \textbf{194} (2008), no.~908, x+261. \MR{2418405}

\bibitem[Zho20]{ZhouParahoric}
Rong Zhou, \emph{Mod {$p$} isogeny classes on {S}himura varieties with
  parahoric level structure}, Duke Math. J. \textbf{169} (2020), no.~15,
  2937--3031.

\end{thebibliography}

\end{document}